\definecolor{myblue}{rgb}{0.09,0.32,0.44} 
\newtheorem{thm}{Theorem}[section] 
\newtheorem*{thm*}{Theorem}
\newtheorem{cor}[thm]{Corollary}
\newtheorem{defn}[thm]{Definition}
\newtheorem{exmpl}[thm]{Example}
\newtheorem{fact}[thm]{Fact}
\newtheorem{lem}[thm]{Lemma}
\newtheorem{prop}[thm]{Proposition}
\theoremstyle{remark}
\newtheorem{rem}[thm]{Remark}
\newtheorem{ques}[thm]{Question}
\newtheorem*{rem*}{Remark}
\newtheorem*{rems*}{Remarks}
\newcommand\Cref[1]{{Corollary~\ref{#1}}}
\newcommand\Dref[1]{{Definition~\ref{#1}}}
\newcommand\Fref[1]{{Fact~\ref{#1}}}
\newcommand\Lref[1]{{Lemma~\ref{#1}}}
\newcommand\Pref[1]{{Proposition~\ref{#1}}}
\newcommand\Rref[1]{{Remark~\ref{#1}}}
\newcommand\Tref[1]{{Theorem~\ref{#1}}}
\newcommand\Sref[1]{{\S \ref{#1}}}
\newcommand\Ssref[1]{{Subsection~\ref{#1}}}
\newcommand\Qref[1]{{Question~\ref{#1}}}
\newcommand{\N}{\mathbb{N}}
\newcommand{\Z}{\mathbb{Z}}
\newcommand{\F}{\mathrm{F}}
\newcommand{\set}[1]{\left\{#1\right\}}
\newcommand{\sub}{\subseteq}
\newcommand{\E}{\mathbb{E}}
\newcommand{\eps}{\varepsilon}
\newcommand{\ceil}[1]{\left\lceil #1 \right\rceil}
\newcommand{\sg}[1]{\left\langle #1\right\rangle}
\newcommand{\suchthat}{:}
\newcommand{\id}{\mathrm{id}}
\renewcommand{\Pr}{\mathbb{P}}
\newcommand{\supp}{\operatorname{supp}}
\newcommand{\on}{\textrm{ on }}
\newcommand{\Sym}{\mathrm{Sym}}
\def\moverlay{\mathpalette\mov@rlay}
\def\mov@rlay#1#2{\leavevmode\vtop{%
   \baselineskip\z@skip \lineskiplimit-\maxdimen
   \ialign{\hfil$\m@th#1##$\hfil\cr#2\crcr}}}
\newcommand{\charfusion}[3][\mathord]{
    #1{\ifx#1\mathop\vphantom{#2}\fi
        \mathpalette\mov@rlay{#2\cr#3}
      }
    \ifx#1\mathop\expandafter\displaylimits\fi}
\newcommand{\cupdot}{\charfusion[\mathbin]{\cup}{\cdot}}
\newlength{\tempindent} 
\newcommand{\lazyenum}{
\setlength{\tempindent}{\parindent} 
\begin{enumerate}[leftmargin=0cm,itemindent=0.7cm,labelwidth=\itemindent,labelsep=0cm,align=left,label=(\arabic*)]
\setlength{\parskip}{\smallskipamount}
\setlength{\parindent}{\tempindent}
}
\newif
\title{Probabilistic Laws on Infinite Groups}
\author{Gideon Amir}
\email{gidi.amir@gmail.com}
\address{Bar-Ilan University, Ramat Gan 52900, Israel}
\author{Guy Blachar}
\email{guy.blachar@gmail.com}
\address{Bar-Ilan University, Ramat Gan 52900, Israel}
\author{Maria Gerasimova}
\email{mari9gerasimova@mail.ru}
\address{West\"{a}lische Wilhelms-Universit\"{a}t M\"{u}nster, 48149 M\"{u}nster, Germany}
\author{Gady Kozma}
\email{gady.kozma@weizmann.ac.il}
\address{The Weizmann Institute of Science, Rehovot 76100, Israel}
\subjclass[2020]{20P05, 20F69, 20F65, 60B15, 20E22}
\begin{document}

\maketitle
\vspace{-2.5em} 
\begin{abstract}
We study the probability that certain laws are satisfied on infinite groups, focusing on elements sampled by random walks. 
For several group laws, including the metabelian one, we construct examples of infinite groups for which the law holds with high probability, but the group does not satisfy the law virtually. On the other hand, we show that if an infinite group satisfies the law $x^2=1$ with positive probability, then it is virtually abelian.
\end{abstract}

\section{Introduction}

The commuting probability of a finite group $G$, denoted $\Pr([x,y]=1)$, is the probability that two random elements from $G$ commute with each other. The following is a well-known theorem of Gustafson \cite{Gu}:
\begin{thm*}
  If $\Pr([x,y]=1)>\frac{5}{8}$ then $G$ is abelian.
\end{thm*}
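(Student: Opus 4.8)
The plan is to translate the probabilistic hypothesis into a statement about the number of conjugacy classes, and then to use elementary group theory to bound that number for a non-abelian group.

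First I would record the classical identity $\Pr([x,y]=1)=k(G)/|G|$, where $k(G)$ is the number of conjugacy classes of $G$. The set of commuting pairs $(x,y)$ has size $\sum_{x\in G}|C_G(x)|$, and since $|C_G(x)|=|G|/|x^{G}|$ while $\sum_{x\in C}1/|C|=1$ for each conjugacy class $C$, this sum equals $|G|\,k(G)$. Dividing by $|G|^{2}$ gives the identity (this is just Burnside's lemma for the conjugation action). So it suffices to show that $k(G)/|G|\le\tfrac58$ whenever $G$ is non-abelian.

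Next I would bound $k(G)$ from above. Write $Z=Z(G)$. The $|Z|$ central elements give $|Z|$ singleton classes. Every non-central $x$ has $C_G(x)\subsetneq G$, so $|x^{G}|=[G:C_G(x)]\ge 2$; hence the $|G|-|Z|$ non-central elements lie in at most $\tfrac12(|G|-|Z|)$ classes. Therefore
\[
k(G)\le |Z|+\tfrac12\bigl(|G|-|Z|\bigr)=\tfrac12\bigl(|G|+|Z|\bigr),
\qquad\text{so}\qquad
\Pr([x,y]=1)\le \tfrac12\Bigl(1+\tfrac{1}{[G:Z]}\Bigr).
\]

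Finally I would invoke the standard fact that $G/Z(G)$ cyclic forces $G$ abelian; thus for non-abelian $G$ the quotient $G/Z$ is non-cyclic, and since there is no non-cyclic group of order at most $3$ we get $[G:Z]\ge 4$. Substituting, $\Pr([x,y]=1)\le\tfrac12(1+\tfrac14)=\tfrac58$, and the contrapositive is exactly the statement. The argument is entirely elementary; the only real "move" is to pass from the probability to the class count and then to the central quotient, and the sole place the precise constant $\tfrac58$ enters is the bound $[G:Z]\ge 4$ — which is sharp, as $D_4$ and $Q_8$ show. The main thing to get right is thus not a hard obstacle but the bookkeeping: making sure the class-counting inequality is applied only to non-central elements and that the $G/Z$ argument is used to pin down $[G:Z]\ge4$.
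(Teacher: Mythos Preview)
Your proof is correct and is essentially the classical argument due to Gustafson. Note that the paper does not actually prove this theorem---it is quoted in the introduction as a well-known result with a citation to \cite{Gu}---so there is no in-paper proof to compare against; your argument is the standard one.
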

We refer to such a phenomenon as a gap: when considering the set of commuting probabilities over all finite groups as a subset of $[0,1]$, there is a gap close to $1$.

More generally, for a finite group $G$ and a word $1\neq w=w(x_1,\dots,x_d)\in\F_d$, we define the probability that $w$ is satisfied on $G$ as the probability that $d$ independent uniformly chosen elements $g_1,\dots,g_d$ of $G$ satisfy $w(g_1,\dots,g_d)=1$:
$$\Pr\left(w = 1 \textrm{ on }G\right)=\frac{\left|\set{(g_1,\dots,g_d)\in G^d\suchthat w(g_1,\dots,g_d)=1}\right|}{\left|G\right|^d}.$$
A natural question is whether for a given word $w$, these probabilities also present a similar behavior.

It is also well known (and with the same proof as Gustafson's theorem) that for a fixed $k$, the $k$-step nilpotent law $[x_1,\dots,x_k]=[[\dotsb[[x_1,x_2],x_3]\dotsb],x_k]=1$ presents a gap. 

Considering power laws $x^m=1$, several results have been established. Laffey showed the existence of a gap for the laws $x^2=1$, $x^3=1$ \cite{laf}, $x^4=1$ \cite{laf3} and $x^p=1$ for prime $p$ on non-$p$-groups \cite{laf2}. For general exponent $m$, a result of Mann and Martinez \cite{MM} proves the existence of a gap for $x^m=1$ for all groups with a fixed number of generators. Another result in this direction is the recent work of Delizia, Jezernik, Moravec and Nicotera \cite{deli} that the metabelian law $[[x,y],[z,w]]=1$ and the 2-Engel law $[[x,y],y]=1$ satisfy a gap.



Although all known cases indeed satisfy a gap for finite groups, the general case is still an open question:
\begin{ques}\label{ques:gap-fin}
  Does any law satisfy a gap for finite groups?
\end{ques}

The commuting probability of a finite group does not only present a gap, but also reveals structural information about the group. A well-known theorem of Peter Neumman \cite[Theorem 1]{Ne} states:
\begin{thm*}
  If $G$ is a finite group for which $\Pr([x,y]=1)\ge\eps>0$, then $G$ is ($\eps$-bounded)-by-abelian-by-($\eps$-bounded), i.e.\ $G$ contains subgroups $H'\vartriangleleft H<G$ such that $[G:H]$ and $\left|H'\right|$ are bounded by a function of $\eps$, and such that $H/H'$ is abelian.
\end{thm*}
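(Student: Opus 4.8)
The plan is to deduce the statement from B.\,H.\ Neumann's theorem that a group in which every conjugacy class has size at most $n$ has commutator subgroup of order bounded by a function of $n$ alone. The subgroup $H$ will be the one generated by the elements whose conjugacy class is small, and $H'$ will be its commutator subgroup. I would begin with the standard first-moment computation: writing $\Pr([x,y]=1)=|G|^{-2}\sum_{x\in G}|C_G(x)|\ge\eps$, an averaging (Markov) argument yields that
\[
A:=\set{x\in G\suchthat |C_G(x)|\ge\tfrac{\eps}{2}|G|}
\]
has $|A|\ge\tfrac{\eps}{2}|G|$. The set $A$ is symmetric, contains $1$, and is a union of conjugacy classes, each of size at most $m:=\lfloor 2/\eps\rfloor$. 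Put $H:=\sg{A}$. Since $A$ is conjugation-invariant, $H\vartriangleleft G$, and since $|H|\ge|A|\ge\tfrac{\eps}{2}|G|$ we have $[G:H]\le 2/\eps$; this is the outer ``($\eps$-bounded)'' factor.

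The core of the argument is to show that $H$ is $n$-BFC for some $n=n(\eps)$, i.e.\ $|x^H|\le n$ for every $x\in H$. Since $A=A^{-1}\ni1$ has density at least $\eps/2$ in $H$, a standard fact about dense subsets of finite groups provides an $r=r(\eps)$ such that every element of $H$ is a product of $r$ elements of $A$. Given $x=a_1\cdots a_r$ with $a_i\in A$, one has $C_G(x)\supseteq\bigcap_{i=1}^{r}C_G(a_i)$, so that
\[
|x^H|=[H:C_H(x)]\le[G:C_G(x)]\le\prod_{i=1}^{r}[G:C_G(a_i)]\le m^{r}=:n .
\]
Hence $H$ is $n$-BFC, and B.\,H.\ Neumann's theorem gives $|[H,H]|\le g(n)=g(\eps)$. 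Taking $H':=[H,H]$ completes the proof: $H'\vartriangleleft H$, the quotient $H/H'$ is abelian, and both $[G:H]$ and $|H'|$ are bounded in terms of $\eps$.

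I expect the one genuinely delicate point --- the only place the hypothesis is used for more than an averaging estimate --- to be the implication ``$A$ has positive density $\Rightarrow$ $H$ is BFC with bound depending only on $\eps$'', which rests on the bounded-diameter statement that every element of $H$ is a product of a bounded number of elements of $A$. Some such input is essential: a group generated by elements with bounded conjugacy classes need not have finite commutator subgroup at all --- for instance $Q_8^{\,k}$ is generated by elements whose conjugacy classes have size $2$, yet $|[Q_8^{\,k},Q_8^{\,k}]|=2^{k}$ --- and it is precisely the density of $A$ (equivalently, that $\Pr([x,y]=1)$ is bounded away from $0$, which fails for $Q_8^{\,k}$ since there $\Pr=(5/8)^{k}$) that rules this out. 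What remains is bookkeeping: fixing an explicit admissible value of $r$ and recalling an explicit form of Neumann's BFC bound, where only the existence of the bounds --- not their shape --- is needed.
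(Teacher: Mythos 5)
The paper does not prove this statement: it is quoted in the introduction as a known theorem of Peter Neumann with a citation, and never reproved. So there is no in-paper proof to compare against; I can only assess your argument on its own terms.

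Your proof is correct, and it is in fact essentially Neumann's original argument. The Markov step giving $A=\set{x:\left|C_G(x)\right|\ge\frac{\eps}{2}\left|G\right|}$ of density $\ge\eps/2$, the observations that $A$ is symmetric, contains $1$, and is a union of conjugacy classes of size $\le 2/\eps$, and that $H=\sg{A}$ is normal of index $\le 2/\eps$, are all sound. The ``standard fact about dense subsets'' you invoke is also correct and is indeed the crux: if $1\in A=A^{-1}\subseteq H$ with $\left|A\right|\ge\delta\left|H\right|$, then $A^r=H$ for $r=O(1/\delta)$. One clean way to see it: if $A^k\ne H$ then there is $h\in A^{k+1}\setminus A^k$, and then $hA$ is disjoint from $A^{k-1}$ (else $h\in A^k$), so $\left|A^{k+2}\right|\ge\left|A^{k-1}\right|+\left|A\right|$; iterating, the powers of $A$ must stabilise at $H$ within $O(1/\delta)$ steps because $\left|H\right|$ is finite. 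From there, $C_G(a_1\cdots a_r)\supseteq\bigcap_iC_G(a_i)$ and $[G:\bigcap_i C_G(a_i)]\le\prod_i[G:C_G(a_i)]\le m^r$ give that $H$ is $n$-BFC for $n=n(\eps)$, and B.~H.~Neumann's BFC theorem finishes the job. Your $Q_8^{\,k}$ example is a good illustration of why bounded conjugacy classes for a \emph{generating set} alone do not suffice and the density hypothesis is genuinely used. The only thing I would tighten in a final write-up is to state the bounded-generation lemma explicitly with a reference (or the short argument above) rather than leaving it as ``a standard fact''.
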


We call such a behavior \emph{positivity} and say that ``the commutativity law satisfies positivity'' (see this more formally in \Dref{def:pos} below).

An interesting phenmenon is that sometimes, if a law holds with probability at least $\eps$, then \emph{another} law holds virtually. We quote two results of this type. For the law $x^2=1$, a result of Mann~\cite{Ma} shows that if $\Pr\left(x^2=1\on G\right)\ge\eps>0$, then $G$ is ($\eps$-bounded)-by-abelian-by-($\eps$-bounded). The example of dihedral groups shows that it is not true that the group is ($\eps$-bounded)-by-(satisfying $x^2=1$)-by-($\eps$-bounded).


For the $2$-step nilpotent law $[[x_1,x_2],x_3]=1$, a result of Eberhard and Shumyatsky \cite{ES} shows that such groups are not ($\eps$-bounded)-by-($2$-step-nilpotent)-by-($\eps$-bounded), but are ($\eps$-bounded)-by-($3$-step-nilpotent)-by-($\eps$-bounded).

We remark that in the nilpotent case, this phenomenon is due to our requirement that the excess parts are bounded \emph{only} by $\eps$. If one allows them to depend also on the number of generators, the picture changes completely. Indeed, Shalev \cite{Sh} showed that if $G$ is $d$-generated, and $\Pr\left([x_1,\dots,x_k]=1\on G\right)\ge\eps>0$, then $G$ is ($k$-step-nilpotent)-by-($(k,d,\eps)$-bounded).



Again, the general case remains unknown:
\begin{ques}\label{ques:pos-fin}
  If a law $w$ is satisfied on a finite group $G$ with probability larger than~$\eps$, does the group $G$ contain a large subgroup (of index bounded as a function of~$\eps$, and possibly also of the number of generators of $G$) that satisfies a law which depends only on~$w$?
\end{ques}\medskip

\subsection{Infinite groups}
One may ask for a generalization of these concepts to infinite groups. As we can no longer sample elements according to a uniform distribution, one needs to define how to measure the probability of satisfying a law on an infinite group. When the group is locally compact, for example, a natural choice for a measure is the Haar measure.

Our main interest in this paper is the case of infinite, finitely generated discrete groups. Following the ideas of \cite{AMV}, we use a sequence of probability measures $M=\set{\mu_n}$ on $G$, and define the probability that $G$ satisfies the law $w$ with respect to $M$ as
\begin{equation}\label{eq:defPM}
\Pr_M\left(w=1\on G\right)=\limsup_{n\to\infty}\mu_n\left(\set{(g_1,\dots,g_d)\suchthat w(g_1,\dots,g_d)=1}\right).
\end{equation}
If $G$ is finitely generated, natural choices for $\set{\mu_n}$ include taking a random walk measure on $G$, or a uniform measure on a sequence of balls centered at the identity and converging to $G$. We will omit $M$ if it is the random walk measure with respect to some natural generators (this is the case we are most interested in). 

In previous works by Ant\'{o}lin-Martino-Ventura \cite{AMV} and later by Tointon \cite{toin}, the commutativity probability was studied. These works prove analogous results to the finite case:
\begin{thm*}[{Tointon \cite[Theorem 1.9]{toin}}]~
  \begin{enumerate}
    \item If $\Pr\left([x,y]=1\on G\right)>\frac{5}{8}$, then $G$ is abelian. (In other words, the commutator word satisfies a gap also for infinite finitely generated groups.)
    \item If $\Pr\left([x,y]=1\on G\right)>0$, then $G$ is virtually abelian.
  \end{enumerate}
\end{thm*}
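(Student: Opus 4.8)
The plan is to follow the finite-group proofs — Gustafson's for (1), Neumann's and Mann's for (2) — but to replace every argument that counts elements of $G$ by an estimate on the random-walk measures $\mu_n=\mu^{*n}$; the single new ingredient is a lemma describing how a random walk sees a subgroup. \emph{Key Lemma:} for a symmetric, aperiodic (e.g.\ lazy), finitely supported generating measure $\mu$ and any $H\le G$ one has $\mu_n(H)\to[G:H]^{-1}$, with $\infty^{-1}:=0$. For $[G:H]<\infty$ this is just convergence of the irreducible aperiodic finite Markov chain on the Schreier coset graph $H\backslash G$ to its uniform stationary law. For $[G:H]=\infty$, with $P$ the (self-adjoint) Markov operator on $\ell^2(H\backslash G)$, the quantity $\mu_{2n}(H)=\langle P^{2n}\delta_H,\delta_H\rangle=\|P^n\delta_H\|^2$ is non-increasing, and by the spectral theorem its limit is the mass the spectral measure of $\delta_H$ gives to $\{\pm1\}$ — which is $0$, since a connected infinite graph has no $\ell^2$-eigenfunction of $P$ for eigenvalue $\pm1$ (a maximum-principle argument: an $\ell^2$ function harmonic for $P$ tends to $0$ at infinity, hence vanishes); odd times are handled the same way. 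I would isolate and prove this first, then treat (2), then deduce (1).

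\emph{Proof of (2).} Let $\eps=\Pr([x,y]=1\on G)>0$ and let $F=\mathrm{FC}(G)$, the characteristic subgroup of elements with finite-index centralizer. It suffices to prove $[G:F]<\infty$: then $F$ is finitely generated, its centre $\bigcap_i C_F(x_i)$ over a finite generating set is a finite intersection of finite-index subgroups and hence has finite index, so $F$ is centre-by-finite, hence virtually abelian, hence so is $G$. Writing $\Pr([x,y]=1\on G)=\limsup_n\bigl(\sum_{g\in F}\mu_n(g)\mu_n(C_G(g))+S_n\bigr)$ with $S_n:=\sum_{g\notin F}\mu_n(g)\mu_n(C_G(g))$, the first sum is at most $\mu_n(F)\to[G:F]^{-1}$ by the Key Lemma. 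Hence if $[G:F]=\infty$ we must have $\limsup_n S_n\ge\eps>0$; so everything reduces to showing that $[G:F]=\infty$ forces $S_n=\Pr_{x,y\sim\mu_n}(x\notin F,\ [x,y]=1)\to0$, i.e.\ that two independent $\mu_n$-random elements, both avoiding the FC-centre, almost never commute as $n\to\infty$.

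\emph{Proof of (1), granting (2).} Suppose $\Pr([x,y]=1\on G)>\tfrac58$ but $G$ is non-abelian. By (2), $G$ is virtually abelian, hence — being finitely generated — residually finite, so it has a non-abelian finite quotient $Q=G/N$. Since $[x,y]=1$ in $G$ implies $[\bar x,\bar y]=1$ in $Q$, and the pushed-forward (still lazy, generating) walk on the finite group $Q$ equidistributes, $\Pr([x,y]=1\on G)\le\limsup_n(\mu_n\times\mu_n)\{[\bar x,\bar y]=1\}=\Pr_{\mathrm{unif}}([x,y]=1\on Q)\le\tfrac58$ by Gustafson's theorem — a contradiction; so $G$ is abelian.

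\emph{Main obstacle.} The heart of the matter is the reduction claim in (2): that $[G:\mathrm{FC}(G)]=\infty$ forces $S_n\to0$. This cannot be done by soft analysis, because the $\mu_n$ are not tight — mass escapes to infinity — so $\Pr([x,y]=1\on G)$ genuinely records the walk's behaviour ``at infinity'' (the naive termwise passage to the limit would wrongly yield $\Pr=0$ even for abelian $G$), and it cannot be reduced to the finite case as in (1), since finitely generated infinite simple groups have trivial FC-centre and no nontrivial finite quotients yet must, by the theorem, satisfy $\Pr([x,y]=1)=0$. One needs structural input on $G/F$ and on the centralizers of the walk's increments. For $G$ virtually nilpotent but not virtually abelian I would argue as for the Heisenberg group: after passing to a finite-index subgroup the commutator descends to a non-degenerate alternating form on a section of $G/F$, so $\{[x,y]=1\}$ has positive ``codimension'' and $S_n\to0$ by a product-of-independent-random-walks estimate, with an induction on the nilpotency class. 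The remaining work, and the real difficulty, is to handle every non-virtually-abelian group uniformly — transferring a positive lower bound on the commuting probability into finiteness of $[G:\mathrm{FC}(G)]$ is where I expect essentially all the effort to lie.
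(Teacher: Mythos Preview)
The paper does not prove this theorem; it is quoted in the introduction as a result of Tointon \cite[Theorem~1.9]{toin}, with no proof supplied. So there is no ``paper's own proof'' to compare against.

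That said, your proposal has a genuine gap that you yourself flag: in part~(2) the entire argument hinges on showing that $[G:\mathrm{FC}(G)]=\infty$ forces $S_n\to 0$, and you do not prove this --- you sketch it only for virtually nilpotent groups and concede that the general case is ``where essentially all the effort lies''. This is not a minor technicality; it is the whole theorem. The route actually taken in the literature (and reflected in this paper's \Lref{lem:markov} and the proof of \Tref{thm:derived}(\ref{item:derived-pos})) bypasses your $S_n\to 0$ claim entirely: from $\Pr([x,y]=1)>\eps$ one deduces, via a conditional-expectation (Markov-type) inequality, that $\limsup_n\Pr\bigl([G:C_G(R_n)]\le M\bigr)>\eps'$ for some $M,\eps'$ depending only on~$\eps$. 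Since $G$ is finitely generated, the intersection $N$ of all subgroups of index $\le M$ has finite index, and the event above is contained in $\{R_n\in C_G(N)\}$; the index-measuring property then gives $[G:C_G(N)]<\infty$, and $N\cap C_G(N)$ is the desired finite-index abelian subgroup. Your FC-centre decomposition is natural, but trying to prove $S_n\to 0$ head-on is the hard direction; the Markov inequality turns the problem around.

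Your reduction for part~(1) --- use (2) to get residual finiteness, push forward to a non-abelian finite quotient, invoke Gustafson --- is correct and is exactly the mechanism formalised in \Sref{sec:res-fin} of the paper.
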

(The results of \cite{AMV} and \cite{toin} are more general than for random walks, we quoted only the random walk results for simplicity). Similar results are proved for the nilpotent case in \cite{MTVV}. \medskip



In this paper we consider other examples of laws. Our goal is to answer the two questions above for various laws. Namely, do these laws satisfy a gap for infinite finitely generated groups? If the probability that a law is satisfied on a group is positive, does the group virtually satisfy the same law, or perhaps another one?
We start with sketching an example showing that very different behaviour may occur in the infinite case.

\subsection{A lamplighter example}

Consider the group $G\coloneqq\F_2\wr\Z^5$ i.e.\ a lamplighter with the base group being $\Z^5$ and the lamps being free groups with 2 generators (exact definitions will be given in \Ssref{subsec:wreath-prods}). We will use this group to show that the metabelian law $[[x,y],[z,w]]=1$ has no gap for infinite groups. It is easy to see that $G$ does not satisfy the metabelian law virtually (thus we get that the metabelian law satisfies neither a gap nor a positivity result with the same example).

Consider first a single commutator, $[x,y]$. The walker (i.e.\ the $\Z^5$ projection) is at the identity. The lamps can be found on a random subset of places visited by the following process: do a random walk on $n$ steps. Then do another random walk on $n$ steps. Then repeat the first walker in reverse, and then the second.

The other commutator, $[z,w]$ gives a second, independent, subset of a random loop. The crucial property of $\Z^5$ that we exploit is that two such loops do not intersect (except at the identity) with positive probability (independent of the length of the loops). We conclude that with positive probability the lamps positions of $[x,y]$ and $[z,w]$ are disjoint. This, of course, means that these elements commute. This shows that the metabelian law does not satisfy positivity for infinite groups.

All this was using the natural generators of $G$. To get a no gap result we modify the generators as follows. Denote the generators of $\F_2$ by $a$ and $b$. Then the walker is allowed to act on the $a$ lamp in its current position, but the $b$ lamp is acted upon in a different position (shifted with respect to its current position by a fixed, large offset). See Figure \ref{fig:twoloops}. This modification means that even when the loops performed by the walkers intersect, with high probability you will only have lamps of one type in the intersection (at each site, either both acted with an $a$ or both acted with a $b$). This modification means that by changing the generators (their number remains~$160$) one may make $\Pr([[x,y],[z,w]]=1\on G)$ arbitrarily close to 1. This shows that the metabelian law does not even satisfy a gap. Full details of the proof can be found in \Sref{sec:commut}.

\begin{figure}
\begin{centering}
\includegraphics{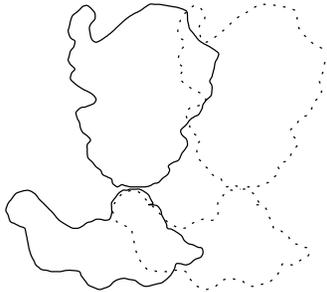}
\end{centering}
\caption{The solid lines are where $a$ was deposited, and the dotted lines are where $b$ was deposited. The proof works because the top solid loop does not intersect the bottom dotted loop, and vice versa.\label{fig:twoloops}}
\end{figure}

It is interesting to note how residual finiteness enters into the picture. Recall that the metabelian law \emph{does} satisfy a gap for finite groups \cite{deli}. This means (see \Sref{sec:res-fin}) that it satisfies a gap for residually finite groups. Thus we get a new manifestation of the fact that a lamplighter with non-commutative lamps is not residually finite (this fact is far from new, see \cite{lampresid}). 

By varying the base group one may get additional examples of laws with no gap. In \Sref{sec:self-comm-power} we take the base group to be an infinite Burnside group to get that the law $[x^k,y^k]$ (with some restrictions on $k$) does not satisfy a gap. In \Sref{sec:comm-power-balanced} we take as the base group a certain generalization of the infinite dihedral group in order to handle laws like $[x^k,[y,z]]$ (exact definition of what does ``like'' means will be found there).

\subsection{Power laws}

We present two results about power laws. The first is about the law $x^2$. We have already mentioned the result of Mann that if $\Pr(x^2=1)\ge\eps$ on a finite group $G$ then $G$ is ($\eps$-bounded)-by-abelian-by-($\eps$-bounded). This example prompted Leemann and de la Salle \cite{LdlS} to conjecture that if an infinite group $G$ satisfies $x^2=1$ with positive probability, it is virtually abelian. We prove this conjecture in \Tref{thm:x2-pos}. 

The infinite dihedral group satisfies the law $x^2=1$ with probability $\nicefrac12$ but does not satisfy this law virtually. In \Pref{non-positivity for power} we show a generalization of this fact to any power. The example is the semi-direct product $\Z^{m-1}\rtimes \Z/m\Z$ where the action of $\Z/m\Z$ is given by powers of the companion matrix of $1+\dotsb+x^m$.

Moving from positivity results to gaps, we present one result on this topic. We show that the law $x^3=1$ satisfies a gap when $\limsup$ is replaced by $\liminf$ in (\ref{eq:defPM}) (\Tref{thm:x3-gap}). The case of powers bigger than 3 remains open.

\subsection{Additional results}

For the case of \emph{residually finite} groups, we show in \Sref{sec:res-fin} that (under certain conditions), the answer to both positivity and gap questions is the same as the finite case. 

In \Sref{sec:derived} we show that for laws of the form $w'=[w(x_1,\dots,x_n),x_{n+1}]$, a positive answer for $w$ to one of these questions implies a positive answer for $w'$ to the same question, generalizing the case of the $k$-step nilpotent law.

We focused so far on probabilities with respect to random walk, but it seems quite natural to also consider probabilities taken uniformly on balls in the Cayley graph of $G$. We believe that these measures are less natural, and we demonstrate that with one result. In \Sref{sec:uniform-measures-balls} we show that many laws cannot satisfy a gap result with respect to uniform elements of balls. The example is simply the product with a finite group which does not satisfy the law.

\subsection{A lemma on random walks}
We end this introduction mentioning one lemma for readers interested in random walks on groups. A well-known open problem in this area is whether random walk on any transient group spends on more than $r^2$ time in a ball of radius $r$, with the best result known to us being that of Lyons, Peres, Sun and Zheng \cite{LPSZ20}. We use the techniques of \cite{LPSZ20} for the \emph{path of a word}, namely for the elements of a group visited by $w(R^{(1)},\dotsc,R^{(d)})$, where $w$ is some word in $d$ letters and $R^{(1)},\dotsc,R^{(d)}$ are $k$ independent random walks. See the exact statement in \Sref{sec:occupation-measure}.

\subsection{Acknowledgements}
We would like to thank Mikael de la Salle for an important suggestion for the proof of \Tref{thm:x2-pos}. We thank Yehuda Shalom for some useful references, and Denis Osin and Andreas Thom for their comments and remarks.

During this research G.A.\ and G.B.\ were supported by Israeli Science Foundation grant \#957/20. G.B.\ was also supported by the Bar-Ilan President's Doctoral Fellowships of Excellence. G.K.\ was supported by the Israel Science Foundation and by the Jesselson Foundation. M.G.\ was supported by the DFG -- Project-ID 427320536 -- SFB 1442, and under Germany's Excellence Strategy EXC 2044 390685587, Mathematics Münster: Dynamics--Geometry--Structure.


\section{Preliminaries}\label{sec:prelim}

\subsection{The probability of satisfying a law}

By a \textbf{word} we mean a nontrivial element of the free group $\F_d$. Let $x_1,\dots,x_d$ denote the standard generators of~$\F_d$. Any word $w\in\F_d$ defines the \textbf{word map} on any group $G$, which we also denote $w\colon G^d\to G$, defined as follows: for $g_1,\dots,g_d$, the value of $w(g_1,\dots,g_d)$ is given by substituting $g_1,\dots,g_d$ for $x_1,\dots,x_d$. We interpret any word as a law on groups by considering the equation $w(x_1, \dots x_d)=1$ (where $1$ is the identity element of~$G$).

For a measure $\mu$ on a group $G$, write $\mu$ also for the measure $\mu\times\cdots\times\mu$ induced on $G^d$. We use the ideas presented in \cite{MTVV}:

\begin{defn}
Let $G$ be a group, and let $w\in\F_d$ be a word.
\begin{enumerate}
  \item Let $\mu$ be a probability measure on $G$. We define the \textbf{probability that $G$ satisfies $w$ with respect to $\mu$} as
      $$\Pr_{\mu}\left(w=1\on G\right)=\mu\left(\set{(g_1,\dots,g_d)\in G^d\suchthat w(g_1,\dots,g_d)=1}\right).$$
  \item Let $M=\set{\mu_n}$ be a sequence of probability measures on $G$. We define the \textbf{probability that $G$ satisfies $w$ with respect to $M$} as
      $$\Pr_M\left(w=1\on G\right)=\limsup_{n\to\infty}\,\Pr_{\mu_n}\left(w=1\on G\right).$$
\end{enumerate}
\end{defn}

Let $G$ be a finitely generated group. We present here several choices for sequences of probability measures on $G$, which we will consider throughout the paper.

Our main interest will be \textbf{random walk measures}. These are sequences of the form $M=\set{\mu_n=\mu^{*n}}$, where $\mu$ is a finitely supported generating probability measure on a group $G$ with $\mu(1)>0$. In other words, $\mu_n$ is the probability measure induced by a random walk with step distribution $\mu$ after $n$ steps. It will occasionally be important to consider random walks also as a stochastic process (i.e.\ a measure on paths in the group) and then we will always use a right random walk, i.e.\ $R_{n+1}=R_nx_n$ where the $x$ are i.i.d.\ with law $\mu$.

Another type of sequences of interest is given in \cite{toin} as follows:
\begin{defn}
  We say that a sequence of probability measures $M=\set{\mu_n}$ on $G$ \textbf{measures index uniformly}, if $\mu_n(xH)\to 1/[G:H]$ uniformly over all $x\in G$ and all subgroups $H\leq G$. Similarly, for a sequence of random variables taking value in $G$ we say that they measure index uniformly if their laws measure index uniformly.
\end{defn}
For example, (lazy) random walk measures satisfy this property \cite[Theorems 1.11 and 1.12]{toin}. 

An additional way to define a sequence of measures on a group is by taking the uniform measures on balls centered at the identity. Formally, we take a finite symmetric generating set $S$ of $G$ which contains the identity, and let $\mu_n$ be the uniform measure on $S^n$ (i.e.\ the ball of radius $n$ around the identity in the Cayley graph). While such sequences of measures usually do not measure index uniformly (for instance, see \cite[Remark after Theorem 1.12]{toin}), they are of interest on their own. We discuss these measures in \Sref{sec:uniform-measures-balls}.

\subsection{Gaps and positivity}

In this paper, we deal with two properties of laws. The first one concerns the values of the probabilities that the law is satisfied:

\begin{defn}
Fix a family of sequences of measures $\mathcal{M}$, and let $w\in\F_d$ be a word.
\begin{enumerate}
  \item We say that $w$ satisfies a \textbf{gap} with respect to $\mathcal{M}$, if there is a function $\phi\colon\N\to(0,1)$ such that for any $r$-generated group $G$ and sequences of measures $M\in\mathcal{M}$ on $G$, if $\Pr_M\left(w=1\on G\right)>1-\phi(r)$, then $G$ satisfies~$w$. In this case we also say that $w$ satisfies a \textbf{$\phi$-gap}.
  \item If $w$ satisfies a gap with respect to a constant function $\phi(r)=\eps$, we say that $w$ satisfies a \textbf{strong gap} (or an \textbf{$\eps$-gap}) with respect to $\mathcal{M}$. In this case, the probability that $w$ is satisfied cannot lie in the interval $(1-\eps,1)$.
\end{enumerate}
\end{defn}

We note that this definition might depend on the family of probability measures one considers. Unless explicitly mentioned, we will assume that $\mathcal{M}$ is the family of random walk measures.

Summarizing the results mentioned in the introduction in this language, for finite groups it is known the the laws $[x,y]=1$, $x^2=1$, $x^3=1$, $x^4=1$, $[x,y,y]=1$ and $[[x,y],[z,w]]=1$ satisfy a strong gap, while $x^n=1$ satisfies a weak gap. For finitely generated groups, it is known that $[x,y]=1$ and $[x_1,\dots,x_k]=1$ satisfy a strong gap.\medskip

We move to discuss the second property of laws we will study. We say that a group \textbf{virtually} satisfies a law $w$, if its has a finite index subgroup which satisfies~$w$. If a group virtually satisfies a law, it has a positive probability to satisfy it. This leads to the following:
\begin{defn}\label{def:pos}
Fix a family of sequences of measures $\mathcal{M}$, and let $w\in\F_d$ be a word.
\begin{enumerate}
  \item We say that $w$ satisfies a \textbf{positivity property} with respect to $\mathcal{M}$, if there is a function $\psi\colon(0,1)\times\N\to\N$ such that the following holds: for any $\eps\in(0,1)$, any $r$-generated group $G$ and any $M\in\mathcal{M}$ on $G$, if $\Pr_M\left(w=1\on G\right)\ge\eps$, then $G$ has a subgroup of index $\leq\psi(\eps,r)$ which satisfies~$w$.
  \item We say that $w$ satisfies a \textbf{strong positivity property} with respect to $\mathcal{M}$, if it has a positivity property with respect to a function $\psi(\eps,r)$ which depends only on $\eps$.
\end{enumerate}
\end{defn}

For example, Tointon \cite[Theorem 1.9]{toin} showed that the commutativity law satisfies a strong positivity property.

\begin{rem*}
  If $G$ virtually satisfies a law, it must completely satisfy some other law. In particular, if $G$ contains a non-abelian free subgroup, it cannot virtually satisfy any law.

  Indeed, let $\Lambda\leq G$ be a finite index subgroup that satisfies a law $w$. There exists a normal subgroup $N\vartriangleleft G$ with $N\leq\Lambda$ and $[G:N]=k<\infty$. Therefore $g^k\in N\leq\Lambda$ for all $g\in G$, so $G$ satisfies the law $w(x_1^k,\dots,x_d^k)=1$.
\end{rem*}

\begin{rem}\label{subgroup}
  If $G$ virtually satisfies a law, then each subgroup $H$ of $G$ also virtually satisfies this law. Indeed, let $\Lambda$ be a finite index subgroup of $G$ that satisfies this law, then $\Lambda \cap H$ is a finite index subgroup in $H$ that also satisfies this law.
\end{rem}

\subsection{Wreath products}\label{subsec:wreath-prods}

A primary construction that we will use in this paper is the wreath product of two groups. We recall here the definition and a basic property regarding laws of wreath products.

Let $G,H$ be two groups. The \textbf{wreath product} (or \textbf{lamplighter group}) $H\wr G$ is the semidirect product $\left(\bigoplus_{g\in G} H\right)\rtimes G$, where $G$ acts on $\bigoplus_{g\in G} H$ by permuting the indices, i.e.
$$g\cdot (h_a)_{a\in G}=(h_{g^{-1}a})_{a\in G}.$$
The group $G$ is called the \textbf{base group} of $H\wr G$, and $H$ is called the \textbf{lamp group} of $H\wr G$. Each element of $H\wr G$ can be written in the form $(L,g)$, where $g\in G$ is the \textbf{lamplighter position}, and $L\in\bigoplus_{g\in G} H$ is the \textbf{lamp configuration}. For an element $(L,g)$ we define its \textbf{support} $\supp (L,g)=\{a\in G \suchthat L(a)\neq 1\}$.

We make the following observation:

\begin{prop}
  If $H$ does not satisfy a law $w\in\F_d$, then for any infinite group~$G$ the lamplighter group $H\wr G$ does not satisfy the law $w$ virtually.
\end{prop}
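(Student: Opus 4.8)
The plan is to reduce the claim about $H \wr G$ to the assumption that $H$ does not satisfy $w$, by exploiting the infinitude of $G$ together with \Rref{subgroup}. Suppose for contradiction that $H \wr G$ virtually satisfies $w$. By \Rref{subgroup}, every subgroup of $H \wr G$ virtually satisfies $w$ as well, so it suffices to locate inside $H \wr G$ a subgroup that \emph{contains a copy of $H^{\oplus \infty}$ in a controlled way}, and then derive that $H$ itself satisfies $w$.

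First I would pin down the algebra. Inside the base $\bigoplus_{g \in G} H$ we have, for each $g \in G$, a canonical copy $H_g$ of $H$ (configurations supported on $\{g\}$), and distinct $H_g$'s commute elementwise. Let $\Lambda \le H \wr G$ be a finite-index subgroup satisfying $w$; since $G$ is infinite, $B := \bigoplus_{g} H$ is infinite, and $\Lambda \cap B$ has finite index in $B$. The key point is that a finite-index subgroup of an (infinite) direct sum $\bigoplus_{g \in G} H$ must contain $H_g$ entirely for infinitely many $g$ — indeed, for all but finitely many $g$, since if $\Lambda \cap B$ failed to contain $H_g$ for infinitely many $g$, the index would be infinite. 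More carefully: $[G:\Lambda\cap B]=[B:\Lambda \cap B] \le [H\wr G : \Lambda] < \infty$, and if $g_1, \dots, g_k$ are indices with $H_{g_i} \not\subseteq \Lambda \cap B$, then the cosets of $\Lambda \cap B$ distinguish at least two elements in each $H_{g_i}$ independently, giving index $\ge 2^k$; hence $k$ is bounded and in particular there are (infinitely many, but we only need) $d$ distinct indices $g_1, \dots, g_d$ with $H_{g_i} \subseteq \Lambda$.

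Now I would finish as follows. Pick $d$ distinct such indices $g_1, \dots, g_d \in G$. The subgroup $\langle H_{g_1}, \dots, H_{g_d}\rangle \le \Lambda$ is the internal direct product $H_{g_1} \times \dots \times H_{g_d}$ (distinct coordinates in the base commute and intersect trivially). Given arbitrary $h_1, \dots, h_d \in H$, let $\hat h_i \in H_{g_i}$ be the corresponding element. Since $\Lambda$ satisfies $w$, we have $w(\hat h_1, \dots, \hat h_d) = 1$ in $H \wr G$. But $w(\hat h_1, \dots, \hat h_d)$ lies in the direct product $\prod_i H_{g_i}$, and because the $\hat h_i$ live in pairwise-commuting, independent coordinates, the $g_i$-component of $w(\hat h_1, \dots, \hat h_d)$ is exactly $w$ evaluated with $x_i \mapsto h_i$ and all other generators sent to $1$ — wait, that is not quite right since all the generators $x_1,\dots,x_d$ of $w$ are used. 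The clean statement: evaluating $w$ on $(\hat h_1, \dots, \hat h_d)$ and projecting to the $g_i$ coordinate, every $\hat h_j$ with $j \ne i$ projects to $1$ there, so the projection is $w(1,\dots,1,h_i,1,\dots,1)$ — which is not what we want either. The correct move is instead to put \emph{all} of $h_1, \dots, h_d$ into a \emph{single} coordinate $H_{g_1}$: then $w(h_1, \dots, h_d)$ computed inside $H_{g_1} \cong H$ must equal $1$ because $H_{g_1} \subseteq \Lambda$ and $\Lambda$ satisfies $w$; since $h_1, \dots, h_d \in H$ were arbitrary, $H$ satisfies $w$, a contradiction. The main obstacle, and the step to write carefully, is the finite-index-subgroup-of-an-infinite-direct-sum lemma: that a finite-index subgroup of $\bigoplus_{g \in G} H$ with $G$ infinite must contain at least one (in fact all but finitely many) of the factors $H_g$ intact. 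Everything else is bookkeeping with the semidirect product structure.
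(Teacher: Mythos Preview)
Your argument has a genuine gap: the ``finite-index-subgroup-of-an-infinite-direct-sum lemma'' is false. Consider $H=\mathbb{Z}/2\mathbb{Z}$ and $G=\mathbb{Z}$, and let $K\le\bigoplus_{n\in\mathbb{Z}} H$ be the subgroup of elements with an even number of nonzero coordinates. Then $K$ has index $2$, yet $H_n\not\subseteq K$ for \emph{every} $n$. The error in your ``index $\ge 2^k$'' step is the word ``independently'': the coset of $\Lambda\cap B$ may fail to distinguish the $H_{g_i}$ independently, as the even-weight example shows (all the coordinates contribute to a single parity). For a counterexample closer to the hypotheses of the proposition, take any $H$ with $H^{\mathrm{ab}}\neq 1$ (e.g.\ $H=\F_2$, which satisfies no law), pick a surjection $\epsilon\colon H\to\mathbb{Z}/2\mathbb{Z}$, and let $K$ be the kernel of $\bigoplus_g H\to\mathbb{Z}/2\mathbb{Z}$, $(h_g)\mapsto\sum_g\epsilon(h_g)$. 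Again $[B:K]=2$ and no $H_g$ lies in $K$.

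The paper's proof sidesteps this by asking for something weaker than $H_{g_0}\subseteq\Lambda$: it only needs that for a suitably chosen $g_0$ and \emph{each} $h\in H$ there is \emph{some} $z\in\Lambda\cap B$ with $z(g_0)=h$ (other coordinates of $z$ are allowed to be nontrivial). This is achieved by taking coset representatives $t_1,\dots,t_n$ of $B/(\Lambda\cap B)$, letting $A=\bigcup_i\supp(t_i)$ (a finite set), and choosing $g_0\in G\setminus A$; then $z_i:=t_j^{-1}(h_i\delta_{g_0})\in\Lambda\cap B$ still has $z_i(g_0)=h_i$. Since evaluation at $g_0$ is a homomorphism on $B$, one gets $w(z_1,\dots,z_d)(g_0)=w(h_1,\dots,h_d)\ne 1$. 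Your final paragraph's ``put all $h_i$ in a single $H_{g_1}$'' idea is exactly right \emph{once} you have a coordinate that behaves well, but the way to guarantee such a coordinate is the coset-representative trick above, not the containment $H_{g_1}\subseteq\Lambda$.
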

\begin{proof}
  Let us take $\bigoplus_{g\in G} H$, which we identify with the subgroup of all elements with trivial lamplighter position. Assume by contradiction that there exists a finite index subgroup $\Lambda<H\wr G$ that satisfies the law $w=w(x_1, \dots, x_d)$. Then note that $\Lambda \cap \bigoplus_{g\in G} H$ is a finite index subgroup in $\bigoplus_{g\in G} H$. Let us take representatives $t_1, \dots, t_n$ of the left cosets of $\bigoplus_{g\in G} H /(\Lambda \cap \bigoplus_{g\in G}H).$ We take the set $A= \cup_{i=1}^n \supp(t_i) \subset G$ and consider $g_0 \in G\setminus A$. Let us consider $h_1, \dots h_d \in H$ such that $w(h_1, \dots, h_d) \neq 1$, and take $y_1, \dots, y_d \in \bigoplus_{g\in G} H$ such that $y_i(g_0)=h_i$ for $i=1, \dots, d$. Then for any $i=1, \dots, d$ there exists $j\in \{1, \dots, n\}$ such that $z_i:=t_j^{-1}y_i \in \Lambda \cap \bigoplus_{g\in G} H$, so $z_i(g_0)=h_i$. Therefore, since $w(z_1, \dots, z_d)(g_0)=w(h_1, \dots, h_d)$, we get that $w(z_1, \dots, z_d)\neq 1$. A contradiction.
\end{proof}

\subsection{Notation} A ``word'' is a sequence $w$ of letters in some alphabet. We denote by $|w|$ the number of letters in $w$. Let $G$ and $H$ be groups and let $g\in G$ and $h\in H$. Then the notation $h\delta_g$ refers to the element of $\bigoplus_{g\in G}H$ which takes value $h$ at $g$ and takes the unit of $H$ at any $g'\ne g$.

We use the following standard group notation. We use $C_G(x)$ for the centralizer of $x$ in $G$, namely, $\{y\in G:[x,y]=1\}$. We use $C_G(H)=\bigcap_{h\in H}C_G(h)$. We use $Z(G)$ for the center of $G$, namely, $\{g\in G:C_G(g)=G\}$.

We use $c$ and $C$ to denote constants that depend only on the law in question, and, for infinite groups, on the walk. Their value may change from line to line, and even within the same line. We use $c$ for constants which are ``sufficiently small'' and $C$ for constants which are ``sufficiently large''. When we want to specify specific constants we will  use $c'$, $c''$, etc\@. These change value only from lemma to lemma.

\section{Residually finite groups}\label{sec:res-fin}


In this section we study the questions of gap and positivity for laws on residually finite groups. We show that, when working with sequences of probability measures that measure index uniformly, these questions are equivalent to the same questions on finite groups.

Let $G$ be a group, and let $N\vartriangleleft G$ be a normal subgroup. Any probability measure~$\mu$ on $G$ induces a probability measure $\overline{\mu}$ on $G/N$ by taking the pushforward of $\mu$ with respect to the natural projection $G\to G/N$. The following lemmas follow directly from the definition of measuring index uniformly and the pushforward measure:

\begin{lem}
Let $G$ be a finite group, and let $M=\set{\mu_n}$ be a sequence of probability measures on $G$ that measures index uniformly. Let $U_G$ denote the uniform probability measure on $G$. Then $\mu_n\to U_G$.
\end{lem}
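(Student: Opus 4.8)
The plan is to show that a sequence of probability measures which measures index uniformly on a finite group must converge to the uniform distribution, by testing against singletons viewed as cosets of the trivial subgroup. First I would recall that in a finite group $G$, for any element $x\in G$ the singleton $\set{x}$ is exactly the coset $x\set{1}$ of the trivial subgroup $H=\set{1}$, which has index $[G:H]=|G|$. Applying the hypothesis that $M=\set{\mu_n}$ measures index uniformly with this particular choice of $H$, we get $\mu_n(\set{x})=\mu_n(x H)\to 1/[G:H]=1/|G|=U_G(\set{x})$, and moreover this convergence is uniform over all $x\in G$.

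Next I would upgrade this pointwise (in $x$) convergence to convergence of the measures. Since $G$ is finite, for any subset $A\sub G$ we have $\mu_n(A)=\sum_{x\in A}\mu_n(\set{x})$ and $U_G(A)=\sum_{x\in A}1/|G|$, so
\[
\left|\mu_n(A)-U_G(A)\right|\le\sum_{x\in A}\left|\mu_n(\set{x})-\tfrac{1}{|G|}\right|\le|G|\cdot\max_{x\in G}\left|\mu_n(\set{x})-\tfrac{1}{|G|}\right|\xrightarrow[n\to\infty]{}0.
\]
Since $G$ is finite this is already total variation convergence (and any reasonable notion of convergence of measures on a finite set coincides), so $\mu_n\to U_G$, which is the claim.

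There is essentially no obstacle here: the only thing to be slightly careful about is that the definition of ``measures index uniformly'' quantifies over \emph{all} subgroups $H\le G$ and \emph{all} cosets $xH$, so we are entitled to plug in the trivial subgroup; and that on a finite group convergence of the measure of every singleton forces convergence of the measure of every set, which is immediate from finiteness. One could equally phrase the argument purely in terms of the hypothesis applied to $H=\set1$ without summing, noting that a probability measure on a finite set is determined by its values on singletons and that $\max_{x}|\mu_n(\set x)-1/|G||\to0$ is itself the statement that $\mu_n\to U_G$ in any of the standard senses.
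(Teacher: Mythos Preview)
Your proof is correct and is exactly the direct argument the paper has in mind: the paper states that the lemma ``follows directly from the definition of measuring index uniformly'' without writing out a proof, and applying the definition with $H=\{1\}$ as you do is precisely that direct argument. There is nothing to add.
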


\begin{lem}\label{lem:prob-quo}
Let $G$ be a group, and let $N\vartriangleleft G$ be a normal subgroup. Let $M=\set{\mu_n}$ be a sequence of probability measures on $G$, and let $\overline{M}=\set{\overline{\mu_n}}$. Let $w$ be a word. Then
$$\Pr_M\left(w=1\;\mathrm{on}\;G\right)\leq\Pr_{\overline{M}}\left(w=1\;\mathrm{on}\;G/N\right).$$
\end{lem}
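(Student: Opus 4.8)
The plan is to prove the inequality $\Pr_M(w=1\on G)\leq\Pr_{\overline M}(w=1\on G/N)$ directly from the definitions, using that the preimage of the solution set in the quotient contains the solution set upstairs together with the fact that the pushforward of a product measure is the product of the pushforwards.

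First I would fix $n$ and work with a single measure $\mu_n$, writing $\pi\colon G\to G/N$ for the projection and $\pi^{\times d}\colon G^d\to (G/N)^d$ for its $d$-fold power. The key set-theoretic observation is that if $(g_1,\dots,g_d)\in G^d$ satisfies $w(g_1,\dots,g_d)=1$ in $G$, then $w(\pi(g_1),\dots,\pi(g_d))=\pi(w(g_1,\dots,g_d))=1$ in $G/N$, since $\pi$ is a homomorphism and word maps commute with homomorphisms. Hence
\[
\set{(g_1,\dots,g_d)\in G^d\suchthat w(g_1,\dots,g_d)=1}\sub(\pi^{\times d})^{-1}\left(\set{(\bar g_1,\dots,\bar g_d)\suchthat w(\bar g_1,\dots,\bar g_d)=1}\right).
\]
(The inclusion may be strict: an element of $G^d$ whose image solves $w$ in the quotient need not solve it in $G$.) Applying $\mu_n$ (viewed as the product measure on $G^d$) to both sides and using monotonicity together with the change-of-variables identity $\mu_n\bigl((\pi^{\times d})^{-1}(A)\bigr)=(\pi^{\times d}_*\mu_n)(A)=\overline{\mu_n}(A)$, where the last equality is because the pushforward of a product measure under a product map is the product of the pushforwards, we obtain
\[
\Pr_{\mu_n}\left(w=1\on G\right)\leq\Pr_{\overline{\mu_n}}\left(w=1\on G/N\right).
\]

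Finally I would take $\limsup_{n\to\infty}$ on both sides; since the inequality holds termwise and $\limsup$ is monotone, the claimed inequality $\Pr_M(w=1\on G)\leq\Pr_{\overline M}(w=1\on G/N)$ follows. There is no real obstacle here — the only point requiring a line of care is the identification $\overline{\mu_n}=\pi^{\times d}_*\mu_n$ on $(G/N)^d$, i.e.\ that taking the product measure and then pushing forward agrees with pushing forward and then taking the product measure; this is immediate on product (cylinder) sets and extends by the standard uniqueness argument, and in the finitely supported case of interest it is a one-line finite computation.
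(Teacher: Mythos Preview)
Your proof is correct and follows exactly the same idea as the paper's proof, which is essentially the one-line observation that $w(g_1,\dots,g_d)=1$ in $G$ implies $w(g_1N,\dots,g_dN)=1$ in $G/N$, so the probability cannot decrease. You have simply spelled out the pushforward and $\limsup$ steps in more detail than the paper does.
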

\begin{proof}
For $g_1,\dots,g_d\in G$, if $w(g_1,\dots,g_d)=1$ in $G$, then $w(g_1N,\dots,g_dN)=1$ in~$G/N$, so the probability of satisfying $w$ cannot decrease.
\end{proof}

We can now combine the above lemmas to study the general case of residually finite groups. Recall that $N\vartriangleleft_f G$ if $N$ is a normal subgroup of $G$ of finite index.

\begin{prop}\label{prop:res-fin-inf}
    Let $G$ be a finitely generated residually finite group, and let $M$ be a sequence of probability measures on $G$ that measures index uniformly. Then
    $$\Pr_M\left(w=1\;\mathrm{on}\;G\right)=\inf\set{\Pr\left(w=1\;\mathrm{on}\;G/N\right)\suchthat N\vartriangleleft_f G}.$$
\end{prop}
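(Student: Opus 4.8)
The plan is to prove the two inequalities separately. For the inequality
$$\Pr_M\left(w=1\on G\right)\le\inf\set{\Pr\left(w=1\on G/N\right)\suchthat N\vartriangleleft_f G},$$
I would fix an arbitrary $N\vartriangleleft_f G$ and apply \Lref{lem:prob-quo} to get $\Pr_M(w=1\on G)\le\Pr_{\overline M}(w=1\on G/N)$. Since $G/N$ is finite and $M$ measures index uniformly, the pushforward sequence $\overline M=\set{\overline{\mu_n}}$ also measures index uniformly on $G/N$ (this is immediate: cosets of subgroups of $G/N$ pull back to cosets of subgroups of $G$), so by the first unnamed lemma of this section $\overline{\mu_n}\to U_{G/N}$. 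Hence $\Pr_{\overline M}(w=1\on G/N)=\Pr(w=1\on G/N)$ with respect to the uniform measure, because the event is a fixed finite union of points and convergence of measures on a finite set gives convergence of the probability of any event (no $\limsup$ subtlety here). Taking the infimum over all $N\vartriangleleft_f G$ finishes this direction.

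For the reverse inequality
$$\Pr_M\left(w=1\on G\right)\ge\inf\set{\Pr\left(w=1\on G/N\right)\suchthat N\vartriangleleft_f G},$$
the idea is that the random walk, run for finitely many steps, only ever sees a finite ball in $G$, and on a finite ball residual finiteness lets us separate points using a single finite quotient. Concretely, fix $\eps>0$. The $d$-tuples $(g_1,\dots,g_d)$ with $w(g_1,\dots,g_d)\ne 1$ that carry most of the $\mu_n$-mass can, for each fixed $n$, be taken inside a finite set $K_n\sub G^d$ (using that $\mu_n$ is a probability measure, so mass escapes to infinity only in the limit — but we must be careful, since we want a bound uniform in $n$). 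The cleaner route: for each fixed $n$, $\mu_n$ is supported on the ball of radius $Cn$ (as $\mu$ is finitely supported), and on this finite ball $B$ there is a single $N_B\vartriangleleft_f G$ with $B\cap N_B=\set{1}$, so that for all tuples in $B^d$, $w(g_1,\dots,g_d)=1$ in $G$ iff $w(g_1N_B,\dots,g_dN_B)=1$ in $G/N_B$. This gives $\Pr_{\mu_n}(w=1\on G)=\Pr_{\overline{\mu_n}}(w=1\on G/N_B)$ for that particular $n$, but $N_B$ depends on $n$, which is exactly the obstacle.

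To get around the $n$-dependence I would instead argue at the level of the $\limsup$. Let $\ell=\Pr_M(w=1\on G)=\limsup_n\Pr_{\mu_n}(w=1\on G)$. Suppose for contradiction that some $N\vartriangleleft_f G$ has $\Pr(w=1\on G/N)<\ell-\eps$. Pick a subsequence $n_k$ along which $\Pr_{\mu_{n_k}}(w=1\on G)\to\ell$. For each $k$, \Lref{lem:prob-quo} already gives $\Pr_{\mu_{n_k}}(w=1\on G)\le\Pr_{\overline{\mu_{n_k}}}(w=1\on G/N)$; but $G/N$ is finite and the pushforward measures measure index uniformly, so $\overline{\mu_{n_k}}\to U_{G/N}$, hence $\Pr_{\overline{\mu_{n_k}}}(w=1\on G/N)\to\Pr(w=1\on G/N)$. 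Combining, $\ell\le\Pr(w=1\on G/N)$, contradicting the assumption. Thus every $N\vartriangleleft_f G$ satisfies $\Pr(w=1\on G/N)\ge\ell$ — wait, this reproves only the direction already established; the genuine content of the reverse inequality needs the residual-finiteness input. The correct reverse argument is: fixing $\eps$, choose by residual finiteness an $N_0\vartriangleleft_f G$ whose corresponding quotient \emph{nearly} realizes the infimum and is fine enough that the ball of the relevant radius injects — but since the radius grows with $n$ this still fails. The resolution used in the literature, and what I would carry out, is to note that because $\mu_n\to$ no limit is needed here: one shows directly that $\Pr_{\mu_n}(w=1\on G)\ge \Pr_{\overline{\mu_n}}(w=1\on G/N)-\mu_n^{\otimes d}(\text{tuples leaving a huge finite set})$ and that the error term, while not controllable for fixed $N$, disappears once we first fix the target quotient $G/N$ realizing the inf within $\eps$, then choose $N'\le N$ with $N'\vartriangleleft_f G$ such that the probability under $G/N'$ is still within $\eps$ of the inf, and use that $w=1$ in $G/N'$ forces $w=1$ in $G$ \emph{on the support} once... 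Honestly, the main obstacle is precisely this uniformity-in-$n$ issue, and I expect the paper resolves it by a compactness/diagonal argument over the inverse system $\set{G/N}$ together with the fact that on any fixed finite quotient the measures converge to uniform; the residual finiteness guarantees the inverse limit separates points so that no mass is "lost" relative to \emph{all} quotients simultaneously, yielding equality rather than just $\le$.
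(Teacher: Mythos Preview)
Your treatment of the inequality $\Pr_M(w=1\on G)\le\inf_N\Pr(w=1\on G/N)$ is correct and matches the paper's.

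For the reverse inequality your proposal does not arrive at a proof: you try a finite-support approach, abandon it because the separating quotient $N_B$ depends on $n$, then try a contradiction argument which (as you correctly notice) only reproves the easy direction, and finally speculate about a compactness argument over the inverse system. The missing idea is simple. Since $G$ is finitely generated and residually finite, one can choose a \emph{single nested chain} $N_1\supseteq N_2\supseteq\cdots$ of finite-index normal subgroups with $\bigcap_k N_k=\{1\}$. Then, as events in $G^d$,
\[
\{w=1\on G\}=\bigcap_{k\ge 1}\{(g_1,\dots,g_d):w(g_1,\dots,g_d)\in N_k\},
\]
a decreasing intersection. Continuity of probability then gives
\[
\Pr_M(w=1\on G)=\lim_{k\to\infty}\Pr(w=1\on G/N_k)\ge\inf\set{\Pr(w=1\on G/N)\suchthat N\vartriangleleft_f G}.
\]
This is exactly the ``diagonal over the inverse system'' you were reaching for at the end: rather than choosing a quotient adapted to each $n$ separately, one fixes a cofinal chain once and for all and lets $k\to\infty$. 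Your observation that on the (finite) support of $\mu_n^{\otimes d}$ the events $\{w\in N_k\}$ and $\{w=1\}$ eventually coincide is correct and is essentially what underlies the continuity step for each fixed $n$; you had the right ingredient but not the right organization.
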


\begin{proof}
    The left hand side is not bigger than the right hand side by \Lref{lem:prob-quo}. For the opposite direction, as $G$ is finitely generated and residually finite, one can find a sequence of finite indexed normal subgroups $N_k\vartriangleleft_f G$ such that $N_{k+1}\sub N_k$ and $\bigcap_{k=1}^{\infty}N_k=1$. Therefore
    $$\set{w=1\on G}=\bigcap_{k=1}^{\infty}\set{w=1\on G/N_k},$$
    so by continuity of probability we have
    $$\Pr_M\left(w=1\on G\right)=\lim_{k\to\infty}\Pr\left(w=1\on G/N_k\right)\ge\inf\set{\Pr\left(w=1\on G/N\right)\suchthat N\vartriangleleft_f G}.$$
\end{proof}

\begin{thm}
  Let $w\in\F_d$ be a word, and let $\mathcal{M}$ denote a family of sequences of probability measures that measure index uniformly.
  \begin{enumerate}
    \item If $w$ satisfies a gap for all finite groups, then $w$ satisfies a gap for all residually finite groups with respect to $\mathcal{M}$, with the same gap function $\eps(r)$.
        \item If $w$ satisfies a positivity result for all finite groups, then $w$ satisfies a positivity result for all residually finite groups with respect to $\mathcal{M}$.
  \end{enumerate}
\end{thm}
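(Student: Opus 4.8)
The plan is to reduce both parts to the corresponding finite-group statements via Proposition~\ref{prop:res-fin-inf}. Fix an $r$-generated residually finite group $G$ and a sequence $M\in\mathcal M$ on $G$ (so $M$ measures index uniformly); I will repeatedly use that every quotient $G/N$ is again $r$-generated, and is finite whenever $[G:N]<\infty$.

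For part (1) I would argue as follows. Assume $w$ satisfies a $\phi$-gap for finite groups and that $\Pr_M(w=1\on G)>1-\phi(r)$. By Proposition~\ref{prop:res-fin-inf} this probability equals the infimum of $\Pr(w=1\on G/N)$ over $N\vartriangleleft_f G$, so $\Pr(w=1\on G/N)>1-\phi(r)$ for every such $N$, and the finite-group gap forces each ($r$-generated, finite) quotient $G/N$ to satisfy $w$. Then for any $g_1,\dots,g_d\in G$ the element $w(g_1,\dots,g_d)$ lies in every $N\vartriangleleft_f G$, hence in their intersection, which is trivial by residual finiteness; so $G$ satisfies $w$, and we obtain the gap for residually finite groups with the same function~$\phi$.

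For part (2) the same reduction gives, assuming a positivity function $\psi(\eps,r)$ for finite groups and $\Pr_M(w=1\on G)\ge\eps$, that $\Pr(w=1\on G/N)\ge\eps$ for every $N\vartriangleleft_f G$. I would pick a nested chain $N_1\supseteq N_2\supseteq\cdots$ of finite-index normal subgroups with $\bigcap_k N_k=1$, as in the proof of Proposition~\ref{prop:res-fin-inf}. The finite case then yields, for each $k$, a subgroup $\overline{H_k}\le G/N_k$ of index $\le\psi(\eps,r)$ satisfying $w$; pulling back along $G\to G/N_k$ gives $H_k\le G$ with $N_k\le H_k$, $[G:H_k]\le\psi(\eps,r)$, and $H_k/N_k$ satisfying $w$. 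Here I would invoke the classical fact that a finitely generated group has only finitely many subgroups of a given finite index, so by the pigeonhole principle some fixed subgroup $H$ equals $H_k$ for infinitely many $k$. For those $k$ we have $N_k\le H$ and $H/N_k$ satisfies $w$, so any value $w(g_1,\dots,g_d)$ with $g_1,\dots,g_d\in H$ lies in $N_k$ for infinitely many $k$, hence in $\bigcap_k N_k=1$. Thus $H$ satisfies $w$ and $[G:H]\le\psi(\eps,r)$, giving the positivity result for $G$ with the same function $\psi$.

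The measure-theoretic bookkeeping (pushforwards and monotonicity of the probability under quotients) is already packaged in Proposition~\ref{prop:res-fin-inf}, so the one genuinely external ingredient is the finiteness of the set of subgroups of bounded index in a finitely generated group. That is the step I expect to need the most care, and it is also what makes finite generation of $G$ essential in part (2): without it the pigeonhole step, and with it the passage from quotients satisfying $w$ to an actual finite-index subgroup of $G$ satisfying $w$, breaks down.
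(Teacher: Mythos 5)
Your proof is correct, and both parts take essentially the same route as the paper: reduce to finite quotients via Proposition~\ref{prop:res-fin-inf}, then exploit finite generation of $G$ to bound the number of subgroups of small index. Part (1) is identical. In part (2) you differ from the paper in one small respect: you fix a nested chain $N_1\supseteq N_2\supseteq\cdots$ of finite-index normal subgroups with trivial intersection and run a pigeonhole argument to find a single $H\le G$ with $[G:H]\le\psi(\eps,r)$ that works for infinitely many $N_k$, whence $w(H,\dots,H)\subseteq\bigcap_k N_k=1$. The paper instead takes $K=\bigcap_{H\in\mathcal{T}}H$ over \emph{all} subgroups of index at most $\psi(\eps,r)$ and observes that $w(K,\dots,K)\subseteq N$ for every $N\vartriangleleft_f G$, so $K$ satisfies $w$; this avoids choosing a chain but only bounds $[G:K]$ by a larger function of $\eps$ and $r$ (the index of an intersection of boundedly many bounded-index subgroups), rather than by $\psi(\eps,r)$ itself. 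Your variant thus preserves the exact positivity function; the paper's produces a canonical (and normal) subgroup. Both arguments are valid, and the difference is essentially cosmetic. One remark on a minor sloppiness: after pigeonholing, you know $w(g_1,\dots,g_d)\in N_k$ only for $k$ in an infinite subset $S$, so you should note that $\bigcap_{k\in S}N_k=\bigcap_k N_k$ because the chain is nested decreasing and $S$ is unbounded; this is easy but worth saying.
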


It follows that if $w$ satisfies a strong gap for all finite groups then it satisfies a strong gap for residually finite groups, because strong gap is simply a weak gap with a gap function $\eps(r)$ bounded below.
\begin{proof}
  Let $G$ be an $r$-generated residually finite group, and let $M=\set{\mu_n}$ be a sequence of probability measures on $G$ that measures index uniformly.

  \lazyenum
    \item This is a direct corollary of \Pref{prop:res-fin-inf}.


    \item Assume that $w$ satisfies a weak positivity property for all finite groups. That is, there is a function $\psi\colon(0,1)\times\N\to\N$ such that for any $\eps\in(0,1)$ and any finite $r$-generated group $\Lambda$, if $\Pr(w=1\on\Lambda)\ge\eps$ then $\Lambda$ has a subgroup of index $\leq\psi(\eps,r)$ satisfying $w$.

        If $\Pr_M\left(w=1\on G\right)\ge\eps$, the same holds for $G/N$ for any finite indexed normal subgroup $N\vartriangleleft G$. Therefore, for any such $N$, the group $G/N$ contains a subgroup of index $\leq\psi(\eps,r)$ which satisfies $w$, so there is a subgroup $N\leq H\leq G$ such that $[G:H]\leq\psi(\eps,r)$ and $w(H,\dots,H)\sub N$.

        Let $\mathcal{T}$ denote the set of subgroups of $G$ of index $\leq\psi(\eps,r)$. As $G$ is finitely generated, it has finitely many such subgroups and their number only depends on $r$ and $\psi(\eps,r)$ (see, e.g., \cite[Corollary 1.1.2]{lubotzbook}). Hence $K=\bigcap_{H\in\mathcal{T}}H$ is a finite index subgroup of $G$, and its index is bounded by a function of $\eps$ and $r$. Note that $w(K,\dots,K)\sub\bigcap_{N\vartriangleleft_f G}N=\set{e}$, so~$K$ satisfies $w$, as required.\qedhere
  \end{enumerate}
\end{proof}

\section{Derived laws}\label{sec:derived}

Fix a word $w=w(x_1,\dots,x_d)\in\F_d$. We define the \textbf{derived word of $w$} to be $w'=[w(x_1,\dots,x_d),x_{d+1}]\in\F_{d+1}$. In this section we show that if $w$ satisfies a gap or positivity result, then so does $w'$. This is a result with similar flavor to results in \cite{Sh}. We now state our main theorem:

\begin{thm}\label{thm:derived}
Let $w\in\F_d$ be a word, and consider $w'\in\F_{d+1}$. Let $\mathcal{M}$ be the family of sequences of probability measures that measure index uniformly. Then:
\begin{enumerate}
    \item\label{item:derived-gap} If $w$ satisfies a gap with respect to $\mathcal{M}$, then $w'$ satisfies a gap with respect to $\mathcal{M}$;
    \item\label{item:derived-pos} If $w$ satisfies a positivity result with respect to $\mathcal{M}$, then $w'$ satisfies a positivity result with respect to $\mathcal{M}$.
\end{enumerate}
\end{thm}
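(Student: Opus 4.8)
The plan is to exploit the structure of $w' = [w(x_1,\dots,x_d), x_{d+1}]$: the value $w'(g_1,\dots,g_{d+1}) = 1$ exactly when $g_{d+1}$ commutes with $w(g_1,\dots,g_d)$. So conditioning on the first $d$ coordinates, we are asking for the probability that a random element lands in the centralizer of a fixed element $v = w(g_1,\dots,g_d)$. Since $\mathcal M$ measures index uniformly, for any subgroup $H$ we know $\mu_n(xH) \to 1/[G:H]$ uniformly, and $C_G(v)$ is a subgroup; thus the conditional probability that $w'=1$ is, in the limit, $1/[G:C_G(v)]$ if this index is finite, and tends to $0$ if it is infinite. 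Hence $\Pr_{\mathcal M}(w'=1 \on G)$ is controlled by $\E[1/[G:C_G(v)]]$ where $v$ is distributed as $w$ applied to $d$ independent $\mu_n$-samples — i.e., roughly the probability (under the $w$-pushforward of $\mu_n^{\otimes d}$) that $v$ has small centralizer index, weighted by that reciprocal index. I would first make this reduction precise with a dominated-convergence / Fatou argument using the uniform convergence in the definition of measuring index uniformly.

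Next, for the gap statement (item (1)): suppose $\Pr_{\mathcal M}(w'=1\on G) > 1 - \phi'(r)$ for a suitably small $\phi'$ to be chosen. From the reduction, a large fraction of the $v$'s (under the relevant distribution) must have $[G:C_G(v)]$ very close to $1$, hence equal to $1$, i.e. $v \in Z(G)$. Concretely, if $[G:C_G(v)] \ge 2$ then the reciprocal index is $\le 1/2$, which forces the mass of such $v$ to be small; so with probability $> 1 - 2\phi'(r)$ (say) over $g_1,\dots,g_d$ we get $w(g_1,\dots,g_d) \in Z(G)$. Now pass to $\overline G = G/Z(G)$: the pushed-forward measures still measure index uniformly (a quotient of a uniform-index-measuring sequence measures index uniformly, since subgroups of $\overline G$ pull back to subgroups of $G$ containing $Z(G)$), and $\overline G$ is still $r$-generated, and $\Pr_{\overline{\mathcal M}}(w = 1 \on \overline G) > 1 - 2\phi'(r)$. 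Choosing $\phi'(r) = \phi(r)/2$ where $\phi$ is the gap function for $w$, we conclude $\overline G$ satisfies $w$, i.e. $w(G,\dots,G) \subseteq Z(G)$, which is exactly $w'(G,\dots,G,x_{d+1}) = 1$, so $G$ satisfies $w'$. This gives a gap for $w'$ with $\phi'(r) = \phi(r)/2$.

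For the positivity statement (item (2)): suppose $\Pr_{\mathcal M}(w'=1\on G) \ge \eps$. The reduction gives $\E[1/[G:C_G(v)]] \ge \eps$ (in the $\limsup$ sense), so in particular $\Pr(v \in Z_k(G)) \ge \eps/2$ for a threshold $k = k(\eps)$, where $Z_k(G) = \{v : [G:C_G(v)] \le k\}$; one checks $\bigcup_{v : [G:C_G(v)]\le k} C_G(v)$ behaves well, but the cleaner route is: the set $\{v : [G:C_G(v)] \le k\}$ need not be a subgroup, so instead I would argue that $v$ lies with probability $\ge \eps/2$ in $N := $ the intersection of all subgroups of $G$ of index $\le k$ — no, that fails since $v$'s centralizer is the relevant subgroup, not $v$ itself. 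The correct move: let $Z^{(k)}(G)$ be the subgroup of elements whose centralizer has index $\le k$ in $G$ — this is actually a normal subgroup when we replace $k$ by a bound depending only on $k$ (the "FC-type" center, or bounded-FC center); standard results (B.H. Neumann) show the set of elements with centralizer of index $\le k$ generates a subgroup of index bounded in terms of $k$, and is contained in a normal subgroup $N \vartriangleleft G$ with $[G:C_G(N)]$ bounded. Then $\Pr_{\overline{\mathcal M}}(w=1\on G/N) \ge \eps/2$ via \Lref{lem:prob-quo}-type reasoning, apply positivity for $w$ to get $H/N \le G/N$ of bounded index with $w(H,\dots,H) \subseteq N$; since $N$ has bounded-index centralizer, intersect $H$ with $C_G(N)$ to get a bounded-index subgroup $H'$ with $w(H',\dots,H') \subseteq N \cap Z(H')$... and finally $[w(H',\dots,H'), H'] = 1$, i.e. $H'$ satisfies $w'$.

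The main obstacle I expect is exactly this last point: the set $\{v \in G : [G:C_G(v)] \le k\}$ is not a subgroup, so transferring "$w$ lands in low-centralizer-index elements with positive probability" into "$w = 1$ on a bounded-index quotient" requires the Neumann/B.H.-Neumann-type structure theory for groups with a bounded-index-centralizer condition (boundedly generated by such elements $\Rightarrow$ finite-by-abelian with bounds). Getting the quantitative bookkeeping right — so that the final index and the final law depend only on $\eps$, $r$, and $w$ — is the delicate part; the gap case (item (1)) is comparatively clean because there the threshold is forced all the way down to index exactly $1$, i.e. the genuine center, and no such structure theory is needed.
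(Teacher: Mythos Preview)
Your argument for part~(1) is correct and essentially identical to the paper's: reduce to showing that $w(g_1,\dots,g_d)\in Z(G)$ with high probability, pass to $G/Z(G)$, and apply the gap hypothesis for~$w$ there.

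For part~(2), however, you dismissed the right idea too quickly. You considered $N\coloneqq$ the intersection of all subgroups of $G$ of index at most $k=2/\eps$, then rejected it because ``$v$'s centralizer is the relevant subgroup, not $v$ itself.'' But that is exactly what makes it work: if $[G:C_G(v)]\le k$ then $C_G(v)$ is one of the subgroups being intersected, so $N\subseteq C_G(v)$, hence $v$ centralizes $N$, hence $v\in C_G(N)$. Thus with probability at least $\eps/(2-\eps)$ one has $w(g_1,\dots,g_d)\in C_G(N)$, and $C_G(N)$ \emph{is} a normal subgroup. The paper then passes to $G/C_G(N)$ (not $G/N$), applies the positivity hypothesis for $w$ there to obtain $H/C_G(N)$ of bounded index satisfying $w$, and concludes that $H\cap N$ satisfies $w'$: indeed $w(H\cap N,\dots,H\cap N)\subseteq C_G(N)$ while $H\cap N\subseteq N$, and $[C_G(N),N]=1$. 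The index $[G:H\cap N]$ is bounded since $[G:H]\le\psi\bigl(\tfrac{\eps}{2-\eps},r\bigr)$ and $[G:N]$ is bounded in terms of $r$ and $\eps$ (because $G$ is $r$-generated). No Neumann-type structure theory for bounded-FC elements is needed; the single observation $[G:C_G(v)]\le k\Rightarrow v\in C_G(N)$ replaces all of it.
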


Our strategy is similar to the proofs of \cite[Proposition 2.3]{kocs} and \cite[Proposition 1.8]{Sh}. We first prove a version of \cite[Proposition 2.1]{toin}:

\begin{lem}\label{lem:markov}
Let $G$ be a group, and let $x_n$ and $y_n$ be two sequences of random variables taking value in $G$. Assume that $y_n$ measures index uniformly and that
$$\limsup_{n\to\infty}\Pr([x_n,y_n]=1)>\eps.$$
Then, for any $\delta>0$,
$$\limsup_{n\to\infty}\Pr([G:C_G(x_n)]<1/\delta)>\frac{\eps-\delta}{1-\delta}.$$
\end{lem}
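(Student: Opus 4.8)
The plan is to estimate $\Pr([x_n, y_n] = 1)$ by conditioning on the value of $x_n$ and using the fact that $y_n$ measures index uniformly. First I would observe that the event $[x_n, y_n] = 1$ is exactly the event $y_n \in C_G(x_n)$, so conditioning on $x_n = g$ gives $\Pr([x_n, y_n] = 1 \mid x_n = g) = \Pr(y_n \in C_G(g))$. Since $C_G(g)$ is a subgroup of $G$, the measuring-index-uniformly hypothesis lets me compare $\Pr(y_n \in C_G(g))$ with $1/[G : C_G(g)]$ uniformly in $g$. Specifically, for any $\eta > 0$ there is an $N$ such that for all $n \ge N$, $\Pr(y_n \in C_G(g)) \le 1/[G : C_G(g)] + \eta$ for every $g$ (and every subgroup, but we only need centralizers here).

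Next I would split according to whether $[G : C_G(g)]$ is small or large. Fix $\delta > 0$. If $[G : C_G(g)] \ge 1/\delta$, then $1/[G : C_G(g)] \le \delta$, so the conditional probability of commuting is at most $\delta + \eta$. If $[G : C_G(g)] < 1/\delta$, we crudely bound the conditional probability by $1$. Writing $p_n = \Pr([G : C_G(x_n)] < 1/\delta)$, the law of total probability gives
\[
\Pr([x_n, y_n] = 1) \le p_n \cdot 1 + (1 - p_n)(\delta + \eta) \le p_n + \delta + \eta.
\]
Taking $\limsup$ over $n$ (along a subsequence realizing the $\limsup$ on the left, and noting the uniform bound from measuring index uniformly holds for all large $n$) yields $\eps < \limsup_n \Pr([x_n, y_n] = 1) \le \limsup_n p_n + \delta + \eta$. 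Since $\eta > 0$ was arbitrary, $\limsup_n p_n \ge \eps - \delta$. This already gives a bound, though weaker than claimed.

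To get the sharper bound $\frac{\eps - \delta}{1 - \delta}$, I would refine the total-probability estimate: instead of bounding the conditional probability by $1$ on the event $\{[G:C_G(x_n)] < 1/\delta\}$, keep it as is and bound more carefully, or rather replace the estimate $(1-p_n)(\delta+\eta)$ by $(\delta + \eta)$ and the term $p_n \cdot 1$ stays, so actually the bound $\Pr([x_n,y_n]=1) \le p_n + (1-p_n)(\delta+\eta)$ rearranges to $\Pr([x_n,y_n]=1) - (\delta+\eta) \le p_n(1 - \delta - \eta)$, hence $p_n \ge \frac{\Pr([x_n,y_n]=1) - \delta - \eta}{1 - \delta - \eta}$. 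Taking $\limsup$ and then letting $\eta \to 0$ gives $\limsup_n p_n \ge \frac{\eps - \delta}{1 - \delta}$, as desired. The one point requiring a little care is the interchange of $\limsup$ with the uniform error term $\eta$: because the measuring-index-uniformly bound is uniform in $g$ and holds for all sufficiently large $n$, the inequality $\Pr([x_n,y_n]=1) \le p_n + (1-p_n)(\delta+\eta)$ holds for all large $n$ simultaneously, so taking $\limsup$ is legitimate. I do not anticipate a serious obstacle here; the only subtlety is making sure the uniformity is applied correctly and that the rearrangement of the inequality is valid (which needs $1 - \delta - \eta > 0$, fine for small $\delta, \eta$).
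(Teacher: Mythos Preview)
Your proof is correct and follows essentially the same approach as the paper: condition on $x_n$, split according to whether $[G:C_G(x_n)] < 1/\delta$, bound the conditional probability on the large-index part by $\delta + o(1)$ using that $y_n$ measures index uniformly, and rearrange the resulting inequality $\eps < p_n + (1-p_n)(\delta + o(1))$. The paper writes the uniform error as $o(1)$ where you write $\eta$, and it works along a fixed subsequence from the start rather than taking $\limsup$ at the end, but these are cosmetic differences.
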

As usual, $C_G(x)$ is the centralizer of $x$ in $G$.
\begin{proof}
By definition, there is a sequence $\set{n_k}$ of nonnegative integers such that $\Pr([x_{n_k},y_{n_k}]=1)>\eps$, i.e.\ $\Pr[y_{n_k}\in C_G(x_{n_k})]>\eps$. Write
$$A=\set{x\in G\suchthat [G:C_G(x)]<1/\delta}.$$
Then
\begin{align*}
    \eps&<\Pr(x_{n_k} \in A) +\E[y_{n_k}\in C_G(x_{n_k})\,|\, x\notin A]\cdot\Pr(x_{n_k}\notin A)\leq\\
    &\leq\Pr(x_{n_k} \in A)+
    \left(\frac{1}{[G:C_G(x)]}+o(1)\right)(1-\Pr(x_{n_k}\in A))\leq\\
    &\leq\Pr(x_{n_k}\in A)+\delta(1-\Pr(x_{n_k}\in A))+o(1)
\end{align*}
where the second inequality used the fact that $y_n$ measures index uniformly. The conclusion follows.
\end{proof}

\smallskip 

\begin{proof}[Proof of \Tref{thm:derived}]~
\lazyenum\item
Suppose that $w$ satisfies a gap with respect to a function $\phi$. We may assume that $\phi(r)>\frac{1}{2}$ for all $r$. Let $G$ be an $r$-generated group, and let $x_1 ,\dotsc,x_{d+1}$ be sequences of random variables measuring index uniformly on $G$ such that 
$$\limsup_{n\to\infty}\Pr\left([w(x_1,\dotsc,x_d),x_{d+1}]=1\on G\right)=\eps>\frac{\phi(r)+1}{2}$$ 
(each $x_i$ is a sequence of variables depending on $n$, but we suppress the dependence on $n$ in the notation). We apply \Lref{lem:markov} with $x_{\textrm{\Lref{lem:markov}}}=w(x_1,\dotsc,x_d)$, $y_{\textrm{\Lref{lem:markov}}}=x_{d+1}$ and $\delta=\frac{1}{2}$. Note that it is important here that in \Lref{lem:markov} only $y$ needs to measure index uniformly, and $x$ does not need to do that. We get
 $$       
          \limsup_{n\to\infty}\Pr\left([G:C_G(w(x_1,\dots,x_d))]<2\right)>\frac{\eps-\frac{1}{2}}{1-\frac{1}{2}}>\phi(r).
$$ 
Of course, if $[G:C_G(w(x_1,\dotsc,x_d)]<2$ then in fact $w(x_1,\dotsc,x_d)\in Z(G)$, the center of $G$.

        Consider the $r$-generated  group $\overline{G}=G/Z(G)$. Using the induced sequences of random variables $\overline{x_i}$, we have $\Pr\left(w(\overline{x_1},\dotsc,\overline{x_d})=1\on\overline{G}\right)>\phi(r)$; therefore $\overline{G}$ satisfies $w$. But then $w(x_1,\dots,x_n)\in Z(G)$ for all $x_1,\dots,x_n\in G$, so $G$ satisfies $w'$.

        From the proof we also see that if $w$ satisfies a strong gap, i.e.\ $\phi$ is constant, then so does $w'$.
\item
Assume that $w$ satisfies a positivity property with respect to a function $\psi\colon(0,1)\times\mathbb{N}\to\mathbb{N}$. Let $\eps>0$, let $G$ be an $r$-generated group and let $x_1,\dotsc,x_{d+1}$ be random variables on $G$ measuring index uniformly such that 
$$\Pr\left(w'(x_1,\dotsc,x_{d+1})=1\on G\right)>\eps.$$ 
Using \Lref{lem:markov} as in the previous part we get
$$\limsup_{n\to\infty}\Pr\left([G:C_G(w(x_1,\dots,x_d))]<\frac{2}{\eps}\right)>\frac{\eps-\frac{\eps}{2}}{1-\frac{\eps}{2}}=\frac{\eps}{2-\eps}.$$
        Let $N$ denote the intersection of all subgroups of $G$ of index at most $\frac{2}{\eps}$. This is a normal subgroup of $G$, and its index is bounded as a function of~$r$ and $\frac{2}{\eps}$. Then we have
        $$\limsup_{n\to\infty}\Pr\left(w(x_1,\dots,x_d)\in C_G(N)\right)>\frac{\eps}{2-\eps}.$$
        Consider the group $\overline{G}=G/C_G(N)$. By the above,
        $$\limsup_{n\to\infty}\Pr\left(w(x_1,\dots,x_d)C_G(N)=C_G(N)\right)>\frac{\eps}{2-\eps}.$$
        Therefore $G/C_G(N)$ contains a subgroup $H/C_G(N)$ of index bounded by $\psi(\frac{\eps}{2-\eps},r)$ that satisfies $w$. We therefore conclude that $H\cap N$ satisfies $w'$, and its index is bounded as a function of $\eps$ and $r$.\qedhere
\end{enumerate}
\end{proof}

\section{Power laws}\label{sec:power-laws}

\subsection{Positivity for \texorpdfstring{$x^2=1$}{x squared}}



In this section we prove 
\begin{thm}\label{thm:x2-pos}
Let $G$ be a finitely generated group. If
$$\Pr\left(x^2=1\on G\right)>0$$
with respect to some random walk, then $G$ is virtually abelian.
\end{thm}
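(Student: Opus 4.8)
The plan is to show that if $\Pr(x^2=1\on G)>0$ with respect to a random walk, then $G$ is virtually abelian. Write $\mu$ for the step distribution and $R_n$ for the walk; by hypothesis $\limsup_n \mu^{*n}(\{g: g^2=1\})=:\alpha>0$. The first observation is that $\mu^{*n}$ measures index uniformly (lazy random walks do, by \cite[Theorems 1.11 and 1.12]{toin}, and one may assume laziness), so along a subsequence $n_k$ the measure of any fixed coset is essentially controlled. The key structural input I would try to extract is that the set $T=\{g\in G: g^2=1\}$ of involutions (together with the identity) carries a positive fraction of the walk infinitely often. I would first argue that $T$ cannot be ``spread out'': if $g,h\in T$ then $(gh)$ need not be an involution, but $gh$ and $hg$ are inverse to each other, and more importantly the map $g\mapsto g$ on $T$ is an ``inversion'' in the sense that conjugation structure is rigid.

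The cleanest route, following the remark that this conjecture is due to Leemann--de la Salle and the acknowledgement to de la Salle, is probably: (i) use positivity of $\alpha$ together with index-uniform measuring to find, for each $k$, a finite-index subgroup is not what we get directly — instead we get that a positive proportion of walk endpoints are involutions, and by a pigeonhole/ping-pong argument on products of involutions one shows $G$ contains a finite-index subgroup $H$ in which the involutions behave coherently. Concretely, I would consider the subgroup $G^2=\langle g^2 : g\in G\rangle$, or rather its structure: if many elements $g$ satisfy $g^2=1$, I would look at the subgroup generated by $T$ and show it is virtually abelian, then show it has finite index. An alternative and perhaps more robust approach is to invoke that a group with $\Pr(x^2=1)>0$ has bounded exponent "on average," pass to the quotient killing a suitable normal subgroup, and reduce to the finite or virtually-$\Z$ case; but the honest hard step is getting from the probabilistic statement to an algebraic one about $T$.

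I would therefore structure the argument in three steps. \textbf{Step 1: Reduction via index-uniform measuring.} Show that $\alpha>0$ forces: for every finite-index subgroup $K\le G$, a proportion $\ge\alpha$ of the walk (along $n_k$) lands in the involution set, and this is inherited by quotients. In particular if $G$ had a finite-index subgroup mapping onto a nonabelian free group or onto something of exponential involution-deficiency we would get a contradiction; so one reduces to $G$ amenable, even virtually solvable, using that the event $\{g^2=1\}$ is a union of two cosets of $C_G(g)$ translates — no, more simply, one uses Tointon-style arguments that positive probability of a "small" set forces algebraic smallness. \textbf{Step 2: The core algebraic lemma.} Prove that a finitely generated group in which the involutions-plus-identity set is "large" (positive density along the walk) must be virtually abelian — this is where I expect the real difficulty, and I suspect it uses the classification-flavored fact that $2$-groups / groups generated by involutions with strong commuting constraints are metabelian-by-finite, combined with Mann's finite-group result $\Pr(x^2=1\on\Lambda)\ge\eps \Rightarrow \Lambda$ is $(\eps\text{-bdd})$-by-abelian-by-$(\eps\text{-bdd})$ applied to finite quotients, giving a uniform bound on the derived-length-type data. \textbf{Step 3: Assembly.} Combine the uniform bounds over all finite quotients (as in \Pref{prop:res-fin-inf} and the proof of \Tref{thm:derived}) to produce an actual finite-index abelian subgroup of $G$ itself, using that $G$ is finitely generated so has only finitely many subgroups of each index. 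The main obstacle is Step 2: translating "positive probability of being an involution" into a genuine finite-index abelian subgroup, since unlike the commuting case there is no single centralizer to exploit — I expect one must either run a ping-pong argument on products of involutions to rule out free subsemigroups, or go through finite quotients and patch, and making the bounds uniform in the walk is the delicate point.
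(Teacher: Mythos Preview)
Your proposal correctly locates the difficulty in Step~2 but does not resolve it, and the routes you sketch would not work as stated. Going through finite quotients and invoking Mann's result requires $G$ to be residually finite, which is not assumed; you would first have to \emph{prove} residual finiteness, and that is essentially as hard as the theorem itself. The ping-pong suggestion on products of involutions is too vague to constitute an argument, and the reduction to ``amenable, even virtually solvable'' in Step~1 has no mechanism behind it.

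The paper's proof rests on a concrete trick you gesture toward (``$gh$ and $hg$ are inverse to each other'') but then abandon: if $g$ and $gu$ are both involutions, then $gug^{-1}=u^{-1}$. Applied to the walk, if $R_n^2=1$ and $R_{n+s}^2=1$ then with $u=R_n^{-1}R_{n+s}$ (distributed as an independent $R_s$) one has $R_n u R_n^{-1}=u^{-1}$. Now take an \emph{independent} copy $R_n'$: with probability roughly $\alpha^4$ both conjugation relations hold simultaneously, forcing $[(R_n')^{-1}R_n,\,u]=1$. Since $(R_n')^{-1}R_n$ is distributed as $R_{2n}$ and is independent of $u\sim R_s$, this converts the $x^2=1$ hypothesis into a genuine \emph{commutativity} event of positive probability between two independent walk positions. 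From there a Markov-type argument (as in \Lref{lem:markov}) shows $[G:C_G(R_{2n})]$ is bounded with positive probability, and one finishes exactly as in the commutator case: intersect all subgroups of bounded index to get a finite-index $Q$, observe $C_G(Q)$ also has finite index, and $Q\cap C_G(Q)$ is abelian.

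There is one further subtlety your plan does not address: one needs two \emph{different} times $n$ and $n+s$, with $s\to\infty$, at which the walk is an involution with good probability \emph{simultaneously}. This is not automatic from $\limsup_n \Pr(R_n^2=1)>0$ alone, and the paper handles it by a separate pigeonhole lemma (\Lref{lem:RnRn+s}).
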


Recall that our standing assumption is that all random walks are lazy, i.e.\ have positive probability to stay in the same place for one turn. This fact is not crucial (the theorem holds with essentially the same proof without this assumption), but is convenient because it implies that random walk measures index uniformly.

For the rest of this subsection, we prove the above theorem. Let $G$ be a finitely generated group, and let $R_n$ be a (lazy) random walk on $G$ such that
$$\limsup_{n\to\infty}\Pr(R_n^2=1)=\alpha>0.$$
Take a sequence $\set{m_k}$ such that $\Pr(R_{m_k}^2=1)\ge\frac{\alpha}{2}$ for all $k$.

\begin{lem}\label{lem:RnRn+s}
There exists $k_0$ such that for any $k\ge k_0$,
$$\limsup_{s\to\infty}\Pr(R_{m_k}^2=1,R_{m_k+s}^2=1)>\frac{\alpha^2}{16}.$$
\end{lem}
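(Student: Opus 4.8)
The idea is to exploit the Markov property together with the fact that lazy random walk measures index uniformly. Fix $k$ and condition on the event $E_k=\{R_{m_k}^2=1\}$, which has probability at least $\alpha/2$. Given $R_{m_k}=g$, the increment $R_{m_k+s}R_{m_k}^{-1}$ is distributed as an independent $s$-step random walk $R'_s$ started at the identity, so $R_{m_k+s}=R'_s g$. On the event $E_k$ we have $g^2=1$, i.e. $g=g^{-1}$, so the event $\{R_{m_k+s}^2=1\}$ becomes $\{R'_s g R'_s g = 1\}$, equivalently $\{(R'_s)^{g} = (R'_s)^{-1}\}$ where $(R'_s)^g = g^{-1}R'_s g$. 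The plan is to show that, conditionally on $E_k$ (hence on $g$ with $g^2=1$), the probability that $g^{-1}R'_s g = (R'_s)^{-1}$ is, along a subsequence in $s$, at least roughly $\alpha/2 - o(1)$, so that intersecting with $E_k$ and taking $\limsup_s$ gives something like $(\alpha/2)^2 = \alpha^2/4 > \alpha^2/16$, with room to spare to absorb error terms.

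To get the conditional lower bound, first I would observe that since $\limsup_n \Pr(R_n^2=1)=\alpha$, for a density-one-ish set of large $s$ (or at least along the subsequence $\{m_\ell\}$) we have $\Pr((R'_s)^2=1)\ge \alpha/2$. Now I need to pass from $\Pr((R'_s)^2=1)$ to $\Pr(g^{-1}R'_s g=(R'_s)^{-1})$ uniformly over involutions $g$. The cleanest route is: the map $h\mapsto h^2$ sends $R'_s$ into a measure, and the event $\{h^2=1\}$ is the event that $h$ lies in the set $T$ of involutions-and-identity; I want to compare this with the event that $h$ lies in a ``twisted'' analogue. Here is where measuring index uniformly should enter — or, alternatively, one can use that the law of $R'_s$ is close (in a suitable weak sense along a subsequence) to being invariant under the relevant symmetries. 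Concretely, I expect the author conditions further and uses a second-moment / Cauchy–Schwarz step: write $\Pr(R_{m_k}^2=1,R_{m_k+s}^2=1)$ as $\E[\mathbbm{1}_{R_{m_k}^2=1}\cdot f_s(R_{m_k})]$ where $f_s(g)=\Pr(R'_s g R'_s g=1)$, and lower-bound this using that $f_s(g)$ cannot be too small for too many involutions $g$ if $\Pr((R'_s)^2=1)$ is not small — because taking $g=1$ already gives $f_s(1)=\Pr((R'_s)^2=1)\ge\alpha/2$, and conjugation-invariance-in-distribution (or a reversibility argument: $R'_s$ and its time-reversal $(R'_s)^{-1}$ have related laws) should transfer this to general involutions $g$ up to an $o(1)$ error.

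The main obstacle I anticipate is exactly this transfer step: controlling $\Pr(g^{-1}R'_s g=(R'_s)^{-1})$ uniformly over all involutions $g\in G$, since $g$ ranges over an infinite, unstructured set and conjugation by $g$ is a genuinely different automorphism for each $g$. My best guess for how to handle it is to not prove a pointwise bound but an averaged one: bound $\E\big[\mathbbm{1}_{R_{m_k}^2=1}\,f_s(R_{m_k})\big]$ from below by something like $\big(\Pr(R_{m_k}^2=1)\big)^{-1}\big(\E[\mathbbm{1}_{R_{m_k}^2=1}\sqrt{f_s(R_{m_k})}]\big)^2$ via Cauchy–Schwarz, then relate $\E[\mathbbm{1}_{R_{m_k}^2=1}\sqrt{f_s}]$ to $\Pr(R_{m_k}^2=1,R_{m_k+s}^2=1)^{1/2}$ or directly to $\Pr(R_{m_k+s}^2=1)\ge \alpha/2$ along a subsequence of $s$ — using that the event $\{R_{m_k}^2=1, R_{m_k+s}^2=1\}$ is symmetric under swapping the roles of the two times (both equal $\{R_{m_k}\in T,\ R_{m_k+s}\in T\}$) so a reversibility/stationarity argument of the type in \cite{toin} applies. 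I would structure the final computation so that the two appearances of $\alpha/2$ multiply and the Cauchy–Schwarz loss is only a constant factor, comfortably landing above $\alpha^2/16$; choosing $k_0$ large enough serves only to make the $o(1)$ errors (from "measures index uniformly" and from the $\limsup$ in $n$) negligible.
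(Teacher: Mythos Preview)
Your approach has a genuine gap at exactly the point you flag as the main obstacle. You need to bound $f_s(g)=\Pr(gR'_sg=(R'_s)^{-1})$ from below, averaged over involutions $g$ in the support of $R_{m_k}$, but none of the tools you list actually do this. Reversibility only tells you $R'_s\overset{d}{=}(R'_s)^{-1}$; it gives no relation between $\Pr((R'_s)^2=1)$ and $\Pr(gR'_sg=(R'_s)^{-1})$ for a nontrivial involution $g$, because conjugation by $g$ is not an automorphism that preserves the law of the walk in general. Your Cauchy--Schwarz step is circular: after writing $\E[\mathbbm{1}_{E_k}f_s]\ge \Pr(E_k)^{-1}\big(\E[\mathbbm{1}_{E_k}\sqrt{f_s}]\big)^2$ you propose to relate $\E[\mathbbm{1}_{E_k}\sqrt{f_s}]$ back to $\Pr(R_{m_k}^2=1,R_{m_k+s}^2=1)^{1/2}$, which is exactly $\E[\mathbbm{1}_{E_k}f_s]^{1/2}$ --- the quantity you are trying to bound. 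And relating it instead to $\Pr(R_{m_k+s}^2=1)$ fails because that probability is an average of $\Pr((gR'_s)^2=1)$ over \emph{all} $g$ in the support of $R_{m_k}$, not just involutions; the mass could sit entirely off the involution set. Also note a minor slip: for a right random walk $R_{m_k+s}=R_{m_k}R'_s$, so the increment is $R_{m_k}^{-1}R_{m_k+s}$, not $R_{m_k+s}R_{m_k}^{-1}$.

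The paper's proof is completely different and sidesteps the transfer problem altogether. It argues by contradiction: if the conclusion fails for infinitely many $k$, one can greedily extract indices $k'_1<\cdots<k'_M$ with $M=\lceil 4/\alpha\rceil$ such that $\Pr(R_{m_{k'_j}}^2=1,\,R_{m_{k'_i}}^2=1)<\alpha/(4(i-1))$ for all $j<i$ (possible because each bad $k'_j$ has $\limsup_s\Pr(R_{m_{k'_j}}^2=1,R_{m_{k'_j}+s}^2=1)\le\alpha^2/16<\alpha/(4(i-1))$, and $m_{k}\to\infty$). Inclusion--exclusion then gives $M$ pairwise disjoint events each of probability $>\alpha/4$, contradicting total probability $1$. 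No group structure, no Markov property, no Cauchy--Schwarz --- just pigeonhole. Notice in particular that the role of $k_0$ is structural, not merely to absorb $o(1)$ errors: the argument genuinely allows finitely many exceptional $k$.
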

\begin{proof}
Suppose on the contrary that no such $k_0$ exists, i.e.\ there are infinitely many values of $k$ such that
\begin{equation}\label{eq:x2-lem1}
\limsup_{s\to\infty}\Pr(R_{m_k}^2=1,R_{m_k+s}^2=1)\leq\frac{\alpha^2}{16}.
\end{equation}
Write $M=\ceil{\frac{4}{\alpha}}$. We construct a sequence~$\set{k'_i}_{i=1}^M$ as follows. Let $k'_1$ be the first index satisfying~\eqref{eq:x2-lem1}. Let $k'_2$ be the first index satisfying~\eqref{eq:x2-lem1} and such that
$$\Pr(R_{m_{k'_1}}^2=1,R_{m_{k'_2}}^2=1)<\frac{\alpha}{4}$$
(which exists because $\frac{\alpha}{4}>\frac{\alpha^2}{16}$). Let $k'_3$ be the first index satisfying~\eqref{eq:x2-lem1} and such that
$$\Pr(R_{m_{k'_j}}^2=1,R_{m_{k'_3}}^2=1)<\frac{\alpha}{8}\textrm{ for }j=1,2.$$
We continue inductively to define $\set{k'_i}$ for $1\leq i\leq M$, where $k'_i$ is chosen as the first index satisfying~\eqref{eq:x2-lem1} such that
$$\Pr(R_{m_{k'_j}}^2=1,R_{m_{k'_i}}^2=1)<\frac{\alpha}{4(i-1)}\textrm{ for }j=1,\dots,i-1.$$
Such indices exist since for any $i\leq M$ one has $\frac{\alpha}{4(i-1)}>\frac{\alpha^2}{16}$.

As $\Pr(R_{m_{k'_i}}^2=1)\ge\frac{\alpha}{2}$ for any $1\leq i\leq M$, it follows that
$$\Pr(R_{m_{k'_i}}^2=1,R_{m_{k'_1}}^2\neq 1,\dots,R_{m_{k'_{i-1}}}^2\neq 1)>\frac{\alpha}{2}-(i-1)\cdot\frac{\alpha}{4(i-1)}=\frac{\alpha}{4}.$$
But this is a sequence of $M=\ceil{\frac{4}{\alpha}}$ pairwise disjoint events with probability larger than $\frac{\alpha}{4}$, a contradiction.
\end{proof}

Write $\beta=\frac{\alpha^2}{16}$.
\begin{lem}
$$\limsup_{n\to\infty}\Pr\left([G:C_G(R_n)]\leq\frac{2}{\beta^2}\right)\ge\frac{\beta^2}{2}.$$
\end{lem}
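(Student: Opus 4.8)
The goal is to upgrade the two-time event $\{R_{m_k}^2=1,\ R_{m_k+s}^2=1\}$ into a statement about centralizers having small index, and the natural mechanism is the standard observation used throughout this area: if $g^2=1$ and $(gh)^2=1$ then $g$ normalizes $\langle h\rangle$ in a strong sense, or more precisely $ghg^{-1}=h^{-1}$. I would start by fixing $k\ge k_0$ from \Lref{lem:RnRn+s} and writing the walk at time $m_k+s$ as $R_{m_k+s}=R_{m_k}\cdot S_s$, where $S_s$ is (conditionally on $R_{m_k}$) an independent copy of a length-$s$ random walk started at the identity. On the event $\{R_{m_k}^2=1\}\cap\{R_{m_k+s}^2=1\}$ we have $(R_{m_k}S_s)^2=1$ and $R_{m_k}^2=1$, hence $R_{m_k}S_sR_{m_k}=S_s^{-1}$, i.e.\ conjugation by $R_{m_k}$ inverts $S_s$. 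In particular, on this event, for the \emph{fixed} involution $g=R_{m_k}$, the random element $S_s$ lies in the set $\mathrm{Inv}_g=\{h:ghg^{-1}=h^{-1}\}$. The key algebraic point (as in the classical Laffey/Mann arguments) is that $\mathrm{Inv}_g$ is contained in a union of two cosets of $C_G(g)$: if $h_1,h_2\in\mathrm{Inv}_g$ then $g(h_1h_2^{-1})g^{-1}=h_1^{-1}h_2=(h_1h_2^{-1})^{-1}\cdot[\text{something}]$ — more carefully, $h_1h_2\in C_G(g)$, so $\mathrm{Inv}_g$ is a single coset of $C_G(g)$ together with $\mathrm{Inv}_g^{-1}$, giving at most two cosets. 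Thus $\Pr(S_s\in \mathrm{Inv}_g\mid R_{m_k}=g)$, when it is $\ge$ some constant along a subsequence of $s$, forces $[G:C_G(g)]$ to be bounded, because a random walk measures index uniformly: if $[G:C_G(g)]$ were large, $S_s$ would equidistribute over the cosets and could not concentrate on two of them.

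Making this quantitative is the heart of the argument. From \Lref{lem:RnRn+s} and the conditioning on $R_{m_k}$, there is a set $B$ of group elements $g$ with $g^2=1$ such that $\Pr(R_{m_k}=g)$ summed over $g\in B$ is $\gtrsim\beta$, and for each such $g$, $\limsup_{s}\Pr(S_s\in\mathrm{Inv}_g\mid R_{m_k}=g)\gtrsim\beta$ — this is just a Markov/averaging step: if the conditional probability were small for most of the mass of $R_{m_k}$, the joint probability in \Lref{lem:RnRn+s} could not exceed $\beta$. (One should be slightly careful because the $\limsup$ in $s$ is taken of the joint probability, not pointwise in $g$; since $G$ may be infinite the interchange is not automatic, but a reverse-Fatou / bounded-convergence argument along a suitable subsequence of $s$, using that the state space contributing mass is effectively finite up to $\eps$, handles it.) Then for each such $g$, picking $s$ along the witnessing subsequence and using that $S_s$ measures index uniformly, $\Pr(S_s\in\mathrm{Inv}_g)\le 2/[G:C_G(g)]+o(1)$, so $[G:C_G(g)]\le 2/(c\beta)$ up to the $o(1)$; absorbing constants gives $[G:C_G(g)]\le 2/\beta^2$ for all $g\in B$ (after possibly shrinking $B$ to control the $o(1)$ uniformly, which is fine since only finitely many cosets carry almost all the mass).

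Finally I would convert "$[G:C_G(g)]\le 2/\beta^2$ for $g$ in a set of $R_{m_k}$-probability $\gtrsim\beta^2/2$" into the desired $\limsup$ statement. Here one uses that $R_n$ for general $n$ can be coupled with $R_{m_k}$: write $R_n=R_{m_k}\cdot S_{n-m_k}$; but actually the cleaner route is to observe that for $n=m_k$ itself we already have $\Pr([G:C_G(R_{m_k})]\le 2/\beta^2)\ge$ (mass of $B$) $\gtrsim\beta$, which is stronger than $\beta^2/2$, and then note $m_k\to\infty$ so these times are unbounded, yielding the $\limsup_{n\to\infty}$ bound. The stated constant $\beta^2/2$ leaves comfortable slack for all the averaging losses above, so I would not optimize.

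**Main obstacle.** The delicate point is the passage from the joint $\limsup_s$ in \Lref{lem:RnRn+s} to a pointwise-in-$g$ lower bound on $\Pr(S_s\in\mathrm{Inv}_g\mid R_{m_k}=g)$ along a common subsequence of $s$, since $G$ is infinite and $\mathrm{Inv}_g$ depends on $g$. I expect this to be handled by truncating to the finitely many values of $g$ carrying all but $\eps$ of the mass of $R_{m_k}$ (a compactness-free, purely measure-theoretic truncation since $R_{m_k}$ has finite support when the walk is finitely supported — in fact $R_{m_k}$ is supported on a finite set, so there is genuinely no interchange issue at all), after which everything is a finite average and the reverse-Fatou step is trivial. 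The second, more routine, obstacle is the clean algebraic lemma that $\mathrm{Inv}_g$ is a union of at most two cosets of $C_G(g)$; this is a short computation with involutions that I would state as a preliminary claim.
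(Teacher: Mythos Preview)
Your central algebraic claim is false: the set $\mathrm{Inv}_g=\{h:ghg^{-1}=h^{-1}\}$ is \emph{not} in general contained in boundedly many cosets of $C_G(g)$. Take $G=D_\infty=\langle a,b\mid a^2=b^2=1\rangle$ and $g=a$. Every rotation $(ab)^k$ lies in $\mathrm{Inv}_a$, since $a(ab)^ka=(ba)^k=((ab)^k)^{-1}$; but $C_G(a)=\{1,a\}$ has infinite index, so $\mathrm{Inv}_a$ meets infinitely many cosets of $C_G(a)$. In particular, knowing that $S_s$ lands in $\mathrm{Inv}_g$ with probability $\gtrsim\beta$ along a subsequence tells you nothing about $[G:C_G(g)]$. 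Your computation ``$h_1h_2\in C_G(g)$'' is simply wrong: $g(h_1h_2)g^{-1}=h_1^{-1}h_2^{-1}$, which equals $h_1h_2$ only when $(h_1h_2)^2=1$.

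The correct (and true) statement reverses the roles: the set of \emph{inverters} of a fixed $h$, namely $\{g:ghg^{-1}=h^{-1}\}$, is either empty or a single left coset of $C_G(h)$, because if $g_1,g_2$ both invert $h$ then $g_1^{-1}g_2\in C_G(h)$. This is exactly what the paper exploits. Having observed (as you did) that on the good event $R_n$ inverts $u=R_n^{-1}R_{n+s}$, the paper takes an \emph{independent copy} $R_n'$ of $R_n$; then with probability at least $\beta^2$ both $R_n$ and $R_n'$ invert $u$, hence $(R_n')^{-1}R_n\in C_G(u)$. Since $(R_n')^{-1}R_n$ is distributed as $R_{2n}$ and is independent of $u$, one gets $\Pr([R_{2n},u]=1)\ge\beta^2$, and a Markov step followed by sending $s\to\infty$ (using that $u$ measures index uniformly) yields $\Pr([G:C_G(R_{2n})]\le 2/\beta^2)\ge\beta^2/2$. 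Note the conclusion is for $R_{2n}$, not $R_{m_k}$; the independent-copy trick is what both fixes the algebra and produces the right random variable.
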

As usual, $C_G(R_n)$ is the centralizer of $R_n$ in $G$.
\begin{proof}
Fix one $n$ such that
\begin{equation}\label{eq:RnRn+s}
\limsup_{s\to\infty}\Pr(R_n^2=1,R_{n+s}^2=1)>\beta>0.
\end{equation}
By the previous lemma, there are infinitely many such $n$. Then there is a sequence $s_1<s_2<\cdots$ such that
$$\Pr(R_n^2=1,R_{n+s_i}^2=1)\ge\beta.$$
Write $u=R_n^{-1}R_{n+s_i}$, which has the same distribution as $R_{s_i}$ and is independent of $R_n$. Note that the equations $R_n^2=R_{n+s_i}^2=1$ imply that $u^{-1}=R_nuR_n^{-1}$, so
$$\Pr(u^{-1}=R_nuR_n^{-1})\ge\beta.$$
If $R'_n$ is an independent copy distributed like $R_n$, we have
$$\Pr(u^{-1}=R_nuR_n^{-1}=R'_nu(R'_n)^{-1})\ge\beta^2,$$
so
$$\Pr([(R'_n)^{-1}R_n,u]=1)\ge\beta^2.$$
As $(R'_n)^{-1}R_n$ has the same distribution as $R_{2n}$, and both are independent of $u$,
$$\Pr([R_{2n},u]=1)\ge\beta^2.$$
Hence it follows (similarly to \Lref{lem:markov}),
$$\Pr\left(\Pr\left([R_{2n},u]=1\suchthat R_{2n}\right)\ge\frac{1}{2}\beta^2\right)\ge\frac{1}{2}\beta^2.$$
Since $u$ is distributed like a random walk of length $s_i$ and is independent of $R_{2n}$, taking $i$ to infinity gives
$$\Pr\left([G:C_G(R_{2n})]\leq\frac{2}{\beta^2}\right)\ge\frac{1}{2}\beta^2.$$
By \Lref{lem:RnRn+s} there are infinitely many $n$ such that (\ref{eq:RnRn+s}) holds. This proves the lemma.
\end{proof}

We are now ready to prove the theorem.

\begin{proof}[Proof of \Tref{thm:x2-pos}]
Since $G$ is finitely generated, it has finitely many subgroups of any finite index. Therefore, writing
$$Q=\bigcap_{H\leq G,[G:H]\leq 2/\beta^2}H,$$
we have $[G:Q]<\infty$.

Now,
$$\Pr(R_{2n}\in C_G(Q))=\Pr([R_{2n},Q]=1)\ge\frac{1}{2}\beta^2,$$
so $[G:C_G(Q)]\leq\frac{2}{\beta^2}<\infty$.
Therefore $Q\cap C_G(Q)$ is an abelian subgroup of $G$ of finite index, as required.
\end{proof}

We give an immediate corollary from the above theorem.

\begin{cor}
\begin{enumerate}
    \item The law $x^2=1$ satisfies a gap.
    \item The value of $\limsup_{n\to\infty}\Pr(R_n^2=1)$ does not depend on the random walk's step distribution $\mu$, and it is a limit rather than a $\limsup$.
\end{enumerate}
\end{cor}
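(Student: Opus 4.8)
The plan is to derive both items of the corollary directly from \Tref{thm:x2-pos} together with what is already known for virtually abelian groups. For item (1), recall that a gap means: there is $\phi\colon\N\to(0,1)$ such that any $r$-generated group $G$ with $\Pr(x^2=1\on G)>1-\phi(r)$ satisfies $x^2=1$. By \Tref{thm:x2-pos}, any $G$ with $\Pr(x^2=1\on G)>0$ is virtually abelian; in particular, $G$ is amenable and, being finitely generated virtually abelian, it is also finitely presented, residually finite, etc. So the remaining task is to show that a \emph{virtually abelian} group which satisfies $x^2=1$ with high probability (high enough, depending only on $r$) must actually satisfy $x^2=1$. The cleanest route is to reduce to finite groups: for a finitely generated virtually abelian group $G$ that does not satisfy $x^2=1$, there is some finite quotient $\overline{G}$ that does not satisfy $x^2=1$ either (pick $g$ with $g^2\neq 1$ and a finite-index normal subgroup avoiding $g^2$), and since lazy random walk measures index uniformly, $\Pr(x^2=1\on G)\le\Pr(x^2=1\on\overline{G})$ by \Lref{lem:prob-quo}. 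Now invoke Laffey's gap for finite groups \cite{laf}: there is an absolute $\eps_0>0$ with $\Pr(x^2=1\on\overline G)\le 1-\eps_0$ for every finite group $\overline G$ not satisfying $x^2=1$. Hence $\Pr(x^2=1\on G)\le 1-\eps_0$, giving in fact a \emph{strong} gap with constant $\phi\equiv\eps_0$, independent of $r$.

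For item (2), fix a finitely generated group $G$ and two finitely supported lazy generating measures $\mu,\mu'$, with corresponding walks $R_n$ (step law $\mu$) and $R'_n$ (step law $\mu'$). Set $\alpha=\limsup_n\Pr(R_n^2=1)$ and $\alpha'=\limsup_n\Pr((R'_n)^2=1)$. If $\alpha=\alpha'=0$ there is nothing to prove, so assume WLOG $\alpha>0$. Then by \Tref{thm:x2-pos}, $G$ is virtually abelian; in particular, by the argument above, either $G$ satisfies $x^2=1$ — in which case $\Pr(x^2=1\on G)=1$ for every lazy random walk and we are done — or $G$ does not satisfy $x^2=1$, and then by \Lref{lem:prob-quo} both $\alpha$ and $\alpha'$ are bounded away from $1$. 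To pin down the exact common value, pass to the finite quotients: since $G$ is finitely generated residually finite, \Pref{prop:res-fin-inf} applies (lazy random walks measure index uniformly), giving
$$
\Pr_\mu(x^2=1\on G)=\inf_{N\vartriangleleft_f G}\Pr(x^2=1\on G/N)=\Pr_{\mu'}(x^2=1\on G),
$$
since the right-hand side depends only on $G$, not on the step distribution. The same proposition, combined with the fact that $\set{x^2=1\on G}$ is the decreasing intersection $\bigcap_k\set{x^2=1\on G/N_k}$ along a cofinal chain $N_k$, shows that $\mu^{*n}(\set{x^2=1})$ actually converges, so the $\limsup$ is a genuine limit.

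The main obstacle, such as it is, is bookkeeping rather than mathematics: one must make sure the reduction to finite quotients is legitimate, i.e.\ that every finitely generated virtually abelian group is residually finite and finitely generated as required by \Pref{prop:res-fin-inf} and \Lref{lem:prob-quo}, and one must be careful that the appeal to Laffey's finite gap gives an \emph{absolute} constant $\eps_0$ (so that the resulting gap for infinite groups is strong, not merely $r$-dependent). One small subtlety for item (2): strictly speaking \Pref{prop:res-fin-inf} is stated for sequences of measures that measure index uniformly, so we should note explicitly that for \emph{each} lazy step distribution $\mu$ the walk $\set{\mu^{*n}}$ has this property \cite[Theorems 1.11 and 1.12]{toin}, whence the displayed equality holds for $\mu$ and for $\mu'$ separately with the same right-hand side. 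No new idea beyond \Tref{thm:x2-pos} is needed; the corollary is genuinely immediate once that theorem and the finite-group results are in hand.
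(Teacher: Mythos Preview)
Your argument for item~(1) is correct and matches the paper's: \Tref{thm:x2-pos} forces $G$ to be virtually abelian, hence residually finite, and the gap is then inherited from Laffey's strong gap for finite groups via the machinery of \Sref{sec:res-fin}. You spell out more detail than the paper and correctly note that the resulting gap is strong.

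For item~(2), your proof that the $\limsup$ is independent of the step distribution is valid but takes a different route than the paper: you use \Pref{prop:res-fin-inf} to identify $\limsup_n\mu^{*n}(\{x^2=1\})$ with $\inf_{N\vartriangleleft_f G}\Pr(x^2=1\on G/N)$, a quantity depending only on $G$. The paper instead observes that virtually abelian implies virtually nilpotent and invokes \cite[Corollary~1.14]{MTVV} directly, which gives both independence and convergence at once.

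Your argument that the $\limsup$ is a genuine limit, however, has a gap. \Pref{prop:res-fin-inf} pins down only the value of the $\limsup$; it says nothing about $\liminf_n\mu^{*n}(\{x^2=1\})$. With $A_k=\{g\in G:g^2\in N_k\}$ along a cofinal chain, you know that $\mu^{*n}(A_k)\to p_{N_k}$ for each fixed $k$ and that $A_k\searrow\{x^2=1\}$, but concluding $\mu^{*n}(\{x^2=1\})\to\inf_k p_{N_k}$ is an interchange of limits you have not justified: the convergence $\mu^{*n}(A_k)\to p_{N_k}$ is not uniform in $k$ (the mixing time on $G/N_k$ grows with $[G:N_k]$), and residual finiteness by itself does not supply the missing lower bound on the $\liminf$. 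The paper sidesteps this entirely by citing \cite[Corollary~1.14]{MTVV}, which uses the structure of virtually nilpotent groups (ultimately local limit theorems) rather than bare residual finiteness.
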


\begin{proof}
We first prove that $x^2=1$ satisfies a gap. By \Tref{thm:x2-pos}, any group which satisfies $x^2=1$ with positive probability is virtually abelian, and thus residually finite. Therefore the gap follows from the gap in the finite case.

For the second part, if $\limsup_{n\to\infty}\Pr(R_n^2=1)>0$ for some random walk $R_n$ on $G$, \Tref{thm:x2-pos} shows that $G$ is virtually abelian. In particular, $G$ is virtually nilpotent, so the corollary follows from \cite[Corollary 1.14]{MTVV}.
\end{proof}

The proof of \Tref{thm:derived}(\ref{item:derived-pos}) can be generalized to show the following:
\begin{prop}
    If a group $G$ satisfies $[x^2,y]$ probabilistically, $G$ is virtually $2$-step nilpotent.
\end{prop}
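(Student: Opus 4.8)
The plan is to run the proof of \Tref{thm:derived}(\ref{item:derived-pos}) almost verbatim, with the ``inner word'' taken to be $w=x_1^2$ (so that $w'=[x_1^2,x_2]=[x^2,y]$), and then to replace its last step — which in \Tref{thm:derived} invokes a positivity property of the inner word — by an appeal to \Tref{thm:x2-pos}. So, assuming $\Pr_M([x^2,y]=1\on G)>0$, I would fix a subsequence along which $\Pr([R_n^2,R_n']=1)\ge\varepsilon>0$, where $R_n,R_n'$ are two independent copies of the (lazy) random walk; these are exactly the two independent coordinates appearing in the definition of $\Pr_M([x^2,y]=1\on G)$. Since $R_n'$ measures index uniformly and is independent of $R_n^2$, \Lref{lem:markov} applied with $x=R_n^2$, $y=R_n'$ and $\delta$ a small multiple of $\varepsilon$ yields a constant $D=D(\varepsilon)$ and a further subsequence along which $\Pr\bigl([G:C_G(R_n^2)]\le D\bigr)$ stays bounded below by a positive constant.

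Next, let $N$ be the intersection of all subgroups of $G$ of index at most $D$. Since $G$ is finitely generated there are only finitely many such subgroups, so $N\vartriangleleft_f G$, and consequently $C_G(N)\vartriangleleft G$; set $\overline G\coloneqq G/C_G(N)$, a finitely generated group. Whenever $[G:C_G(R_n^2)]\le D$ we have $N\le C_G(R_n^2)$, i.e.\ $R_n^2\in C_G(N)$, i.e.\ $\overline{R_n}^2=1$ in $\overline G$. The pushforward of the step distribution to $\overline G$ is still finitely supported, generating and lazy, so $\overline{R_n}$ is a legitimate random walk on $\overline G$, and by the previous paragraph
$$\Pr_{\overline M}(x^2=1\on\overline G)\ge\limsup_{n\to\infty}\Pr(\overline{R_n}^2=1)>0.$$

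Now \Tref{thm:x2-pos} applies to $\overline G$ and gives that $\overline G$ is virtually abelian: there is a subgroup $H\le G$ with $C_G(N)\le H$, $[G:H]<\infty$ and $[H,H]\le C_G(N)$. Put $K\coloneqq H\cap N$, a finite index subgroup of $G$ with $K\le N$. Then $[K,K]\le[H,H]\le C_G(N)$, and since $K\le N$ we conclude $[[K,K],K]\le[C_G(N),N]=\{1\}$. Hence $K$ is $2$-step nilpotent, so $G$ is virtually $2$-step nilpotent.

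The only step that genuinely differs from \Tref{thm:derived} is this last one, and it is also where the argument might look delicate: the power law $x^2=1$ does \emph{not} satisfy a positivity property (the infinite dihedral group shows this), so one cannot conclude that $\overline G$ virtually satisfies $x^2=1$. What rescues the argument is that \Tref{thm:x2-pos} upgrades ``$\overline G$ satisfies $x^2=1$ probabilistically'' all the way to ``$\overline G$ is virtually abelian'', after which $2$-step nilpotency is automatic because the commutator subgroup of $K\le N$ lies in $C_G(N)$ and therefore centralizes $K$. The remaining ingredients — finiteness of index and normality of $N$ and $C_G(N)$ (using finite generation of $G$), the fact that the pushed-forward walk is again a valid lazy random walk so that \Tref{thm:x2-pos} is applicable, and the legitimacy of the application of \Lref{lem:markov} (which needs $R_n'$ independent of $R_n^2$, so that it still measures index uniformly after conditioning on $R_n^2$) — are all routine and handled exactly as in \Tref{thm:derived}, so I do not anticipate a serious obstacle.
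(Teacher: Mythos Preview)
Your proposal is correct and is exactly the argument the paper has in mind: the paper says only that ``the proof of \Tref{thm:derived}(\ref{item:derived-pos}) can be generalized'' and omits the details, and you have supplied precisely those details --- run the derived-law machinery with $w=x^2$, pass to $\overline{G}=G/C_G(N)$, and replace the final ``positivity of $w$'' step by \Tref{thm:x2-pos} to conclude $\overline{G}$ is virtually abelian, whence $K=H\cap N$ is $2$-step nilpotent via $[[K,K],K]\le[C_G(N),N]=1$.
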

The changes required are straightforward and we omit the details.

\subsection{Strong gap for \texorpdfstring{$x^3=1$}{x to the power 3}}

In this subsection, our aim is to prove the following result:

\begin{thm}\label{thm:x3-gap}
Consider a finitely generated group $G$ with a finite symmetric generating set $S$ containing the identity, and let $R_n$ be a random walk on $G$. Then there exists $\eps>0$ such that if
$$\liminf_{n\to\infty}\,\Pr\left(R_n^3=1\right)>1-\eps$$
then $x^3=1$ holds in $G$ (and therefore $G$ is finite).
\end{thm}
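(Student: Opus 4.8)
The plan is to show that the hypothesis forces $R_n^3 = 1$ with probability exactly $1$ for all large $n$, and then to upgrade this to an honest identity in $G$. The starting point is the observation that in any group, $x^3=1$ is equivalent to saying that $x$ and $x^{-1}$ are conjugate by... no — rather, the key algebraic fact is that the set $T = \{g \in G : g^3 = 1\}$ is closed under conjugation and contains $1$, but is typically far from being a subgroup. So a pure ``intersect finitely many finite-index subgroups'' argument as in the $x^2$ case will not work directly; instead I expect the proof to exploit the rigidity of the $\liminf$ hypothesis together with the structure of the random walk as a process.

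First I would use the $\liminf$ assumption to get that for \emph{every} sufficiently large $n$, $\Pr(R_n^3 = 1) > 1 - \eps$, and then combine two (or three) independent increments: writing $R_{n+m} = R_n \cdot u$ with $u$ distributed as $R_m$ and independent of $R_n$, the events $\{R_n^3 = 1\}$, $\{u^3 = 1\}$ and $\{(R_n u)^3 = 1\}$ all hold simultaneously with probability $> 1 - 3\eps$. The identity $R_n^3 = u^3 = (R_n u)^3 = 1$ is a strong constraint: expanding $(R_nu)^3 = R_n u R_n u R_n u = 1$ and using $R_n^3 = u^3 = 1$ should force a commutation-type relation between $R_n$ and $u$ (for instance, in a group where $x^3 = 1$ for all $x$ in some generating set, one is close to the Burnside setting $B(2,3)$, which is finite of order $27$). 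The goal of this step is to deduce that with probability bounded below, $R_n$ lies in a subgroup generated by few elements each of order dividing $3$, hence in a finite subgroup of bounded order.

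Next I would promote ``with probability bounded below, $R_n$ lies in a bounded finite subgroup'' to a statement about $G$ itself. Here I would use that the random walk is supported on a fixed finite symmetric generating set $S$: if, say, $\Pr(R_n \in F) > 0$ along a subsequence for finite subgroups $F$ of bounded order, a compactness/pigeonhole argument gives a \emph{fixed} finite subgroup $F_0 \le G$ with $\Pr(R_n \in F_0) \ge c > 0$ infinitely often. Since $R_n$ visits every generator $s \in S$ with probability bounded below at appropriate times (laziness helps here, as it does throughout the paper), this should force $S \subseteq F_0$, hence $G = \langle S \rangle = F_0$ is finite; and then $\liminf \Pr(R_n^3 = 1) > 1 - \eps$ with $\eps$ small enough (relative to $1/|F_0|$, which is itself bounded) forces $x^3 = 1$ to hold identically, i.e.\ $G = B(1,3)$-type, so $|G| \le 3^{\text{bounded}}$ — in fact the argument should pin down that $G$ itself satisfies $x^3=1$.

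The main obstacle I anticipate is the algebraic step: extracting a usable relation from $R_n^3 = u^3 = (R_nu)^3 = 1$ for \emph{independent} $R_n, u$. Unlike the $x^2$ case, where $a^2 = b^2 = (ab)^2 = 1$ immediately gives $ab = ba$, the order-$3$ analogue only yields that $\langle R_n, u\rangle$ is a quotient of the (infinite!) group $\langle a, b \mid a^3 = b^3 = (ab)^3 = 1\rangle$, which is \emph{not} finite. So one extra relation is not enough — I would need a third independent increment, using $(R_n u v)^3 = (R_nu)^3 = (uv)^3 = \dots = 1$ to land inside the free Burnside group $B(3,3)$, which \emph{is} finite (order $3^7$), or more cleverly to show that already along the walk the relevant products exhaust a finite Burnside quotient. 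Managing the probabilistic bookkeeping so that enough of these order-$3$ relations hold simultaneously with positive probability — and choosing $\eps$ small enough at the end relative to the (bounded) size of the resulting Burnside group — is where the real work lies.
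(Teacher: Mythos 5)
Your proposal correctly identifies the central difficulty --- that $\langle a,b\mid a^3=b^3=(ab)^3=1\rangle$ is infinite, so two independent increments are not enough --- but the fix you propose has a real gap. You suggest adding a third increment in the hope of landing inside the free Burnside group $B(3,3)$. However, conditioning on $R_n^3=u^3=(R_nu)^3=\cdots=1$ only gives you order-$3$ relations for \emph{simple} products (each letter appearing at most once), because only those are distributed like random walks. You do \emph{not} get $(R_nuR_n^{-1}u)^3=1$ or any relation with a repeated letter, so $\langle R_n,u,v\rangle$ is a quotient of a much bigger group than $B(3,3)$ and there is no reason for it to be finite. Your second step is also problematic: ``with probability $\ge c$, $R_n$ lies in some finite subgroup of order $\le K$'' does not pigeonhole to a single fixed $F_0$ with $\Pr(R_n\in F_0)\ge c'$ infinitely often, since in a transient infinite group $R_n$ can drift through infinitely many distinct finite subgroups; and ``$R_n$ visits every generator $s\in S$ with probability bounded below'' is false on any infinite group, since $\Pr(R_n=s)\to 0$.

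The paper takes a different route that sidesteps exactly this obstruction. Call $x,y,z,w$ \emph{good} if every simple product of $x^{\pm1},y^{\pm1},z^{\pm1},w^{\pm1}$ has order $3$. Instead of trying to force $\langle x,y,z,w\rangle$ to be a finite Burnside group (which, as you note, the available relations do not yield), the paper proves a much weaker but sufficient algebraic fact: if $x,y,z,w$ are good then $[x,y,z,w]=1$ (a sequence of elementary commutator identities, Lemmas~\ref{lem:[ab,a]}--\ref{lem:(a[b,c])^3} and Proposition~\ref{prop:good-imp-nilp}). Taking four independent walks $X_n,Y_n,Z_n,W_n$, each simple product is itself a random walk of comparable length, so the $\liminf$ hypothesis forces goodness with high probability; hence $\liminf_n\Pr([X_n,Y_n,Z_n,W_n]=1)>0$. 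By the nilpotent-law gap of \cite{MTVV}, $G$ is then virtually nilpotent, hence residually finite, and the known finite-group gap for $x^3=1$ of Laffey~\cite{laf} finishes the proof via Proposition~\ref{prop:res-fin-inf}. The moral is: extract a single \emph{group identity} from the goodness relations, not finiteness of a subgroup, and let residual finiteness carry the rest.
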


Our proof also finds an explicit value of $\eps$ such that the above theorem holds. However, we believe that this $\eps$ is not optimal, and that the theorem can be proved for $\limsup$ instead of $\liminf$.\medskip

We prove this result in several steps. 
Let $G$ be a group, and let $x,y,z,w\in G$. A product of $x^{\pm 1},y^{\pm 1},z^{\pm 1},w^{\pm 1}$ is called \textbf{simple} if any letter appears at most once (thus there are only finitely many simple products of any four elements). 
We say that $x,y,z,w$ are \textbf{good} if any simple product of them has order $3$. For example, if $x,y,z,w$ are good then $(z^{-1}xw)^3=1$ and $(yx^{-1}w^{-1}z)^3=1$, but not necessarily $(xyx^{-1}z)^3=1$.

Note that the assumption of the theorem suggest that if $x,y,z,w$ are independent random walks with respect to $S$, then $x,y,z,w$ are good with high probability. Indeed, any simple product of $x,y,z,w$ is also distributed like a random walk with respect to $S$ (with a different number of steps), and therefore has order $3$ with high probability.

Our method of proof is as follows. We prove that if $x,y,z,w$ are good, then $[x,y,z,w]=1$ (we originally verified this with a GAP program, but here we give a human proof of this fact). Therefore, if $\Pr\left(R_n^3=1\right)>1-\eps$ for small enough $\eps$, the group $G$ is nilpotent with positive probability. Therefore, $G$ is virtually nilpotent, so it is residually finite, and as $x^3=1$ has a gap on finite groups we are finished.\medskip

Recall the following standard notation. For any $a,b\in G$ write $a^b=bab^{-1}$, and for any $a_1,\dots,a_n\in G$ we defined inductively $[a_1,\dots,a_n]=[[a_1,\dots,a_{n-1}],a_n]$.

\begin{lem}\label{lem:[ab,a]}
  If $b^3=(ab)^3=(ab^{-1})^3=1$, then $[a^b,a]=1$.
\end{lem}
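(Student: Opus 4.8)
The plan is to work entirely inside the group generated by $a$ and $b$ under the three cubing relations $b^3 = (ab)^3 = (ab^{-1})^3 = 1$, and to manipulate words until $a^b a = a a^b$ falls out. First I would record the basic consequences of a relation $g^3=1$: it says $g^{-1} = g^2$ and, more usefully, $g^{-1} h^{-1} = (hg)^{-1} = (hg)^2 = hghg$ type rewritings. Concretely, from $(ab)^3=1$ we get $bab = a^{-1}b^{-1}a^{-1}$ after conjugating/rearranging, i.e. $ab\cdot ab = b^{-1}a^{-1}$, and similarly from $(ab^{-1})^3=1$ we get $ab^{-1}ab^{-1} = b a^{-1}$, i.e. $b^{-1}ab^{-1}a b^{-1} = a^{-1}$. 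These two identities, together with $b^{-1}=b^2$, are the raw material.

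The key step is to compute $a^b = bab^{-1}$ in a normal form and then check it commutes with $a$. I would start from $(ab)^3 = ababab = 1$, which gives $bab = a^{-1}\cdot(ab)^{-1}\cdot \text{(rearranged)}$; cleanly, $ababab=1 \Rightarrow bab = a^{-1}b^{-1}a^{-1}$. Hence $a^b = bab^{-1} = (bab)b^{-2} = a^{-1}b^{-1}a^{-1}b^{-2} = a^{-1}b^{-1}a^{-1}b$ using $b^{-2}=b$. Similarly from $(ab^{-1})^3 = ab^{-1}ab^{-1}ab^{-1}=1$ I get $b^{-1}ab^{-1} = a^{-1}ba^{-1}$, hence $a^{b^{-1}} = b^{-1}ab = (b^{-1}ab^{-1})b^2 = a^{-1}ba^{-1}b^2 = a^{-1}ba^{-1}b^{-1}$. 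Now I want $[a^b,a]=1$, equivalently $a^b \cdot a = a\cdot a^b$, equivalently (substituting the formula for $a^b$) $a^{-1}b^{-1}a^{-1}b\cdot a = a\cdot a^{-1}b^{-1}a^{-1}b = b^{-1}a^{-1}b$; so the goal reduces to the single word identity
\[
a^{-1}b^{-1}a^{-1}bab^{-1} = a^{-1}.
\]
Multiplying on the left by $a$ and on the right by $ba^{-1}$, this is $b^{-1}a^{-1}bab^{-1} = ba^{-1}$, i.e. $b^{-1}a^{-1}b \cdot ab^{-1} = ba^{-1}$, i.e. $a^{b^{-1}}\cdot ab^{-1} = ba^{-1}$. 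But $a^{b^{-1}} = a^{-1}ba^{-1}b^{-1}$ from above, so the left side is $a^{-1}ba^{-1}b^{-1}ab^{-1}$, and I must check $a^{-1}ba^{-1}b^{-1}ab^{-1} = ba^{-1}$, i.e. $ba^{-1}b^{-1}ab^{-1} = ab a^{-1}$ (after left-multiplying by $a$). Here I would invoke the relation $(ab^{-1})^3=1$ in the form $ab^{-1}a = b a^{-1} b a^{-1} b^{-1}$ or some rearrangement, and also $b^3=1$ to fold $b^{-1}$'s into $b^2$'s, grinding the two sides into a common normal form.

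The main obstacle I anticipate is purely bookkeeping: keeping track of which of the three cubing relations to apply and in which direction, since the word length does not monotonically decrease and one must pass through longer intermediate words before collapsing. A cleaner route, which I would try in parallel, is to introduce $c := ab$ and $d := ab^{-1}$, so $a = $ (something symmetric in $c,d$) and $b = a^{-1}c$, $b^{-1}=a^{-1}d$; then $c^3 = d^3 = b^3 = 1$ and the hypothesis becomes three order-$3$ relations among $a,c,d$ with $cd^{-1} = ab\cdot ba^{-1}$ — but this may not simplify things. Failing an elegant identity, the safe fallback is the brute-force rewriting sketched above, which is guaranteed to terminate because the quotient of $\F_2$ by these three relations is a known finite group (it is a quotient of the Burnside-type group $B(2,3)$-related presentation), so every true word identity has a finite derivation; I would simply present the shortest chain of rewrites I can find.
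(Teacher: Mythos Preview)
Your strategy of rewriting words directly using the three cubing relations is viable and is different from the paper's approach, but your execution has an error and is left incomplete. The slip: from $a^{-1}b^{-1}a^{-1}ba = b^{-1}a^{-1}b$, right-multiplication by $b^{-1}$ gives $a^{-1}b^{-1}a^{-1}bab^{-1} = b^{-1}a^{-1}$, not $a^{-1}$, and your subsequent reductions inherit this mistake. Your fallback is also unjustified: the group $\langle a,b \mid b^3,(ab)^3,(ab^{-1})^3\rangle$ is not visibly a quotient of any Burnside group (no relation bounds the order of $a$), and you give no argument that it is finite, so ``guaranteed to terminate'' is unsupported. That said, the direct route does finish in three lines: from $(ab)^3=1$ one has $a^{-1}b^{-1}a^{-1}=bab$, so
\[
[a^b,a]=bab^{-1}\cdot a\cdot ba^{-1}b^{-1}a^{-1}=bab^{-1}ab\cdot(bab)=bab^{-1}ab^{-1}ab
\]
using $b^2=b^{-1}$; then $(ab^{-1})^3=1$ gives $b^{-1}ab^{-1}a=a^{-1}b$, and the word collapses to $ba\cdot a^{-1}b\cdot b=b^3=1$.

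The paper does something entirely different: it records the free-group identity
\[
[a^b,a]=\bigl((ab^{-1})^b\bigr)^3\,(b^2a^{-1})^3\,\bigl((b^{-1})^a\bigr)^3,
\]
valid without any hypotheses, and then observes that each factor vanishes under one of the three given relations --- the first is a conjugate of $(ab^{-1})^3$, the second equals $((ab)^{-1})^3$ once $b^2=b^{-1}$, and the third is a conjugate of $b^{-3}$. This avoids any search through rewritings, at the price of having to discover (or at least verify) the identity. Your approach, once corrected, is more pedestrian but equally short and arguably easier to find.
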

\begin{proof}
  This follows from the identity
  $$[a^b,a]=\left(\left(ab^{-1}\right)^b\right)^3\left(b^2a^{-1}\right)^3\left(\left(b^{-1}\right)^a\right)^3$$
  which holds in any group.
\end{proof}

\begin{lem}\label{cor:x3-2eng-invcom}
  If $[a^b,a]=1$ and $a^3=1$ then $[a,b]^3=1$, $[b,a,a]=1$ and $[a,b]^{-1}=[a^{-1},b]$.
\end{lem}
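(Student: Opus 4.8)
The statement to prove is \Lref{cor:x3-2eng-invcom}: assuming $[a^b,a]=1$ and $a^3=1$, we must derive $[a,b]^3=1$, $[b,a,a]=1$, and $[a,b]^{-1}=[a^{-1},b]$. The hypothesis $[a^b,a]=1$ says precisely that $a$ commutes with its conjugate $a^b = bab^{-1}$. Set $N=\langle a, a^b\rangle$; this is an abelian subgroup (both generators have order dividing $3$ and they commute), so $N$ is an abelian group of exponent dividing $3$. The plan is to do all the computations inside $N$, where everything is additive.

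First I would observe that $[a,b] = a^{-1}a^b \in N$, so $[a,b]^3 = (a^{-1}a^b)^3 = a^{-3}(a^b)^3 = 1$ since $N$ is abelian of exponent $3$; this gives the first assertion immediately. For $[a,b]^{-1}=[a^{-1},b]$: compute $[a^{-1},b] = (a^{-1})^{-1}(a^{-1})^b = a\,(a^b)^{-1}$, and $[a,b]^{-1} = (a^{-1}a^b)^{-1} = (a^b)^{-1}a$; these are equal because $a$ and $a^b$ commute. Both of these are one-line verifications once the abelianness of $N$ is in hand.

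The remaining assertion $[b,a,a]=1$ is the one requiring a little more care, since it involves conjugating $[b,a]$ by $a$, and $[b,a]=a^{-1}\cdot{}^{b^{-1}}\!a$ a priori involves $a^{b^{-1}}$ rather than $a^b$. The clean route: $[b,a,a] = [[b,a],a]$, and $[b,a] = [a,b]^{-1} = a\,(a^b)^{-1} \in N$ by what was just shown. So $[b,a]$ is an element of $N$, and we need $[[b,a],a]=1$, i.e.\ that $a$ commutes with $[b,a]\in N$. But $a\in N$ and $N$ is abelian, so this is automatic. Thus all three conclusions follow from the single structural fact that $a$ and $a^b$ generate an abelian group of exponent dividing $3$.

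**Anticipated obstacle.** There is essentially no obstacle — the only thing to be slightly careful about is bookkeeping of inverses and the direction of conjugation in the commutator convention ($a^b = bab^{-1}$ here, and $[a,b]=a^{-1}a^b$ presumably, matching the paper's earlier usage), to make sure that the intermediate elements genuinely lie in $N=\langle a,a^b\rangle$ and not in some larger subgroup involving $a^{b^{-1}}$. Once one checks that $[a,b]$, $[b,a]$, and $[a^{-1},b]$ are all expressible using only $a$ and $a^b$, the abelian exponent-$3$ computation closes everything out.
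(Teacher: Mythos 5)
Your argument is correct and is essentially the paper's own proof, repackaged: the paper writes $[b,a]=a^ba^{-1}$ and evaluates the three expressions directly using that $a$ and $a^b$ commute and each has order dividing $3$, which is precisely your observation that everything lives in the abelian exponent-$3$ subgroup $N=\langle a,a^b\rangle$. The only small mismatch is notational: with the paper's conventions ($a^b=bab^{-1}$ and, as its proof implies, $[x,y]=xyx^{-1}y^{-1}$), one has $[a,b]=a(a^b)^{-1}$ rather than $a^{-1}a^b$, but this changes nothing since either expression lies in $N$, which is all your argument uses.
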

\begin{proof}
  Note that $[b,a]=a^ba^{-1}$, so
  \begin{align}
    [a,b]^3&=\left(a^ba^{-1}\right)^3=(a^b)^3a^{-3}=1\label{eq:[a,b]^3}\\
    [b,a,a]&=[[b,a],a]=[a^ba^{-1},a]=1\nonumber\\
    [a,b]^{-1}&=bab^{-1}a^{-1}=a^ba^{-1}=a^{-1}a^b=a^{-1}bab^{-1}=[a^{-1},b].\label{eq:[a,b]^-1}
  \end{align}
  The lemma is thus proved.
\end{proof}

\begin{lem}\label{lem:x3-comm-trans}
  If $[a^b,a]=[a^c,a]=[a^b,a^c]=[b^a,b]=[(bc)^a,bc]=[c^b,c]=[c^{ba},c]=1$, then $[a,b,c]^{-1}=[a,c,b]$.
\end{lem}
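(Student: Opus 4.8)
The claim is a purely algebraic identity-hunting problem in an abstract group: under the listed commutation hypotheses, $[a,b,c]^{-1}=[a,c,b]$. My plan is to reduce this to a combination of the elementary facts already extracted in \Lref{cor:x3-2eng-invcom} and \Lref{lem:x3-comm-trans}'s hypotheses, by working in the subgroup generated by the various conjugates of $a$, $b$, $c$ and exploiting that each pair among $\{a^b,a^c\}$, $\{b^a,b\}$, $\{c^b,c\}$, etc., commutes. First I would observe that the hypothesis $[a^b,a]=1$ together with (implicitly, from the ambient situation where $a,b,c$ are good) $a^3=1$ lets me apply \Lref{cor:x3-2eng-invcom} to each of the pairs $(a,b)$, $(a,c)$, $(b,a)$, $(bc,a)$, $(c,b)$, $(c,ba)$, giving in particular that $[b,a,a]=1$, $[c,b,b]=1$, $[a,b]^{-1}=[a^{-1},b]$, $[a,c]^{-1}=[a^{-1},c]$, and the analogous ``inverse-swaps'' for the other pairs. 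These are the 2-Engel-type relations and the sign rules that make commutator manipulation tractable.

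Next I would write both sides in terms of conjugates of $a$. Using $[a,b]=aba^{-1}b^{-1}$ one has $[a,b,c]=[[a,b],c]$; I would expand $[a,b]$ and then $[[a,b],c]$ and try to collect everything into a product of conjugates $a^{g}$ for suitable words $g$ in $b,c$. The point of the hypotheses $[a^b,a^c]=1$, $[b^a,b]=1$, $[(bc)^a,bc]=1$, $[c^b,c]=1$, $[c^{ba},c]=1$ is precisely to guarantee that the conjugates of $a$ (resp.\ $b$, resp.\ $c$) that arise all pairwise commute, so that I can freely reorder them and cancel. Concretely, $[a,b,c]$ and $[a,c,b]$ should both reduce to an expression of the form (product of four conjugates of $a$ by words of length $\le 2$ in $\{b,c\}$), and the two reductions should be visibly inverse to each other once the commuting relations are invoked to align the factors. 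A clean way to organize this: set $u=[a,b]$, $v=[a,c]$; show $u,v$ lie in the abelian group generated by $\{a, a^b, a^c, a^{bc}, \dots\}$ (this uses $[a^b,a]=[a^c,a]=[a^b,a^c]=1$), compute $u^c$ and $v^b$ inside that abelian group, and then the identity becomes additive and falls out.

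The main obstacle is the bookkeeping: making sure that every conjugate of $a$ appearing in the expansion of $[a,b,c]$ really does lie in the span of the explicitly-controlled commuting set, and likewise that the $b$- and $c$-conjugates appearing when we conjugate by $c$ or $b$ are covered by $[b^a,b]=1$, $[c^b,c]=1$, $[(bc)^a,bc]=1$, $[c^{ba},c]=1$. I expect the hypotheses to have been reverse-engineered from exactly the conjugates that show up, so the strategy is to expand honestly, list the conjugates that appear, and check them off against the hypothesis list one by one; the identity should then be forced. If the direct expansion gets unwieldy, the fallback is to mimic the proof style of \Lref{lem:[ab,a]}: guess the identity expressing $[a,b,c]^{-1}[a,c,b]^{-1}$ (or $[a,b,c][a,c,b]$) as a product of cubes of simple products — which are trivial by goodness — and verify it holds in the free group, perhaps with computer assistance as the authors mention they originally did.
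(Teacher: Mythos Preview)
Your primary plan --- write $u=[a,b]=a(a^{-1})^b$ and $v=[a,c]=a(a^{-1})^c$, place them in the abelian group generated by $\{a,a^b,a^c,\dots\}$, and reduce $[a,b,c]^{-1}=[a,c,b]$ to an additive identity --- does not go through. When you expand $[u,c]=u(u^{-1})^c$ you are forced to introduce $a^{cb}$, and likewise $[v,b]$ introduces $a^{bc}$. None of the seven hypotheses says anything about $a^{bc}$ or $a^{cb}$ commuting with $a,a^b,a^c$; the only $a$-conjugates under control are $a,a^b,a^c$. If one proceeds formally, the ``additive'' statement one would need is $2a^b+2a^c-2a=a^{bc}+a^{cb}$, which is a genuine relation, not a free identity, and cannot be read off from the hypothesis list. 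So the expectation that ``the hypotheses were reverse-engineered from exactly the conjugates that show up'' is false for the $a$-conjugate bookkeeping you propose. (A smaller issue: you invoke \Lref{cor:x3-2eng-invcom}, but that lemma needs $a^3=1$, which is \emph{not} among the hypotheses here; the present lemma is purely about commutation.)

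The paper's proof avoids this by never expanding $[a,c]^b$ in terms of conjugates of $a$. The organizing identity is $[a,bc,bc]=1$, which is exactly the hypothesis $[(bc)^a,bc]=1$. One expands $[a,bc,bc]$ via $[x,yz]=[x,y][x,z]^y$ into a product involving $[a,bc,b]$ and $[a,bc,c]$, and then simplifies each piece separately. The crucial move is to write $[a,c]=c^ac^{-1}$ (conjugates of $c$, not of $a$), so that $[a,c]^b=c^{ba}(c^{-1})^b$; now the hypotheses $[c^{ba},c]=1$ and $[c^b,c]=1$ are tailor-made to give $[[a,c]^b,c]=1$, whence $[a,bc,c]=[a,b,c]$. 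Similarly, $[b^a,b]=1$ gives $[a,b,b]=1$ and $b^{[a,b]}=b$, and the three $a$-conjugate hypotheses give $[[a,b],[a,c]]=1$, from which $[a,bc,b]=[a,c,b]^b$. Putting these into $1=[a,bc,bc]=[a,bc,b]\,[a,bc,c]^b$ yields $[a,c,b]^b[a,b,c]^b=1$, i.e.\ the claim. In short: the hypotheses about $c^{ba}$, $c^b$, $b^a$, $(bc)^a$ are there precisely so that the relevant commutators can be rewritten in terms of conjugates of $c$ and $b$ at the right moments --- not to enlarge the commuting set of $a$-conjugates. Your fallback (search for a free-group identity) would of course succeed, but the human argument hinges on this change of viewpoint.
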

\begin{proof}
  In any group 
  \begin{equation}\label{eq:[a,bc]}
  [a,bc]=[a,b][a,c]^b \textrm{ and } [ab,c]=[b,c]^a[a,c],
  \end{equation}
  so
  $$[a,bc,c]=[[a,b][a,c]^b,c]=[[a,c]^b,c]^{[a,b]}[a,b,c].$$
  Note that $[a,c]^b=(c^ac^{-1})^b=c^{ba}(c^{-1})^b$, so by the assumptions $[[a,c]^b,c]=1$, and thus
  $$[a,bc,c]=[a,b,c].$$
  In a similar manner,
  $$[a,bc,b]=[[a,b][a,c]^b,b]=[[a,c]^b,b]^{[a,b]}[a,b,b]\stackrel{(*)}{=}[[a,c]^b,b]^{[a,b]}=[[a,c]^{[a,b]b},b^{[a,b]}].$$
  As $[b^a,b]=1$, the elements $b$ and $[a,b]$ commute, which gives $(*)$ and also $b^{[a,b]}=b$. In addition, $[a,b]=a(a^{-1})^b$ and $[a,c]=a(a^{-1})^c$, so our assumptions imply that $[a,b]$ and $[a,c]$ commute. 
  This proves that
  $$[a,bc,b]=[[a,c]^b,b]=[a,c,b]^b.$$
  Finally, as $[(bc)^a,bc]=1$, using again (\ref{eq:[a,bc]}),
  $$1=[[a,bc],bc]=[[a,bc],b][[a,bc],c]^b=[a,c,b]^b[a,b,c]^b,$$
  showing that $[a,c,b]=[a,b,c]^{-1}$, as required.
\end{proof}

\begin{lem}\label{cor:x3-comm-cyc}
  If $[a,c,b]^{-1}=[a,b,c]$ and $[[a,c]^b,[a,c]]=1$, then $[a,b,c]=[c,a,b]$.
\end{lem}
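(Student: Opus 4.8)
The plan is to reduce the desired identity $[a,b,c]=[c,a,b]$, via the given hypothesis $[a,c,b]^{-1}=[a,b,c]$, to the single equality $[a,c,b]^{-1}=[c,a,b]$, and then to derive this equality from $[[a,c]^b,[a,c]]=1$ by the same kind of one-line manipulation that proves \eqref{eq:[a,b]^-1} in \Lref{cor:x3-2eng-invcom}.

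First I would introduce the shorthand $q=[a,c]$, together with the elementary observation that $[c,a]=q^{-1}$ (indeed $[c,a]=cac^{-1}a^{-1}=(aca^{-1}c^{-1})^{-1}$). In this notation the hypothesis $[a,c,b]^{-1}=[a,b,c]$ reads $[q,b]^{-1}=[a,b,c]$, the conclusion $[a,b,c]=[c,a,b]$ reads $[q,b]^{-1}=[q^{-1},b]$, and the remaining hypothesis $[[a,c]^b,[a,c]]=1$ says precisely that $q$ commutes with its conjugate $q^b$. So everything comes down to the purely formal implication: if $[q^b,q]=1$, then $[q,b]^{-1}=[q^{-1},b]$.

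That implication I would check by expanding both sides in the paper's conventions $a^b=bab^{-1}$, $[x,y]=xyx^{-1}y^{-1}$: one has $[q,b]^{-1}=bqb^{-1}q^{-1}$, while $[q^{-1},b]=q^{-1}bqb^{-1}=q^{-1}q^b$, and these two group elements coincide exactly when $q\cdot(bqb^{-1})=(bqb^{-1})\cdot q$, i.e.\ when $[q,q^b]=1$, which is the hypothesis. This is essentially the computation in \eqref{eq:[a,b]^-1} with $[a,c]$ in the role of $a$; note that for this particular consequence only the commuting hypothesis is used, not any order-$3$ hypothesis, which is what allows the lemma to be chained with \Lref{cor:x3-2eng-invcom} and \Lref{lem:x3-comm-trans} later without circularity.

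I do not expect a genuine obstacle here: the statement is a short commutator identity and the whole content is the bookkeeping of conjugation/inverse conventions, plus the recognition that the hypothesis $[[a,c]^b,[a,c]]=1$ is \emph{equivalent} to $[a,c,b]^{-1}=[c,a,b]$ rather than merely implying it.
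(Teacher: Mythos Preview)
Your proof is correct and is essentially the paper's own argument: the paper also writes $[a,c,b]^{-1}=[[a,c],b]^{-1}\stackrel{\eqref{eq:[a,b]^-1}}{=}[[a,c]^{-1},b]=[c,a,b]$ and then invokes the first hypothesis, which is exactly your reduction with $q=[a,c]$. Your remark that only the commuting condition (not any order-$3$ condition) is used in \eqref{eq:[a,b]^-1} is accurate and is what makes the citation legitimate here.
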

\begin{proof}
  Since $[[a,c]^b,[a,c]]=1$, we have $[a,c,b]^{-1}=[[a,c],b]^{-1}
  \stackrel{\textrm{(\ref{eq:[a,b]^-1})}}{=}[[a,c]^{-1},b]=[c,a,b]$, so $[c,a,b]=[a,c,b]^{-1}=[a,b,c]$.
\end{proof}

We want to be able to apply the above lemmas where $a,b,c$ are letters, commutators or products of letters and commutators. We therefore need the following lemma:

\begin{lem}\label{lem:(a[b,c])^3}
  If $a^3=(abc)^3=(ac)^3=(acb^{-1})^3=(ac^{-1})^3=[b^a,b]=[b^c,b]=[c^b,c]=[c^a,c^b]=1$, then $(a[b,c])^3=1$.
\end{lem}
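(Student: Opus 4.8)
The goal is to show that if $a,b,c$ satisfy the stated collection of order-$3$ relations (namely $a^3=1$, together with $(abc)^3=(ac)^3=(acb^{-1})^3=(ac^{-1})^3=1$ and the commutator relations $[b^a,b]=[b^c,b]=[c^b,c]=[c^a,c^b]=1$), then $(a[b,c])^3=1$. My plan is to reduce this to the earlier lemmas by the substitution trick: the whole point of formulating \Lref{lem:[ab,a]} and \Lref{cor:x3-2eng-invcom} for arbitrary group elements $a,b$ is so that we may plug in $a$ for the first slot and $[b,c]$ (or a conjugate of it) for the second slot.

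\medskip
First I would observe that $(a[b,c])^3 = 1$ is, after rewriting $[b,c] = bcb^{-1}c^{-1}$ and using $a^3=1$, equivalent to a statement about $[a^{[b,c]}, a]$ via \eqref{eq:[a,b]^3}: indeed if we knew $[a^{d},a]=1$ for $d=[b,c]$ then \Cref{cor:x3-2eng-invcom} (applied with the pair $(a,d)$, using $a^3=1$) would give $[a,d]^3=1$, and one checks that $(ad)^3=1$ is equivalent to $[a,d]^3 = 1$ when $a^3=d^{?}$... — more carefully, the cleanest route is to apply \Lref{lem:[ab,a]} directly with its ``$a$'' equal to our $a$ and its ``$b$'' equal to our $[b,c]$: that lemma concludes $[a^{[b,c]},a]=1$ provided we can verify its three hypotheses $[b,c]^3=1$, $(a[b,c])^3=1$, $(a[b,c]^{-1})^3=1$. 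But the middle one is exactly what we want to prove, so this is circular and I must instead run the implication the other way, or find the right intermediate identity. The correct plan is: use the algebraic identity behind \Lref{lem:[ab,a]} in reverse. Since $[a^d,a] = ((ad^{-1})^d)^3 (d^2a^{-1})^3 ((d^{-1})^a)^3$ holds in any group with $d=[b,c]$, and since $d^3 = [b,c]^3 = 1$ will follow from \Cref{cor:x3-2eng-invcom} applied to the pair $(b,c)$ — wait, that needs $[b^c,b]=1$ and $c^3=1$, but $c^3=1$ is not assumed. So instead $[b,c]^3=1$ should come from applying \Cref{cor:x3-2eng-invcom} to the pair $(c,b)$: we have $[c^b,c]=1$ and we need $c^3=1$... still not assumed.

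\medskip
Let me reconsider: among the hypotheses we have $[c^b,c]=1$ and $[b^c,b]=1$ and $[b^a,b]=1$ and $[c^a,c^b]=1$, but no cube relation on $b$ or $c$ individually. So $[b,c]$ need not have order $3$. The right strategy, then, is to expand $(a[b,c])^3$ directly and massage it using the four commutator relations plus the four cube relations on the simple products $abc, ac, acb^{-1}, ac^{-1}$. Concretely, $(a[b,c])^3 = a[b,c]a[b,c]a[b,c]$; I would conjugate and rearrange to express this in terms of $(a\cdot bcb^{-1}c^{-1})^3$ and try to insert the products $ac$, $abc$, etc. A promising manipulation: write $a[b,c] = abcb^{-1}c^{-1} = (abc)(c b^{-1} c^{-1})$ and note $cb^{-1}c^{-1} = (b^{-1})^{c^{-1}}$... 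Alternatively, using $[b^c,b]=1$ and $[c^b,c]=1$ to move $b$'s and $c$'s past their conjugates. The main obstacle, and where the real work lies, is exactly this bookkeeping: finding the precise sequence of substitutions of the cube relations $(abc)^3, (ac)^3, (acb^{-1})^3, (ac^{-1})^3$ into the expansion of $(a[b,c])^3$, using the commutator relations to make the substitutions legal. I expect the cleanest writeup mirrors \Lref{lem:[ab,a]}: exhibit an explicit identity, valid in any group, expressing $(a[b,c])^3$ as a product of (conjugates of) cubes of simple products and of commutators among $b,c$ and their conjugates, and then kill each factor using a hypothesis.

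\medskip
So the plan in order is: (1) record the general-group identity $[a,bc] = [a,b][a,c]^b$ and its variants (already \eqref{eq:[a,bc]}) and the conjugation identities I'll need; (2) use $[b^a,b]=1$, $[b^c,b]=1$, $[c^b,c]=1$, $[c^a,c^b]=1$ to establish the ``commuting'' facts I need — e.g. that $b$ commutes with $b^a$ and $b^c$, $c$ commutes with $c^b$, and $c^a$ commutes with $c^b$; (3) expand $(a[b,c])^3$, insert factors of the form $(ac)(ac)^{-1}$, $(abc)(abc)^{-1}$, etc., to build up cubes of the four listed simple products; (4) use step (2) to cancel the leftover commutator debris. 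The hardest step is (3)–(4): getting the insertion pattern right. I would guide it by the analogy with \Lref{lem:[ab,a]}, whose proof is a single displayed identity, and expect a similar one-line (if long) identity to exist here; failing a slick identity, the fallback is the systematic rewriting using \eqref{eq:[a,bc]} repeatedly, which is guaranteed to terminate since every generator in sight has bounded exponent in the relevant quotient.
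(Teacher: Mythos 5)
Your proposal correctly identifies the strategy the paper actually uses---a direct word computation, expanding $(a[b,c])^3$ and substituting the cube relations $(abc)^3=(ac)^3=(acb^{-1})^3=(ac^{-1})^3=1$ while using the four commutator relations to permit the necessary rearrangements---and you make two genuinely useful preliminary observations: that a naive reduction to \Lref{lem:[ab,a]} is circular (since $(a[b,c])^3=1$ is itself a hypothesis of that lemma), and that $[b,c]$ need not have order $3$ because no cube relation on $b$ or $c$ alone is assumed, so \Lref{cor:x3-2eng-invcom} cannot be applied to the pair $(b,c)$ or $(c,b)$. Both points are correct and non-obvious.

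However, what you present is a plan, not a proof. Steps (3) and (4)---expanding $(a[b,c])^3$, inserting the cube relations, and cancelling the leftover commutators---constitute the entire content of the lemma, and you explicitly defer them (``the hardest step is (3)--(4): getting the insertion pattern right''). The paper's proof consists precisely of carrying this out: after establishing $[b,c]=[b^{-1},c^{-1}]$ from $[b^c,b]=[c^b,c]=1$, it rewrites each cube relation as an identity ($bcabc=a^{-1}c^{-1}b^{-1}a^{-1}$, $c^{-1}a^{-1}c^{-1}=aca$, $b^{-1}acb^{-1}a=c^{-1}a^{-1}bc^{-1}$, $ac^{-1}ac^{-1}=ca^{-1}$), then shows $a[b,c]a[b,c]=(a[b,c])^{-1}$ by a chain of substitutions that uses $[b^a,b]=1$ and $[c^a,c^b]=1$ at two pivotal steps to slide conjugates past each other, and uses $a^3=1$ to replace $a^{-2}$ by $a$. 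None of that computation appears in your writeup, so the lemma is not established.

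Your proposed fallback---``systematic rewriting using \eqref{eq:[a,bc]} repeatedly, which is guaranteed to terminate since every generator in sight has bounded exponent in the relevant quotient''---is also not sound. The hypotheses impose no exponent bound on $b$ or $c$ individually (only $a$ has $a^3=1$), the group generated by $a,b,c$ subject to the given relations need not be finite, and there is no specified termination ordering that makes such a rewriting process well-founded. So the escape hatch you rely on does not actually exist; the explicit computation is unavoidable.
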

\begin{proof}
  As $[b^c,b]=[c^b,c]=1$, we have 
  \begin{equation}\label{eq:[b,c]}
  [b,c]
  \stackrel{\textrm{(\ref{eq:[a,b]^-1})}}{=}
  [b^{-1},c]^{-1}=[c,b^{-1}]
  \stackrel{\textrm{(\ref{eq:[a,b]^-1})}}{=}
  [c^{-1},b^{-1}]^{-1}=[b^{-1},c^{-1}].
  \end{equation}Also, since $(abc)^3=(ac)^3=(acb^{-1})^3=(ac^{-1})^3=1$, we have
  \begin{align}
    bcabc & =a^{-1}c^{-1}b^{-1}a^{-1}\label{eq:plc1}\\
    c^{-1}a^{-1}c^{-1} &= aca\label{eq:plc2}\\
    b^{-1}acb^{-1}a &= c^{-1}a^{-1}bc^{-1}\label{eq:plc3}\\
    ac^{-1}ac^{-1} &= ca^{-1}\label{eq:plc4}
  \end{align}
  Therefore
  \begin{align*}
    a[b,c]a[b,c] & 
    \stackrel{\textrm{(\ref{eq:[b,c]})}}{=}
    a[b^{-1},c^{-1}]a[b,c]=ab^{-1}c^{-1}bcabcb^{-1}c^{-1}=ab^{-1}c^{-1}(bcabc)b^{-1}c^{-1}=\\
    &\overset{\eqref{eq:plc1}}{=}ab^{-1}c^{-1}(a^{-1}c^{-1}b^{-1}a^{-1})b^{-1}c^{-1}= ab^{-1}(c^{-1}a^{-1}c^{-1})b^{-1}a^{-1}b^{-1}c^{-1}=\\
    &\overset{\eqref{eq:plc2}}{=} ab^{-1}(aca)b^{-1}a^{-1}b^{-1}c^{-1}=ab^{-1}ac((ab^{-1}a^{-1})b^{-1})c^{-1}=\\
    &\stackrel{\mathclap{[b^a,b]=1}}{=}\;\;\;ab^{-1}ac(b^{-1}(ab^{-1}a^{-1}))c^{-1}=a(b^{-1}acb^{-1}a)b^{-1}a^{-1}c^{-1}=\\
    &\overset{\mathclap{\eqref{eq:plc3}}}{=}\;a(c^{-1}a^{-1}bc^{-1})b^{-1}a^{-1}c^{-1}=(ac^{-1}a^{-1})(bc^{-1}b^{-1})a^{-1}c^{-1}=\\
    &\stackrel{\mathclap{[c^a,c^b]=1}}{=}\;\;\;(bc^{-1}b^{-1})(ac^{-1}a^{-1})a^{-1}c^{-1}=bc^{-1}b^{-1}ac^{-1}a^{-2}c^{-1}=\\
    &=bc^{-1}b^{-1}(ac^{-1}ac^{-1})\overset{\eqref{eq:plc4}}{=}bc^{-1}b^{-1}ca^{-1}=[b,c^{-1}]a^{-1}=\\
    &=[c^{-1},b]^{-1}a^{-1}=[c,b]a^{-1}=\left(a[b,c]\right)^{-1},
  \end{align*}
  so $(a[b,c])^3=1$.
\end{proof}

Finally, we can prove the following:

\begin{prop}\label{prop:good-imp-nilp}
  Suppose that $x,y,z,w\in G$ are good. Then $[x,y,z,w]=1$.
\end{prop}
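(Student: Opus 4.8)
The plan is to assemble Lemmas \Lref{lem:[ab,a]}--\Lref{lem:(a[b,c])^3} into a chain of deductions ending at $[x,y,z,w]=1$, exploiting throughout that any product of $x^{\pm1},y^{\pm1},z^{\pm1},w^{\pm1}$ in which each letter occurs at most once is a simple product of good elements and hence has order $3$, so it may be used freely as a hypothesis in those lemmas.

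\emph{Two-variable relations, and promoting commutators to ``letters''.} First I would run \Lref{lem:[ab,a]} on every ordered pair $(g,h)$ of distinct letters: the hypotheses $h^3=(gh)^3=(gh^{-1})^3=1$ are order-$3$ statements on simple products, so $[g^h,g]=1$, and then \Lref{cor:x3-2eng-invcom} (using $g^3=1$) gives $[g,h]^3=1$, $[h,g,g]=1$ and $[g,h]^{-1}=[g^{-1},h]$; the same applied with conjugation by a simple word gives $[g^h,g^k]=1$ for distinct $g,h,k$. The crucial point is that a commutator such as $c\coloneqq[x,y]$ can play the role of a variable in all of these lemmas. For that I need order-$3$ relations for $cv$ and $cv^{-1}$ with $v\in\{z,w\}$ (and, for the triple-commutator lemmas, for products of $c$ with two distinct letters); but $(cv)^3=1$ is exactly the conclusion of \Lref{lem:(a[b,c])^3} applied with $a=v$ and $[b,c]=[x,y]$, whose hypotheses are in turn order-$3$ statements on simple products together with two-variable relations already obtained. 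This supplies, with $c=[x,y]$ in the role of the variable, $c^3=1$, $[c^v,c]=1$, $[c,v]^3=1$, $[v,c,c]=1$, $[c,v]^{-1}=[c^{-1},v]$, and likewise $[c^z,c^w]=1$.

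\emph{Symmetry of triple commutators, and conclusion.} With the above the full hypothesis lists of \Lref{lem:x3-comm-trans} and \Lref{cor:x3-comm-cyc} are met for every triple of letters, and also for the triple $([x,y],z,w)$ and its permutations, so in each such case $[a,b,c]^{-1}=[a,c,b]$ and $[a,b,c]=[c,a,b]$: the triple commutator is invariant under cyclic permutation and inverted by a transposition. Then $[x,y,z,w]=[[x,y],z,w]$, and applying these symmetries to the triple $([x,y],z,w)$ together with the Hall--Witt identity (converted, via $[u,v]^{-1}=[u^{-1},v]$ from the previous step, into a Jacobi-type relation among $[x,y],z,w$) should express $[[x,y],z,w]$ as a product of triple and quadruple commutators all of which the previous step already shows to be trivial.

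\emph{Main obstacle.} The genuine work lies in the last two steps: checking that the long hypothesis lists of \Lref{lem:(a[b,c])^3}, \Lref{lem:x3-comm-trans} and \Lref{cor:x3-comm-cyc} really do reduce to simple-product order-$3$ statements once a commutator occupies one of the slots, and then identifying the precise combination of commutator identities that forces $[[x,y],z,w]=1$. Controlling order-$3$ of ``mixed'' products such as $[x,y]\cdot z\cdot w$ is the delicate point; it may well be cleanest to prove one or two further identities in the spirit of \Lref{lem:(a[b,c])^3}, or to reduce the weight-$4$ commutator via Hall--Witt to ones already handled, rather than expanding everything by hand.
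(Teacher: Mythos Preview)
Your framework---iterate \Lref{lem:[ab,a]}, equation~\eqref{eq:[a,b]^3} and \Lref{lem:(a[b,c])^3} so that commutators of disjoint letters may themselves be fed into the lemmas, and then invoke \Lref{lem:x3-comm-trans} and \Lref{cor:x3-comm-cyc} to obtain the $S_3$-symmetry of triple commutators---is exactly what the paper does. So the first two thirds of your plan are correct and match the paper.

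The gap is in your final step. You propose to finish via the Hall--Witt identity, converted into a Jacobi relation among $[x,y],z,w$, and then to recognise the resulting commutators as already trivial. But the only triple commutators you know to be trivial are the Engel ones $[h,g,g]=1$; general triple commutators are not trivial, and once conjugates are stripped from Hall--Witt (which itself needs justification here), the cyclic invariance you have established collapses Jacobi to $[a,b,c]^3=1$, a fact you already possess. So Hall--Witt yields nothing new and the argument stalls.

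The paper's endgame is different and sharper. Using only the cyclic rule $[a,b,c]=[c,a,b]$, the transposition rule $[a,b,c]^{-1}=[a,c,b]$, and $[g,h]^{-1}=[g^{-1},h]$ (each valid when one of $a,b,c$ is a commutator of the remaining letters), one shows by a short chain that $[[x,y],[z,w]]=[x,y,z,w]$; and directly by one cyclic move that $[[z,w],[x,y]]=[[x,y],z,w]=[x,y,z,w]$ as well. Since $[[z,w],[x,y]]=[[x,y],[z,w]]^{-1}$, this forces $[x,y,z,w]^2=1$. Combined with $[x,y,z,w]^3=1$ (which your iteration does supply), the element is trivial. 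That ``order $2$ meets order $3$'' trick is the missing idea.
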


\begin{proof}
  Our first step is to apply three times the combination of \Lref{lem:[ab,a]}, equation~\eqref{eq:[a,b]^3} and \Lref{lem:(a[b,c])^3}. Here are the exact details.
  
We first apply \Lref{lem:[ab,a]} to get that $[x^y,x]=1$ and similarly for all other disjoint simple products of letters from $\{x,y,z,w,x^{-1},y^{-1},z^{-1},w^{-1}\}$, for example $[(zy)^{xw^{-1}},zy]=1$ (recall that $x,y,z,w$ are good, namely any simple product is or order 3). In particular we get $[x^{z^{-1}y},x]=1$ which implies $[x^y,x^z]=1$ (again, for any 3 disjoint simple products). 

  These facts allow us to apply \eqref{eq:[a,b]^3} to get $[x,y]^3=1$ and \Lref{lem:(a[b,c])^3} to get $(x[y,z])^3=1$, again for any 2 and 3 disjoint simple products. These estimates we can feed back into \Lref{lem:[ab,a]} to get $[[x,y]^z,[x,y]]=[[x,y]^{zw},[x,y]]=[x^{[y,z]},x]=[x^{y[z,w]},x]=1$ (for the last case we use that if $(y^{-1}x[w,z])^3=1$ then one can conjugate with $y$ and get that $(x[w,z]y^{-1})^3=1$). Again we conclude from $[x^{y^{-1}[z,w]},x]\linebreak[4]=1$ that $[x^{[z,w]},x^y]=1$.

  We continue using \eqref{eq:[a,b]^3} to get $[[x,y],z]^3=1$ and \Lref{lem:(a[b,c])^3} to get $(w[[x,y],z])^3=1$. A final round through \Lref{lem:[ab,a]} and \eqref{eq:[a,b]^3} gives $[[[x,y],z],w]^3=1$.
  
Thus we may apply \Lref{lem:x3-comm-trans} and \Lref{cor:x3-comm-cyc} whenever $a,b,c$ are letters or commutators of $x,y,z,w$ without joint letters, as long as at most one commutator (possibly iterated) appears. On the one hand,
  \begin{align*}
    [[x,y],[z,w]] & \overset{\textrm{\ref{cor:x3-comm-cyc}}}{=} [[[z,w],x],y] = [[z,w,x],y] \overset{\textrm{\ref{cor:x3-comm-cyc}}}{=} [x,z,w,y]=\\
    & \overset{\textrm{\ref{lem:x3-comm-trans}}}{=} [x,z,y,w]^{-1} \overset{\textrm{\ref{lem:x3-comm-trans}}}{=} [[x,y,z]^{-1},w]^{-1} \overset{\textrm{\eqref{eq:[a,b]^-1}}}{=} [x,y,z,w].
  \end{align*}
  On the other hand, $[[z,w],[x,y]]=[[x,y],z,w]=[x,y,z,w]$, so $[x,y,z,w]^{-1}=[x,y,z,w]$, which can be written as $[x,y,z,w]^2=1$. As $[x,y,z,w]^3=1$, it follows that $[x,y,z,w]=1$.
\end{proof}

We can now conclude the proof of the theorem.

\begin{proof}[Proof of \Tref{thm:x3-gap}]
  Assume that $\liminf_{n\to\infty}\Pr\left(R_n^3=1\right)>1-\eps$. Take four independent random walks $X_n,Y_n,Z_n,W_n$ with respect to $S$. As previously explained, any simple product of $X_n,Y_n,Z_n,W_n$ is also distributed as a random walk on $G$ with respect to $S$, so it is of order $3$ with high probability. Therefore, the elements $X_n,Y_n,Z_n,W_n$ are good with high probability (that depends only on $\eps$).

  If $X_n,Y_n,Z_n,W_n$ are good, by \Pref{prop:good-imp-nilp} it follows that $[X_n,Y_n,Z_n,W_n]=1$. Therefore, if~$\eps$ is small enough, $\liminf_{n\to\infty}\Pr\left([X_n,Y_n,Z_n,W_n]=1\right)>0$, so by \cite[Theorem 1.5]{MTVV} the group $G$ is virtually nilpotent. By \cite[Theorem 3.25]{Hi}, any nilpotent groups is residually finite, so $G$ is also residually finite. As the law $x^3=1$ has a gap for finite groups \cite{laf}, if $\eps$ is small enough than $G$ must satisfy $x^3=1$, and we are done.
\end{proof}

\subsection{Non-Positivity result for \texorpdfstring{$x^m=1$}{x to power m}}
We now show that the law $x^m=1$ does not satisfy a positivity property. We use an example similar to the Frobenius group example which appeared in \cite{laf2}.

Let us choose a matrix $A\in \mathrm{Mat}_{m-1}(\mathbb{Z})$ such that $A^m=I$ and $\det(A-I)\neq 0$. Then we have ${(A-I)(A^{m-1}+\cdots+A+I)=0}$ and since $\det(A-I)\neq 0$ we have $A^{m-1}+ \cdots +A +I=0$. For example, one can take the companion matrix of $1+x+\dotsb+x^{m-1}$, namely the matrix $A=(a_{ij})_{i,j=1 \cdots m-1}$ with \begin{equation*}
a_{ij}=\begin{cases}
1, &\text{if }i=j+1,\\
-1, &\text{if }j=m-1,\\
0, &\text{otherwise}.
\end{cases}
\end{equation*}
Consider the semidirect product $G=\Z^{m-1}\rtimes_A \Z/m\Z$, namely, with the action of $\Z/m\Z$ given by powers of $A$.

\begin{prop} \label{non-positivity for power}
Let $G=\Z^{m-1}\rtimes_A \Z/m\Z$, and let $R_n$ be a (lazy) random walk on $G$. Then
$x^m=1$ with probability at least $\frac{\phi(m)}{n}$ with respect to the random walk~$R_n$, where $\phi(m)$ is the Euler's totient function, but $G$ does not satisfy $x^m=1$ virtually.
\end{prop}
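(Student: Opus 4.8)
The plan is to split the statement into two essentially independent parts: the probabilistic lower bound, and the fact that $G$ does not virtually satisfy $x^m=1$. For the second part, I would argue exactly as in the Remark after \Dref{def:pos}: if $G$ virtually satisfied $x^m=1$, then (as in that Remark) some finite-index \emph{normal} subgroup $N\vartriangleleft_f G$ would satisfy $x^m=1$, hence $G$ would satisfy $x^{mk}=1$ for $k=[G:N]$. But $\Z^{m-1}\leq G$, and no nontrivial power law holds on $\Z^{m-1}$ since it is torsion-free and infinite. Hence $G$ cannot virtually satisfy any power law, in particular not $x^m=1$. (One can also invoke \Rref{subgroup}: a virtual power law on $G$ would descend to a virtual power law on the subgroup $\Z^{m-1}$, which is absurd.) This part is short.

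For the probabilistic lower bound, the key observation is a clean algebraic description of which elements of $G=\Z^{m-1}\rtimes_A\Z/m\Z$ satisfy $x^m=1$. Write an element as $(v,j)$ with $v\in\Z^{m-1}$, $j\in\Z/m\Z$. One computes $(v,j)^m=\bigl((I+A^j+A^{2j}+\dots+A^{(m-1)j})v,\ 0\bigr)$. Now if $\gcd(j,m)=1$, the map $k\mapsto kj$ permutes $\Z/m\Z$, so $I+A^j+\dots+A^{(m-1)j}=I+A+\dots+A^{m-1}=0$ by our choice of $A$; therefore \emph{every} element $(v,j)$ with $j$ a unit mod $m$ has order dividing $m$, i.e.\ satisfies $x^m=1$. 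So the event $\{R_n^m=1\}$ contains the event that the $\Z/m\Z$-projection of $R_n$ is a unit mod $m$. There are $\phi(m)$ such residues.

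Thus it remains to show that the $\Z/m\Z$-projection $\bar R_n$ of the random walk lands in any fixed residue class with probability $\gtrsim 1/n$ — in fact we only need $\Pr(\bar R_n\in U)\geq \phi(m)/n$ where $U$ is the set of units, and even $\gtrsim \phi(m)/m$ would be more than enough for a ``positivity'' counterexample, but the statement as written claims $\phi(m)/n$, so that is what I would prove. The projection $\bar R_n$ is itself a lazy random walk on the finite group $\Z/m\Z$ driven by the pushforward of the step distribution; since the step measure generates $G$, its pushforward generates $\Z/m\Z$, and laziness gives aperiodicity, so $\bar R_n$ converges to the uniform distribution on $\Z/m\Z$. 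Hence $\Pr(\bar R_n\in U)\to \phi(m)/m$, which certainly exceeds $\phi(m)/n$ for all large $n$; for the finitely many small $n$ one checks directly (e.g.\ the laziness assumption $\mu(1)>0$ forces $\Pr(\bar R_n = \bar e)>0$, and a short argument handles a single prescribed unit, while $n$ small makes $\phi(m)/n$ large but still $\leq 1$ only when $n\geq\phi(m)$; alternatively one simply states the bound for $n$ large, which is all that matters for $\Pr_M$). The main obstacle, such as it is, is purely bookkeeping: making the ``$1/n$'' rate claim hold uniformly in $n$ rather than just asymptotically — but since $\bar R_n$ lives on a \emph{finite} group and converges to uniform, any honest rate like $\phi(m)/m - o(1)$ beats $\phi(m)/n$ eventually, so I expect this to be entirely routine and the real content to be the algebraic identity $I+A+\dots+A^{m-1}=0$ together with the ``units permute the exponents'' trick.
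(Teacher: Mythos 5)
Your proof is correct and matches the paper's argument essentially exactly: the same computation of $(v,j)^m = \bigl((I+A^j+\cdots+A^{(m-1)j})v,\,0\bigr)$, the same observation that $\gcd(j,m)=1$ makes $k\mapsto kj$ a permutation of $\Z/m\Z$ so the sum collapses to $I+A+\cdots+A^{m-1}=0$, the same projection to $\Z/m\Z$ converging to the uniform distribution to get the lower bound $\phi(m)/m$, and the same use of the torsion-free finite-index subgroup $\Z^{m-1}$ together with \Rref{subgroup} to rule out $G$ virtually satisfying $x^m=1$. The ``$\phi(m)/n$'' in the statement is evidently a typo for ``$\phi(m)/m$'' (the paper's proof concludes the limit is $\phi(m)/m$), so the care you devote at the end to chasing a literal $1/n$-rate is unnecessary; you correctly identify $\phi(m)/m$ as the real content.
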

\begin{proof}
Let us take $(v,k) \in \Z^{m-1}\rtimes_A \Z/m\Z $. Then we have\\
\begin{align*}
    (v,k)^m&=((A^{k(m-1)}+\cdots +A^{2k}+A^k+I)v,mk)\\
    &=((A^{k(m-1)}+\cdots +A^{2k}+A^k+I)v,0).
\end{align*}
If $k$ and $m$ are coprime, then the formula above can simplified and we get
$$(v,k)^m=((A^{m-1}+ \cdots + A + I)v,0)=(0,0).$$
Let $X_k=\{(v,k), \text{ where } v\in \Z^{m-1}\} \subset G$. We note that $\Pr(R_n \in X_k) \to \frac{1}{m}$ as $n \to \infty$. Then $\Pr(R_n^m=1) \geq \Pr(R_n \in \cup_{\gcd(k,m)=1} X_k) \to \frac{\phi(m)}{m}$, where $\gcd(k,m)$ is the greatest common divisor and this proves the first claim.

Now we show that $G$ does not contain a torsion finite index subgroup of exponent~$n$. First note that $X_0$ is a subgroup of $G$ of index $m$ such that for any $x\in X_0\setminus{\{0\}}$ we have $x^m \neq 1$, so $X_0$ does not satisfy the law $x^m=1$ virtually. Therefore, by \Rref{subgroup}, the group $G$ does not satisfy the law virtually.
\end{proof}

\begin{rem}\label{prime}
Let us note that we can get a higher probability by considering the group $G=\Z^{p-1}\rtimes_A \Z/p\Z$, where $p$ the largest prime factor of $m$. $G$ satisfies $x^p=1$ and hence $x^m=1$ with probability at least $1-\frac{1}{p}=\frac{\varphi(p)}{p}\geq \frac{\varphi(m)}{m}$. It follows from the proof that $G$ does not satisfy the law $x^m=1$ virtually,
\end{rem}

\section{Commutators of balanced laws}\label{sec:commut}

In the coming sections, we will be interested in laws of the form $[w_1,w_2]$. We will usually assume that $w_1,w_2$ are laws on disjoint letters. We will show that in many cases, such laws do not satisfy a gap result, and also do not satisfy a positivity result.

In fact, our results are somewhat stronger. We will show that for such laws, we can find a group~$\Gamma$, which contains a non-abelian free group as a subgroup, and satisfies the following: one can find generating sets $S$ of $\Gamma$ so that the probabilty that $\Gamma$ satisfies $[w_1,w_2]$ with respect to an $S$-random walk can be arbitrarily close to $1$.

We make the following differentiation. We call a law \textbf{balanced} if it has trivial abelianization. We will separate our discussion of commutators of laws according to whether the inner laws are balanced or not.

\begin{rem}
If $G$ has an element of infinite order and $w$ is a non-balanced law, then $G$ cannot satisfy $w$ virtually. Indeed, let $H_0\cong \Z$ be a subgroup generated by an element of infinite order and $\Lambda$ be a finite index subgroup of $G$, then $\Lambda \cap H_0$ is a non-trivial finite index subgroup of $H_0$. Let $w=w(x_1, \dots x_n)$ be a non-balanced law and the total degree of some variable $x_k$ is not zero. Let us fix some $a\neq 1$ in $\Lambda \cap H_0$. For $x_i=1$ for $i\neq k$ and $x_k=a$ we have $w(x_1, \dots x_n) \neq 1$, so any finite index subgroup $\Lambda$ does not satisfy $w$ and $G$ does not satisfy $w$ virtually.
\end{rem}

In this section we discuss the case of a commutator of two balanced laws. We first present the proof for the metabelian law $[[x,y],[z,w]]=1$; we then generalize it to an arbitrary commutator law $[w_1,w_2]=1$ (where $w_1,w_2$ are balanced).

\subsection{Word paths of random walks}\label{sub:word-paths}

\begin{defn}\label{def:path}


For a word $w\in\F_d$ and random walks $R_n^{(1)},\dots,R_n^{(d)}$, we write $\gamma=\gamma(w;R_n^{(1)},\dots,R_n^{(d)})$ to be the path defined by $w$ when substituting the random walks $R_n^{(1)},\dots,R_n^{(d)}$. Namely, if  $w=a_1^{\eps_1}\dotsb a_\ell^{\eps_l}$ where $a_j\in\{1,\dotsc,d\}$ and $\eps_j\in\{\pm 1\}$. Then $\gamma=(\gamma_0,\dotsc,\gamma_{\ell n})$ with 
\begin{equation}\label{eq:def gamma}
\gamma_{jn+k}=\begin{cases}
    \big(R_n^{(a_1)}\big)^{\eps_1}\dotsb \big(R_n^{(a_{j})}\big)^{\eps_{j+}}R_k^{(a_{j+1})} & \eps_{j+1}=1\\
    \big(R_n^{(a_1)}\big)^{\eps_1}\dotsb \big(R_n^{(a_{j})}\big)^{\eps_{j}}\big(R_n^{(a_{j+1})}\big)^{-1}R_{n-k}^{(a_{j+1})} & \eps_{j+1}=-1.
\end{cases}
\end{equation}
for all $j\in\{0,\dotsc,\ell-1\}$ and $k\in\{1,\dotsc,n\}$, and with $\gamma_0$ being the unit of the group.
\end{defn}
In words, 
the path is constructed by first following $(R_n^{(i_1)})^{\eps_1}$, then following the steps of $(R_n^{(i_2)})^{\eps_2}$ and so on. In particular, it is indeed a path in the Cayley graph, i.e.\ $\gamma_i$ and $\gamma_{i+1}$ are always neighbors.

\subsection{The metabelian law}

We turn to studying the law $[[x,y],[z,w]]=1$. Our aim is to show that this law does not satisfy a general gap result as well as any positivity result. We do this by studying the wreath product~$H\wr\Z^5$ for a non-abelian group $H$, and proving the following theorem:

\begin{thm}\label{t:meta-eps}
Let $H$ be a non-abelian group generated by two elements $a,b$, and consider the wreath product $\Gamma=H\wr\Z^5$. Then for any $\eps>0$ there exists a symmetric generating set $S$ of $\Gamma$ of size $160$ such that, if $X_n,Y_n,Z_n,W_n$ are independent simple random walks on $\Gamma$ where each step is uniform on $S$,
$$\liminf_{n\to\infty}\Pr\left([[X_n,Y_n],[Z_n,W_n]]=1\right)\geq 1-\eps.$$
In other words, $\Gamma$ satisfies the metabelian law with probability $\geq 1-\eps$ with respect to the simple random walk induced by $S$.

In fact, the generating set can be taken as follows: Let $\pm e_i$ denote the $10$ standard generators of $\Z^5$. We then take $S=\{s_1ms_2:s_1,s_2\in\{(a^{\pm 1}\delta_{\underline{0}},\underline{0}),(b^{\pm 1}\delta_{ke_1},\underline{0})\},m\in\{(\underline{\id_H},\pm e_i)\}\}$ for a large enough $k$.



\end{thm}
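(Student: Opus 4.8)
The plan is to reduce the statement to a purely probabilistic fact about loop intersections in $\Z^5$. Recall from the introduction's sketch that a commutator $[X_n,Y_n]$ in $\Gamma=H\wr\Z^5$ has trivial lamplighter position (since the $\Z^5$-projection is abelian), so it is determined by its lamp configuration, which is supported on the set of sites visited by the concatenated loop $\sigma = \gamma(X)\gamma(Y)\gamma(X)^{-1}\gamma(Y)^{-1}$ — a loop in $\Z^5$ of length $4n$ built from four independent simple random walk pieces. Two elements of $\bigoplus_{g\in\Z^5}H$ commute as soon as, at every site, the two lamp values commute in $H$; in particular they commute if the two configurations have, at each site, values generated by a common cyclic subgroup. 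So the first step is to write down precisely, using \Dref{def:path} and the formula \eqref{eq:def gamma}, the lamp configuration of $[X_n,Y_n]$ as a function of the path $\sigma$ and of the generating moves chosen along the way, and likewise for $[Z_n,W_n]$ with an independent loop $\tau$.

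The second step is the geometric input: with the \emph{natural} generators, if $\sigma$ and $\tau$ are disjoint (except at $\underline 0$) then the two lamp configurations have disjoint supports and hence commute. The crucial fact is that two independent such loops in $\Z^5$, each of length $4n$, started at the origin, avoid each other away from the origin with probability bounded below by a constant $c>0$ \emph{independent of $n$} — this is where transience of $\Z^5$ (Green's function summability, or the standard fact that the expected number of intersections of two independent walks in $\Z^d$, $d\ge 5$, is bounded) enters. However this only gives probability bounded below by a constant, not $\ge 1-\eps$, so it is not yet enough.

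The third step — and the main obstacle — is the generator modification that upgrades "constant probability" to "$1-\eps$". Using the generating set $S$ described in the statement, the walker deposits $a$-lamps at its current position but $b$-lamps at a fixed offset $ke_1$. Thus the support of an $a$-type lamp configuration lies on (a thinning of) the loop $\sigma$, while the support of a $b$-type configuration lies on the \emph{shifted} loop $\sigma+ke_1$. For two configurations to fail to commute one needs a site carrying an $a$-lamp from one element and a $b$-lamp from the other (equal lamp types at a site automatically commute, being powers of the same generator of $H$). I would show: (i) for fixed $k$, each loop visits a given site $O(1)$ times in expectation and the $a$-sites and $b$-sites along one loop are separated by the offset $k$, so within a single element the $a$- and $b$-supports overlap on only $O(n/k^{?})$... — more cleanly, the bad events are of the form "$\sigma$ visits a site that $\tau+ke_1$ also visits" or "$\sigma$ visits a site that $\tau$ visits and the deposited types there differ"; (ii) summing Green's-function bounds over all such configurations, the expected number of bad sites is $O(1/k^{\alpha})$ for some $\alpha>0$, uniformly in $n$; hence by Markov's inequality the probability of a bad site is $\le\eps$ once $k=k(\eps)$ is large. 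Combined with steps one and two, on the complementary event $[[X_n,Y_n],[Z_n,W_n]]=1$, giving the $\liminf\ge 1-\eps$ bound. I expect the delicate point to be bookkeeping the finitely many "types" of coincidences (which of $\sigma,\sigma+ke_1,\tau,\tau+ke_1$ meet which) and verifying that each contributes $o_k(1)$ uniformly in $n$ — a Green's-function estimate in $\Z^5$ of the form $\sum_{x}G(x)G(x+ke_1)\to 0$ as $k\to\infty$, which holds since $G\in\ell^2(\Z^5)$. The count $|S|=160$ and symmetry of $S$ are then just arithmetic: $|\{s_1\}|=|\{s_2\}|=4$, $|\{m\}|=10$, giving $4\cdot 10\cdot 4=160$.
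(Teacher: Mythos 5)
Your proposal is correct and follows essentially the same route as the paper: express the lamp configuration of each commutator via its loop in $\Z^{5}$, observe that the switch-move-switch generators force the $a$-support onto the loop $\gamma$ and the $b$-support onto the shift $\gamma+ke_{1}$, reduce the statement to showing $\gamma\cap(ke_{1}+\gamma')=\varnothing$ and $(ke_{1}+\gamma)\cap\gamma'=\varnothing$ with probability $\ge 1-\eps$, and bound the bad event by an intersection estimate that decays in $k$ uniformly in $n$ (the paper proves $\Pr(\gamma\cap\gamma'\neq\varnothing)\le Cd(x_0,y_0)^{-1/2}$ directly via the $n^{-5/2}$ heat-kernel bound, which plays the role of your $\ell^{2}$--decay of $G*\tilde G$). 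One small imprecision worth noting: the commutator loop is \emph{not} built from four independent walk pieces --- only $X$ and $Y$ are independent, with the other two segments being their time-reversals --- so the occupation measure of $\gamma$ is not directly dominated by a Green's function; the paper's Lemma~\ref{l:4paths} handles this by splitting $\gamma$ into the two length-$2n$ walks $X_nY_n$ and $\overleftarrow{Y_n}\overleftarrow{X_n}$ (each an honest SRW from the origin, though not mutually independent) and union-bounding over the four ordered pairs of such segments.
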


\begin{rems*}
    \lazyenum
        \item The same theorem also holds for random walks when taking the following $14$ generators: $\pm e_i$, $(a^{\pm 1}\delta_{\underline{0}},\underline{0})$ and $(b^{\pm 1}\delta_{ke_1},\underline{0})$. However, we work with the larger generating set to simplify the proof.
        \item A similar result also holds when $H$ is generated by more than $2$ elements. In such cases, one needs to choose a finite generating set $S$ of $H$, and for the generating set of $H\wr\Z^5$ apply each generator in a ``far away'' position from the others.
        \item One can use this result to find a sequence of probability measures $M=\set{\mu_n}$ that measures index uniformly, such that $\Pr_M\left([[x,y],[z,w]]=1\on \Gamma\right)=1$. To do this, for each $n$ one takes a generating set $S^{(n)}$ of $\Gamma$ such that one has $\Pr_{S^{(n)}}\left([[x,y],[z,w]]=1\on G\right)>1-\frac{1}{n}$. For each $n$, take $\mu_n$ to be the distribution of a random walk with step distribution uniform on $S^{(n)}$, after many steps. By taking this number of steps to be large enough, one can make sure that the supports of $\mu_n$ cover all of $G$, $M$ measures index uniformly, and $\Pr_M\left([[x,y],[z,w]]=1\on \Gamma\right)=1$.
    \end{enumerate}
\end{rems*}

Before proving the theorem, let us mention its implications. Taking $H=\F_2$ to be the free group on $2$ generators, we get that the group $F_2\wr\Z^5$ satisfies the metabelian law with probability arbitrarily close to 1. 
Because of its free subgroups it does not satisfy any law, and hence does not satisfy and law virtually. Thus the metabelian law does not satisfy neither gap nor positivity.

We turn to the proof of \Tref{t:meta-eps}. We will need the following observation on commutators in wreath products:
\begin{fact}\label{f:commutators}
Let $L_1,L_2 \in \bigoplus_{g\in G} H$. If for any $x\in G$ we have $[L_1(x),L_2(x)]=\id_H$, then in the wreath product $H\wr G$ we have $[(L_1,\id_G),(L_2,\id_G)]=\id_{H\wr G}$.
\end{fact}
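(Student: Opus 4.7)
The plan is to unwind the definition of the wreath product. Since the elements $(L_1,\id_G)$ and $(L_2,\id_G)$ both have trivial lamplighter position, the semidirect product twist is inactive: multiplication of such pairs collapses to componentwise multiplication in $\bigoplus_{g\in G}H$, and taking inverses is componentwise as well. Consequently the commutator lies entirely in the lamp coordinate, and its value at each site $x\in G$ reduces to the commutator $[L_1(x),L_2(x)]$ computed inside $H$, which is $\id_H$ by hypothesis.

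Concretely, I would first verify, using the formula $(L_1,g_1)(L_2,g_2)=(L_1\cdot(g_1\cdot L_2),g_1g_2)$ from Subsection~\ref{subsec:wreath-prods}, that $(L_1,\id_G)(L_2,\id_G)=(L_1L_2,\id_G)$ and $(L_i,\id_G)^{-1}=(L_i^{-1},\id_G)$, where the operations on the $L_i$ are pointwise in the direct sum. Plugging these into the commutator gives $[(L_1,\id_G),(L_2,\id_G)]=(C,\id_G)$ with $C(x)=[L_1(x),L_2(x)]$ for every $x\in G$. By the hypothesis each $C(x)$ equals $\id_H$, so $C$ is the trivial lamp configuration and the whole commutator is $\id_{H\wr G}$.

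There is no substantial obstacle here; the statement is a bookkeeping lemma singled out because the metabelian argument to come will apply it repeatedly to pairs of commutators whose lamp supports turn out to be disjoint (so that the pointwise commutators vanish automatically). The only point of care is to respect the left-action convention $g\cdot(h_a)_{a\in G}=(h_{g^{-1}a})_{a\in G}$ fixed in Subsection~\ref{subsec:wreath-prods}, but since the acting element is $\id_G$ the choice of convention is irrelevant.
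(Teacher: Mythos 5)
Your proof is correct and is exactly the routine componentwise verification the paper has in mind; indeed the paper states this as a Fact without proof precisely because elements with trivial lamplighter position multiply and invert pointwise in $\bigoplus_{g\in G}H$, so the commutator reduces to $[L_1(x),L_2(x)]$ at each site.
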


We will also need several estimates on intersections of random paths in $\Z^5$, given by the following two lemmas. All random walks on $\Z^5$ are taken with respect to the standard generating set.

\begin{lem}\label{l:intersection}
Let $\gamma=(X_n)_{n=0}^{\infty}$ and $\gamma'=(Y_n)_{n=0}^\infty$ be two independent simple random walks paths on $\Z^5$ starting at $x_0$ and $y_0$ respectively. Then there is a constant $C>0$ such that
$$\Pr\left(\gamma\cap\gamma'\neq\varnothing\right) < C\cdot d(x_0,y_0)^{-\frac{1}{2}}.$$
\end{lem}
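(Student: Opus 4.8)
The statement is a quantitative estimate on the probability that two independent simple random walk paths in $\Z^5$ ever meet, given that they start at distance $d=d(x_0,y_0)$. The plan is to reduce the problem to a single random walk and Green's function estimates. First I would observe that
$$\Pr\left(\gamma\cap\gamma'\neq\varnothing\right)=\Pr\left(\exists\, i,j\ge 0:\; X_i=Y_j\right).$$
By translating and using that the difference of two independent simple random walks is (a time-change of) a single random walk — more precisely $X_i-Y_j$ as $(i,j)$ ranges over $\N^2$ visits exactly the set of sites visited by the walk $Z_t=X_t-Y_t$, which is a mean-zero, finite-range, irreducible walk on $\Z^5$ — the event $\{\gamma\cap\gamma'\neq\varnothing\}$ is contained in the event that $Z$, started at $x_0-y_0$, ever hits $\underline 0$. (One must be slightly careful: the indices $i$ and $j$ run independently, so $X_i=Y_j$ is not literally the same as $Z_i=0$; but $\gamma\cap\gamma'\neq\varnothing$ implies $X_i=Y_i$ for no particular reason — instead one argues that if $X_i=Y_j$ then following $\gamma$ to time $i$ and $\gamma'$ to time $j$ both reach the common point, and one can couple so that the concatenated ``difference'' walk hits $0$; alternatively, and more cleanly, bound $\Pr(\gamma\cap\gamma'\neq\varnothing)\le\sum_{z}\Pr(z\in\gamma)\Pr(z\in\gamma')$ is false, so instead use the standard first-intersection decomposition.)

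The cleanest route, which I would actually take: condition on the first intersection point. If $\gamma\cap\gamma'\neq\varnothing$, let $z$ be the first point of $\gamma$ lying on $\gamma'$; then $z\in\gamma$ and $z\in\gamma'$, and these two events are independent. Hence
$$\Pr\left(\gamma\cap\gamma'\neq\varnothing\right)\le\sum_{z\in\Z^5}\Pr\!\left(z\in\gamma\right)\Pr\!\left(z\in\gamma'\right).$$
For a transient walk, $\Pr(z\in\gamma)=\Pr_{x_0}(\text{walk hits }z)=G(x_0,z)/G(z,z)\le c\,G(x_0,z)$ where $G$ is the Green's function, and in $\Z^5$ one has $G(x,z)\asymp d(x,z)^{-3}$ (the walk is transient with the standard $|x|^{2-n}$ decay, $n=5$). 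So the right-hand side is at most
$$c\sum_{z\in\Z^5}\frac{1}{(1+|x_0-z|)^{3}}\cdot\frac{1}{(1+|y_0-z|)^{3}}.$$
This is a standard convolution-type sum: splitting according to whether $z$ is within distance $d/2$ of $x_0$, within $d/2$ of $y_0$, or far from both, one gets a bound of order $d^{-1}$ (in $\Z^5$: $\sum_{|z|\le d}|z|^{-3}\sim d^{2}$, times $d^{-3}$ for the far factor, gives $d^{-1}$; the ``far from both'' regime contributes $\int_{r>d} r^{-6}r^{4}\,dr\sim d^{-1}$ as well). That already gives the cleaner bound $O(d^{-1})$, which is stronger than the claimed $O(d^{-1/2})$; so the claimed bound certainly follows, and I would simply present the $d^{-1/2}$ version if the weaker estimate is all that is needed downstream (or state $O(d^{-1})$ if convenient — but since the paper commits to $d^{-1/2}$ I'll prove that).

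\textbf{Main obstacle.} The only real work is the Green's function asymptotic $G_{\Z^5}(x,z)\le C(1+|x-z|)^{-3}$ and the resulting convolution sum; both are classical (see e.g. Lawler, \emph{Intersections of Random Walks}, or Spitzer), so I would just cite them rather than reprove them. A secondary subtlety is justifying $\Pr(z\in\gamma)\le c\,G(x_0,z)$ uniformly, which follows from $G(z,z)=G(\underline 0,\underline 0)>1$ being a fixed constant independent of $z$ by translation invariance. No step is genuinely hard; the proof is a short computation once the Green's function bound is invoked, and the constant $C$ in the statement depends only on the dimension $5$ and the (standard) step distribution, as required.
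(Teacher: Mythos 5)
Your proof is correct and in fact establishes the sharper bound $O(d^{-1})$, which the paper explicitly remarks is the true order but declines to prove. Both your argument and the paper's are first-moment bounds, and in fact estimate the same quantity: the paper bounds
\[
\Pr(\gamma\cap\gamma'\neq\varnothing)\le \E\big[|\{(n,m):X_n=Y_m\}|\big]=\sum_{n,m}\Pr(X_n=Y_m)=\sum_k(k+1)\,p_k(x_0,y_0),
\]
using that $Y_m-X_n-(y_0-x_0)$ has the law of a walk of $n+m$ steps, and then invokes the crude local-CLT bound $p_k(\cdot)\le Ck^{-5/2}$ together with $n+m\ge d(x_0,y_0)$, giving $\sum_{k\ge r}(k+1)Ck^{-5/2}\le C'r^{-1/2}$. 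You instead sum over sites, bound $\Pr(z\in\gamma)\le G(x_0,z)/G(\underline 0,\underline 0)\le c\,|x_0-z|^{-3}$, and run the three-region convolution estimate. These are the same object in disguise: $\sum_z G(x_0,z)G(z,y_0)=\sum_k(k+1)p_k(x_0,y_0)$. The difference is that the paper discards the spatial decay and compensates with the constraint $k\ge r$, losing a power, whereas your route retains the $|x|^{-3}$ decay of the Green's function and so recovers the optimal $O(d^{-1})$. For the purposes of \Lref{l:4paths} either suffices; your version is strictly stronger.

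One small slip worth correcting: you write that the bound $\Pr(\gamma\cap\gamma'\neq\varnothing)\le\sum_z\Pr(z\in\gamma)\Pr(z\in\gamma')$ ``is false'' before re-deriving it via a first-intersection argument. The inequality is in fact plainly true as stated --- it is Markov's inequality for $\E[|\gamma\cap\gamma'|]$ combined with independence of the two walks --- so nothing is being rescued; the first-intersection phrasing is a valid alternative justification, but the aside about falsity should simply be deleted.
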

Here and below the notation $\gamma\cap\gamma'$ for two paths stands for the set of vertices appearing in both paths.
\begin{proof}
Let $r=d(x_0,y_0)$, and denote by $K=|\gamma \cap \gamma'|$ the number of intersections between the paths. Then
$$\Pr\left(\gamma \cap \gamma'\neq \varnothing\right)= \Pr(K\geq 1) \leq \E[K] \le \sum_{n=0}^\infty\sum_{m=0}^\infty \Pr(X_n=Y_m) = \sum_{m+n\geq r}\Pr(X_n=Y_m).$$
Let $S_n$ be a simple random walk on $\Z^5$. Then $Y_m-X_n-(y_0-x_0)$ has the same distribution as $S_{n+m}$. It is well known that $\Pr(S_n=x)\le Cn^{-5/2}$ for all $x\in\Z^5$. Hence
$$\Pr\left(\gamma \cap \gamma'\neq \varnothing\right)\leq\sum_{n+m\geq r}\Pr(S_{n+m}=x_0-y_0) \leq \sum_{n+m\geq r}C\frac{1}{(n+m)^{\frac{5}{2}}} \leq C'r^{-\frac{1}{2}}$$
as required.
\end{proof}

We remark that the correct asymptotic behavior is $\Pr(\gamma\cap\gamma'\neq \varnothing)\approx d(x_0,y_0)^{-1}$, but we will have no use for this extra precision, so we will not prove it.
\begin{lem}\label{l:4paths}

Let $\{X_i,Y_i,Z_i,W_i\}_{i=1}^n$ be four simple random walks on $\Z^5$.
Consider each of the random walks as a path in $\Z^5$, and denote by $\gamma,\gamma'$ the paths (of length $4n$) described by the commutators $[X_n,Y_n], [Z_n,W_n]$ respectively. Denote by $\gamma'+v$ the translation of $\gamma'$ by the vector $v$.

Then for any $\eps>0$ there exists $k_0$, independent of $n$, such that for $k\ge k_0$ we have
$$\Pr\left(\gamma \cap (\gamma'+ke_1) \neq \varnothing\right) < \eps.$$
\end{lem}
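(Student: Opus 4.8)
The plan is to reduce the four-path statement to the two-path estimate of Lemma~\ref{l:intersection}. The key observation is that the path $\gamma$ traced by the commutator $[X_n,Y_n]$ is a concatenation of four pieces, each of which is (the image under a fixed group translation of) a simple random walk path of length $n$; the same holds for $\gamma'$. Writing $\gamma=\gamma^{(1)}\cup\gamma^{(2)}\cup\gamma^{(3)}\cup\gamma^{(4)}$ and $\gamma'=\gamma'^{(1)}\cup\cdots\cup\gamma'^{(4)}$ in this way, a union bound gives
\[
\Pr\left(\gamma\cap(\gamma'+ke_1)\neq\varnothing\right)\le\sum_{i,j=1}^4\Pr\left(\gamma^{(i)}\cap(\gamma'^{(j)}+ke_1)\neq\varnothing\right),
\]
so it suffices to bound each of the sixteen terms by $\eps/16$ for $k$ large.

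The main point is then to control, for each pair $(i,j)$, the distance between the starting points of $\gamma^{(i)}$ and $\gamma'^{(j)}+ke_1$. Here I would use that the endpoints of the four sub-paths of a commutator path are themselves at bounded distance from a handful of random walk endpoints, so the starting point of $\gamma^{(i)}$ is of the form $X_n^{\pm1}Y_n^{\pm1}$-type words evaluated at time $n$ — in particular its first coordinate is a sum of a bounded number of random walk coordinates at time $n$. The obstacle is that these pieces are \emph{not} independent of each other (the second copy of the walk in $[X_n,Y_n]$ is the reversal of the first), and a translate by $ke_1$ need not push two sub-paths apart if, by bad luck, the relevant random endpoints have drifted by order $k$. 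The fix is a standard concentration/anti-concentration step: condition on the event $E$ that all of $X_n,Y_n,Z_n,W_n$ end within distance $k/2$ of the origin in the $e_1$-coordinate; on $E$, every starting point of $\gamma^{(i)}$ has $e_1$-coordinate $O(k)$ bounded by, say, $10k$, so the starting point of $\gamma^{(i)}$ and of $\gamma'^{(j)}+ke_1$ are at distance at least $ck$, and Lemma~\ref{l:intersection} bounds each of the sixteen conditional probabilities by $C(ck)^{-1/2}$. Wait — this is backwards; I actually want the complementary event, where the $e_1$-displacements are \emph{small} compared to $k$, so let me restate: let $E_k$ be the event that $|\langle X_n,e_1\rangle|,\dots,|\langle W_n,e_1\rangle|\le k/100$. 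On $E_k$ the translate $\gamma'+ke_1$ lies (in the $e_1$-direction) a distance $\ge k/2$ from $\gamma$, giving the desired bound on $\Pr(\gamma\cap(\gamma'+ke_1)\neq\varnothing\mid E_k)$, while $\Pr(E_k^c)$ can be made small — but not uniformly in $n$, since the walk spreads out.

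Because of that last issue, the cleaner route — and the one I would actually carry out — avoids conditioning on a drift event altogether and instead applies Lemma~\ref{l:intersection} directly after taking expectations over the (random) starting points. Precisely, for each pair $(i,j)$ condition on the four walks \emph{up to the times that determine the base points} $p_i$ (the start of $\gamma^{(i)}$) and $q_j$ (the start of $\gamma'^{(j)}$); given these, $\gamma^{(i)}$ and $\gamma'^{(j)}+ke_1$ are \emph{independent} simple random walk paths started at $p_i$ and $q_j+ke_1$ (the commutator structure only re-uses increments within a single one of $\gamma,\gamma'$, never across them, and the reversal of a simple random walk is again a simple random walk independent of what preceded it once we re-root). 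Hence by Lemma~\ref{l:intersection},
\[
\Pr\left(\gamma^{(i)}\cap(\gamma'^{(j)}+ke_1)\neq\varnothing\right)\le\E\!\left[C\,d(p_i,q_j+ke_1)^{-1/2}\right].
\]
Now $d(p_i,q_j+ke_1)\ge k-d(p_i,0)-d(q_j,0)$, and writing $D=d(p_i,0)+d(q_j,0)$ (a fixed non-negative random variable, a sum of boundedly many $\|\cdot\|_1$-norms of length-$n$ walk endpoints), I split the expectation over $\{D\le k/2\}$ and $\{D>k/2\}$: on the first event the summand is $\le C(k/2)^{-1/2}$; the second event has probability at most $\Pr(D>k/2)$, which by a union bound is at most (number of pieces)$\cdot\Pr(\|S_n\|_1>ck)$. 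The genuine obstacle is that $\Pr(\|S_n\|_1>ck)$ is \emph{not} small for $n\gg k^2$. To get a bound uniform in $n$ one replaces this crude tail estimate by the same occupation-style argument as in Lemma~\ref{l:intersection}: bound $\Pr(\gamma^{(i)}\cap(\gamma'^{(j)}+ke_1)\neq\varnothing)$ by $\sum_{a,b}\Pr(\text{$a$-th point of }\gamma^{(i)}=\text{$b$-th point of }\gamma'^{(j)}+ke_1)$, note each such probability is $\Pr(S_{m}= v)$ for some $m\le 2n$ and some fixed $v$ with $\|v\|_1\ge k-(\text{bounded overshoot})$ — crucially, $v$ here is a fixed vector of size $\asymp k$ because the two constituent walks are independent and the only deterministic part of the displacement is $ke_1$ — and then use both $\Pr(S_m=v)\le Cm^{-5/2}$ (to sum the $a,b\le n^2$ pairs) and a large-deviation bound $\Pr(S_m=v)\le e^{-c\|v\|_1}$ for $m\le \|v\|_1^{?}$... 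The hard part, in short, is handling the regime $n\gg k^2$: there one cannot afford to lose the $k$-separation, and the right tool is to observe that the displacement vector $v$ between a point of $\gamma^{(i)}$ and a point of $\gamma'^{(j)}+ke_1$ is $ke_1$ plus an \emph{independent} difference of two simple random walks, so $\Pr(\text{they coincide})=\Pr(S'_{m'}=-ke_1)$ for a single simple random walk $S'$ and $m'\le 4n$, and now exactly the computation in Lemma~\ref{l:intersection}, but with the sum $\sum_{m'\ge 0}\Pr(S'_{m'}=-ke_1)$, converges to something $\le C\|ke_1\|^{-1/2}=Ck^{-1/2}\to0$. (This is the Green's-function bound $g(ke_1)\le C|k|^{-3}$ in dimension $5$, which is more than enough.) So the genuine content is recognizing that, pairing a point of $\gamma^{(i)}$ with a point of $\gamma'^{(j)}$, their difference of positions is always a difference of two independent simple random walks of total length $\le 4n$, hence its law is that of a single simple random walk of length $\le 4n$ evaluated at a point of size $\ge k-O(1)$; summing the Green's function over all $\le (4n)^2$ pairs still gives a bound $Ck^{-1/2}$ independent of $n$. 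Choosing $k_0$ so that $16Ck_0^{-1/2}<\eps$ finishes the proof.
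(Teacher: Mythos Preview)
Your eventual approach is correct: bound the intersection probability by $\sum_{a,b}\Pr(\gamma_a=\gamma'_b+ke_1)$, observe that $\gamma_a-\gamma'_b$ has the law of a single simple random walk $S_{m(a,b)}$ (because $(X,Y)$ and $(Z,W)$ are independent and each $\gamma_a$ is itself a sum of independent walk values), and sum. One point is glossed over --- the map $(a,b)\mapsto m(a,b)$ is not injective, and what you actually need is that each value $M$ is attained by $O(M)$ pairs, so that the sum is at most $C\sum_{M\ge k}M\cdot M^{-5/2}\le Ck^{-1/2}$. This is the computation in the proof of \Lref{l:intersection}, not the Green's-function bound $g(ke_1)\le Ck^{-3}$ you invoke. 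With that fix the argument goes through, and indeed it is essentially the proof of the more general \Lref{l:gen-intersection} specialised to commutators.

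The paper's proof, however, is considerably shorter, and it exploits a structural point your plan never uses: because $\Z^5$ is abelian, the commutator path $\gamma$ is a closed \emph{loop} at the origin. Its second half, read backward from the endpoint $0$, is therefore the concatenation $Y$-then-$X$, which is itself an SRW of length $2n$ starting at $0$ (since $X$ and $Y$ are independent). Thus $\gamma$ splits into just \emph{two} pieces, both SRW paths of length $2n$ rooted at the origin; likewise for $\gamma'$. This gives only $2\times 2=4$ cross-terms, each the intersection probability of two independent SRWs started at the deterministic points $0$ and $ke_1$, and \Lref{l:intersection} applies directly at distance exactly $k$. No random base points, no conditioning, no pointwise first-moment sum --- just $4\cdot Ck^{-1/2}$.

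The random-base-point difficulty that occupies most of your plan is an artefact of the four-piece decomposition; the two-piece decomposition rooted at both ends of the loop dissolves it entirely.
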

\begin{proof}
Notice that each of the commutator paths is a loop starting at the origin that can be divided into two simple random walk paths of length $2n$ each: for instance, $\gamma$ can be divided into $X_nY_n$ and $\overleftarrow{Y_n}\overleftarrow{X_n}$. The arrow indicates that the path is traversed in the opposite direction, though this will not matter to us. Therefore by symmetry
\begin{align*}
    \Pr(\gamma \cap (\gamma'+ke_1) \neq \varnothing)& \leq \Pr(X_nY_n \cap (Z_nW_n+ke_1)\neq \varnothing)+\\
    &+ \Pr(\overleftarrow{Y_n}\overleftarrow{X_n} \cap (Z_nW_n+ke_1)\neq \varnothing) + \\
    &+  \Pr(X_nY_n \cap (\overleftarrow{W_n}\overleftarrow{Z_n}+ke_1)\neq \varnothing) + \\
    &+  \Pr(\overleftarrow{Y_n}\overleftarrow{X_n} \cap (\overleftarrow{W_n}\overleftarrow{Z_n}+ke_1)\neq \varnothing) .
\end{align*}
Each of the terms can be bounded above by the intersection probability of two infinite simple random walks in $\Z^5$ starting distance $k$ apart. By \Lref{l:intersection} this is bounded by $Ck^{-\frac{1}{2}}$, so the lemma follows.
\end{proof}

We are now ready to prove the main theorem.

\begin{proof}[Proof of \Tref{t:meta-eps}]
Fix some $k\ge 0$. Recall that the generating set $S$ consists of the standard generators $\pm e_i$ of~$\Z^5$, $(a^{\pm 1}\delta_{\underline{0}},\pm e_i)$ and $(b^{\pm 1}\delta_{ke_1},\pm e_i)$. Let $X_n,Y_n,Z_n,W_n$ be independent simple random walks with step distribution uniform on $S$.

Consider the commutator paths $\gamma$ and $\gamma'$ of $[X_n,Y_n]$ and $[Z_n,W_n]$ respectively as in \Lref{l:4paths}, and write $[X_n,Y_n]=(L_1,\underline{0})$ and $[Z_n,W_n]=(L_2,\underline{0})$.
Notice that for $x\in \Z^5$, $L_1(x)=\id$ for $x\notin \gamma\cup (ke_1+\gamma)$, $L_1(x)\in A\coloneqq\{a^n\}_{n=-\infty}^{\infty}$ for $x\in \gamma\setminus (ke_1 + \gamma)$ and $L_1(x)\in B\coloneqq\{b^n\}_{n=-\infty}^{\infty}$ for $x\in (ke_1 + \gamma)\setminus \gamma$.

Fix $\eps>0$. By \Lref{l:4paths} we can choose $k$ large enough so that with probability $\geq 1-\eps$ the commutator loops satisfy $\gamma \cap (ke_1+\gamma')=\varnothing$ and  $(\gamma+ke_1)\cap \gamma'=\varnothing$. Under this event, any $x\in\Z^5$ falls into at least one of the following cases:
\begin{enumerate}
    \item $x\notin \gamma\cup (ke_1+\gamma)$. In this case $L_1(x)=\id_H$ hence $[L_1(x),L_2(x)]=\id_H$.
    \item $x\notin \gamma'\cup (ke_1+\gamma')$. In this case $L_2(x)=\id_H$ hence $[L_1(x),L_2(x)]=\id_H$.
    \item $x\in (\gamma\cap \gamma') \setminus ((ke_1+\gamma)\cup (ke_1+\gamma'))$. In this case  $L_1(x),L_2(x)\in A$ and thus $[L_1(x),L_2(x)]=\id_H$.
    \item $x\in ((ke_1+\gamma)\cap (ke_1+\gamma')) \setminus (\gamma \cup \gamma')$. In this case  $L_1(x),L_2(x)\in B$ and thus $[L_1(x),L_2(x)]=\id_H$.
\end{enumerate}
It now follows from \Fref{f:commutators} that $[[X_n,Y_n],[Z_n,W_n]]$ is trivial. Therefore
$$\Pr\left([[X_n,Y_n],[Z_n,W_n]]=1\right)\geq \Pr\left(\gamma \cap (ke_1+\gamma')=(\gamma+ke_1)\cap \gamma'=\varnothing\right)\ge 1-\eps$$
as required.
\end{proof}

\subsection{The general case}

The ideas of the previous theorem can be generalized to give the following result:

\begin{thm}\label{t:bal-comm-eps}
Let $w_1\in\F_d$ and $w_2\in\F_{d'}$ be two balanced laws such that $[w_1,w_2]$ is not trivial. Let $H$ be a non-abelian group generated by two elements $a,b$, and consider the wreath product $\Gamma=H\wr\Z^5$. Then for any $\eps>0$ there exists a symmetric generating set $S$ of $\Gamma$ of size $160$ such that, if $R_n^{(1)},\dots,R_n^{(d)},S_n^{(1)},\dots,S_n^{(d')}$ are independent simple random walks on $\Gamma$ where each step is uniform on $S$,
$$\liminf_{n\to\infty}\Pr\left([w_1(R_n^{(1)},\dots,R_n^{(d)}),w_2(S_n^{(1)},\dots,S_n^{(d')})]=1\right)\geq 1-\eps.$$
In fact, the generating set $S$ can be taken as in \Tref{t:meta-eps}.
\end{thm}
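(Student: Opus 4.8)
The plan is to adapt the proof of \Tref{t:meta-eps} essentially verbatim, replacing the two commutators $[X_n,Y_n]$ and $[Z_n,W_n]$ by the general balanced-word elements $w_1(R_n^{(1)},\dots,R_n^{(d)})$ and $w_2(S_n^{(1)},\dots,S_n^{(d')})$. The three ingredients that made the earlier argument work all persist. First, because $w_1$ and $w_2$ are balanced, when we substitute random walks on $\Gamma=H\wr\Z^5$ the $\Z^5$-coordinate of each resulting element is trivial, so both outputs lie in $\bigoplus_{g\in\Z^5}H$ and we are reduced to checking the pointwise commutation criterion of \Fref{f:commutators}. Second, each of these elements still traces out a \emph{word path} in the sense of \Dref{def:path}: the path $\gamma=\gamma(w_1;R_n^{(1)},\dots,R_n^{(d)})$ projects to a loop in $\Z^5$ which is a concatenation of $|w_1|$ simple random walk segments of length $n$ (some traversed in reverse, which is irrelevant), and likewise $\gamma'$ for $w_2$ is a concatenation of $|w_2|$ such segments. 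The support of $w_1(\dots)$, as a lamp configuration, is contained in the set of $\Z^5$-sites visited by $\gamma$ together with its $ke_1$-translate $ke_1+\gamma$ — exactly as in the metabelian case, where the $a$-lamps live along $\gamma$ and the $b$-lamps along $ke_1+\gamma$ — and similarly for $w_2$ with $\gamma'$.

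The combinatorial heart is then the same disjointness event. I would restate and reprove the analogue of \Lref{l:4paths}: for any $\eps>0$ there is $k_0$ (independent of $n$) so that for $k\ge k_0$,
$$\Pr\left(\gamma\cap(ke_1+\gamma')\neq\varnothing \textrm{ or } (\gamma+ke_1)\cap\gamma'\neq\varnothing\right)<\eps.$$
This follows by the same union bound: $\gamma$ is a union of at most $|w_1|$ simple-random-walk segments and $ke_1+\gamma'$ is a union of at most $|w_2|$ such segments, so by a union over the $|w_1|\cdot|w_2|$ pairs and \Lref{l:intersection} (two independent simple random walks in $\Z^5$ started distance $\ge k$ apart meet with probability $\le Ck^{-1/2}$) the whole probability is $\le 2|w_1||w_2|Ck^{-1/2}$, which we make $<\eps$ by choosing $k$ large. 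Note the constant $C$ and hence $k_0$ depend on $w_1,w_2$ but not on $n$, which is all we need.

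On the event that $\gamma\cap(ke_1+\gamma')=\varnothing$ and $(\gamma+ke_1)\cap\gamma'=\varnothing$, I would run the same four-case analysis at each site $x\in\Z^5$. Writing $w_1(\dots)=(L_1,\underline 0)$ and $w_2(\dots)=(L_2,\underline 0)$: if $x\notin\gamma\cup(ke_1+\gamma)$ then $L_1(x)=\id_H$; if $x\notin\gamma'\cup(ke_1+\gamma')$ then $L_2(x)=\id_H$; if $x$ lies in both $\gamma$ and $\gamma'$ but in neither translate, then $L_1(x),L_2(x)$ both lie in the abelian subgroup $A=\sg a$; and if $x$ lies in both translates but in neither of $\gamma,\gamma'$, then $L_1(x),L_2(x)\in B=\sg b$. (The disjointness event is exactly what rules out the remaining mixed possibilities.) In every case $[L_1(x),L_2(x)]=\id_H$, so by \Fref{f:commutators} the commutator $[w_1(\dots),w_2(\dots)]$ is trivial, giving $\Pr([w_1,w_2]=1)\ge 1-\eps$ for all $n$, hence the $\liminf$ bound. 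The one point requiring a little care — and the only place the general case differs from \Tref{t:meta-eps} — is verifying that $\supp w_1(R_n^{(1)},\dots,R_n^{(d)})$ is genuinely contained in $\gamma\cup(ke_1+\gamma)$ with the $a$-lamps confined to $\gamma$ and the $b$-lamps to $ke_1+\gamma$: this is because every generator in $S$ deposits an $a^{\pm1}$ only at the walker's current $\Z^5$-position and a $b^{\pm1}$ only at the $ke_1$-shifted position, a property preserved under forming arbitrary words since the word path records precisely the sequence of walker positions. This bookkeeping is the main (mild) obstacle; everything else is a transcription of the metabelian proof.
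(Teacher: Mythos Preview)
Your overall architecture is right and matches the paper: use that $w_1,w_2$ are balanced so the $\Z^5$-coordinates of $w_1(\dots)$ and $w_2(\dots)$ vanish, reduce via \Fref{f:commutators} to the pointwise picture, and then run exactly the four-case analysis on the event that $\gamma\cap(ke_1+\gamma')=\varnothing$ and $(ke_1+\gamma)\cap\gamma'=\varnothing$. That part is fine.

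The gap is in your proof of the intersection estimate. You claim $\gamma$ is a union of $|w_1|$ simple-random-walk segments and then invoke \Lref{l:intersection} on each of the $|w_1|\cdot|w_2|$ pairs, asserting they are ``started distance $\ge k$ apart''. But \Lref{l:intersection} needs the two walks to start at \emph{fixed} points at distance $k$. For $j>1$ the $j$-th segment of $\gamma$ starts at the random point $\gamma_{(j-1)n}$, and the corresponding segment of $ke_1+\gamma'$ starts at $ke_1+\gamma'_{(j'-1)n}$; there is no control on $|\gamma_{(j-1)n}-(ke_1+\gamma'_{(j'-1)n})|$. Worse, when a letter repeats in $w_1$ (say $w_1=x^2yx^{-2}y^{-1}$) the starting point $\gamma_{(j-1)n}$ depends on $R_n^{(a_j)}$ itself, so the segment is not even a simple random walk independent of its own starting point, and you cannot apply \Lref{l:intersection} conditionally either. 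The trick in \Lref{l:4paths}---splitting the commutator loop into two halves $X_nY_n$ and $\overleftarrow{Y_nX_n}$, each a genuine SRW from the origin---hinges on every letter appearing exactly once; it does not survive to general balanced words.

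The paper replaces this step by a first-moment argument (\Lref{l:gen-intersection}): bound $\Pr(\gamma\cap(\gamma'+ke_1)\ne\varnothing)\le\sum_{i,i'}\Pr(\gamma_i-\gamma'_{i'}=ke_1)$, write each $\gamma_i-\gamma'_{i'}$ as an integer combination $\sum_j b_jX_{m_j}^{(j)}$ of $d+d'+2$ \emph{independent} walk endpoints (splitting $R_n^{(j_0)}=R_k^{(j_0)}+(R_n^{(j_0)}-R_k^{(j_0)})$ at the current time to decouple), condition on all but the longest one, and use the heat-kernel bound $\max_v\Pr(b_1X_{m_1}=v)\le Cm_1^{-5/2}$. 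Summing with the right multiplicity for $m_1$ gives $Ck^{-1/2}$. Once this lemma is in hand, the rest of your proof goes through verbatim.
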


In particular, for any two balanced laws $w_1$ and $w_2$ 
the law $[w_1,w_2]$ does not satisfy a gap or a positivity result.

The proof of this theorem will be similar to the proof of \Tref{t:meta-eps}.
We consider again the group $\Gamma=H\wr\Z^5$ with the same generating set as above.


We first give an analog of \Lref{l:intersection} on the intersections of such paths. Note that these are no longer random walk paths, but can still be decomposed into segments of random walks.

\begin{lem}\label{l:gen-intersection}
  Let  $R_n^{(1)},\dots,R_n^{(d)}$ and  $S_n^{(1)},\dots,S_n^{(d')}$ be $d+d'$ independent random walks on $\Z^5$ of length~$n$. Set $\gamma=\gamma(w_1,R_n^{(1)},\dots,R_n^{(d)})$ and $\gamma'=\gamma(w_2,S_n^{(1)},\dots,S_n^{(d')})$ to be the corresponding paths.
    Then for any $\eps>0$ there exists $k_0$, independent of $n$, such that for $k\ge k_0$ we have
$$\Pr\left(\gamma \cap (\gamma'+ke_1) \neq \varnothing\right) < \eps.$$
\end{lem}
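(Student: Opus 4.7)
The strategy generalizes that of \Lref{l:4paths}. Write $w_1=a_1^{\eps_1}\cdots a_\ell^{\eps_\ell}$ and decompose $\gamma=\sigma_1\sigma_2\cdots\sigma_\ell$, where $\sigma_j$ is a segment of length $n$ tracing the walk $R^{(a_j)}$ (forwards or reversed according to $\eps_j$) from its starting point $T_j=\prod_{i<j}(R_n^{(a_i)})^{\eps_i}$; decompose $\gamma'$ analogously into $\sigma'_1\cdots\sigma'_{\ell'}$. The union bound reduces the problem to showing, for each pair $(\sigma_j,\sigma'_{j'})$, that $\Pr(\sigma_j\cap(\sigma'_{j'}+ke_1)\neq\varnothing)<\eps/(\ell\ell')$ for $k$ large enough.

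For a fixed pair $(j,j')$, a further union bound over vertex pairs gives
\begin{equation*}
\Pr(\sigma_j\cap(\sigma'_{j'}+ke_1)\neq\varnothing)\leq\sum_{p,q=0}^{n}\Pr(Y_{p,q}=ke_1),
\end{equation*}
where $Y_{p,q}:=(\sigma_j)_p-(\sigma'_{j'})_q$ expands as a $\Z$-linear combination, with integer coefficients determined by $w_1,w_2$ and by $j,j',p,q,\eps_j,\eps'_{j'}$, of the i.i.d.\ $\Z^5$-valued increments of the walks $R^{(i)},S^{(i)}$. Since these increments have mean zero and covariance a scalar multiple of the identity, the local central limit theorem for sums of i.i.d.\ lattice random variables gives
\begin{equation*}
\Pr(Y_{p,q}=ke_1)\leq C V_{p,q}^{-5/2}\exp\!\bigl(-k^2/(CV_{p,q})\bigr),
\end{equation*}
where $V_{p,q}$ is the per-coordinate variance of $Y_{p,q}$.

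The heart of the proof is the variance lower bound $V_{p,q}\geq c(p+q+1)$, after a reindexing $p\leftrightarrow n-p$ and/or $q\leftrightarrow n-q$ dictated by the partial exponent sums $c_j:=\sum_{i<j,\ a_i=a_j}\eps_i$ and the analogous $c'_{j'}$: the coefficient of the step $\xi_u^{(a_j)}$ in $Y_{p,q}$ is one of $c_j$ and $c_j\pm 1$ (depending on $\eps_j$) on two complementary ranges of $u\in[1,n]$, and since these two values differ by $1$ they cannot both vanish, so walk $R^{(a_j)}$ contributes at least $\min(p,n-p)$ to $V_{p,q}$, and analogously for $S^{(a'_{j'})}$. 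Setting $t=p+q$ after reindexing, we obtain
\begin{equation*}
\sum_{p,q=0}^n\Pr(Y_{p,q}=ke_1)\leq C\sum_{t=0}^{2n}(t+1)^{-3/2}\exp\!\bigl(-k^2/(C(t+1))\bigr)=O(k^{-1})
\end{equation*}
uniformly in $n$. The main obstacle will be the careful variance analysis together with the treatment of the finitely many boundary corners $(p,q)\in\{0,n\}^2$ at which both walk contributions can simultaneously degenerate; at such corners $Y_{p,q}$ is deterministic and lies in a bounded set depending only on $w_1,w_2$, so $Y_{p,q}\neq ke_1$ once $k$ is sufficiently large.
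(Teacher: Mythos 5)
Your overall architecture is the same as the paper's: bound the intersection probability by the expected number of intersections, expand each difference of path points as an integer-coefficient combination of independent increments, observe that the coefficient of the ``active'' walk takes two values differing by $1$ on complementary ranges and hence cannot vanish on both, and sum a local estimate over the index pairs. Your two-ranges observation is exactly the paper's device of writing $R_n^{(j_0)}=R_k^{(j_0)}+(R_n^{(j_0)}-R_k^{(j_0)})$ to obtain independent walks. The paper then avoids any Gaussian tail factor: it notes that $|\gamma_i-\gamma'_{i'}|\le l(d+d'+2)m_1$, where $m_1$ is the length of the longest walk carrying a nonzero coefficient, so the probability is identically zero unless $m_1\ge c'k$; conditioning on the other walks and using the plain on-diagonal bound $Cm_1^{-5/2}$ then gives $\sum_{M\ge c'k}CMl^2\cdot M^{-5/2}\le Ck^{-1/2}$, which is all that is needed.

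Three of your steps do not hold as written. First, $Y_{p,q}$ is not a sum of i.i.d.\ lattice variables but a weighted sum of independent blocks with different integer coefficients, so the quoted i.i.d.\ LCLT does not deliver the product bound $CV_{p,q}^{-5/2}\exp(-k^2/(CV_{p,q}))$; you need either a conditioning argument (which loses the Gaussian factor, as in the paper) or a separate Hoeffding-type tail bound combined with an on-diagonal bound. Second, writing $f(V)=CV^{-5/2}e^{-k^2/(CV)}$, this function is increasing for $V\lesssim k^2$, so substituting the lower bound $V_{p,q}\ge c(t+1)$ into $f$ does not give an upper bound on $\Pr(Y_{p,q}=ke_1)$; the correct manipulation is $\sup_{V\ge V_0}f(V)\le C\max\{V_0,k^2\}^{-5/2}$, which still yields $O(k^{-1})$ after summation but is not what you wrote. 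Third, at a corner $(p,q)\in\{0,n\}^2$ the variable $Y_{p,q}$ is generally not deterministic: for $w_1=[x_1,x_2]$ the starting point of the third segment is $R_n^{(1)}+R_n^{(2)}$. What is true is that at a corner either every coefficient vanishes (then $Y_{p,q}=0\ne ke_1$) or some full-length walk survives with nonzero constant coefficient, giving variance $\ge cn$ and probability $\le Ck^{-5}$. All three issues are repairable without changing your strategy, but as stated they are gaps.
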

\begin{proof}
  Denote by $\gamma_i$ and $\gamma'_i$ the $i$'th step in $\gamma$ and in $\gamma'$. We begin as in \Lref{l:intersection}: Denote by $K=|\gamma \cap (\gamma'+ke_1)|$ the number of intersections between the paths. Then
  \begin{align}
    \Pr\left(\gamma \cap (\gamma'+ke_1)\neq \varnothing\right)&= \Pr(K\geq 1) \leq \E[K] \le \sum_{i=0}^{|\gamma|}\sum_{i'=0}^{|\gamma'|} \Pr(\gamma_i=\gamma'_{i'}+ke_1) =\nonumber\\ 
    &=\sum_{i=0}^{|\gamma|}\sum_{i'=0}^{|\gamma'|} \Pr(\gamma_i-\gamma'_{i'}=ke_1).\label{eq:sum gammaiip}
  \end{align}
  Examining the definition of $\gamma$ \eqref{eq:def gamma} we see that $\gamma_i$ is the sum of several copies of various $R_n^{(j)}$, some with positive sign and some with negative sign, as well as one copy of $R_k^{(j_0)}$ for some $j_0$ (coming from the random walk path on which the $i$'th step lies). We wish to write $\gamma$ as a combination of \emph{independent} random walks, so we write $R_n^{(j_0)}=R_k^{(j_0)}+(R_n^{(j_0)}-R_k^{(j_0)})$ and note that $R_n^{(j_0)}-R_k^{(j_0)}$ is itself a random walk, and is independent of $R_k^{(j_0)}$ (and, of course, of the other $R_n^{(j)}$). We have thus arrived at a representation
  $$
  \gamma_i=\sum_{j=1}^{d+1}b_j X_{m_j}^{(j)}
  $$
  where $b_j\in\Z$, $m_j=n$ except, possibly, for two values of $j$, and $X^{(j)}$ are independent random walks. Further, each value of $m_j$ different from $n$ appears for at most $2|w|$ different values of $i$, and each $|b_j|\leq|w_1|$.

  Repeating this calculation for $\gamma'$ we may write
  $$\gamma_i-\gamma'_{i'} = \sum_{j=1}^{d+d'+2}b_j X_{m_j}^{(j)}.$$
  Rearrange the nonzero terms $X^{(j)}$ so that $m_1\ge m_2 \ge \dotsb$. 
  Let $l=\max\set{|w_1|,|w_2|}$. As $|b_j|\leq l$ for all~$j$, we get that if $m_1<\frac{k}{l(d+d'+2)}=c'k$, then $|\gamma_i-\gamma'_{i'}|<k$, and cannot be equal to $ke_1$. 
  Note that after rearranging, the number of $(i,i')$ for which $m_1=M$ is at most $2Ml^2$, for any $M<n$. For $M=n$ it will be enough to use the trivial bound that the number of couples $(i,i')$ with $m_1=n$ is smaller than the total number of couples, $|w_1||w_2|n^2$.

 Condition on the values of $X_{m_2}^{(2)},\dots,X_{m_{d+d'+2}}^{(d+d'+2)}$. We get
\begin{align*}
\Pr(\gamma_i-\gamma'_{i'}=ke_1)
&\le \max_{v\in\Z^d}\Pr\bigg(\gamma_i-\gamma'_{i'}=ke_1 \,\bigg|\, \sum_{j=2}^{d+d'+2}b_jX_{m_j}^{(j)}=v\bigg)\\
&=\max_{v\in\Z^d}\Pr\big(b_1X_{m_1}^{(1)}=ke_1-v\big)\le Cm_1^{-5/2}.
\end{align*}
  Using the multiplicity of possible values for $m_1$ explained above and the fact that if $m_1<c'k$ then this probability is zero allows to estimate the sum in \eqref{eq:sum gammaiip},
  \begin{align*}
  \sum_{i=0}^{|\gamma|}\sum_{i'=0}^{|\gamma'|} \Pr(\gamma_i-\gamma'_{i'}=ke_1)
  &= \sum_{M=c'k}^n \sum_{(i,i'):m_1=M}\Pr(\gamma_i-\gamma'_{i'}=ke_1)\\
  &\le \sum_{M=c'k}^{n-1} 2Ml^2\cdot CM^{-5/2} + l^2n^2\cdot Cn^{-5/2}\le Ck^{-1/2}
\end{align*}
(the last constant may depend on $w_1$ and $w_2$). With \eqref{eq:sum gammaiip} the lemma is proved.
%
\end{proof}

We can now prove the theorem.

\begin{proof}[Proof of \Tref{t:bal-comm-eps}]
    This is essentially the same proof as \Tref{t:meta-eps}. Since tha base group $\Z^5$ is abelian and the laws $w_1,w_2$ are balanced, each one of them defines a loop. By \Lref{l:gen-intersection}, the probability that the $w_1$-loop and the translation of the $w_2$-loop are disjoint (and vice versa) can be made arbitrarily close to $1$ by choosing $k$ large enough. The rest of the proof remains verbatim as the proof of the metabelian case.
\end{proof}


\section{Commutators of a power and a balanced law}\label{sec:comm-power-balanced}

Let us take a matrix $A_0=(a_{ij})_{1\le i,j\le m}$ of a cyclic permutation, that is  \begin{equation*}
    a_{ij}=\begin{cases}
    1 & \text{ if}\quad j-i \equiv 1  \pmod{m},\\
    0 &\text{otherwise}.
    \end{cases}
\end{equation*}
For any $d$ that divides $m$, let us consider the subspace $V_d\subset \mathbb{C}^m$ defined by
$$V_d=\set{(x_1, \cdots x_m)\in\mathbb{C}^m\suchthat \forall 1\leq i \leq \frac{m}{d}: \quad x_i+x_{i+\frac{m}{d}}+ \cdots +x_{i+\frac{(d-1)m}{d}}=0}.$$
We have $\dim V_d = m-\frac{m}{d}$, and each $V_d$ is $A_0$-invariant. Let us fix a primitive $m$-th root of unity $\xi$. The vectors $v_k:=(1,\xi^k,\xi^{2k}, \cdots , \xi^{(m-1)k})$ are eigenvectors of $A_0$, i.e.\ $A_0v_k=\xi^kv_k$. Note also that $v_k \in V_d$ if and only if $d$ does not divide $k$, so the number of such $v_k$ is exactly $m-\frac{m}{d}$, and they form a basis of $V_d$.

Let us define the space $V=\cap_{1\neq d|m} V_d$. By the construction, $V$ has the basis $\set{v_k}_{\gcd(k,m)=1}$, hence $\dim V=\varphi(m)$, where $\varphi$ is the Euler's totient function, and the eigenvalues of $A_0|_V$ are exactly the primitive $m$-th roots of unity $\xi^k$ with $\gcd(k,m)=1$. The space $V$ is defined by rational equations, so $V \cap \Z^m \cong \Z^{\varphi(m)}$. Moreover, $A_0\cdot V=V$ and $A_0 \cdot \Z^m=\Z^m$, hence $A_0\cdot(V \cap \Z^m)=V \cap \Z^m.$

Fix a $\Z$-basis $e_i$, $i=1,\ldots,\varphi(m)$ of $V\cap\mathbb{Z}^m$. For convenience, we identify $V\cap\Z^m$ with $\Z^{\varphi(m)}$, by identifying the $\Z$-basis $e_i$ with the standard generators of~$\Z^{\varphi(m)}$. Let~$A$ be the matrix of $A_0|_{V}$ in the basis $\set{e_i}$. Observe that since $A_0^m=1$, we have $A^m=1$.

\begin{lem}\label{lem:semi}
Let $A$ be the matrix defined above, and let $f(x)\in\mathbb{Q}[x]$. Then for the matrix $M=f(A)$ we have either $M=0$ or $\det(M)\neq 0.$
\end{lem}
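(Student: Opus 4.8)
The plan is to exploit the explicit diagonalization of $A$ over $\mathbb{C}$ together with the fact that the eigenvalues are exactly the \emph{primitive} $m$-th roots of unity, each with multiplicity one. Recall that $A$ is the matrix of $A_0|_V$ in the basis $\{e_i\}$, and $A_0|_V$ is diagonalizable over $\mathbb{C}$ with eigenvectors $v_k$ for $\gcd(k,m)=1$ and eigenvalues $\xi^k$. Consequently $f(A)$ is diagonalizable over $\mathbb{C}$ with eigenvalues $f(\xi^k)$ for $\gcd(k,m)=1$. The determinant of $f(A)$ is then $\prod_{\gcd(k,m)=1} f(\xi^k)$, so $\det f(A)=0$ if and only if $f(\xi^k)=0$ for \emph{some} $k$ coprime to $m$.

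The key point will be that the primitive $m$-th roots of unity are all conjugate over $\mathbb{Q}$: the $m$-th cyclotomic polynomial $\Phi_m(x)$ is irreducible over $\mathbb{Q}$, and its roots are precisely the $\xi^k$ with $\gcd(k,m)=1$. Hence if $f\in\mathbb{Q}[x]$ vanishes at one primitive $m$-th root of unity $\xi^{k_0}$, then $\Phi_m(x)\mid f(x)$ in $\mathbb{Q}[x]$, and therefore $f$ vanishes at \emph{every} primitive $m$-th root of unity. In that case all the eigenvalues $f(\xi^k)$ are zero, so $f(A)$ is a diagonalizable matrix with all eigenvalues zero, i.e. $f(A)=0$. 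Conversely, if $f$ does not vanish at any primitive $m$-th root of unity, then every eigenvalue $f(\xi^k)$ is nonzero and $\det f(A)=\prod_{\gcd(k,m)=1}f(\xi^k)\neq 0$. This dichotomy is exactly the statement of the lemma.

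Concretely I would carry out the steps in this order: (1) record that $A$ is conjugate (over $\mathbb{C}$) to the diagonal matrix with entries $\xi^k$, $\gcd(k,m)=1$, so that $f(A)$ is conjugate to $\operatorname{diag}(f(\xi^k))_{\gcd(k,m)=1}$; (2) observe $\det f(A)=\prod_{\gcd(k,m)=1} f(\xi^k)$, so the determinant is nonzero unless some factor vanishes; (3) invoke irreducibility of $\Phi_m$ over $\mathbb{Q}$ to conclude that vanishing at one primitive root forces $\Phi_m\mid f$ and hence vanishing at all of them; (4) in that case note $f(A)$ is diagonalizable with zero spectrum, so $f(A)=0$.

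The only genuine input beyond bookkeeping is the irreducibility of the cyclotomic polynomial $\Phi_m$ over $\mathbb{Q}$, which is classical; I would simply cite it. A minor technical point to be careful about is that $f\in\mathbb{Q}[x]$ rather than $\mathbb{Z}[x]$, but clearing denominators does not change the zero locus, so this causes no difficulty; similarly, one should note that $f(A)$ having rational entries and being nilpotent (zero spectrum) over $\mathbb{C}$ forces it to be the zero matrix, which is immediate since a diagonalizable matrix with all eigenvalues $0$ is $0$. Thus the main ``obstacle'' is really just assembling these standard facts correctly; there is no serious difficulty.
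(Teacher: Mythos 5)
Your proof is correct and matches the paper's argument essentially verbatim: diagonalize $A$ with eigenvalues the primitive $m$-th roots of unity, use irreducibility of $\Phi_m$ over $\mathbb{Q}$ to get the dichotomy between $f$ vanishing at none or all of them, and conclude via the eigenvalues of $f(A)$. You also correctly spell out the minor point (implicit in the paper) that a diagonalizable matrix with zero spectrum is the zero matrix.
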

\begin{proof}
As $A$ is diagonalizable with eigenvalues $\xi^k$ for any $\gcd(k,m)=1$, the matrix $f(A)$ is diagonalizable with eigenvalues $f(\xi^k)$ for such $k$. If $f(\xi^k)=0$ for some~$k$ with $\gcd(k,m)=1$, then $f$ is divisible by the cyclotomic polynomial $\Phi_m(x)$, and thus $f(\xi^j)=0$ for any $j$ with $\gcd(m,j)=0$, i.e.\ $M=f(A)=0$. Otherwise, $M$ has no zero eigenvalues, so $\det(M)\neq 0$.
\end{proof}

Let us consider the group $G_m=\mathbb{Z}^{\varphi(m)}\rtimes_A\mathbb{Z}/m\mathbb{Z}$, where the semidirect product is taken using the matrix $A$ defined as above. Consider a simple random walk $R_n$ on $G_m$ corresponding to the generating set $(\pm e_i,k), (0, k)$ where $i=1,\ldots,\varphi(m)$, $k=0,\ldots, m-1 \in \mathbb{Z}/m\mathbb{Z}$.

\begin{thm}\label{power-balanced}
Let $H$ be a non-abelian group generated by two elements $a$ and $b$ and consider the wreath product $\Gamma=H \wr G_m^5$. Then for any $\varepsilon>0$ and any balanced law $w=w(y_1,\dots, y_d)$ there exists a symmetric generating set $S$ on $\Gamma$ such that if $X_n$, $Y^{(1)}_n, \dots, Y^{(d)}_n$ are independent simple random walks of $\Gamma$ where each step is uniform on $S$, then
$$\liminf_{n\to\infty}\Pr\left([(X_n)^m,w(Y^{(1)}_n, \dots, Y^{(d)}_n)]=1\right)\geq \left(\frac{\varphi(m)}{m^{d+1}}\right)^5-\eps.$$
In other words, $\Gamma$ satisfies the law $[x^m,w]$ with positive probability with respect to the simple random walk induced by $S$.
\end{thm}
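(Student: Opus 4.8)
The plan is to mimic the structure of the proof of \Tref{t:meta-eps} and \Tref{t:bal-comm-eps}, replacing the base group $\Z^5$ by $G_m^5$ and the inner law $[x,y]$ by the power $x^m$. The key new features are: (i) for an element $X_n=(L,g)\in\Gamma$, the element $(X_n)^m$ need not lie in the base subgroup $\bigoplus H$ unless the $G_m^5$-projection $g$ of $X_n$ has order dividing $m$; (ii) even when it does, the lamp configuration of $(X_n)^m$ is supported on a translate-and-union of the trajectory of the lamplighter, which is no longer a loop through the origin but rather a concatenation of $m$ pieces of the random walk. So first I would record the event $\mathcal{E}_X=\{$the $G_m^5$-coordinate of $X_n$ has order dividing $m$ in each of the five copies of $G_m\}$; by \Pref{non-positivity for power} (applied coordinatewise) and independence of the five coordinates of a simple random walk on $G_m^5$, we have $\liminf_n \Pr(\mathcal{E}_X)\geq (\varphi(m)/m)^5$ — more precisely, in each copy the walk lands in $\bigcup_{\gcd(k,m)=1}X_k$ with asymptotic probability $\varphi(m)/m$. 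Actually, because the step distribution on $\Gamma$ involves the lamp generators, the $G_m$-projection of one step of the $S$-walk is supported on $\{0,\dots,m-1\}\subset\Z/m\Z$ with some distribution; I would just note that landing in the coset of a "good" $k$ (coprime to $m$) still has the right limiting probability, and intersecting over the five coordinates and the inner walk's analogous constraint (the balanced law $w$ already forces $w(Y^{(1)},\dots,Y^{(d)})$ into the base subgroup automatically, since $w$ is balanced and $G_m^5$ is nilpotent-by-finite — wait, $G_m^5$ is not abelian, so balancedness alone does not put $w$ in the base; I need the $G_m^5$-projection of $w(Y)$ to be trivial, which happens exactly when the projections $\overline{Y^{(j)}}$ satisfy $w=1$ in $G_m^5$; this is where the factor $m^{-d\cdot 5}$ in the statement comes from, bounding below the probability that each of the $d$ inner walks, in each of the $5$ coordinates, lands in $X_0$, the index-$m$ base subgroup of $G_m$ where $w$ trivially holds). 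This explains the constant $(\varphi(m)/m^{d+1})^5$.

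Next, conditioned on being on this good event, I would run the geometric argument: write $(X_n)^m=(L_1,\underline 0)$ and $w(Y^{(1)}_n,\dots,Y^{(d)}_n)=(L_2,\underline 0)$, and observe that $\supp L_1$ is contained in $\Pi\cup(ke_1+\Pi)$ where $\Pi$ is the (random) subset of $G_m^5$ traced by the path of $x^m$ under the walk $X_n$ — a concatenation of $m$ random-walk segments — and that on $\Pi\setminus(ke_1+\Pi)$ the lamp values lie in $A=\langle a\rangle$ while on $(ke_1+\Pi)\setminus\Pi$ they lie in $B=\langle b\rangle$; similarly $\supp L_2\subset\Pi'\cup(ke_1+\Pi')$ with the analogous dichotomy, where $\Pi'$ is the path of $w(Y)$. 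Then exactly the four-case analysis from the proof of \Tref{t:meta-eps} applies verbatim — at every site $x\in G_m^5$ either one of $L_1(x),L_2(x)$ is trivial, or both lie in $A$, or both lie in $B$ — provided $\Pi\cap(ke_1+\Pi')=\varnothing$ and $(\Pi+ke_1)\cap\Pi'=\varnothing$; in all four cases $[L_1(x),L_2(x)]=\id_H$, so \Fref{f:commutators} gives $[(X_n)^m,w(Y)]=\id_\Gamma$.

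The remaining ingredient is the geometric disjointness estimate: for any $\eps>0$ there is $k_0$, uniform in $n$, such that for $k\geq k_0$ and conditioned on the good event, $\Pr(\Pi\cap(ke_1+\Pi')\neq\varnothing)<\eps$ (and symmetrically). This is the analogue of \Lref{l:4paths}/\Lref{l:gen-intersection}, and here is where \Lref{lem:semi} does the real work: $\Pi$ and $\Pi'$ decompose into boundedly many segments, each of which is (the $\Z^{\varphi(m)}$-part of) a random walk on $G_m^5$, and the difference of a point on $\Pi$ and a point on $\Pi'$ is, after conditioning on all but the longest segment, a single random walk increment $b_1 X^{(1)}_{m_1}$ pushed around by powers of $A$ — i.e.\ an expression of the form $\big(\sum_j \pm A^{c_j}\big)X^{(1)}_{m_1}+(\text{const})$. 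The coefficient matrix $\sum_j\pm A^{c_j}$ is $f(A)$ for some $f\in\Z[x]$; by \Lref{lem:semi} it is either $0$ or invertible. When it is invertible, the local central limit theorem on $\Z^{\varphi(m)}$ (the walk on $V\cap\Z^m$ is genuinely $\varphi(m)$-dimensional, transient since $\varphi(m)\geq 5$ would be needed — but note $G_m^5$ means the \emph{fifth power} of $G_m$, so the ambient lattice has dimension $5\varphi(m)\geq 5$; transience and the $n^{-5\varphi(m)/2}\le n^{-5/2}$ heat-kernel bound hold) gives, exactly as in \Lref{l:intersection}, an $O(k^{-1/2})$ bound on the expected number of intersections, hence on $\Pr(\Pi\cap(ke_1+\Pi')\neq\varnothing)$. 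The cases where $f(A)=0$ for \emph{every} choice of the longest segment would force $\Pi$ and $\Pi'$ to be degenerate in a way incompatible with $[x^m,w]$ being a nontrivial law — this is the one subtle point and the main obstacle: one must check that the "bad" coincidence patterns of signs cannot occur for all pairs $(i,i')$ simultaneously, i.e.\ that genuinely transient directions survive. Granting this, summing the $O(M^{-5/2})$ contributions over segment-lengths $M$ from $c'k$ to $n$ (plus the trivial $O(n^2\cdot n^{-5/2})$ term for the full-length segment) yields a bound $\leq Ck^{-1/2}$, and choosing $k$ large makes the intersection probability $<\eps$. Putting it together: $\liminf_n\Pr([(X_n)^m,w(Y)]=1)\geq \liminf_n\Pr(\mathcal E)\;-\;\eps\geq (\varphi(m)/m^{d+1})^5-\eps$, as claimed.
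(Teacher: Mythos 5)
Your proposal mirrors the paper's argument closely and has the right skeleton: you identify the factor $\left(\varphi(m)/m^{d+1}\right)^5$ as the probability that both the path of $x^m$ and the path of $w$ project to loops in the base group (the $(\varphi(m)/m)^5$ factor from \Pref{non-positivity for power} applied in each of the five $G_m$-coordinates, and the $(1/m)^{5d}$ factor from forcing each of the $d$ inner walks into the index-$m$ subgroup $\Z^{\varphi(m)}<G_m$ in all five coordinates); you reuse the four-case lamp dichotomy of \Tref{t:meta-eps} verbatim; you identify the path-intersection estimate as the remaining hurdle and correctly bring in \Lref{lem:semi} to upgrade the coefficient matrices from ``nonzero'' to ``invertible.''

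The genuine gap is exactly the one you flag and then waive with ``Granting this.'' But your framing of the obstacle is also slightly off. You write that ``one must check that the bad coincidence patterns of signs cannot occur for all pairs $(i,i')$ simultaneously,'' which suggests a global non-degeneracy argument. The paper's resolution is local and much simpler: for \emph{each} pair $(k,l)$ it shows that, after splitting $X_n$ into independent pieces $R_r^{(1)}$ and $R_{n-r}^{(2)}$, the coefficients $b_1$ and $b_2$ of these two pieces cannot both vanish, and likewise for the split $R_{r'}^{(3)}, R_{n-r'}^{(4)}$ of the inner walk. The reason is structural: in the formal word $\widehat{w}_{k,l}$, every occurrence of $R_r^{(1)}$ is immediately followed by $R_{n-r}^{(2)}$ \emph{except one} (the dangling $R_r^{(1)}$ closing the prefix $\gamma_k$). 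Writing $b_1' $ for the contribution of the paired occurrences, one has $b_2=A^{k_1}b_1'$ and $b_1=b_1'+A^{c}$ for some $c$; by \Lref{lem:semi}, $b_1'$ is $0$ or invertible, and in either case at least one of $b_1,b_2$ is invertible. This is precisely what guarantees $s=\sum_{i:b_i\ne 0}\mathrm{len}(R^{(i)})\ge \min\{r,n-r\}+\min\{r',n-r'\}$ for every $(k,l)$ and feeds the summation $\sum_S S\cdot S^{-5/2}\le Ck^{-1/2}$. Without this observation the $f(A)=0$ case blocks the local-CLT bound exactly where you feared, and your ``incompatible with $[x^m,w]$ being nontrivial'' heuristic is not a proof — cancellations like $I+A^k+\cdots+A^{(m-1)k}=0$ really do occur on the good event.

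A lesser point of precision: you write the offset additively as $\Pi\cup(ke_1+\Pi)$, implicitly treating $G_m^5$ as abelian. The paper offsets by right-multiplication by $q=(q_0,\dots,q_0)$ with $q_0=(v_0,k_0)\in G_m$, works with $\gamma\cap\gamma'q$ and $\gamma q\cap\gamma'$, and the controlling parameter is $|v_0|$, not a scalar $k$. This is mostly notational (you may take $q_0=(v_0,0)$), but it matters for keeping track of the $A$-conjugations in \Pref{prop:Gm-loops-prob}.
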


As before, the theorem holds one $H$ is taken to be the free group on two generators, and in this case, since this group contains a free subgroup it cannot satisfy any law virtually, so we conclude that $[x^m,w(y_1,\dotsc,y_d)]$ does not satisfy positivity.

To prove this theorem we will use a strategy similar to the proof of \Tref{t:meta-eps}. In particular, recall the definition of the path of a word
(\Dref{def:path}). Let $R_n^{(1)},\dotsc,R_n^{(d+1)}$ be independent random walks on $G_m^5$ and let $\gamma$ and $\gamma'$ be the paths of $\big(R_n^{(d+1)}\big)^m$ and $w(R_n^{(1)},\dotsc,R_n^{(d)})$. Our goal is to prove that $\Pr(\gamma\cap\gamma'q\neq \varnothing )$ is small for an appropriate $q\in G_m^5$.

First, we want to simplify the statement. We will assume $q=(q_0,\dotsc,q_0)$ for some $q_0\in G_m$ and then
$$\Pr(\gamma\cap\gamma'q\neq \varnothing )\le 
\sum_{\substack{k\le mn\\ l\le n|w|}}\Pr((\gamma'_{l,1})^{-1}\gamma_{k,1}=q_0)^5,$$
where $\gamma_l'$ is the $l$-th step of $\gamma'$, $\gamma_k$ is the $k$-th step of $x^m$ and also $\gamma_{l,1}'$ and $\gamma_{k,1}$ are the first ``coordinates" of $\gamma_l'$ and $\gamma_k$ respectively.
As in the proof of \Lref{l:gen-intersection}, we may write
$$(\gamma_{l,1}')^{-1}\gamma_{k,1}=\widehat{w}_{k,l}(R_r^{(1)},R_{n-r}^{(2)},R_{r'}^{(3)},R_{n-r'}^{(4)}, R_n^{(5)},\ldots R_n^{(d+3)})$$
where $R_r^{(1)},R_{n-r}^{(2)},R_{r'}^{(3)},R_{n-r'}^{(4)}, R_n^{(5)},\ldots R_n^{(d+3)}$ are independent random walks and where $r$ depends on $k$, and $r'$ on $l$. Hence
$$\Pr(\gamma\cap\gamma'q\neq \varnothing )\le 
\sum_{\substack{k\le nm\\ l\le n|w|}} \left(\Pr(\widehat{w}_{k,l}(R_r^{(1)},R_{n-r}^{(2)},R_{r'}^{(3)},R_{n-r'}^{(4)}, R_n^{(5)},\ldots R_n^{(d+3)})=q)\right)^5.$$
Therefore the proof of \Tref{power-balanced} boils down to getting an upper bound on the probability that a word evaluated at some random walks will hit a far-away point. This was done in \Sref{sec:commut} for abelian groups. Here $G_m$ is not abelian, however we can exploit its structure to get a similar result. 
\begin{lem}
    For any fixed word $\widehat{w}=\widehat{w}(z_1,\ldots, z_u)$ there is a finite set $B\subset \mathrm{Mat}_{\varphi(m)}(\mathbb{Z})$ such that for any $(v_i,k_i)\in G_m$ we have
    $$\widehat{w}((v_1,k_1),\ldots, (v_u,k_u))=\left(\sum_{i=1}^{u} b_i\cdot v_i, * \right)$$
    for some $b_i\in B$, where $*$ is some element of $\mathbb{Z}/m\mathbb{Z}$.
\end{lem}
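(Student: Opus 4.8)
The plan is to just multiply everything out explicitly. Recall that the group law on $G_m=\Z^{\varphi(m)}\rtimes_A\Z/m\Z$ is $(v,k)(w,\ell)=(v+A^kw,\,k+\ell)$ --- this is the convention consistent with the identity $(v,k)^m=\bigl((I+A^k+\dots+A^{(m-1)k})v,\,0\bigr)$ used earlier --- and hence $(v,k)^{-1}=(-A^{-k}v,\,-k)$. The only structural input needed is that $A^m=I$, so that every power of $A$ occurring in such a computation is one of $I,A,\dots,A^{m-1}$; this is precisely what forces the coefficient set to be finite.

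First I would write $\widehat w=z_{i_1}^{\epsilon_1}\cdots z_{i_\ell}^{\epsilon_\ell}$ with $\ell=|\widehat w|$ and each $\epsilon_t\in\{\pm1\}$, substitute $(v_j,k_j)$ for $z_j$, and compute the product from left to right by induction on $t$. Writing $(u_t,p_t)$ for the product of the first $t$ letters, one gets $p_t=\sum_{s\le t}\epsilon_s k_{i_s}$ and $u_t=u_{t-1}+A^{p_{t-1}}w_t$, where $w_t$ is the first coordinate of $(v_{i_t},k_{i_t})^{\epsilon_t}$, namely $w_t=v_{i_t}$ if $\epsilon_t=1$ and $w_t=-A^{-k_{i_t}}v_{i_t}$ if $\epsilon_t=-1$. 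Consequently the first coordinate of $\widehat w\bigl((v_1,k_1),\dots,(v_u,k_u)\bigr)$ is $\sum_{t=1}^{\ell}A^{p_{t-1}}w_t$, and collecting the terms according to which $z_i$ sits in position $t$ gives $\sum_{i=1}^u b_i v_i$ with
\[
b_i=\sum_{\substack{t:\ i_t=i\\ \epsilon_t=1}}A^{p_{t-1}}\ -\ \sum_{\substack{t:\ i_t=i\\ \epsilon_t=-1}}A^{\,p_{t-1}-k_{i_t}}.
\]
The second coordinate is simply $p_\ell=\sum_{t}\epsilon_t k_{i_t}\in\Z/m\Z$, which is the $*$ in the statement.

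It then remains to observe that $b_i$ always lies in a fixed finite set. Each exponent appearing above ($p_{t-1}$ or $p_{t-1}-k_{i_t}$) is an integer, so after reduction modulo $m$ --- legitimate since $A^m=I$ --- it contributes one of the matrices $I,A,\dots,A^{m-1}$; thus $b_i$ is a signed sum of at most $\ell$ of these, and therefore lies in
\[
B:=\set{\textstyle\sum_{s=1}^{\ell}\eta_s A^{j_s}\suchthat \eta_s\in\{-1,0,1\},\ j_s\in\{0,\dots,m-1\}}\ \subset\ \mathrm{Mat}_{\varphi(m)}(\Z),
\]
a finite set depending only on $m$ and $\ell=|\widehat w|$, not on the chosen elements $(v_j,k_j)$. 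I do not expect any genuine obstacle here: the one place demanding a little care is the bookkeeping of the extra factor $A^{-k_{i_t}}$ produced by inverse letters, which shifts the relevant exponent from $p_{t-1}$ to $p_{t-1}-k_{i_t}$; but since we only need membership in a finite set, the precise value of the exponent is irrelevant --- only the fact that it is an integer (hence reducible mod $m$) matters.
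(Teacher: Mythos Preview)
Your proof is correct and takes essentially the same approach as the paper: compute the first coordinate of the product letter by letter, observe that each contribution is $\pm A^{j}v_i$ for some integer exponent $j$, and conclude that the coefficient of each $v_i$ lies in the finite set of signed sums of at most $|\widehat w|$ powers of $A$. The paper's proof is a two-line sketch stating exactly this; you have simply written out the bookkeeping in full.
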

\begin{proof}
    Multiplication and taking the inverse in $G_m$ is given by multiplication of the first coordinate by the powers of $A$. So we can take as $B$ a set of all sums of $\pm A^k$ of length at most $|\widehat{w}|$.
\end{proof}

Let $q=(v_0,k_0)\in G_m$, and let $R_*^i=(v_i,k_i)$ be random walks on $G_m$. Write $B=B(\widehat{w})$ and $\widehat{w}_{k,l}(R_r^{(1)},R_{n-r}^{(2)},R_{r'}^{(3)},R_{n-r'}^{(4)}, R_n^{(5)},\ldots ,R_n^{(d+3)})=\left(\sum_{i=1}^{d+3} b_i\cdot v_i, * \right)$ by the previous lemma. The $b_i$'s only depend on $k_1,\dotsc,k_{d+3}$, but actually we do not need to calculate them. Instead we can use the fact that $B$ is finite together with a union bound. 
Thus we need to bound the probabilities $\Pr\left(\sum_i b_i v_i = v_0\right).$
%

\begin{lem}
    For any $n>0$ and any $b\in B\setminus\{0\}$ if $R_n$ is random walk on $G_m$ and if $R_n=(v,k)$ then
    $$\Pr(b\cdot v=v_0)\le Cn^{-\varphi(m)/2} e^{-c|v_0|^2/n}.$$
\end{lem}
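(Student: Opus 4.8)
The plan is to exploit the semidirect-product structure of $G_m$ to unfold the walk and reduce the estimate to a local limit theorem for an auxiliary $\varphi(m)$-dimensional integer walk, obtained after conditioning on the $\Z/m\Z$-component. Write the $n$ steps of $R_n$ as $(s_1,j_1),\dots,(s_n,j_n)$, i.i.d.\ uniform on the generating set; since that set is a product set, $s_t$ is uniform on $\{0,\pm e_1,\dots,\pm e_{\varphi(m)}\}$, $j_t$ is uniform on $\Z/m\Z$, and the two sequences are independent. Using $(w,k)(w',k')=(w+A^kw',k+k')$ and writing $J_t=j_1+\dots+j_t$, one gets $R_n=(v,k)$ with $v=\sum_{t=1}^n A^{J_{t-1}}s_t$, so $b\cdot v=\sum_{t=1}^n X_t$ where $X_t:=bA^{J_{t-1}}s_t$. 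Because $b\in B\setminus\{0\}$ is an integer $\pm$-combination of powers of $A$, \Lref{lem:semi} gives $\det b\neq0$; as $\det A=\pm1$, every matrix $bA^{j}$ ($0\le j<m$) is invertible over $\Z$, and since there are only finitely many of them, $\|bA^{j}\|\le C$ and $\|(bA^{j})^{-1}\|\le C$ with $C$ depending only on $\widehat w$ (through $B$) and on $m$.

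Next I would condition on the $\Z/m\Z$-walk $(j_t)_{t\le n}$. Given it, the $X_t$ are independent, have mean zero (each $s_t$ is symmetric), and satisfy $|X_t|\le C$. Split $b\cdot v=Y+Z$ into the contributions of the first $\lfloor n/2\rfloor$ and the last $\lceil n/2\rceil$ steps; conditionally these are independent. On $\{b\cdot v=v_0\}$ at least one of $|Y|,|Z|$ exceeds $|v_0|/2$, whence
$$\Pr(b\cdot v=v_0\mid j)\le \Pr\!\big(|Y|\ge\tfrac{|v_0|}{2}\,\big|\,j\big)\,\sup_z\Pr(Z=z\mid j)+\Pr\!\big(|Z|\ge\tfrac{|v_0|}{2}\,\big|\,j\big)\,\sup_y\Pr(Y=y\mid j).$$
It then remains to estimate the tail factors and the anticoncentration (on-diagonal) factors, uniformly in the conditioning $(j_t)$.

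For the tail factors, Hoeffding's inequality applied coordinate-by-coordinate to the bounded, independent, mean-zero sum $Y$ (of at most $n$ terms) yields $\Pr(|Y|\ge|v_0|/2\mid j)\le Ce^{-c|v_0|^2/n}$, and likewise for $Z$. For the anticoncentration factors I would use a pigeonhole step: among the indices $t\le\lfloor n/2\rfloor$, some residue $j^\ast\in\Z/m\Z$ occurs on a set $T$ with $|T|\ge\lfloor n/2\rfloor/m\ge cn$; then $Y=bA^{j^\ast}\big(\sum_{t\in T}s_t\big)+(\text{a term independent of }\sum_{t\in T}s_t)$, and $\sum_{t\in T}s_t$ is a lazy nearest-neighbour walk on $\Z^{\varphi(m)}$ of length $|T|$, hence a bounded, aperiodic, non-degenerate i.i.d.\ walk, so by the local central limit theorem (or a direct characteristic-function estimate) $\sup_w\Pr(\sum_{t\in T}s_t=w)\le C|T|^{-\varphi(m)/2}\le Cn^{-\varphi(m)/2}$. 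Since $bA^{j^\ast}$ is injective over $\Z$, composing with it cannot increase the largest atom, and convolving with the remaining independent term cannot either; hence $\sup_y\Pr(Y=y\mid j)\le Cn^{-\varphi(m)/2}$, and symmetrically $\sup_z\Pr(Z=z\mid j)\le Cn^{-\varphi(m)/2}$. Substituting the four bounds into the displayed inequality gives $\Pr(b\cdot v=v_0\mid j)\le Cn^{-\varphi(m)/2}e^{-c|v_0|^2/n}$ uniformly in $j$, and taking expectation over $(j_t)$ yields the claim; the finitely many values $n<2m$ (for which the pigeonhole set could be empty) are absorbed into $C$, using $\Pr\le1$ and the fact that $b\cdot v=v_0$ forces $|v_0|\le Cn$, so $n^{-\varphi(m)/2}e^{-c|v_0|^2/n}$ is then bounded below by a positive constant.

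The main obstacle is the on-diagonal factor $n^{-\varphi(m)/2}$: after conditioning on $(j_t)$, the walk $\sum_tbA^{J_{t-1}}s_t$ is a sum of independent but non-identically-distributed steps, and one must rule out that it degenerates along some direction or is "effectively short". This is exactly what the pigeonhole step handles — it extracts a genuinely $\varphi(m)$-dimensional i.i.d.\ sub-walk of length $\gtrsim n/m$, whose non-degeneracy is guaranteed by $\det b\neq0$ (\Lref{lem:semi}) together with $\det A=\pm1$ — while splitting $b\cdot v$ into two halves is the standard device for obtaining the Gaussian factor $e^{-c|v_0|^2/n}$ and the polynomial factor $n^{-\varphi(m)/2}$ at the same time.
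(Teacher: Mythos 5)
Your proof is correct, but it takes a genuinely different route from the paper's. The paper's argument is a three-line reduction: since $\det b\ne 0$ (via \Lref{lem:semi}), rewrite $\Pr(b\cdot v=v_0)=\Pr(v=b^{-1}v_0)$, note $|b^{-1}v_0|\ge c|v_0|$, and invoke the ``usual heat kernel bound'' $\Pr(R_n=\cdot)\le Cn^{-\varphi(m)/2}e^{-c|\cdot|^2/n}$ --- which, since the $v$-component is not literally a simple walk on $\Z^{\varphi(m)}$, is best read as the Hebisch--Saloff-Coste Gaussian upper bound for the virtually abelian group $G_m$ of polynomial growth $\varphi(m)$ (the same machinery the paper already uses in the polynomial-growth case of Lemma~\ref{lem:not just superpoly}). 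You instead unfold the semidirect product, condition on the $\Z/m\Z$-walk, and prove the two factors separately: Hoeffding on one half of the steps for the Gaussian tail, and a pigeonhole-extracted i.i.d.\ sub-walk plus LCLT (and injectivity of $bA^{j^\ast}$) on the other half for the $n^{-\varphi(m)/2}$ on-diagonal decay. Your route is more elementary and self-contained --- it makes explicit the twisting by $A$ that the paper's one-liner glosses over --- at the cost of length; the paper's is shorter because it leans on heat kernel theory already in its toolbox. One small imprecision on your side: $bA^j$ is invertible over $\Q$, not over $\Z$ in general (its determinant need not be $\pm1$); what you actually use, and correctly state later, is only that it is injective on $\Z^{\varphi(m)}$, which follows from $\det b\ne0$ and $\det A=\pm1$.
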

\begin{proof}
    By \Lref{lem:semi} for $b\in B\setminus\{0\}$ we have $\det(b)\ne 0$. Now $\Pr(b\cdot v=v_0)=\Pr(v=b^{-1}\cdot v_0)$ and $|b^{-1}\cdot v_0|\ge c|v_0|$ with $c=1/\max_{b\in B\setminus \{0\}}||b||$. The lemma then follows from the usual heat kernel bound for random walks in $\Z^{\varphi(m)}$.
\end{proof}

\begin{lem}
    For any fixed lengths of random walks $R^{(i)}_*$ and any $b_1,\dots,b_{d+3}\in B$, denoting $R_*^{(i)}=(v_i,k_i)$ we have:
    $$\Pr\left(\sum_i b_i\cdot v_i=v_0\right)\le Cs^{-\varphi(m)/2} e^{-c|v_0|^2/s}.$$
where $s=\sum_{i:b_i\neq 0} \mathrm{len}(R^{(i)})$.
\end{lem}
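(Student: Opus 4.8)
The plan is to reduce the estimate to the single-walk bound of the previous lemma by conditioning on all of the walks except the longest relevant one. We may assume $s\ge 1$, as otherwise there is nothing to prove. Write $n_i=\mathrm{len}(R^{(i)})$ and choose an index $j$ with $b_j\ne 0$ and $n_j$ maximal among all $i$ with $b_i\ne 0$. Since there are at most $d+3$ such indices, $n_j\ge s/(d+3)$, hence $n_j^{-\varphi(m)/2}\le Cs^{-\varphi(m)/2}$ with $C=(d+3)^{\varphi(m)/2}$.

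Next I would condition on the family $\big(R^{(i)}\big)_{i\ne j}$ and set $w=-\sum_{i\ne j}b_i\cdot v_i$, which is a fixed vector on this event. As $R^{(j)}$ is independent of the conditioning and $b_j\in B\setminus\{0\}$, the previous lemma (applied with far-away point $v_0+w$) gives
$$\Pr\Big(\textstyle\sum_i b_i\cdot v_i=v_0\ \Big|\ \big(R^{(i)}\big)_{i\ne j}\Big)=\Pr\big(b_j\cdot v_j=v_0+w\big)\le Cn_j^{-\varphi(m)/2}e^{-c|v_0+w|^2/n_j}\le Cs^{-\varphi(m)/2}e^{-c|v_0+w|^2/s},$$
where in the last step I used $n_j\le s$. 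Taking expectations, it remains to establish the Gaussian bound $\E\big[e^{-c|v_0+w|^2/s}\big]\le Ce^{-c'|v_0|^2/s}$, i.e.\ to control the size of $w$.

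For this I would first record a tail estimate for the vector component of a random walk on $G_m$: writing $R_n=(v,k)$, the partial sums of the vector increments form a martingale, since each increment is $A^{(\text{current }\Z/m\Z\text{-coordinate})}\epsilon$ with $\epsilon$ independent of the past and of mean zero (the generating set is symmetric in $\epsilon$), and its norm is bounded by $\max_{0\le k<m}\|A^k\|=O(1)$ because there are only finitely many powers of $A$. Azuma--Hoeffding then yields $\Pr(|v|\ge t)\le Ce^{-ct^2/n}$. Applying this to each $R^{(i)}$ with $i\ne j$ and $b_i\ne 0$, using $\|b_i\|\le\max_{b\in B}\|b\|$ together with a union bound over the (boundedly many) such $i$, and using $n_i\le s$, I get $\Pr(|w|\ge|v_0|/2)\le Ce^{-c|v_0|^2/s}$. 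Finally I split the expectation according to this event: on $\{|w|<|v_0|/2\}$ one has $|v_0+w|\ge|v_0|/2$, so $e^{-c|v_0+w|^2/s}\le e^{-c|v_0|^2/(4s)}$; on $\{|w|\ge|v_0|/2\}$ I bound $e^{-c|v_0+w|^2/s}\le 1$ and pay the probability just estimated. Adding the two contributions gives the Gaussian bound, and substituting it back completes the proof.

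The step I expect to be the main obstacle is the tail estimate for the vector component of a $G_m$-random walk: because of the semidirect-product twist this component is not literally a random walk on $\Z^{\varphi(m)}$, so a standard heat kernel/local CLT bound cannot be quoted verbatim, and one must instead exploit the martingale structure obtained after conditioning on the $\Z/m\Z$-coordinates. Once that is in place, the rest is a routine conditioning-and-union-bound argument, essentially parallel to the proof of \Lref{l:gen-intersection}.
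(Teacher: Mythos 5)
Your argument is correct, but it takes a genuinely different route from the paper's. The paper's proof is essentially a one-line computation: the single-walk bound $Cn^{-\varphi(m)/2}e^{-c|v_0|^2/n}$ is a discrete Gaussian, and the convolution of two such bounds with parameters $n_1$ and $n_2$ is, after completing the square in the exponent, again a bound of the same shape with parameter $n_1+n_2$; since $\sum_{i:b_i\ne 0}b_i v_i$ is a sum of at most $d+3$ independent summands each obeying the single-walk bound, iterating the convolution identity finishes the proof. You instead condition on all walks except the one of maximal length among those with $b_i\ne 0$, apply the single-walk bound to that walk with the randomly shifted target $v_0+w$, and control the shift $w$ through a separate Azuma--Hoeffding tail estimate for the $\Z^{\varphi(m)}$-component, correctly exploiting that after conditioning on the $\Z/m\Z$-coordinates this component is a bounded-increment martingale (indeed $\{A^k\}_{k=0}^{m-1}$ is a finite set of matrices, so the increments $A^{k_{n-1}}\epsilon_n$ have uniformly bounded norm and conditional mean zero). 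Both proofs are sound. The convolution route is more economical because the Gaussian tail is already built into the single-walk bound and it treats the summands symmetrically, with no need for an extra concentration input; your route pays the cost of a separate martingale/Azuma argument but avoids having to combine prefactors across many convolutions, and your observation about the martingale structure is a nice explicit articulation of why the heat-kernel bound is legitimate here despite the semidirect-product twist. Your remark that one may assume $s\ge 1$ correctly disposes of the degenerate case where every $b_i$ vanishes.
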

\begin{proof}
    Direct computations show that convolution of two bounds from the lemma above with $n=n_1$ and $n=n_2$ is the function of the same type with $n=n_1+n_2$ and maybe different constants.
\end{proof}

We now return to $\widehat{w}_{k,l}=\widehat{w}_{k,l}(R_r^{(1)},R_{n-r}^{(2)},R_{r'}^{(3)},R_{n-r'}^{(4)}, R_n^{(5)}\ldots ,R_n^{(d+3)})$. We wish to simply sum over the finite set of possibilities for the $b_i$, but we need to restrict the set slightly. We therefore note that $b_1$ and $b_2$ cannot both be zero. This is because $R_r^{(1)}$ and $R_{n-r}^{(2)}$ appear in $\widehat{w}_{k,l}$ together, except the very last letter in $\widehat{w}_{k,l}$ which is $R_r^{(1)}$. The same holds for $b_3$ and $b_4$. We get that $s\ge \min\{r,n-r\}+\min\{r',n-r'\}$. Denote this value by $s'$. 
Since $s^{-\frac{\varphi(m)}{2}}e^{\frac{-|v_0|^2}{s}}$ as a function of $s$ takes a maximum value at $s=\frac{2|v_0|^2}{\varphi(m)}$, we get
$$
s^{-\varphi(m)/2}e^{-|v_0|^2/s}\le\max_{s\ge s'}s^{-\varphi(m)/2}e^{-|v_0|^2/s}\le C\max\{s',|v_o|^2\}^{-\varphi(m)/2}.
$$
This allows to write
\begin{align*}\Pr(\widehat{w}_{k,l}=q)
&\le \sum_{\substack{b_1,\dotsc,b_{d+3}\in B\\b_1\ne 0\textrm{ or }b_2\ne 0\\b_3\ne 0\textrm{ or }b_4\ne 0}}\Pr\bigg(\sum_jb_jv_j=v_0\bigg)\\
&\le \sum_{b_1,\dotsc,b_{d+3}\in B} C\max\{s',|v_0|^2\}^{-5\varphi(m)/2}\le C\max\{s',|v_0|^2\}^{-5/2}
\end{align*}
for $q=(q_0,\dotsc,q_0)$ and $q_0=(v_0,k_0)$. We sum these over $k$ and $l$ and use the fact that $s'$ (which is a function of $k$ and $l$) takes any value $S$ no more than $4m|w|(S+1)$ times. Hence
\begin{align*}
\sum_{k\le mn}\sum_{l\le n|w|}\Pr(\widehat{w}_{k,l}=q)
&\le\sum_{S=1}^\infty\sum_{(k,l):s'=S}C\max\{S,|v_0|^2\}^{-5/2}\\
&\le \sum_{S=1}^\infty 4m|w|(S+1)\cdot C\max\{S,|v_0|^2\}^{-5/2}
\le \frac{C}{|v_0|}.
\end{align*}
%
%
%
Thus we proved the the following statement.
\begin{prop}\label{prop:Gm-loops-prob}
For $q=(q_0,\ldots,q_0)$, $q_0\in G_m$, $q_0=(v_0,k_0)$ we have
$$\Pr(\gamma\cap\gamma'q\neq \varnothing ) < C|v_0|^{-1}.$$
\end{prop}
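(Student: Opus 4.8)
Looking at Proposition \ref{prop:Gm-loops-prob}, the statement is essentially a summary of everything that was just developed in the lemmas immediately preceding it, so the proof should be a short assembly of those pieces rather than new work.

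\medskip

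The plan is as follows. First I would invoke the reduction made just before the lemmas: since $q=(q_0,\dots,q_0)$ is a ``diagonal'' element, the event $\gamma\cap\gamma'q\neq\varnothing$ forces a coincidence in each of the five $G_m$-coordinates simultaneously and independently, which gives the bound
$$\Pr(\gamma\cap\gamma'q\neq\varnothing)\le\sum_{\substack{k\le mn\\ l\le n|w|}}\Pr\big((\gamma_{l,1}')^{-1}\gamma_{k,1}=q_0\big)^5.$$
Then, rewriting $(\gamma_{l,1}')^{-1}\gamma_{k,1}$ as $\widehat{w}_{k,l}$ evaluated at independent random walks $R_r^{(1)},R_{n-r}^{(2)},R_{r'}^{(3)},R_{n-r'}^{(4)},R_n^{(5)},\dots,R_n^{(d+3)}$ (splitting the walk carrying the $k$-th or $l$-th step into two independent halves, exactly as in \Lref{l:gen-intersection}), I apply the three lemmas in sequence: the structural lemma writes $\widehat{w}_{k,l}$ as $(\sum_i b_iv_i,*)$ with $b_i$ ranging over the finite set $B(\widehat{w})$; the heat-kernel lemmas bound $\Pr(\sum_i b_iv_i=v_0)$ by $Cs^{-\varphi(m)/2}e^{-c|v_0|^2/s}$ where $s$ is the total length of the walks whose coefficient is nonzero; and a union bound over the finitely many tuples $(b_1,\dots,b_{d+3})\in B^{d+3}$ (after raising to the fifth power) yields $\Pr(\widehat{w}_{k,l}=q)\le C\max\{s',|v_0|^2\}^{-5/2}$, where $s'=\min\{r,n-r\}+\min\{r',n-r'\}$, using the observation that $b_1,b_2$ cannot both vanish and likewise for $b_3,b_4$.

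\medskip

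The final step is the summation over $k\le mn$ and $l\le n|w|$. Here one uses that $s'$, as a function of $(k,l)$, attains each value $S$ at most $O(m|w|(S+1))$ times, so
$$\sum_{k\le mn}\sum_{l\le n|w|}\Pr(\widehat{w}_{k,l}=q)\le\sum_{S=1}^\infty 4m|w|(S+1)\cdot C\max\{S,|v_0|^2\}^{-5/2}\le\frac{C}{|v_0|}$$
(split the sum at $S=|v_0|^2$: the head contributes $\sum_{S\le|v_0|^2}(S+1)|v_0|^{-5}\approx|v_0|^{-1}$ and the tail contributes $\sum_{S>|v_0|^2}S^{-3/2}\approx|v_0|^{-1}$), where $C$ may depend on $w$ and on $m$. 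Combining with the diagonal bound gives $\Pr(\gamma\cap\gamma'q\neq\varnothing)<C|v_0|^{-1}$, which is the claim.

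\medskip

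I do not expect a serious obstacle, since all the hard estimates (the heat-kernel bound surviving through the non-abelian twisting, the finiteness of $B$, the non-degeneracy $\det(b)\neq 0$ for $b\in B\setminus\{0\}$ coming from \Lref{lem:semi}) have already been isolated in the preceding lemmas. The only point requiring care is bookkeeping: making sure that when the step index lands on a particular walk, that walk really does split into two independent pieces of lengths summing to $n$, and that the coefficients $b_1,b_2$ (resp.\ $b_3,b_4$) genuinely cannot simultaneously vanish — this is what prevents the bound from degenerating when $r$ or $r'$ is close to $0$ or $n$, and it is the reason $s'$ rather than $s$ appears. Everything else is the routine summation above.
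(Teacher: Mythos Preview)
Your proposal is correct and follows exactly the paper's approach: the proposition is indeed just a summary of the preceding chain of lemmas, and you assemble them in precisely the same order (diagonal reduction to fifth powers, rewriting as $\widehat{w}_{k,l}$, structural lemma giving $(\sum b_iv_i,*)$, heat-kernel bound, the observation that $(b_1,b_2)$ and $(b_3,b_4)$ cannot both vanish so $s\ge s'$, then the final summation split at $S=|v_0|^2$). There is nothing to add.
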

We are now ready to prove \Tref{power-balanced}
\begin{proof}
Let us take $q=(q_0,\dots q_0)\in G_m^5,$ where $q_0=(v_0,k_0)\in G_m$ with large enough $|v_0|$. We will choose the following generating set of $\Gamma=H \wr G_m$
$$
\{s_1ms_2:s_1,s_2\in\{a^{\pm1}\delta_0,b^{\pm1}\delta_q\},m\in\{(\pm e_i,k)\}_{i,k}^5\}.
$$
In the formula above, $a^{\pm1}\delta_0$ and $b^{\pm1}\delta_q$ are both elements of $\oplus_{G_m}H$ (which is embedded in $\Gamma$) while $(\pm e_i,k)$ are elements of $G_m$, and $G_m^5$ is also embedded in $\Gamma$ (the parameter $i$ takes values in $\{1,\dotsc,5\}$ and $k$ in $\{1,\dotsc,m\}$). 
Note that the projection of the $n$-th step of the simple random walk on $\Gamma$ corresponding to this generating set is the $n$-th step of the simple random walk that corresponds to $S^5$ of $G_m^5$.

We will use the same idea as in the previous section:  $(X_n)^m$ and $w(Y^{(1)}_n, \dots, Y^{(d)}_n)$ commute if the corresponding paths $\gamma$ and $\gamma'$ in the base group are loops and $\gamma\cap\gamma'q=\varnothing$, $\gamma q\cap\gamma'=\varnothing$. Similar to the proof of \Pref{non-positivity for power} we get that $\gamma$ is a loop with probability $\left(\frac{\varphi(m)}{m}\right)^5$, and $\gamma'$ is a loop if all $Y^{(i)}_n\in H\wr \mathbb{Z}^{5\varphi(m)} < \Gamma$, which happens independently for each $i$ with probability $\left(\frac{1}{m}\right)^5$.
And finally, by \Pref{prop:Gm-loops-prob} we have
$$\Pr(\gamma\cap\gamma'q\ne \varnothing)+\Pr(\gamma q\cap\gamma'\ne \varnothing)\le \varepsilon$$
for large enough $|q_0|$. Therefore
$$\liminf_{n\to\infty}\Pr\left([(X_n)^m,w(Y^{(1)}_n, \dots, Y^{(d)}_n)]=1\right)\geq \left(\frac{\varphi(m)}{m^{d+1}}\right)^5-\eps.$$
as required.
\end{proof}

\begin{rem*} The theorem can also be proved as a corollary of \Lref{lem:non-peano is enough} below (at the expense of replacing the power 5 with 8), but the resulting simplification is not particularly significant.
\end{rem*}

\section{Commutators of a power and a non-balanced law}\label{sec:comm-power-not-balanced}

Let us consider the general Heisenberg group of dimension $2m-1$
$$H_{2m-1}=\set{\left(\begin{array}{c|ccc|c}1&-&u^t&-&a\\\hline0&1&&0&\mid\\\vdots&&\ddots&&v\\0&0&&1&\mid\\\hline 0&0&\cdots&0&1\end{array}\right)\suchthat u,v\in\Z^{m-1},a\in\Z}.$$
For convenience we will write elements of $H_{2m-1}$ as triples $(u, a, v)$, where ${u, v\in \Z^{m-1}, a\in \Z}$. Then multiplication is given by the formula $$(u_1,a_1,v_1)\cdot (u_2, a_2, v_2)=(u_1+u_2, a_1+a_2+u^t_1v_2, v_1+v_2).$$
Let us take a matrix $A$ as in \Sref{sec:comm-power-balanced}. Define an action of $\Z/m\Z$ on $H_{2m-1}$ by
$$1*\left(\begin{array}{c|ccc|c}1&-&u&-&a\\\hline0&1&&0&\mid\\\vdots&&\ddots&&v\\0&0&&1&\mid\\\hline 0&0&\cdots&0&1\end{array}\right)= \left(\begin{array}{c|ccc|c}1&-&u^tA&-&a\\\hline0&1&&0&\mid\\\vdots&&\ddots&&A^{-1}v\\0&0&&1&\mid\\\hline 0&0&\cdots&0&1\end{array}\right)$$
This is indeed an action of $\Z/m\Z$ on $H_{2m-1}$ since it preserves multiplication and $A^m=1$.

Let us consider the semidirect product $H_{2m-1}\rtimes_A\Z/m\Z$.

\begin{prop}
Let $G=H_{2m-1}\rtimes_A\Z/m\Z$ and let $\mu$ be any finitely supported symmetric generating measure of $G$ with $\mu(1)>0$, and let $R_n$ be a random walk with step distribution $\mu$. Then for any law $w(x, y_1, \dots, y_d)$ with a total degree of some $y_i$ not equal to zero we have $[x^m,w]=1$ with probability at least $\frac{\phi(m)}{m}$ with respect to the random walk $R_n$, where $\phi(m)$ is the Euler's totient function, but G does not satisfy $[x^m,w]=1$ virtually.
\end{prop}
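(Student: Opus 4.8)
The plan is to establish the two claims separately: (1) the probabilistic lower bound $\Pr(R_n^m = 1, \ w = 1) \gtrsim \phi(m)/m$, and (2) the fact that $G = H_{2m-1} \rtimes_A \Z/m\Z$ does not virtually satisfy $[x^m, w] = 1$. The structural backbone is the same as in \Pref{non-positivity for power}: the subgroup $X_0 = H_{2m-1} \rtimes_A \{0\}$ has index $m$ in $G$, and raising an element to the $m$-th power kills the $\Z/m\Z$-component, landing us inside $H_{2m-1}$.

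For claim (1): first I would carry out the power computation. For $(h,k) \in H_{2m-1}\rtimes_A \Z/m\Z$ with $\gcd(k,m)=1$, one has $(h,k)^m = \big((1 + k*(\cdot) + 2k*(\cdot) + \dots)\text{ applied to }h,\ 0\big)$; since $\gcd(k,m) = 1$ the exponents $0, k, 2k, \dots, (m-1)k$ run over all of $\Z/m\Z$, so the $u$-part of $(h,k)^m$ is $(I + A + \dots + A^{m-1})u^t$ and the $v$-part is $(I + A^{-1} + \dots + A^{-(m-1)})v$. By \Lref{lem:semi} (applied with $f(x) = 1 + x + \dots + x^{m-1}$, which vanishes at every primitive $m$-th root of unity), $I + A + \dots + A^{m-1} = 0$ as a matrix, hence both the $u$- and $v$-parts vanish; the $a$-part also vanishes after a short computation using that the cross-terms cancel. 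So $(h,k)^m = 1$ for \emph{every} $h$ whenever $\gcd(k,m) = 1$. Then $\Pr(R_n^m = 1) \geq \Pr(R_n \in \bigcup_{\gcd(k,m)=1} X_k) \to \phi(m)/m$, where $X_k$ is the coset with $\Z/m\Z$-component $k$ and the convergence uses that the random walk equidistributes over the $m$ cosets of $X_0$. Since $R_n^m = 1$ already forces $[R_n^m, w(\dots)] = 1$ for any word $w$, this establishes claim (1) with a single random element (the extra variables $y_i$ are irrelevant here).

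For claim (2): I would argue as in \Pref{non-positivity for power} via \Rref{subgroup}. It suffices to show that $X_0 \cong H_{2m-1}$ does not virtually satisfy $[x^m, w] = 1$. Here the hypothesis that $w(x, y_1, \dots, y_d)$ has some $y_i$ of nonzero total degree is essential: fix any finite-index subgroup $\Lambda \leq H_{2m-1}$. The center $Z(H_{2m-1}) \cong \Z$ (the $a$-coordinate) meets $\Lambda$ in a nontrivial subgroup, so pick $1 \neq c \in \Lambda \cap Z(H_{2m-1})$. Also pick any element $g \in \Lambda$ of infinite order that is \emph{not} central and whose $m$-th power $g^m$ fails to commute with $c$ — wait, that cannot happen since $c$ is central; the point is rather the reverse. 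I would instead set all $y_j = 1$ for $j \neq i$, set $y_i$ to a central element, and choose $x$ so that $w(x, 1, \dots, y_i, \dots, 1)$ is a nontrivial central element whose commutator with $x^m$ we must control — but central elements commute with everything, so this too fails. The correct choice: since some $y_i$ has nonzero total degree, specialize the \emph{other} variables to make $w$ reduce to a nontrivial power map in $y_i$ alone, then pick $x \in \Lambda$ with $x^m$ noncentral (possible since $H_{2m-1}$ is not virtually abelian — its abelianization-by-center structure shows $x \mapsto x^m$ is injective on the noncentral part) and $y_i \in \Lambda$ chosen so that the resulting power of $y_i$ does not commute with $x^m$; concretely one takes $x$ supported on the $u$-block and $y_i$ supported on the $v$-block, so that $[x^m, y_i^{\deg}]$ lands in the center with a nonzero $a$-coordinate coming from $(u^m)^t v^{\deg} \neq 0$. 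This gives $w$ or $[x^m, w]$ nontrivial on $\Lambda$.

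\textbf{Main obstacle.} The delicate point is claim (2): one must verify that for \emph{every} finite-index $\Lambda \leq H_{2m-1}$ (equivalently, every finite-index subgroup of $\Z^{2m-2}$ crossed with a finite-index piece of the center) one can choose $u$-type and $v$-type elements of $\Lambda$ whose $m$-th powers / prescribed powers have nonvanishing pairing $u^t v$, so that the commutator $[x^m, w]$ is genuinely nontrivial. This is a routine-but-careful bilinear-algebra argument: a finite-index subgroup of $\Z^{m-1} \oplus \Z^{m-1}$ still contains vectors $u, v$ with $u^t v \neq 0$, and one must track that the $m$-th power multiplies $u$ by $m$ (not by the singular matrix $I + A + \dots$, since within $X_0$ there is no twisting), keeping the pairing nonzero. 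Everything else — the power formula, the equidistribution over cosets, and the reduction via \Rref{subgroup} — is a direct transcription of the arguments already given for \Pref{non-positivity for power}.
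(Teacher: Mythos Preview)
Your approach for claim~(2) is essentially the same as the paper's: take $x$ of type $(u,0,0)$ and $y_i$ of type $(0,0,v)$ inside any finite-index subgroup, set the other $y_j=1$, and compute that $[x^m,w]$ has nonzero central coordinate $m\,d_i\,u^tv$, where $d_i$ is the total degree of $y_i$ in $w$. (The paper works directly with $\Lambda\le G$ rather than first reducing to $X_0\cong H_{2m-1}$ via \Rref{subgroup}, but this is cosmetic since the chosen elements lie in $X_0$ anyway.)

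The genuine gap is in claim~(1). You assert that for $\gcd(k,m)=1$ one has $(h,k)^m=1$, with the $a$-part vanishing because ``the cross-terms cancel''. This is false. Writing $h=(u,a,v)$, the paper's computation gives $(h,k)^m=((u',a',v'),0)$ with
\[
a'=ma+u^t\Big(\sum_{j=0}^{m-2}(m-1-j)A^{jk}\Big)v.
\]
The term $ma$ is already nonzero for $a\ne 0$ (here $a\in\Z$, not $\Z/m\Z$), and the matrix $M=\sum_{j=0}^{m-1}(m-1-j)A^{jk}$ is \emph{not} zero either: by \Lref{lem:semi} one would need $\sum_j(m-1-j)\xi^{j}=0$ at a primitive $m$-th root $\xi$, but this sum equals $-m/(\xi-1)\ne 0$. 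So $(h,k)^m$ is typically nontrivial.

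What does hold, and what the paper uses, is that $u'=v'=0$ when $\gcd(k,m)=1$, so $(h,k)^m=((0,a',0),0)$ lies in the \emph{center} of $G$ (the $a$-axis is central in $H_{2m-1}$ and is fixed by the $\Z/m\Z$-action). Centrality of $x^m$ already forces $[x^m,w]=1$ regardless of $w$, and the probability bound then follows exactly as you wrote. So the fix is to replace ``$(h,k)^m=1$'' by ``$(h,k)^m\in Z(G)$''; everything else in your argument for (1) stands.
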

Note that, unlike our other results on commutators, that all require the two words to have different letters, in this result the letter $x$ might be common to both terms. The unbalanced letter, though, must be one of the $y_i$.
\begin{proof}
Let us take $x=\left( \left(u,a,v\right),k \right)\in G$. We have $x^m=((u',a',v'),km)$,  where
\begin{align*}
    (u')^t&=u^t\big(I+A^k + \dots +A^{k(m-1)}\big),\\
a'&= ma+u\big((m-1)I+(m-2)A^k+ \dots +A^{(m-2)k}\big)v,\\
v'&=\big(I+A^{-k} \dots +A^{-k(m-1)}\big)v.
\end{align*}
If $k$ and $m$ are coprime, then $u'=v'=0$, so $x^m$ lies in the center of $G$ and $[x^m,w]=1$ for any $y_1, \dots y_l\in G$.
Let $R^x_n$,$R^{y_1}_n, \dots, R^{y_d}_n $ be the set of independent random walks. Let
$X_k=\{((u,a,v),k), \text{ where } (u,a,v)\in H\} \subset G$. We note that $\Pr(R^x_n \in X_k) \to \frac{1}{m}$ as $n \to \infty$. Then
$$\Pr([(R_n^x)^m, w(R_n^{x},R_n^{y_1}, \dots, R_n^{y_d})]=1) \geq \Pr(R_n^x \in \cup_{\gcd(k,m)=1} X_k) \to \frac{\phi(m)}{m},$$ where $\gcd(k,m)$ is the greatest common divisor and this proves the first claim.

Now let us show that $G$ does not contain a finite index subgroup which satisfies the given law. Let us assume that there is a finite index subgroup $\Lambda$ of $G$ such that for any $x, y_1 \dots y_d \in \Lambda$ one has $[x^m,w(y_1, \dots, y_d)]=1$. Let $\{e_1, \dots, e_{m-1}\}$ be the standard generating set of $\mathbb{Z}^{m-1}$. Let us consider the subgroups $K_u<H<G$ and $K_v<H<G$ defined as follows:
$$K_u=\{(u,a,v)\mid u=\lambda e_1, a=0, v=0, \lambda \in \Z\}\cong \Z, $$
$$K_v=\{(u,a,v)\mid u=0, a=0, v=\lambda e_1, \lambda \in \Z\}\cong \Z. $$
Note that $\Lambda \cap K_u$ and $\Lambda\cap K_v$ are finite index subgroups of $K_u$ and $K_v$ respectively, and hence are nontrivial. Let us take $g\in (\Lambda\cap K_u)\setminus\{0\}$ and $h\in (\Lambda\cap K_v)\setminus\{0\}$. Let us fix an index $i$ such that $w(x, y_1, \dots y_d)$ has a total degree of $y_i$ not equal to zero. Then for $x=g,y_i=h$ and $y_j=0$ for $j \neq i$ we have $x^m=(\lambda e_1, 0,0)$, $w=(u,a,\mu e_1)$, where $\lambda$ and $\mu$ are non-zero integers. Hence, $[x^m, w(x, y_1, \cdots, y_l)]=(0,\lambda\mu,0)\neq 1.$ Thus we get elements $x,y_1, \dots, y_k \in \Lambda$ that do not satisfy $[x^m,w]=1$. A contradiction.
\end{proof}
\begin{rem}
Similar to \Rref{prime} one can sometimes increase the probability by considering $G=H_{2p-1}\rtimes_A \Z/p\Z$, where $p$ the largest prime factor of $m$.
\end{rem}

\section{Commutators of a law with itself}\label{sec:self-comm}

Let $w\in\F_d$ be a word, and consider the law $[w,w]\in\F_{2d}$ given by
$$[w,w](x_1,\dots,x_d,y_1,\dots,y_d)=[w(x_1,\dots,x_d),w(y_1,\dots,y_d)].$$
Our goal is to show that if $w$ does not satisfy a gap or a positivity result, the same holds for $[w,w]$. The case where $w$ is balanced is covered by \Sref{sec:commut}, so we focus on the case where $w$ is not balanced.\medskip

\begin{prop}
Let $R_n$ be a random walk on $H \wr G$ with a step distribution $\mu$. Let $R_n'$ be a random walk on $G$ with a step distribution $\pi_*(\mu)$ where $\pi$ is a natural projection $\pi: H \wr G \to G$. If $G$ satisfies a law $w$ with probability $1-\eps$ with respect to $R_n'$, and $H$ is any abelian group, then $H\wr G$ satisfies $[w,w]$ with probability at least $(1-\eps)^2$.
\end{prop}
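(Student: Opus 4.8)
The plan is to push the whole computation down to the base group $G$ via the projection $\pi\colon H\wr G\to G$, in the spirit of \Sref{sec:commut} but with no modification of generators.

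First I would make $\Pr\left([w,w]=1\on H\wr G\right)$ concrete: sample $2d$ independent random walks $X_n^{(1)},\dots,X_n^{(d)},Y_n^{(1)},\dots,Y_n^{(d)}$ on $H\wr G$, each with step distribution $\mu$, and put $u_n=w\left(X_n^{(1)},\dots,X_n^{(d)}\right)$ and $v_n=w\left(Y_n^{(1)},\dots,Y_n^{(d)}\right)$, so that $\Pr_{\mu^{*n}}\left([w,w]=1\on H\wr G\right)=\Pr\left([u_n,v_n]=1\right)$. Since $\pi$ is a surjective group homomorphism, applying it coordinatewise turns these $2d$ independent $\mu$-walks into $2d$ independent random walks on $G$ with step distribution $\pi_*(\mu)$, each distributed as $R_n'$; moreover $\pi(u_n)=w\left(\pi(X_n^{(1)}),\dots,\pi(X_n^{(d)})\right)$ and $\pi(v_n)=w\left(\pi(Y_n^{(1)}),\dots,\pi(Y_n^{(d)})\right)$ are two \emph{independent} evaluations of $w$, each along $d$ independent copies of $R_n'$.

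The key observation is that if $\pi(u_n)=1$ and $\pi(v_n)=1$ in $G$, then $u_n$ and $v_n$ both have trivial lamplighter position, i.e.\ lie in $\bigoplus_{g\in G}H$; since $H$ is abelian this subgroup is abelian, so $u_n$ and $v_n$ commute. (This is the special case of \Fref{f:commutators} in which $[L_1(x),L_2(x)]=\id_H$ holds for every $x$ --- automatic here because $H$ is abelian.) Hence, for every $n$,
$$\left\{\pi(u_n)=1\right\}\cap\left\{\pi(v_n)=1\right\}\subseteq\left\{[u_n,v_n]=1\right\},$$
and, using that the $X$-block is independent of the $Y$-block (independence being preserved by $\pi$) together with the fact that $u_n$ and $v_n$ are identically distributed,
$$\Pr_{\mu^{*n}}\left([w,w]=1\on H\wr G\right)\geq\Pr\left(\pi(u_n)=1\right)\Pr\left(\pi(v_n)=1\right)=\Pr\left(\pi(u_n)=1\right)^2.$$
Now $\Pr\left(\pi(u_n)=1\right)$ is exactly the probability that $w=1$ on $G$ evaluated on $d$ independent $\pi_*(\mu)$-random walks of length $n$, so that $\limsup_{n\to\infty}\Pr\left(\pi(u_n)=1\right)\geq 1-\eps$ by hypothesis.

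Finally I would take $\limsup_{n\to\infty}$ of the displayed inequality: since $t\mapsto t^2$ is continuous and nondecreasing on $[0,1]$, the $\limsup$ commutes with squaring, giving $\Pr\left([w,w]=1\on H\wr G\right)\geq(1-\eps)^2$. I do not expect a genuine obstacle here; the only points that need a line of justification are that the pushforward of a random walk under a homomorphism is again a random walk and that independence is preserved by $\pi$ --- both immediate --- and one may note that the inequality in fact holds already at each finite $n$, before passing to the limit.
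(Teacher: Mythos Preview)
Your proof is correct and follows essentially the same route as the paper: both argue that if $w$ evaluated on the $X$-block and on the $Y$-block lands in $\ker\pi\cong\bigoplus_{g\in G}H$, then abelianity of $H$ forces the commutator to vanish, and independence of the two blocks gives the square. Your treatment is more careful about the pushforward of the walks and the passage to the $\limsup$ (which the paper leaves implicit), but there is no substantive difference in strategy.
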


\begin{proof}
Since $H$ is abelian, we know that $\bigoplus_{i=1}^{\infty}H$ is also abelian and we have
\begin{align*}
  &\Pr([w(R_n^{x_1}, \dots R_n^{x_d}), w(R_n^{y_1}, \dots, R_n^{y_d})]=1) \geq\\
  &\geq \Pr\left(w(R_n^{x_1}, \dots R_n^{x_d}), w(R_n^{y_1}, \dots, R_n^{y_d}) \in \ker \pi \cong\bigoplus_{i=1}^{\infty}H\right)=\\
  &=(\Pr(w(\pi(R_{n}^{x_1}), \dots \pi(R_n^{x_d}))=1\in G))^2.
\end{align*}
Hence, $H \wr G$ satisfies the law $[w,w]$ with a probability at least $(1-\eps)^2.$
\end{proof}

\begin{prop}
    Suppose that an infinite $G$ does not satisfy a law $w=w(x_1,\dots,x_r)$ virtually, and that $H$ is a group that does not satisfy $w$. Then $H\wr G$ does not satisfy $[w,w]$ virtually.
\end{prop}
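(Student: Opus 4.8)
The plan is to mimic the proof of the proposition in Subsection~\ref{subsec:wreath-prods} (that $H\wr G$ does not satisfy $w$ virtually when $H$ does not satisfy $w$) but adapted to the law $[w,w]$. Suppose for contradiction that $\Lambda\le H\wr G$ is a finite index subgroup satisfying $[w,w]$. We identify $B:=\bigoplus_{g\in G}H$ with the subgroup of elements with trivial lamplighter position, and note $\Lambda\cap B$ has finite index in $B$; pick coset representatives $t_1,\dots,t_n$ of $B/(\Lambda\cap B)$ and set $A=\bigcup_i\supp(t_i)\subset G$, a finite set.

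The key idea is that we want to produce two tuples $\bar y=(y_1,\dots,y_r)$ and $\bar z=(z_1,\dots,z_r)$ of elements of $\Lambda\cap B$ with $[w(\bar y),w(\bar z)]\neq 1$. Since $H$ does not satisfy $w$, choose $h_1,\dots,h_r\in H$ with $w(h_1,\dots,h_r)=:h\neq 1$ in $H$. Now I would use that $G$ is infinite in two stages: first pick $g_0\in G\setminus A$, and build $y_i\in B$ supported on $g_0$ with $y_i(g_0)=h_i$; correcting by a coset representative $t_j$ gives $\tilde y_i\in\Lambda\cap B$ which still takes value $h_i$ at $g_0$ \emph{provided} we also know the correction does not disturb the coordinate at $g_0$ --- but here one must be slightly careful, because the $\tilde y_i$ might have support elsewhere. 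The cleanest fix is: the element $w(\tilde y_1,\dots,\tilde y_r)$, computed coordinatewise in the abelian-fibered wreath product, satisfies $w(\tilde y_1,\dots,\tilde y_r)(g_0)=w(h_1,\dots,h_r)=h\neq 1$, because $w$ is a word in the $x_i$'s and the $g_0$-coordinate of a product is determined by the $g_0$-coordinates of the factors (as the lamplighter positions are trivial). Then I pick a \emph{second} point $g_1\in G\setminus(A\cup\{g_0\})$ --- here is where infiniteness of $G$ is used again --- and similarly build $\tilde z_i\in\Lambda\cap B$ with $w(\tilde z_1,\dots,\tilde z_r)(g_1)=h\neq1$ and with all $\tilde z_i$ supported away from $g_0$ except for unavoidable contributions; the point is that $w(\bar z)$ can be arranged to be nontrivial at $g_1$.

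Finally I would evaluate the commutator: $[w(\tilde y),w(\tilde z)]$ lies in $B$, and at coordinate $g_0$ it equals $[w(\tilde y)(g_0),w(\tilde z)(g_0)]$. This need not be nontrivial if $w(\tilde z)(g_0)$ commutes with $h$. So the genuine subtlety, and the main obstacle, is ensuring the \emph{commutator} of the two word-values is nontrivial rather than just each value separately --- this requires knowing something about non-commuting pairs in $H$, not merely that $H\not\models w$. I would resolve this by first replacing $H$ with the hypothesis in a more usable form: since $H$ does not satisfy $w$, neither does $H\times H$; more to the point, choose the tuple realizing $w(h_1,\dots,h_r)=h\neq1$ and a conjugate or translate realizing $w(h_1',\dots,h_r')=h'$ with $[h,h']\neq1$. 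Such $h'$ exists whenever $h\notin Z(H)$; and if every value of the word map $w$ on $H$ lay in $Z(H)$ one argues separately (e.g.\ then $H/Z(H)$ satisfies $w$, and one can push the construction into a quotient, or simply observe $C_H(h)\ne H$ directly since $h\ne1$ and... ) --- tidying up this center case is exactly the step I expect to cost the most care. Once nontriviality of $[h,h']$ is secured, placing the $\tilde y$-data around $g_0$ realizing $h$ and the $\tilde z$-data around the \emph{same} $g_0$ realizing $h'$ (using $g_0,g_1\notin A$ and the abelian fibers so coordinatewise computation is valid) yields $[w(\tilde y),w(\tilde z)](g_0)=[h,h']\neq1$, contradicting $[w,w]\in\Lambda$.
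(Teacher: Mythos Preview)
Your approach has a genuine gap, and it is exactly the ``center case'' you yourself flag but do not resolve. Concretely, take $H$ to be the Heisenberg group (over $\mathbb{Z}$ or over $\mathbb{Z}/p\mathbb{Z}$) and $w=[x_1,x_2]$. Then $H$ does not satisfy $w$, yet every value of the word map $w$ on $H$ lies in $[H,H]\subseteq Z(H)$, so \emph{any} two $w$-values in $H$ commute. Your proposed fixes both fail here: passing to $H/Z(H)$ gives a group that \emph{does} satisfy $w$, so the hypothesis is lost; and ``$C_H(h)\ne H$ since $h\ne 1$'' is simply false when $h\in Z(H)$. Since your entire argument stays inside $B=\bigoplus_{g\in G}H$, where commutators are computed coordinatewise in $H$, you can never produce a nontrivial $[w(\tilde y),w(\tilde z)]$ in this situation. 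Note also that your argument never uses the hypothesis that $G$ does not satisfy $w$ virtually (only that $G$ is infinite); this is a strong hint that the approach cannot succeed in general.

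The paper's proof uses that hypothesis in an essential way and avoids the center obstruction entirely. It takes the two $r$-tuples from \emph{different} subgroups: one tuple $t_1,\dots,t_r$ from $\Lambda\cap\Gamma_0$, where $\Gamma_0=\{(\boldsymbol 1,g):g\in G\}\cong G$, giving $w(t_1,\dots,t_r)=(\boldsymbol 1,g_0)$ with $g_0\ne 1$ (this is where ``$G$ does not satisfy $w$ virtually'' enters); and a second tuple $x_1,\dots,x_r$ from $\Lambda\cap\Gamma_1$, where $\Gamma_1\cong\bigoplus H$ is a lamp subgroup supported on a set of points $\{g_1,g_2,\dots\}$ that are pairwise at distance $\ge 2|g_0|$, giving $w(x_1,\dots,x_r)=(L_0,e)\ne 1$. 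The commutator is then detected not by non-commutation inside $H$ but by the \emph{shift}: $(\boldsymbol 1,g_0)(L_0,e)=(L_1,g_0)$ with $\supp(L_1)=g_0\cdot\supp(L_0)$ disjoint from $\supp(L_0)$, while $(L_0,e)(\boldsymbol 1,g_0)=(L_0,g_0)$; hence $L_1\ne L_0$ and the commutator is nontrivial. No non-centrality of $w$-values in $H$ is needed.
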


\begin{proof}
    Let $\Lambda$ be a finite index subgroup of $\Gamma=H\wr G$. We want to show that $\Lambda$ does not satisfy the law $[w,w]$.

Let $\Gamma_0=\set{(\boldsymbol{1},g)\suchthat g\in G}\cong G$, i.e.\ all elements of $\Gamma$ with trivial lamps. Since $[\Gamma_0:\Gamma_0\cap \Lambda]<\infty$ and  $\Gamma_0\cong G$, it has no subgroups of finite index which satisfy $w$, so there are elements $t_1,\dots,t_r\in \Lambda \cap \Gamma_0$ such that $w(t_1,\dots,t_r)= (\boldsymbol{1},g_0)\neq 1$.
Let $g_1,g_2,\dots\in G$ such that
$$\left|g_j^{-1}g_i\right|\ge 2|g_0|$$
for all $i,j$. Consider the subgroup $\Gamma_1=\set{(L,e)\suchthat \supp(L)\sub\set{g_1,g_2,\dots}}\cong\bigoplus_{i=1}^{\infty}H$ which does not satisfy $w$ virtually. Again  we have $[\Gamma_1:\Gamma_1\cap \Lambda]<\infty$; therefore there are elements $x_1,\dots,x_r\in\Gamma_1\cap \Lambda$ such that $w(x_1,\dots,x_r)=(L_0,e)\neq 1$.
Finally, we have $w(t_1,\dots,t_r)\cdot w(x_1,\dots,x_r)=(L_1,g_0)$ and $w(x_1,\dots,x_r)\cdot w(t_1,\dots,t_r)=(L_0,g_0)$ where $\supp(L_1)\sub\set{g_1g_0,g_2g_0,\dots}$ which does not intersect $\supp(L_0)$, so $L_1\ne L_0$ and hence $[w(t_1,\dots,t_r), w(x_1,\dots,x_r)]\ne 1$.
\end{proof}

\begin{cor}
~
\begin{enumerate}
    \item If $w$ does not satisfy a (strong) gap, neither does $[w,w]$.
    \item If $w$ does not satisfy a (strong) positivity result, neither does $[w,w]$.
\end{enumerate}
\end{cor}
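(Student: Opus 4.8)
The plan is to combine the two preceding propositions in this section with the basic definitions of gap and positivity, so that the corollary follows essentially by unwinding the quantifiers. Throughout, the relevant construction is $\Gamma=H\wr G$ where $H$ is chosen not to satisfy $w$ and $G$ is chosen so that it satisfies $w$ with probability close to $1$ (for the gap statement) or virtually fails to satisfy $w$ but has positive probability to satisfy it (for the positivity statement). The key point is that the first proposition transfers high probability (resp.\ positive probability) from $w$ on $G$ to $[w,w]$ on $\Gamma$, while the second proposition guarantees that $\Gamma$ does not satisfy $[w,w]$ virtually, hence certainly does not satisfy $[w,w]$.

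For part (1): suppose $w$ does not satisfy a gap. If $w$ is balanced we are already done by \Sref{sec:commut} (the law $[w,w]=[w_1,w_2]$ with $w_1,w_2$ balanced copies of $w$ on disjoint letters is covered by \Tref{t:bal-comm-eps}), so assume $w$ is not balanced. Unwinding the definition of a gap: for every $\phi\colon\N\to(0,1)$ there is an $r$-generated group $G$ and a random walk measure on it with $\Pr(w=1\on G)>1-\phi(r)$ but $G$ does not satisfy $w$; in particular we may take $G$ so that this probability is as close to $1$ as we like. Pick $H$ a fixed non-abelian group, say $H=\F_2$ (or more simply any group with $\E\ge 2$ generators) that does not satisfy $w$ --- since $w$ is a nontrivial word, $\F_2$ does not satisfy it, and if $w$ is not balanced one may even take $H=\Z$, which is abelian, so the first proposition applies directly. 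Then the first proposition gives $\Pr([w,w]=1\on H\wr G)\ge(1-\eps)^2$ with respect to the induced random walk, so $[w,w]$ is satisfied on $H\wr G$ with probability arbitrarily close to $1$. But by the second proposition $H\wr G$ does not satisfy $[w,w]$ virtually, in particular it does not satisfy $[w,w]$. Since the number of generators of $H\wr G$ is bounded in terms of the number of generators of $G$ and of $H$ (which is fixed), no gap function $\phi$ can work for $[w,w]$. If moreover $w$ fails to satisfy a \emph{strong} gap, then the counterexamples $G$ can be taken with a uniform lower bound on the probability, independent of the number of generators, and the same holds for $H\wr G$, so $[w,w]$ fails the strong gap too.

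For part (2): again if $w$ is balanced invoke \Sref{sec:commut}; otherwise, suppose $w$ does not satisfy a positivity result. Then for every $\psi\colon(0,1)\times\N\to\N$ there is some $\eps$, some $r$-generated $G$ and a random walk with $\Pr(w=1\on G)\ge\eps$ but $G$ has no subgroup of index $\le\psi(\eps,r)$ satisfying $w$; in particular (pushing $\psi$ to be large) $G$ does not satisfy $w$ virtually. Fix $H$ not satisfying $w$, taken abelian (e.g.\ $H=\Z$) when $w$ is not balanced so the first proposition applies. By the first proposition $\Pr([w,w]=1\on H\wr G)\ge\eps^2>0$, and by the second proposition $H\wr G$ does not satisfy $[w,w]$ virtually. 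Since the class of such groups $H\wr G$ has a bounded number of generators in terms of that of $G$, this shows $[w,w]$ has no positivity function $\psi$; and the strong version follows the same way, tracking that the relevant quantities depend only on $\eps$.

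The argument is almost entirely bookkeeping of quantifiers, so there is no single hard step; the only point requiring a little care is the reduction to the case $w$ not balanced and the choice of $H$. The first proposition as stated requires $H$ abelian, which is fine when $w$ is not balanced since then $\Z$ does not satisfy $w$ (as $w$ has a letter of nonzero total degree), but if one wanted to cover balanced $w$ directly with a lamplighter one would need $H$ non-abelian and a different transfer lemma --- this is exactly why we route the balanced case through \Sref{sec:commut} instead. The other mild subtlety is confirming that ``does not satisfy $[w,w]$ virtually'' indeed implies ``does not satisfy $[w,w]$'', which is immediate since a group satisfying a law satisfies it virtually (with $\Lambda=G$).
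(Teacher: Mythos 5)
Your overall strategy is the one the paper intends: route the balanced case through \Sref{sec:commut}, and for non-balanced $w$ take $H=\Z$ (abelian, and failing $w$ because $w$ has a letter of nonzero total degree) and combine the two propositions of \Sref{sec:self-comm}. The choice $H=\Z$ is exactly right and you correctly flag that it is the reason the first proposition applies. The place where the argument actually slips is the appeal to the second proposition, in both parts.

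For part~(1), the failure of the gap hands you an $r$-generated $G$ with $\Pr(w=1\on G)>1-\phi(r)$ such that $G$ \emph{does not satisfy $w$}. That is strictly weaker than the hypothesis of the second proposition, which requires that $G$ does not satisfy $w$ \emph{virtually}. So as written you cannot invoke that proposition. The good news is you do not need its full conclusion: to defeat a gap you only need $\Gamma=\Z\wr G$ to fail $[w,w]$ outright, and that is exactly the $\Lambda=\Gamma$ case of the second proposition's proof. Tracing it through, one only uses that $\Gamma_0\cong G$ fails $w$ (which you have) and that $\Gamma_1\cong\bigoplus H$ fails $w$ (which holds because $H$ fails $w$). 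Pointing at that sub-argument rather than at the proposition itself is the small patch needed.

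For part~(2), the step ``in particular (pushing $\psi$ to be large) $G$ does not satisfy $w$ virtually'' is not a valid deduction, and this is a genuine gap. Failure of positivity gives, \emph{for each} $\psi$, a possibly different group $G_\psi$ with no subgroup of index $\le\psi(\eps,r)$ satisfying $w$; each individual $G_\psi$ could perfectly well satisfy $w$ on some finite-index subgroup of larger index (finite counterexamples, for instance, always satisfy $w$ virtually). There is in general no single $G$ that fails $w$ virtually, so the second proposition cannot be applied as stated. The correct route is a quantified form of its argument: a finite-index subgroup $\Lambda\le\Gamma=H\wr G$ of index $\le k$ satisfying $[w,w]$ forces $\Lambda\cap\Gamma_0$ (of index $\le k$ in $G$) to satisfy $w$, since $\Lambda\cap\Gamma_1$ can never satisfy $w$ as $\bigoplus H$ fails $w$ virtually. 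Hence if $G$ has no subgroup of index $\le k$ satisfying $w$ and $H$ fails $w$, then $\Gamma$ has no subgroup of index $\le k$ satisfying $[w,w]$. With that in hand one defeats a putative positivity function $\psi'$ for $[w,w]$ by feeding $\psi(\eps,r)\coloneqq\psi'(\eps^2,r+1)$ (or whatever the generator count of $\Gamma$ requires) into the failure of positivity for $w$, taking the resulting $G$, and forming $\Gamma=\Z\wr G$. Your proposal skips this bookkeeping by the unjustified ``does not satisfy $w$ virtually'' shortcut.
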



\section{Occupation measure of random walk paths in balls}\label{sec:occupation-measure}

The aim of this section is to provide upper bounds on the occupation measure of a path $\gamma(w;R_n^{(1)},\dots,R_n^{(d)})$ in balls (of the Cayley graph of $G$). These results will be used in the following section, and were put in a separate section since they may be of independent interest.

We start with two lemmas that estimate conditioned random walks
for transient groups, with the eventual aim of proving \Lref{lem:LPSZ for paths}.
\begin{lem}
\label{lem:superpoly}Let $G$ be a finitely generated group with
superpolynomial growth and let $S$ be some set of generators. Let
$\set{R_t}_{t\leq n}$ be a random walk on the group $G$ with respect to $S$. Let $r\in\mathbb{N}$. Then
\[
\E\big(\max_{x\in G}\E\big(|\{t<n:R_{t}\in B(x,r)\}|\,\big|\,R_{n}\big)\big)\le Cr^{3}.
\]
The constant $C$ may depend on $G$ and on $S$, but not on $r$
and $n$.
\end{lem}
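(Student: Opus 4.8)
## Proof strategy for Lemma (superpolynomial growth occupation bound)

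The plan is to estimate the expected conditioned occupation of a ball $B(x,r)$ by comparing the walk, conditioned on its endpoint $R_n$, with an unconditioned walk, and then summing a Green's-function-type bound over the ball. The key point is that for a group of superpolynomial growth the return probability $\Pr(R_t = e)$ decays faster than any polynomial, so $\sum_t \Pr(R_t \in B(e,r))$ is controlled by the \emph{volume} of the ball — but naively the volume could be huge. The trick (following \cite{LPSZ20}) is that we are allowed a power of $r$ on the right, and we only need a crude bound: we will not use sharp heat-kernel estimates, only the elementary fact that $\Pr(R_t=g)\le\Pr(R_t=e)$ together with a counting argument inside the ball.

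First I would rewrite the quantity of interest. Fix $x$; by reversibility/symmetry of the step distribution,
$$\E\big(|\{t<n: R_t\in B(x,r)\}|\,\big|\,R_n\big)=\sum_{t<n}\Pr(R_t\in B(x,r)\mid R_n).$$
Using $\Pr(R_t = g \mid R_n = h) = \Pr(R_t = g)\Pr(R_{n-t}=g^{-1}h)/\Pr(R_n=h)$ and summing over $g\in B(x,r)$, I would bound the conditioned occupation by a sum of products of unconditioned transition probabilities. Taking the expectation over $R_n$ then collapses the denominator, leaving
$$\E\big(\E(|\{t<n:R_t\in B(x,r)\}|\mid R_n)\big)=\sum_{t<n}\Pr(R_t\in B(x,r))\le\sum_{t=0}^{\infty}\Pr(R_t\in B(x,r)).$$
Now take the maximum over $x$ \emph{before} the outer expectation is not quite the right order of operations — the statement has $\E(\max_x(\cdots))$ — so the more careful route is to keep the max inside and observe that the bound just derived is in fact uniform in $x$ after taking expectation: $\max_x \E(\cdots\mid R_n)$ is a random variable, but its expectation is at most $\sum_x$-free, i.e. $\E(\max_x \E(\cdots\mid R_n)) \le \max_x \E(\E(\cdots\mid R_n)) $ is \emph{false} in general, so instead I would bound $\max_x$ by a union bound over a net, or — cleaner — note that by translation-invariance of the walk the inner conditional expectation $\E(|\{t<n: R_t\in B(x,r)\}|\mid R_n)$ as a function of $x$ and of the endpoint has the same law for every $x$ up to shifting the endpoint, and the endpoint is averaged out; a short argument shows $\E(\max_x\cdots)\le \sum_{t=0}^\infty \max_x\Pr(R_t\in B(x,r))$ suffices. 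Then the heart of the matter is to show
$$\sum_{t=0}^{\infty}\max_{x\in G}\Pr\big(R_t\in B(x,r)\big)\le Cr^3.$$

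For this final sum I would split at $t\asymp r^2$. For $t\le C r^2$ there are at most $Cr^2$ terms, each bounded by $1$, giving $O(r^2)$. For $t> Cr^2$, I use $\Pr(R_t\in B(x,r))\le |B(e,r)|\cdot\max_g\Pr(R_t=g)\le |B(e,r)|\cdot\Pr(R_t=e)$, and superpolynomial growth forces $\Pr(R_t=e)$ to decay superpolynomially (this is a standard consequence, e.g. via Coulhon–Grigor'yan / the Nash-type inequality, or via the fact that the walk is transient with fast escape); meanwhile $|B(e,r)|$ is fixed once $r$ is fixed. The main obstacle — and the reason the exponent $3$ rather than $2$ appears — is that we cannot a priori bound $|B(e,r)|$ by a power of $r$, since the group may have arbitrarily fast (even exponential) growth. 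The resolution, exactly as in \cite{LPSZ20}, is that the decay of $\Pr(R_t=e)$ is fast enough that $\sum_{t>Cr^2}|B(e,r)|\Pr(R_t=e)$ is a \emph{geometric-type} tail: once $t$ is a large multiple of $r^2$, $\Pr(R_t=e)$ beats any fixed volume, so the tail sum is $o(r^2)$, or in any case $\le Cr^3$ after absorbing constants. Thus the only genuinely delicate step is making the ``superpolynomial return decay beats fixed ball volume'' estimate quantitative enough to sum; everything else is bookkeeping with the conditional-to-unconditional reduction. I expect to invoke a standard on-diagonal upper bound for walks on groups of superpolynomial growth (the value $r^3$ is generous precisely so that no sharp isoperimetric input is needed) and then conclude.
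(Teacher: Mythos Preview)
Your proposal contains a genuine gap that you yourself flag but do not resolve: the interchange of the outer expectation and the $\max_x$. After the (legitimate) step $\max_x\sum_t\le\sum_t\max_x$ you are left with
\[
\E\big(\max_x\E(\cdots\mid R_n)\big)\le\sum_{t<n}\E_{R_n}\Big[\max_x\Pr\big(R_t\in B(x,r)\mid R_n\big)\Big],
\]
and each summand is \emph{at least} $\max_x\Pr(R_t\in B(x,r))$, not at most. Neither of your two suggested fixes works: a union bound over a net would need $\sim|B(e,n)|/|B(e,r)|$ net points, which blows up; and translation-invariance does not make the law of $\E(\cdots\mid R_n=z)$ independent of $z$ in any useful way. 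The paper in fact stresses, immediately after the statement, that the inner quantity $\max_x\E(\cdots\mid R_n)$ is \emph{not} bounded pointwise in $R_n$: on the free group, conditioning on $R_n=1$ forces about $\sqrt n$ visits to the identity. So any argument that tries to control $\max_x\E(\cdots\mid R_n)$ before averaging over $R_n$ is doomed.

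The paper's proof sidesteps this by an induction on $n$. Writing $M_n$ for the left-hand side, it splits $[0,n)$ into four quarters and applies, pointwise in $R_n$, the elementary inequality
\[
A+B+C+D\le\max\{A+B+C,\,B+C+D\}+D\,\mathbbm{1}\{A+B>0\}+A\,\mathbbm{1}\{C+D>0\}.
\]
Time-reversal makes the two three-quarter maxima equal, and a further conditioning on $R_{n_3}$ (with $n_3\approx 3n/4$) bounds their expectation by $M_{n_3}$. The point of the two cross terms is that they involve visits at times $s<n/2$ and $t>3n/4$ \emph{to the same ball}, hence are bounded by $\sum_{s,t}\Pr(d(R_s,R_t)\le 2r)$ --- an unconditioned quantity in which the $\max_x$ has vanished. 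Now the heat-kernel bound of \cite{LPSZ20}, $p_m(x,y)\le C\big(m^{-10}r^{20}/|B(2r)|+e^{-cm/r^2}\big)$, controls these; since $|B(2r)|\le|S|^{2r}$, for $n>C'r^3$ the cross terms contribute $\le C/n$, giving $M_n\le M_{n_3}+C/n$, and the recursion closes.

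A secondary problem: your tail estimate $\sum_{t>Cr^2}|B(e,r)|\Pr(R_t=e)\le Cr^3$ is not justified either. For groups of intermediate growth $|B(e,r)|$ is stretched-exponential in $r$, while the Nash/Coulhon on-diagonal bound gives only $\Pr(R_t=e)\le e^{-ct^{\alpha/(\alpha+2)}}$; with a cutoff at $t\asymp r^2$ these do not balance. One needs the refined LPSZ bound (which carries $|B(2r)|$ in the denominator) together with a cutoff at $t\asymp r^3$ --- and even then this only handles the unconditioned occupation, which is not what the lemma asks for.
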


In words, the typical number of visits to a ball of radius $r$ is
not more than $Cr^{3}$, even when one conditions on $R_{n}$, the
place where the random walk ``ends'', as long as~$R_{n}$ is typical.
Let us remark that this does not hold for every value of $R_{n}$.
For example, a random walk on the free group, conditioned on $R_{n}=1$
i.e.\ on returning to the identity, visits the identity about $\sqrt{n}$
times.

It is reasonable to conjecture that the value $r^{3}$ can be replaced
by $r^{2}$, as is conjectured (and still open) for the number of
visits to $B(1,r)$ for an unconditioned random walk. On the unconditioned
problem the best known to us is $r^{5/2}$, see \cite{LPSZ20}. For
the purposes of this paper the exact value of the exponent is not
important.
\begin{proof}
Denote by $M_{n}$ the expectation in the lemma, i.e.
\[
M_{n}\coloneqq\E(\max_{x\in G}\E(|\{t<n:R_{t}\in B(x,r)\}|\,|\,R_{n})).
\]
Examine for one $x$ and one value of $R_{n}$ the quantity $\E(|\{t<n:R_{t}\in B(x,r)\}|\,|\,R_{n})$.
We wish to write it as $A_{1}+A_{2}+A_{3}+A_{4}$, dividing according
to time. For symmetry reasons we have to be slightly careful. Define
$n_{0}=0$, $n_{1}=\lceil n/4\rceil$, $n_{2}=\lceil n/2\rceil$,
$n_{3}=n-\lceil n/4\rceil+1$ and $n_{4}=n+1$ and define
\[
A_{i}\coloneqq A_{i}(x)\coloneqq\E\left[\left|\set{t\in\left[n_{i-1},n_{i}\right):R_{t}\in B(x,r)}\right|\suchthat R_{n}\right].
\]
Now for every 4 non-negative numbers $A$, $B$, $C$ and $D$ we
have
\begin{multline}
A+B+C+D\le\max\{A+B+C,B+C+D\}\\
+\;D\mathbbm{1}\{A+B>0\}+A\mathbbm{1}\{C+D>0\}.\label{eq:ABCD}
\end{multline}
We apply this to $A_{1}(x),\dotsc,A_{4}(x)$ and maximise over $x$.
For brevity we write $A_{123}(x)=A_{1}(x)+A_{2}(x)+A_{3}(x)$ and
similarly for other subsets of $\{1,\dotsc,4\}$. We get
\begin{align}
\max_{x}\{A_{1234}(x)\} & \stackrel{\textrm{\eqref{eq:ABCD}}}{\le}\max_{x}\{\max\{A_{123}(x),A_{234}(x)\}+\nonumber \\
 & \qquad+\;A_{4}(x)\mathbbm{1}\{A_{12}(x)>0\}+A_{1}(x)\mathbbm{1}\{A_{34}(x)>0\}\}\}\nonumber \\
 & \le\max\{\max_{x}\{A_{123}(x)\},\max_{x}\{A_{234}(x)\}+\nonumber \\
 & \qquad+\;\max_{x}\{A_{4}(x)\mathbbm{1}\{A_{12}(x)>0\}\}\nonumber \\
 & \qquad+\max_{x}\{A_{1}(x)\mathbbm{1}\{A_{34}(x)>0\}\}\eqqcolon I+II+III.\label{eq:I II III}
\end{align}
To estimate term $I$ we claim that
\begin{equation}
\max_{x}A_{123}(x)=\max_{x}A_{234}(x).\label{eq:timerev}
\end{equation}
To see \eqref{eq:timerev} note that $A_{123}$ is simply
\[
\E(|\{t\in[0,n_3-1]:R_{t}\in B(x,r)\}|\,|\,R_{n})
\]
and, similarly, $A_{234}$ is
for the interval $[\lceil n/4\rceil,n]$. The time reversal of a random
walk starting from $1$ and conditioned to end at some $y$ is random
walk starting from $y$ and conditioned to end at $1$, so $A_{123}(x)=A_{234}(R_{n}^{-1}x)$. This shows
\eqref{eq:timerev}. 

Fix now one $z$ such that $\Pr(R_{n}=z)>0$ and let $S$ be a random walk of length $n$ conditioned on $S_n=z$. For every $x_0$ We may write
\begin{align*}
\E(|\{t\in [0,n_3-1]:S_t\in B(x_0,r)\}|)
=\E\big(&\E\big(|\{t\in [0,n_3-1]:S_t\in B(x_0,r)\}|\,\big|\,S_{n_3}\big)\big)\\
\le \E\big(\max_x\,&\E\big(|\{t\in [0,n_3-1]:S_t\in B(x,r)\}|\,\big|\,S_{n_3}\big)\big).
\end{align*}
Maximizing over $x_0$ (note that the right-hand side does not depend on $x_0$) and integrating over $z$ gives
\begin{equation}\label{eq:E(I)}
\E(I)\le M_{n_3}.
\end{equation}
This terminates the estimate of $I$.

We move to the estimates of $II$ and $III$. They are essentially
identical, so for brevity we prove only $II$. For every $x$,
\begin{align*}
\lefteqn{{A_{4}(x)\mathbbm{1}\{A_{12}(x)>0\}}}\qquad\qquad\\
 & \le\E\big(|\{(t,s):t\ge n_{3},s<n_{2},R_{t},R_s\in B(x,r)\}|\,\big|\,R_{n}\big).\\
 & \le\sum_{t=n_{3}}^{n_{4}}\sum_{s=0}^{n_{2}}\Pr(d(R_{t},R_{s})\le2r\,\big|\,R_{n}).
\end{align*}
Since the right-hand side does not contain $x$, it also bounds $II$.
Integrating over~$R_{n}$ gives
\begin{equation}
\E(II)\le\sum_{t=n_{3}}^{n_{4}}\sum_{s=0}^{n_{2}}\Pr(d(R_{t},R_{s})\le2r)\le Cn^{2}\max_{m\ge n/4-1}\Pr(d(1,R_{m})\le2r).\label{eq:didnt expect this integration, did you}
\end{equation}
To estimate the right-hand side, we use the results of Lyons, Peres,
Sun and Zheng. By \cite[corollary 2.6]{LPSZ20}, for every integers
$m$ and $r$ and any two $x,y\in G$,
\[
p_{m}(x,y)<C(m^{-10}r^{20}/|B(2r)|+e^{-cm/r^{2}})
\]
(in \cite{LPSZ20} there is an additional parameter $k$, which we
set to $20$). Note that it is at this point that we use that $G$
has superpolynomial growth. Inserting this estimate into \eqref{eq:didnt expect this integration, did you}
gives
\[
\E(II)\le Cn^{-8}r^{20}+Cn^{2}|B(2r)|e^{-cn/r^{2}}.
\]
If $n>C'r^{3}$ for some $C'$ sufficient large then $\E(II)\le C/n$.

We can now wrap up the lemma. If $n\le C'r^{3}$ then the claim of
the lemma holds trivially. For any $n>C'r^{3}$ we have
\[
M_{n}\le\E(\max_{x}A_{1234}(x))\stackrel{\textrm{\eqref{eq:I II III}}}{\le}\E(I)+\E(II)+\E(III)\stackrel{(*)}{\le}M_{n_{3}}+C/n
\]
where in $(*)$ we used \eqref{eq:E(I)} and the estimate of $II$
above (which, as explained, applies verbatim to $III$). Recall that
$n_{3}=n-\lceil n/4\rceil+1$. The lemma now follows by a straightforward
induction on $n$.
\end{proof}
\begin{lem}
\label{lem:not just superpoly}The conclusion of \Lref{lem:superpoly}
holds for any transient group.
\end{lem}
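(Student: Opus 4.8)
The plan is to reduce to groups of polynomial growth and then to run a heat‑kernel argument parallel to (and, in low dimension, sharper than) the one in \Lref{lem:superpoly}. By Varopoulos's recurrence criterion a finitely generated group is transient unless it has polynomial growth of degree at most $2$; hence a transient group either has superpolynomial growth --- in which case \Lref{lem:superpoly} applies verbatim --- or, by Gromov's theorem, is virtually nilpotent with a well‑defined integer growth degree $d$, necessarily $d\ge 3$ since $p_n(1,1)\asymp n^{-d/2}$. So it suffices to treat a group $G$ of polynomial growth of degree $d\ge 3$. For such $G$ the (lazy) random walk obeys two‑sided Gaussian heat‑kernel bounds $cn^{-d/2}e^{-d(x,y)^2/(cn)}\le p_n(x,y)\le Cn^{-d/2}e^{-d(x,y)^2/(Cn)}$ and the parabolic Harnack inequality (these follow from volume doubling plus a Poincaré inequality, by Hebisch and Saloff‑Coste). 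In particular the Green's function satisfies $G(x,y)=\sum_{t\ge0}p_t(x,y)\asymp\max(1,d(x,y))^{2-d}$, and summing by parts against $|B(1,k)|\le Ck^{d}$ yields the basic estimate $\sup_{z\in G}\sum_{y\in B(x,r)}G(z,y)\le Cr^{2}$ for all $x$ and $r$. This already settles the unconditioned quantity, $\E\,|\{t\ge0:R_t\in B(x,r)\}|=\sum_{y\in B(x,r)}G(1,y)\le Cr^{2}\le Cr^{3}$.

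For the conditioned quantity the goal is the fully uniform bound
\[
\sup_{n}\ \sup_{z:\,p_n(1,z)>0}\ \sup_{x\in G}\ \E\bigl(|\{t<n:R_t\in B(x,r)\}|\ \big|\ R_n=z\bigr)\le Cr^{2},
\]
after which the lemma is immediate. Write the conditional expectation as $\sum_{t<n}\sum_{y\in B(x,r)}p_t(1,y)p_{n-t}(y,z)/p_n(1,z)$ and split the time‑sum at $n/2$; the two halves are symmetric under time reversal of bridges (the walk from $1$ to $z$ read backwards is a walk from $z$ to $1$), so consider $t<n/2$. When $\Pr(|R_n|\le K\sqrt n)$‑type endpoints $z$ are involved, i.e.\ $d(1,z)\le K\sqrt n$, the conditional law of $(R_t)_{t\le n/2}$ is absolutely continuous with respect to the free walk with density $p_{\lceil n/2\rceil}(R_{\lfloor n/2\rfloor},z)/p_n(1,z)\le C(n/2)^{-d/2}/\bigl(cn^{-d/2}e^{-CK^2}\bigr)\le C_K$, so $\E(|\{t<n/2:R_t\in B(x,r)\}|\mid R_n=z)\le C_K\sum_{y\in B(x,r)}G(1,y)\le C_Kr^{2}$, and the $t\ge n/2$ half is handled by the time‑reversed version. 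To upgrade this to a bound uniform in $z$ one uses the parabolic Harnack inequality for the space‑time caloric function $(m,w)\mapsto p_m(w,z)$: chaining cylinders along a path from $(n-t,y)$ to $(n,1)$ gives $p_{n-t}(y,z)\le Cp_n(1,z)$ with a constant depending only on $d(1,y)/\sqrt n$ (and not on $z$), which is bounded for $d(1,y)\le\sqrt n$ and kills the dangerous factor $1/p_n(1,z)$ there; the remaining range $d(1,y)>\sqrt n$ (so $B(x,r)$ is far from the origin) carries its own Gaussian smallness — $\sum_{t<n/2}p_t(1,y)$ is exponentially small in $d(1,y)^2/n$ — and a direct computation using $|B(x,r)|\le Cr^{d}$ and $r^{2-d}n^{1-d/2}\le r^{2}$ for $n\ge r^{2}$ shows it contributes $\le Cr^{2}$ as well. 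The case $n\le Cr^{2}$ is trivial since then the conditional expectation is $\le n$.

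The step I expect to be the real obstacle is precisely this uniformity over the endpoint $z$, including atypical $z$ with $d(1,z)\gg\sqrt n$: the naive Gaussian estimate only produces a bound of shape $e^{Cd(1,z)^2/n}r^{2}$, useless after integrating against the law of $R_n$ because the constants in the upper and lower Gaussian bounds need not match (so a shell‑by‑shell summation $\sum_j (\text{bound on shell }j)\cdot\Pr(\text{shell }j)$ diverges). What makes it go through is that the factor $1/p_n(1,z)$ is only ever attached to contributions from $y$ within $C\sqrt n$ of an endpoint, where the parabolic Harnack inequality replaces it by a $z$‑free constant; no other control of $p_n(1,z)$ is needed, since on the complement of the bulk of the law of $R_n$ even the trivial bound $n$ times a Gaussian tail is negligible once the Harnack estimate has removed the exponential blow‑up. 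As an alternative route that avoids the Harnack inequality but only works in high dimension, when $d\ge 6$ one may simply rerun the proof of \Lref{lem:superpoly} word for word, replacing the input of \cite[Corollary~2.6]{LPSZ20} by the on‑diagonal bound $p_m(1,y)\le Cm^{-d/2}$: then the term "$\E(II)$" there becomes $O(n^{2-d/2}r^{d})$, which telescopes along the recursion (stopped at the base $n\asymp r^{3}$) to $O(r^{6-d/2})\le O(r^{3})$; it is only the range $3\le d\le 5$ --- which includes $\Z^{5}$ and the other low‑dimensional cases of interest elsewhere in the paper --- that genuinely requires the Harnack‑based argument.
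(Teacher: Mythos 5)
Your proposal is correct in its reduction (via Gromov and Hebisch--Saloff-Coste) to a virtually nilpotent group of integer growth degree $d\ge 3$, but it then takes a genuinely different --- and considerably harder --- route from the paper for the conditioned estimate. You aim for a \emph{uniform}-in-$z$ bound
\[
\sup_{z}\ \sup_{x}\ \E\bigl(|\{t<n:R_t\in B(x,r)\}|\ \big|\ R_n=z\bigr)\le Cr^{2},
\]
which, if true, would require a lower bound on $p_n(1,z)$, or (as you propose) the parabolic Harnack inequality with chaining to bound the ratio $p_{n-t}(y,z)/p_n(1,z)$ uniformly in $z$. The paper never attempts this. Its proof bounds the \emph{joint} density $p_t(1,z)p_{n-t}(z,y)$, sums over $t$ (using $d\ge 3$) and over $z\in B(x,r)$ to get
\[
\max_{x}\E\bigl(|\{t<n:R_t\in B(x,r)\}|\ \big|\ R_n=y\bigr)\le \frac{Cr^{2}\,n^{-d/2}e^{-c|y|^{2}/n}}{\Pr(R_n=y)},
\]
and then simply observes that the lemma only asks for the \emph{expectation} over $R_n$, so the $\Pr(R_n=y)$ cancels:
\[
\E\bigl(\max_{x}\E(\cdots\mid R_n)\bigr)\le Cr^{2}n^{-d/2}\sum_{y}e^{-c|y|^{2}/n}\le Cr^{2}.
\]
This completely sidesteps the uniformity-over-$z$ issue --- no Harnack, no lower heat-kernel bound, no case analysis on the typicality of $z$.

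Two remarks on your version. First, the place where it is underspecified is exactly the place you flag as hard: the Harnack chaining from $(n-t,y)$ to $(n,1)$ does not obviously apply when $t$ is very small relative to $d(1,y)^2$ (the time separation between the two query points can be much smaller than the square of the spatial separation), and the paragraph about ``the complement of the bulk of the law of $R_n$'' is confusing --- once a bound uniform in $z$ is established there is nothing left to integrate, so mixing the two arguments suggests the uniform bound was not actually secured. Second, your fallback for $d\ge 6$ (replacing the LPSZ input by the on-diagonal bound $p_m(1,y)\le Cm^{-d/2}$ and telescoping to $O(r^{6-d/2})$) is correct as a standalone argument for that range, but it is superseded by the paper's cancellation trick, which works for all $d\ge 3$ and is shorter than either of your two branches.
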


\begin{proof}
Since \Lref{lem:superpoly} proves this for $G$ with superpolynomial
growth, we need only cover the case that $G$ has polynomial growth.
However, for $G$ with growth $d$ we have the estimates
\[
p_{n}(1,y)\le Cn^{-d/2}e^{-c|y|^{2}/n}.
\]
This is well known, but for completeness let us explain this estimate. By Gromov's theorem \cite{Gro81,Kle10} the group $G$ is virtually nilpotent. It is not difficult to see that any nilpotent group is virtually torsion free \cite[chapter II]{ragbook}. By Mal\textquotesingle cev's theorem \cite[chapter II]{ragbook}, a nilpotent torsion-free group is embedded as a cocompact lattice in a connected nilpotent Lie group. By the results of Guivarc'h \cite{gui73} there is an integer $d$ such that the Lie group has volume growth $d$. Since a Lie group is quasi-isometric to a cocompact lattice in it, and since our cocompact lattice is quasi-isometric to $G$, and since growth is preserved by quasi-isometries, we get that $G$ satisfies that $cr^d\le |B(1,r)|\le Cr^d$.  Finally the results of Hebisch and Saloff-Coste \cite[theorems 2.1 and 4.1]{hsc93} give the estimate above. 

The probability to be at $z$ at time $t$ and at $y$ at time $n>t$ can
then be bounded by
\[
Ct^{-d/2}(n-t)^{-d/2}e^{-c|z|^{2}/t-c|y-z|^{2}/(n-t)}.
\]
Summing this estimate over $t\le n$ gives
\[
C\left(\min\{|z|,|y-z|\}\right)^{2-d}n^{-d/2}e^{-c|y|^{2}/n}
\]
(using $d\ge3$, which follows from our assumption that the group
is transient). Summing over $z\in B(x,r)$ gives $r^{2}n^{-d/2}e^{-c|y|^{2}/n}$,
independently of $x$ . Thus
\[
\max_{x}\E\big(|\{t<n:R_{t}\in B(x,r)\}|\,\big|\,R_{n}=y\big)\le\frac{Cr^{2}n^{-d/2}e^{-c|y|^{2}/n}}{\Pr(R_{n}=y)}.
\]
Taking expectation over $R_{n}$ gives
\[
\E\big(\max_{x}\E\big(|\{t<n:R_{t}\in B(x,r)\}|\,\big|\,R_{n}\big)\big)\le Cr^{2}n^{-d/2}\sum_{y}e^{-c|y|^{2}/n}\le Cr^{2},
\]
where in the last inequality we used the fact that $|B(1,r)|\le Cr^d$. This
proves the lemma.
\end{proof}
\begin{lem}
\label{lem:LPSZ for paths}Let $G$ be a finitely generated group,
let $S$ be a set of generators, let~$w$ be a word and let $\gamma$
be the path corresponding to $w$ and random walks of length~$n$ with repsect to the generating set $S$. Let $x\in G$ and let
$r\in\mathbb{N}$. Then
\[
\E(|\gamma\cap B(x,r)|)\le Cr^{3}.
\]
(the constant $C$ may depend on $w$).
\end{lem}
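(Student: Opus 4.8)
The plan is to break $\gamma$ into $|w|$ consecutive segments, one per letter of $w$, and estimate each segment's contribution to $B(x,r)$ using \Lref{lem:not just superpoly}.

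Write $w=a_1^{\eps_1}\dotsb a_\ell^{\eps_\ell}$ as in \Dref{def:path}, and for $1\le j\le\ell$ let $\gamma^{(j)}=\{\gamma_{(j-1)n},\dotsc,\gamma_{jn}\}$, so that $\gamma=\bigcup_{j=1}^\ell\gamma^{(j)}$ and $|\gamma\cap B(x,r)|\le\sum_{j=1}^\ell|\gamma^{(j)}\cap B(x,r)|$. Reading off \eqref{eq:def gamma}, the vertex set of the $j$-th segment is $g_j\cdot\{R_m^{(a_j)}:0\le m\le n\}$, where $g_j=\prod_{i<j}(R_n^{(a_i)})^{\eps_i}$ when $\eps_j=1$, and $g_j=\big(\prod_{i<j}(R_n^{(a_i)})^{\eps_i}\big)(R_n^{(a_j)})^{-1}$ when $\eps_j=-1$ (here we used that as a set $\{R_{n-k}^{(a_j)}:0\le k\le n\}=\{R_m^{(a_j)}:0\le m\le n\}$). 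In particular $g_j$ depends only on the endpoints $R_n^{(a_i)}$, $i\le j$. Since the Cayley-graph metric is left-invariant, $g_j^{-1}B(x,r)=B(g_j^{-1}x,r)$, hence
$$|\gamma^{(j)}\cap B(x,r)|=\big|\{m\le n:R_m^{(a_j)}\in B(g_j^{-1}x,r)\}\big|.$$

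Now fix $j$ and condition on $\mathcal{G}:=\sigma\big((R^{(i)})_{i\ne a_j},R_n^{(a_j)}\big)$. Because the driving walks are independent, conditionally on $\mathcal{G}$ the element $g_j$ is deterministic while $(R_m^{(a_j)})_{m=0}^n$ is distributed as a random walk of length $n$ conditioned on its terminal value; therefore
$$\E\big(|\gamma^{(j)}\cap B(x,r)|\,\big|\,\mathcal{G}\big)\le 1+\max_{y\in G}\E\big(\big|\{m<n:R_m^{(a_j)}\in B(y,r)\}\big|\,\big|\,R_n^{(a_j)}\big),$$
the additive $1$ coming from the step $m=n$. If $G$ is transient, taking expectations and applying \Lref{lem:not just superpoly} to the walk $R^{(a_j)}$ gives $\E(|\gamma^{(j)}\cap B(x,r)|)\le 1+Cr^3\le C'r^3$. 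If $G$ is not transient it is recurrent, hence of polynomial growth of degree at most $2$ (equivalently, virtually $\{1\}$, $\Z$ or $\Z^2$), so $|\gamma\cap B(x,r)|\le|B(x,r)|\le Cr^2\le Cr^3$ deterministically. Summing the $\ell=|w|$ bounds finishes the proof, with a constant depending on $w$.

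The one point requiring care is that the shift $g_j$ need not be independent of the walk $R^{(a_j)}$ along which the $j$-th segment is traced: in the backward case ($\eps_j=-1$) it involves $R_n^{(a_j)}$. This is exactly why one conditions on the terminal value $R_n^{(a_j)}$ and appeals to the endpoint-conditioned visit bound \Lref{lem:not just superpoly} rather than its easier unconditioned counterpart; the remaining steps are routine bookkeeping.
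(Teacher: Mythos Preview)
Your proof is correct and essentially identical to the paper's: decompose $\gamma$ into $|w|$ segments, left-translate each segment so that it becomes a single random walk visiting a translated ball, condition on endpoints so that the translating element is measurable, take a maximum over the centre, and apply \Lref{lem:not just superpoly}. The only cosmetic differences are that the paper conditions on the smaller $\sigma(R_n^{(1)},\dots,R_n^{(d)})$ rather than your $\mathcal{G}$, and is less explicit than you are about the backward-letter case $\eps_j=-1$.
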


\begin{proof}
The lemma holds trivially for $G$ recurrent, since the only recurrent groups are those having volume growth 1 and 2. Assume therefore that $G$ is transient.
Let $l$ be the length of $w$ and let $a_{i}$ be its letters,
so $w=(a_{1}\dotsc a_{l})$. Let $w_{i}$ be the word comprising
of the first $i$ letters of $w$, for $i\in\{0,\dotsc,l-1\}$.
Let $R^{(1)},\dotsc,R^{(d)}$ be the random walks constructing $\gamma$.
Condition on $R_{n}^{(1)},\dotsc,R_{n}^{(d)}$. Then
\begin{align*}
\lefteqn{{\E\big(|\gamma\cap B(x,r)|\,\big|\,R_{n}^{(1)},\dotsc,R_{n}^{(d)}\big)}}\qquad\qquad\\
 & \le\sum_{i=0}^{l-1}\E\big(|w_{i}(R_{n}^{(1)},\dotsc,R_{n}^{(d)})R^{(a_{i+1})}\cap B(x,r)|\,\big|\,R_n^{(1)},\dotsc,R_n^{(d)}\big)\\
 & =\sum_{i=0}^{l-1}\E\big(|R^{(a_{i+1})}\cap B(w_{i}(R_{n}^{(1)},\dotsc,R_{n}^{(d)})^{-1}x,r)|\,\big|\,R_n^{(1)},\dotsc,R_n^{(d)}\big)\\
 & \le\sum_{i=0}^{l-1}\max_{y\in G}\E\big(|R^{(a_{i+1})}\cap B(y,r)|\,\big|\,R_n^{(1)},\dotsc,R_n^{(d)}\big)\\
 & =\sum_{i=0}^{l-1}\max_{y\in G}\E\big(|R^{(a_{i+1})}\cap B(y,r)|\,\big|\,R_n^{(a_{i+1})}\big).
\end{align*}
Taking expectation over $R_{n}^{(1)},\dotsc,R_{n}^{(d)}$ we may estimate
each term by \Lref{lem:not just superpoly} and get
\[
\E(|\gamma\cap B(x,r)|)\le Clr^{3},
\]
as needed.
\end{proof}

\section{Commutators of a power law with itself}\label{sec:self-comm-power}

\begin{defn}
Let $w$ be a word on $k$ letters, let $G$ be finitely generated
group and let $S$ be a finite set of generators. Let $\gamma_{n}$
be the path corresponding to the word~$w$ and $R^{(1)},\dotsc,R^{(d)}$,
where $R^{(1)},\dotsc,R^{(d)}$ are independent random walks on~$G$ of
length~$n$.

We say that $w$ is \textbf{non-Peano} on $G$, $S$ if
\[
\max_{n}\Pr(g\in\gamma_{n})\le C\left|g\right|^{-4},
\]
for some $C$ independent of $n$ and $g$ ($C$ may depend on $G$,
$S$ and $w$).
\end{defn}

We call this property non-Peano because such a path is far from visiting all of~$G$.
Clearly $G$ must have at least $8$ dimensional volume growth to be
non-Peano for any~$w$. But we do not believe this is enough.
Presumably it is possible to construct artificial examples of a $w$
and a ``large'' group $G$ such that $w$ is Peano on~$G$, but
we decided not to pursue this direction here.
\begin{lem}
\label{lem:non-peano is enough}Let $G$ be a finitely generated group
and let $S$ be a finite set of generators. Let $H$ be a $2$-generated group. Let $w_{1}$ and $w_{2}$ be two laws on $d_1$ and~$d_2$ letters respectively such that
$$
\liminf_{n\to\infty}\,\Pr(w_i(R_n^{(1)},\dotsc,R_n^{(d_i)})=1)=p_i\qquad i\in\{1,2\}
$$ for some $p_1$ and $p_2$ (note that we need here $\liminf$ rather than our usual $\limsup$). Assume also that $w_1$ and $w_2$ are non-Peano
on $G$, $S$. Then for every $\varepsilon>0$ there are generators
$S'$ for $H\wr G$ such that the law 
$$[w_{1},w_{2}]\coloneqq[w_1(R_n^{(1)},\dotsc,R_n^{(d_1)}),w_2(R_n^{(d_1+1)},\dotsc,R_n^{(d_1+d_2)})]$$
is satisfied on $H\wr G$ with probability at least $p_1p_2-\varepsilon$.
\end{lem}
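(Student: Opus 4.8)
The plan is to run the proof of \Tref{t:meta-eps}, replacing the planar geometry of $\Z^5$ by the occupation estimates of the previous section. Write $H=\langle a,b\rangle$ and fix $q\in G$, to be chosen at the end with $|q|$ large. As a generating set of $H\wr G$ take
\[
S'=\bigl\{\,s_1\,(\boldsymbol 1,s)\,s_2 \;:\; s\in S,\ s_1,s_2\in\{(a^{\pm1}\delta_e,e),\,(b^{\pm1}\delta_q,e)\}\,\bigr\},
\]
which is symmetric, generates $H\wr G$, and whose pushforward under $\pi\colon H\wr G\to G$ is a lazy generating measure supported on $S$; in particular the $\pi$-image of an $S'$-walk is an $S$-walk on $G$. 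Let $R^{(1)}_n,\dots,R^{(d_1+d_2)}_n$ be independent $S'$-walks, put $u=w_1(R^{(1)}_n,\dots,R^{(d_1)}_n)=(L_1,\bar u)$ and $v=w_2(R^{(d_1+1)}_n,\dots,R^{(d_1+d_2)}_n)=(L_2,\bar v)$, and let $\gamma=\gamma(w_1;\pi R^{(1)}_n,\dots,\pi R^{(d_1)}_n)$ and $\gamma'=\gamma(w_2;\pi R^{(d_1+1)}_n,\dots,\pi R^{(d_1+d_2)}_n)$ be the word paths in $G$ from \Dref{def:path}; they are independent, with endpoints $\bar u$ and $\bar v$. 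Since each generator deposits an $a^{\pm1}$ at the current lamplighter position and a $b^{\pm1}$ at that position right-multiplied by $q$, on $\{\bar u=e\}$ the configuration $L_1$ is supported on $\gamma\cup\gamma q$ (where $\gamma q=\{gq:g\in\gamma\}$), takes values in $\langle a\rangle$ on $\gamma$, and in $\langle b\rangle$ on $(\gamma q)\setminus\gamma$; likewise for $L_2,\gamma',\gamma' q$ on $\{\bar v=e\}$.

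I would then isolate the deterministic step: on the event $E=\{\bar u=e,\ \bar v=e,\ \gamma\cap\gamma' q=\varnothing,\ \gamma q\cap\gamma'=\varnothing\}$ one has $[u,v]=1$. Indeed on $E$ we have $u=(L_1,e)$, $v=(L_2,e)$, so $[u,v]=([L_1,L_2],e)$ with $[L_1,L_2](x)=[L_1(x),L_2(x)]$, and the same four-case analysis as in the proof of \Tref{t:meta-eps} — using the two cross-disjointness relations — shows that for each $x$ the elements $L_1(x),L_2(x)$ lie together in $\langle a\rangle$, together in $\langle b\rangle$, or one of them is $e$; hence they commute. Since $\bar u,\bar v$ are the values of $w_1,w_2$ on independent tuples of $S$-walks on $G$, $\Pr(\bar u=e,\bar v=e)=\Pr(\bar u=e)\Pr(\bar v=e)$, and the hypothesis gives $\liminf_n\Pr(\bar u=e)\Pr(\bar v=e)\ge p_1p_2$. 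Thus
\[
\liminf_{n\to\infty}\Pr([u,v]=1)\ \ge\ p_1p_2\ -\ \limsup_{n\to\infty}\Pr(\gamma\cap\gamma' q\ne\varnothing)\ -\ \limsup_{n\to\infty}\Pr(\gamma q\cap\gamma'\ne\varnothing),
\]
and it remains to make the last two $\limsup$'s at most $\eps/2$ by a suitable choice of $q$.

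The heart of the matter, which I expect to be the main obstacle, is this intersection estimate, uniform in $n$. I would use a first-moment bound $\Pr(\gamma\cap\gamma' q\ne\varnothing)\le\E|\gamma\cap\gamma' q|=\sum_{g\in G}\Pr(g\in\gamma)\,\Pr(gq^{-1}\in\gamma')$ and split at $|g|=|q|/2$. For $|g|<|q|/2$ one has $|gq^{-1}|\ge|q|/2$, so non-Peano gives $\Pr(gq^{-1}\in\gamma')\le C(|q|/2)^{-4}$, while $\sum_{|g|<|q|/2}\Pr(g\in\gamma)=\E|\gamma\cap B(e,|q|/2)|\le C|q|^3$ by \Lref{lem:LPSZ for paths}; this part is $O(|q|^{-1})$. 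For $|g|\ge|q|/2$ one decomposes into dyadic shells $\{2^j\le|g|<2^{j+1}\}$, all of which have $2^j\ge|q|/4$; on such a shell non-Peano bounds $\Pr(g\in\gamma)$ by $C2^{-4j}$, whereas $\sum_{|g|<2^{j+1}}\Pr(gq^{-1}\in\gamma')=\E|\gamma'\cap B(e,2^{j+1})q^{-1}|\le\E|\gamma'\cap B(e,2^{j+3})|\le C2^{3j}$, again by \Lref{lem:LPSZ for paths}, the inclusion $B(e,2^{j+1})q^{-1}\subseteq B(e,2^{j+3})$ using $|q|\le2^{j+2}$. The $j$-th shell thus contributes $O(2^{-j})$, and summing the geometric series over $2^j\ge|q|/4$ gives $O(|q|^{-1})$ as well; hence $\E|\gamma\cap\gamma' q|\le C/|q|$ uniformly in $n$, and the estimate for $\gamma q\cap\gamma'$ is identical with $q^{-1}$ replaced by $q$. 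Taking $|q|>2C/\eps$ then makes each $\limsup$ at most $\eps/2$, and the lemma follows. The subtle point throughout is that $\gamma' q$ is only a right translate of $\gamma'$, not an isometric copy (right multiplication need not preserve the word metric), so the occupation bound must be applied to balls and shells inflated by a tolerance of order $|q|$ — this is exactly what the inclusion $B(e,2^{j+1})q^{-1}\subseteq B(e,2^{j+3})$ absorbs in the range $2^j\ge|q|/4$, and it is why the exponent $4$ of the non-Peano hypothesis (beating the $r^{3}$ of \Lref{lem:LPSZ for paths}) is needed rather than mere transience of $G$.
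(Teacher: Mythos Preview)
Your proof is correct and follows essentially the same approach as the paper: the same switch-move-switch generators with an offset element, the same reduction via \Fref{f:commutators} to showing $[L_1(x),L_2(x)]=1$ pointwise, and the same first-moment intersection bound combining the non-Peano hypothesis with \Lref{lem:LPSZ for paths}. The only organizational difference is in the splitting of the sum $\sum_g\Pr(g\in\gamma)\Pr(gq^{-1}\in\gamma')$: the paper isolates the \emph{left} ball $B_L(u,|u|/2)=\{gu:|g|<|u|/2\}$ (where $|gu^{-1}|$ is small) and then runs the dyadic shells on $|g|$, whereas you isolate the ordinary ball $B(e,|q|/2)$ (where $|g|$ is small) and run the shells from $|q|/2$ outward --- the roles of ``which factor gets non-Peano'' and ``which factor gets LPSZ'' are swapped between the two arguments, but the arithmetic $|q|^{-4}\cdot|q|^3=|q|^{-1}$ is identical.
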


\begin{proof}
Let $u\in G$ be some element, sufficiently large, to be chosen later
depending only on $\varepsilon$. Let $a$ and $b$ be the generators
of $H$. We define the modified switch-move-switch generators
for $H\wr G$ by
$$
S'=\{s_1ms_2 : s_i\in \{a^{\pm 1}\delta_1,b^{\pm 1}\delta_u\}, m\in S\}
$$

Let us estimate the probability that $[w_{1},w_{2}]$ is satisfied under random walk. Write $w_i(\vec{R}^i)=(L_i,g_i)$ for $i=1,2$, where $g_i\in G$ and $L_i\in\bigoplus_{g\in G} H$. We use again \Fref{f:commutators}: $[w_1(\vec{R}^1),w_2(\vec{R}^2)]=1$ in $H\wr G$ if $g_1=g_2=1$ and $[L_1(g),L_2(g)]=1$ for all $g\in G$.

Write $D$ for the event that $g_1=g_2=1$. Since $G$ satisfies $w_1$ and $w_2$ with probabilities $p_1$ and $p_2$ respectively, and the events $g_1=1$ and $g_2=1$ are independent (recall that $w_1$ and $w_2$ are laws on independent random walks), $\Pr\left(D\right)\ge p_1p_2$.

For any $g\in G$, let $A(g)=\set{[L_1(g),L_2(g)]=1}$. Then
\begin{align}
    \Pr\left([w_1(\vec{R}^1),w_2(\vec{R}^2)]=1\right) & \ge \Pr\bigg(D\cap\bigcap_{g\in G}A(g)\bigg)\ge \Pr\left(D\right)-\Pr\bigg(\bigcup_{g\in G}A(g)^c\bigg) \ge \nonumber\\
    & \ge \Pr\left(D\right) - \sum_{g\in G}\Pr\left(A(g)^c\right).\label{eq:prob-estimate-peano-com}
\end{align}

For ease of notation, write $\gamma_i=\gamma(w_i;\vec{R}^i)$ to be the path defined by $w_i$ under $\vec{R}$. Note that if $L_i(g)\notin\sg{b}$ we must have $g\in\gamma_i$, and if $L_i(g)\notin\sg{a}$ then $gu^{-1}\in\gamma_i$.

To estimate $\sum_{g\in G}\Pr\left(A(g)^c\right)$, first note that if $[L_1(g),L_2(g)]\neq 1$, then at least one of them used an $a$ lamp, while the other used a $b$ lamp (otherwise both lie in~$\sg{a}$ or in $\sg{b}$, which are abelian subgroups). In particular, for any $g$ we have
\begin{align*}
    \Pr\left(A(g)^c\right)&\le \Pr\left(L_1(g)\notin\sg{a},L_2(g)\notin\sg{b}\right) + \Pr\left(L_1(g)\notin\sg{b},L_2(g)\notin\sg{a}\right)=\\
    &=\Pr\left(gu^{-1}\in\gamma_1,g\in\gamma_2\right) + \Pr\left(g\in\gamma_1,gu^{-1}\in\gamma_2\right).
\end{align*}
Since for every $g$ the paths $\gamma_1,\gamma_2$ are independent (recall that the notation $[w_{1},w_{2}]$ means that the two laws are used with different, independent random walks), we have
\[
  \Pr\left(A(g)^c\right)\le \Pr\left(gu^{-1}\in\gamma_1\right)\Pr\left(g\in\gamma_2\right) + \Pr\left(g\in\gamma_1\right)\Pr\left(gu^{-1}\in\gamma_2\right),
\]
and thus
\[
  \sum_{g\in G}\Pr\left(A(g)^c\right)\le \sum_{g\in G}\Pr\left(gu^{-1}\in\gamma_1\right)\Pr\left(g\in\gamma_2\right) + \sum_{g\in G}\Pr\left(g\in\gamma_1\right)\Pr\left(gu^{-1}\in\gamma_2\right).
\]

Write $G=B_{L}(u,\left|u\right|/2)\cupdot V$
where $B_{L}$ is the left ball around $u$, i.e.\ $B_L(u,|u|/2)=\set{gu:\left|g\right|<\left|u\right|/2}$, and where $V$ is the remainder. We will estimate each part independently.

We start with $B_L(u,\left|u\right|/2)$. For any $g\in B_L(u,\left|u\right|/2)$ we have $d(g,1)>\left|u\right|/2$. Since $w_i$ is non-Peano, we have
\[
  \Pr\left(g\in\gamma_i\right)\le C\left|u\right|^{-4}.
\]
Next, by \Lref{lem:LPSZ for paths},
\[
  \sum_{g\in B_L(u,\left|u\right|/2)}\Pr\left(gu^{-1}\in\gamma_i\right)= \E\left[\left|(B(1,\left|u\right|/2))\cap\gamma_i\right|\right]\le C\left|u\right|^3,
\]
where the equality uses that when the center is the identity, a left ball is the same as a right ball. We can therefore conclude that
\[
  \sum_{g\in B_L(u,\left|u\right|/2)}\Pr\left(A(g)^c\right)\le C\left|u\right|^{-1}.
\]

To handle $V$ we write $V=\bigcup_{r=0}^{\infty}V_r$, where $V_0=V\cap B(1,\left|u\right|)$, and $V_r=V\cap\left(B(1,2^r\left|u\right|)\setminus B(1,2^{r-1}\left|u\right|)\right)$ for any $r\ge 1$. For any $g\in V_r$, since $w_i$ is non-Peano,
\[
  \Pr\left(gu^{-1}\in\gamma_i\right)\le C\left(2^r\left|u\right|\right)^{-4},
\]
where we estimate that $|gu^{-1}|>c2^r\left|u\right|$ because $\left|g\right|>2^{r-1}\left|u\right|$ for $r\ge 1$, and $g\notin B_L(u,\left|u\right|/2)$ for $r=0$. In addition, again with \Lref{lem:LPSZ for paths},
\[
  \sum_{g\in V_r}\Pr\left(g\in\gamma_i\right)\le \E\left[\left|B(1,2^r\left|u\right|)\cap\gamma_i\right|\right]\le C\left(2^r\left|u\right|\right)^3.
\]
This proves that we may again conclude
\[
  \sum_{g\in V_r}\Pr\left(A(g)^c\right)\le C\left(2^r\left|u\right|\right)^{-1}.
\]

In conclusion,
\begin{align*}
  \sum_{g\in G}\Pr\left(A(g)^c\right) &= \sum_{g\in B_L(u,\left|u\right|/2)}\Pr\left(A(g)^c\right) + \sum_{r=0}^{\infty}\sum_{g\in V_r}\Pr\left(A(g)^c\right)\le\\
  & \le 2C\left|u\right|^{-1}+\sum_{r=0}^{\infty}C(2^r\left|u\right|)^{-1}\le C\left|u\right|^{-1}.
\end{align*}

We may choose $\left|u\right|$ sufficiently large such that $C\left|u\right|^{-1}\leq\eps$, and thus we have that $\Pr\left(\bigcap A(g)\right)\ge 1-\eps$. Therefore
\[
\Pr\left([w_1(\vec{R}^1),w_2(\vec{R}^2)]=1\right) \ge \Pr\left(D\cap \bigcap A(g)\right) \ge \Pr\left(D\right)+\Pr\left(\bigcap A(g)\right)-1\ge p_1p_2-\eps
\]
as required.
\end{proof}

Before moving on to Burnside groups, we need two simple lemmas. One
on paths in lamplighter groups, and another on linear algebra.
\begin{lem}
\label{lem:spanning}Let $B$ be a $d$-generated group and let $G=\mathbb{Z}/l\mathbb{Z}\wr B$
with the switch-move-switch generators, with respect to lazy simple
random walk on $\mathbb{Z}/l\mathbb{Z}$ and the natural walk on $B$.
Let $\gamma$ be a geodesic path of length $n$ in $G$ and let $\beta$ be
its projection to~$B$. Then
\[
|\{\beta_{0},\dotsc,\beta_{n}\}|\ge\frac{1}{l/2+2}|\gamma_{n}|.
\]
\end{lem}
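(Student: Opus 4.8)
The plan is to exploit the geodesic hypothesis in reverse: I will build an explicit, fairly short path in $G$ realizing the element $\gamma_n$, and since $\gamma$ is a geodesic this forces $n=|\gamma_n|$ to be at most the length of that path. After translating we may assume $\gamma_0=1$, so $\beta_0=1$; write $\gamma_n=(L,b)$ with $b=\beta_n\in B$ and $L\in\bigoplus_{B}\mathbb Z/l\mathbb Z$, and note $|\gamma_n|=n$. Put $P=\{\beta_0,\dots,\beta_n\}$ and $m=|P|$. Since lamps are only ever changed at positions occupied by the lamplighter, $\supp L\subseteq P$; also $1,b\in P$. For $p\in P$ let $c_p\in\{0,1,\dots,\lfloor l/2\rfloor\}$ be the least number of $\pm1$ increments needed in $\mathbb Z/l\mathbb Z$ to pass from $0$ to $L(p)$, so $c_p=0$ when $p\notin\supp L$. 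It then suffices to exhibit a path from $1$ to $\gamma_n$ of length at most $(l/2+2)\,m$.

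First I would produce the moving part of this path. Consecutive $\beta_i,\beta_{i+1}$ are neighbours in the Cayley graph of $B$, so $P$ spans a connected subgraph; fix a spanning tree $T$ of it, on vertex set $P$. Its depth-first traversal gives a walk $W$ in $T$ starting at $1$, ending at $b$, visiting every vertex of $P$, of length at most $2(m-1)$, and with the elementary property that every $p\in P\setminus\{1,b\}$ is both entered and left by $W$ and hence is incident to at least two steps of $W$ (concretely, $W$ is an Euler path in the multigraph obtained from $T$ by doubling every edge not on the $1$--$b$ geodesic of $T$). Then realise $W$ as a switch-move-switch path in $G$: each step of $W$ carries one lamp switch at each of its two endpoints, so $W$ hands every vertex $p\notin\{1,b\}$ at least two lamp-change opportunities, and every $p\in\{1,b\}$ at least one (the degenerate case $m=1$ has no steps and is handled directly). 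Whenever a vertex $p\in\supp L$ still needs more opportunities than $W$ provides, I would insert \emph{pivots} there: an excursion to an adjacent vertex and back, costing two extra steps and supplying two further switches at $p$. Every opportunity not used to build $L$ — the spare ones, and the ones incurred at the targets of pivots — is set to the increment $0$, which is legal because the $\mathbb Z/l\mathbb Z$ coordinate of the generators is lazy.

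The count would then give the bound. At an interior vertex $p$ the walk $W$ already supplies $\ge 2$ opportunities, so at most $\lfloor l/2\rfloor-1$ extra pivot-steps are needed there; at $p\in\{1,b\}$ at most $\lfloor l/2\rfloor$. Since there are at most $m-2$ interior vertices and at most two boundary ones, the constructed path has length at most
\[
2(m-1)+(m-2)\bigl(\lfloor l/2\rfloor-1\bigr)+2\lfloor l/2\rfloor \;=\; m\bigl(\lfloor l/2\rfloor+1\bigr)\;\le\;\Bigl(\tfrac{l}{2}+2\Bigr)m .
\]
By geodesicity $n=|\gamma_n|$ is at most this, i.e.\ $|\{\beta_0,\dots,\beta_n\}|=m\ge |\gamma_n|/(l/2+2)$, as required.

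The step I expect to be the real work is precisely this accounting: one must track honestly how many lamp switches each vertex of $P$ obtains from $W$ versus from pivots, and in particular use that every vertex other than the two endpoints meets at least two steps of $W$ — this is what pins the constant at $l/2+2$ rather than something larger. (One also needs the routine facts about depth-first tree traversals asserted above, and a small separate check in the degenerate case $m=1$.)
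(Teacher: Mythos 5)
Your argument is correct and takes essentially the same route as the paper's: build a spanning tree of the set $P$ of sites visited by $\beta$, traverse it (using the switch-move-switch structure to set lamps along the way, inserting extra visits where a lamp needs more adjustment), and compare the resulting path length against the geodesic length $n=|\gamma_n|$. Your bookkeeping — exploiting that a DFS-style traversal from $1$ to $b$ gives every interior vertex at least two switch opportunities for free — is a bit more careful than the paper's "repeat the traversal $\lfloor l/4\rfloor$ times", and lands you at the slightly tighter $m(\lfloor l/2\rfloor+1)\le(l/2+2)m$, but the underlying idea and mechanism are the same.
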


(Recall that a geodesic path is the shortest path between its endpoints).

\begin{proof}
We will show that for any path $\gamma$ there exists a path $\gamma'$
with length $n'\le n$ with the same projection to $B$ and same endpoint,
such that for all $b$, the projection $\beta'$ of $\gamma'$ visits
$b$ no more than $l/2+2$ times. This will show the lemma by
contradiction since if not, then
\[
n'\le|\{\beta'_{0},\dotsc,\beta_{n}'\}|(l/2+2)=|\{\beta_{0},\dotsc,\beta_{n}\}|(l/2+2)<|\gamma_{n}|=|\gamma_{n}'|
\]
which contradicts the definition of $|\gamma_{n}|$ as the length
of the shortest path.

To construct $\gamma'$ we first modify $\gamma$ so that at every
visit to every $b$ it does a geodesic in $\mathbb{Z}/l\mathbb{Z}$
towards $\gamma_{n}(b)$ and when it reaches that value it no longer
moves (recall that the walk on $\mathbb{Z}/l\mathbb{Z}$ is lazy).
Since at every visit to $b$ it can move the value by $2$ (once when
entering and once when exiting), and since the diameter of~$\mathbb{Z}/l\mathbb{Z}$
is $\lfloor l/2\rfloor$, we get that after $\lfloor l/4\rfloor+1$
visits to $b$, it has already taken the value~$\gamma_{n}(b)$.

Next, since the set of sites visited by $\beta$ is connected, one
may find a spanning tree. A simple induction shows that any tree can
be traversed so that each vertex is visited at least once and the overall number of is at most twice the size of the tree (e.g. doing Depth first search). Therefore to get the the desired position and configuration, we
repeat this traversal $\lfloor l/4\rfloor$ times to set all the lamps to the right configuration, then take a geodesic to the desired endpoint. 
\end{proof}
\begin{lem}
\label{lem:kugelager}Let $A$ be an $m\times m'$ matrix with $\mathbb{Z}/l\mathbb{Z}$
entries, such that no row of $A$ is identically zero. Assume that
for some $k$ we have that every row of $A$ contains no more than
$k$ non-zero entries, and similarly for all columns of $A$. Let $X_{1},\dotsc,X_{m'}$
be independent (not necessarily identically distributed) random $\mathbb{Z}/l\mathbb{Z}$
variables with the property that $X_{i}$ takes no specific value
with probability bigger than $1-\varepsilon$ (with~$\varepsilon$
independent of $i$). Then for any $v\in(\mathbb{Z}/l\mathbb{Z})^{m}$
\[
\Pr(AX=v)\le(1-\varepsilon)^{\lceil m/k^{2}\rceil}.
\]
\end{lem}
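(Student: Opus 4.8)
The plan is a standard decoupling argument: extract from $A$ a sparse, essentially ``diagonal'' subsystem consisting of $t\ge\ceil{m/k^2}$ of the equations, and show that each of them contributes an independent factor at most $1-\varepsilon$. To build the subsystem, form the auxiliary graph $\mathcal H$ on the row set $\{1,\dots,m\}$ in which two rows are adjacent if and only if they share a common non-zero column. A given row has at most $k$ non-zero columns, and each such column has at most $k-1$ other non-zero rows, so $\mathcal H$ has maximum degree at most $k(k-1)$. Greedily pick a row, discard it together with all its $\mathcal H$-neighbours (at most $k(k-1)+1\le k^2$ rows), and repeat until no rows remain; since each step removes at most $k^2$ rows and all $m$ rows are eventually removed, this produces rows $r_1,\dots,r_t$ with $tk^2\ge m$, hence $t\ge\ceil{m/k^2}$, that are pairwise non-adjacent in $\mathcal H$, i.e.\ have pairwise disjoint supports (viewed as subsets of columns). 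Since no row of $A$ is identically zero, for each $i$ we may pick a column $c_i$ with $A_{r_ic_i}\ne 0$; the $c_i$ are distinct, and $A_{r_jc_i}=0$ whenever $j\ne i$, since otherwise $c_i$ would lie in the supports of both $r_i$ and $r_j$.

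Now condition on the values of all variables $X_c$ with $c\notin\{c_1,\dots,c_t\}$. Row $r_i$ of the system $AX=v$ reads $\sum_c A_{r_ic}X_c=v_{r_i}$; among the coordinates that are still random, namely $X_{c_1},\dots,X_{c_t}$, row $r_i$ has a non-zero coefficient only on $X_{c_i}$, so this equation is the event
\[
E_i:\qquad A_{r_ic_i}\,X_{c_i}=w_i,\qquad w_i:=v_{r_i}-\sum_{c\ne c_i}A_{r_ic}X_c,
\]
with $w_i$ a function of the conditioned variables only. As $c_1,\dots,c_t$ are distinct and the $X$'s are mutually independent, the events $E_1,\dots,E_t$ are conditionally independent given the conditioning, and $E_i$ depends only on $X_{c_i}$. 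Since $\{AX=v\}\subseteq\bigcap_{i=1}^t E_i$, taking expectations over the conditioned variables yields
\[
\Pr(AX=v)\le\E\!\left[\prod_{i=1}^t\Pr\big(E_i\mid (X_c)_{c\notin\{c_1,\dots,c_t\}}\big)\right]\le(1-\varepsilon)^{t}\le(1-\varepsilon)^{\ceil{m/k^2}},
\]
provided each conditional probability $\Pr(A_{r_ic_i}X_{c_i}=w_i\mid\cdots)$ is at most $1-\varepsilon$.

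That last point is where the only genuine care is needed, and I expect it to be the main (if minor) obstacle. If $A_{r_ic_i}$ is a unit of $\mathbb Z/l\mathbb Z$ — for instance whenever $l$ is prime, or when one can arrange the pivot column to carry an invertible entry — then $A_{r_ic_i}X_{c_i}=w_i$ is equivalent to $X_{c_i}=A_{r_ic_i}^{-1}w_i$ taking one specific value, and the hypothesis on $X_{c_i}$ bounds this by $1-\varepsilon$, completing the argument. For a general modulus $l$ a non-invertible pivot $A_{r_ic_i}$ only constrains $X_{c_i}$ to a coset of $\ker(x\mapsto A_{r_ic_i}x)$, which a priori could carry probability close to $1$; in that case one should either choose the pivot columns $c_i$ in the greedy step so that $A_{r_ic_i}$ is invertible whenever possible, or read the hypothesis on the $X_i$ in the stronger form ``no $X_i$ puts probability exceeding $1-\varepsilon$ on any coset of any subgroup of $\mathbb Z/l\mathbb Z$'' (which holds for the near-uniform lamp increments arising later). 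In all cases the combinatorial heart — the $\ceil{m/k^2}$ bound coming from the bounded-degree graph $\mathcal H$ and the decoupling via conditioning — is exactly as above.
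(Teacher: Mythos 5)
Your proof follows the same route as the paper's: greedily extract $\ceil{m/k^2}$ rows with pairwise disjoint supports via the bounded-degree intersection graph, expose one pivot variable per chosen row, and factor the conditional probability. Your version is the more careful one, and the worry you raise at the end about non-invertible pivots is not a footnote but the real issue. As stated, the lemma is false for composite $l$: take $l=4$, $m=m'=k=1$, $A=(2)$, $v=0$, and $X_1$ uniform on $\{0,2\}\subset\Z/4\Z$. Then $X_1$ takes no specific value with probability exceeding $1/2$, so $\varepsilon=1/2$, yet $\Pr(2X_1=0)=1>(1-\varepsilon)^{\ceil{m/k^2}}=1/2$. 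The paper's own proof contains exactly the same gap in the step asserting that ``in order for this equation to be satisfied that last variable must take a specific value,'' which is only correct when the pivot entry is a unit of $\Z/l\Z$. So either the lemma needs the extra hypothesis that every row of $A$ contains a unit entry (automatic if $l$ is prime), or the condition on the $X_i$ must be strengthened to the one you suggest (no $X_i$ concentrates mass greater than $1-\varepsilon$ on any coset of a nontrivial subgroup of $\Z/l\Z$). With either fix your decoupling argument goes through exactly as written, and the application in \Lref{lem:burnside-laplighter-non-peano} should then be checked to supply whichever hypothesis is chosen.
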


\begin{proof}
Consider the first row of $A$ (think about rows as equations). Condition
on the value of all $X_{i}$ that appear in that row except one, we
see that in order for this equation to be satisfied that last variable
must take a specific value, an event that has probability at most
$1-\varepsilon$.

Now, since the first row contains at most $k$ non-zero entry, and
since for any such entry that are at most $k$ rows containing it,
we see that at most $k^{2}$ rows of~$A$ have any joint variable
with the first row. Discarding these rows, the next one is independent
of the first. Since it, too, has probability $1-\varepsilon$ to be
satisfied, the probability that both are satisfied is at most $(1-\varepsilon)^{2}$.
Continuing this way, the lemma is proved.
\end{proof}
\begin{lem}
\label{lem:burnside-laplighter-non-peano}Let $q$ be a sufficiently
large composite number. Then there exists a group $G$ satisfying
$x^{q}=1$ for all $x\in G$ and a finite set of generators $S$ of
$G$ such that $x^{q}$ is non-Peano for $G$, $S$.
\end{lem}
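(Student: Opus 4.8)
The plan is to take the required group to be a lamplighter $G=(\Z/l\Z)\wr B$ over an infinite Burnside group $B$, and to prove the non‑Peano estimate by conditioning on the trajectory in $B$ and controlling the lamp configuration with \Lref{lem:kugelager}, using \Lref{lem:spanning} and the occupation bounds of \Lref{lem:LPSZ for paths} for the geometric input.

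\emph{Construction.} Since $q$ is composite it has a prime divisor $p\le\sqrt q$; put $l=p$ and $d=q/p$, so $l\ge 2$ and $d\ge\sqrt q$. For $q$ large enough $d$ is large, so the free Burnside group $B=B(2,d)$ is infinite (Novikov--Adian for odd $d$, Ivanov--Lysenok in general). Let $G=(\Z/l\Z)\wr B$ with the switch--move--switch generating set $S$ built from lazy simple random walk on $\Z/l\Z$ and the standard generators of $B$, exactly as in \Lref{lem:spanning}. That $x^q=1$ on $G$ is a direct computation: for $(f,b)$ with $b$ of order $o\mid d$ one has $b^q=1$ and $\sum_{t=0}^{q-1}b^{t}f=(q/o)\sum_{t=0}^{o-1}b^{t}f$, which vanishes in $\bigoplus_B\Z/l\Z$ because $l\mid q/o$. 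Being infinite, finitely generated and of finite exponent, $B$ is not virtually nilpotent, so by Gromov it has superpolynomial growth; hence so does $G$, so $G$ is transient and \Lref{lem:not just superpoly}, \Lref{lem:LPSZ for paths} apply.

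\emph{The non-Peano bound.} By \Dref{def:path} the path of $x^q$ is $\gamma_n=\bigcup_{j=0}^{q-1}R_n^{\,j}R_{[0,n)}$ with $R$ the walk, so $h\in\gamma_n$ forces $n\ge|h|/q$; a union bound over the $q$ values of $j$ reduces matters to bounding $\Pr(R_n^{\,j}R_k=h)$ for fixed $j$. For $j=0$ this is classical: $\sum_k\Pr(R_k=h)\le\sum_{t}p_t(1,h)$, the Green's function, which for a group of superpolynomial growth decays faster than any polynomial in $|h|$ (Carne--Varopoulos together with the on-diagonal heat-kernel bound). For $j\ge 1$ I would condition on the projected trajectory $\beta=(\beta_0,\dots,\beta_n)$ in $B$ and write $V=\{\beta_0,\dots,\beta_n\}$. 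With $R_n=(L_n,\beta_n)$, $R_k=(L_k,\beta_k)$, the lamp value of $R_n^{\,j}R_k$ at a site $v$ equals $\sum_{i=0}^{j-1}L_n(\beta_n^{-i}v)+L_k(\beta_n^{-j}v)$, an affine function over $\Z/l\Z$ of the i.i.d.\ $\{-1,0,1\}$-valued ``clicks'' of the walk. Exposing, for each site $u\in V$, only the single click made just before the \emph{last} departure from $u$ (distinct sites give distinct steps, hence independent clicks given $\beta$), one checks the induced $\Z/l\Z$-linear system has at most $q$ non-zero entries per row and per column (each such click influences the configuration at $\le j+1\le q$ sites, and each site's equation involves $\le j+1\le q$ of the exposed clicks). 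Standard concentration for the range of a transient walk shows that off an event of probability $\le Cn^{-10}$ the walk visits $\ge cn$ distinct $B$-sites, each of which must carry its prescribed lamp value (equal to $0$ off $\supp(L)$, hence prescribed and mostly zero at $\ge cn-|h|$ of them). By \Lref{lem:kugelager} the conditional matching probability is $\le(1-\eps)^{cn/q^{2}}$, so for every $n\ge|h|/q$
\[
\Pr(h\in\gamma_n)\;\le\;\sum_{t}p_t(1,h)\;+\;Cn^{-10}\;+\;qn\,(1-\eps)^{cn/q^{2}} .
\]
The map $n\mapsto qn(1-\eps)^{cn/q^{2}}$ is eventually decreasing, so its supremum over $n\ge|h|/q$ is attained near $n=|h|/q$ and decays faster than any polynomial in $|h|$; the same holds for the Green's function and for $\sup_{n\ge|h|/q}Cn^{-10}\le Cq^{10}|h|^{-10}$. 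Hence $\Pr(h\in\gamma_n)\le C|h|^{-4}$ for all $n$, which is non-Peano. (A few degenerate cases — when $\beta_n$ has small order, so that the translates $\beta_n^{i}V$ overlap and the exposed rows can vanish mod $l$ — must be treated separately, but there $R_n^{\,j}R_k$ collapses to $R_k$ or to a power of $R_n$ alone, and is covered by the $j=0$ estimate or by the same argument with fewer superposed copies.)

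\emph{Main obstacle.} The delicate point is the $j\ge1$ step: turning ``prescribe the lamp configuration on $\gtrsim n\gtrsim|h|$ sites'' into an honest instance of \Lref{lem:kugelager} requires choosing the exposed clicks so that the induced system over $\Z/l\Z$ is genuinely sparse with no identically-zero rows, while simultaneously keeping the number of constrained sites linear in $n$ and absorbing the small probability that the walk's range in $B$ is atypically small. It is the interaction of the power $R_n^{\,j}$ (which superposes $j$ shifted copies of the configuration), the overlap between $L_n$ and $L_k$, and divisibility coincidences modulo $l$ that makes the bookkeeping subtle; \Lref{lem:spanning} is what guarantees that $|h|$ and the geometric size of the region a geodesic to $h$ must sweep through in $B$ are comparable, which is why the count ``$\ge cn\ge c|h|/q$ constrained sites'' is strong enough, and \Lref{lem:LPSZ for paths} provides the alternative geometric route in the lamp-light regime.
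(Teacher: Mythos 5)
Your overall framework matches the paper's: build $G=(\Z/l\Z)\wr B$ with $B$ an infinite Burnside group, check that $G$ has exponent $q$, condition on the $B$-projection of the trajectory, and feed the lamp-matching constraints into \Lref{lem:kugelager}. The exponent computation and the choice of switch-move-switch generators are also as in the paper (the specific factorisation $q=p\cdot(q/p)$ versus the paper's $q=kl$ is immaterial).

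The genuine gap is in how you obtain the lower bound on the number of constrained sites. You invoke ``standard concentration for the range of a transient walk'' to assert that $\Pr(\text{range}_n<cn)\le Cn^{-10}$. This is not a standard fact for general transient groups, and in particular I see no reason it should hold for the Burnside group $B$; even a variance bound $\mathrm{Var}(\text{range}_n)=O(n)$ (which itself is not automatic here) would only yield $\Pr(\text{range}_n<cn)\le C/n$, giving $|h|^{-1}$ after taking $\sup_{n\ge|h|/q}$, far short of the required $|h|^{-4}$. The paper avoids the problem entirely by using \Lref{lem:spanning} as a \emph{deterministic} constraint: on the event $\gamma_i=g$, the subpath $(\gamma_0,\dotsc,\gamma_i)$ goes from the identity to $g$, so its $B$-projection necessarily visits at least $\tfrac1{l/2+2}|g|$ distinct sites. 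No probabilistic estimate on the range is needed at all; the lower bound $|M|\ge c|g|$ holds pointwise on the event of interest, and the ``bad'' indices $i$ with $|M|<c|g|$ contribute zero. Your final paragraph actually names \Lref{lem:spanning} and describes exactly this mechanism, but it is not what the displayed estimate uses — the $Cn^{-10}$ term should simply not be there, and the matching probability should be $(1-\eps)^{c|h|/q^2}$ rather than $(1-\eps)^{cn/q^2}$, with the count of indices controlled (as the paper does, via \Lref{lem:not just superpoly} with $r=1$) by $\E|\{i:\bar\beta_i=b\}|\le Cq$.

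Two smaller remarks. First, the quantity ``$\ge cn-|h|$'' can be negative (since $|h|$ can be as large as $qn$ while $cn\le n$); the subtraction is unnecessary anyway, because sites with prescribed non-zero lamp value are constraints too. Second, you flag degenerate cases where $\beta_n$ has small order and rows of the \Lref{lem:kugelager} matrix could vanish mod $l$; this is a legitimate concern, and the paper's write-up is also terse on this point, but it is orthogonal to the main gap above.
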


\begin{proof}
For any $k_{0}$ there exists $q_{0}$ such that if $q>q_{0}$ is
composite then it is possible to write $q=kl$ with $k>k_{0}$ and
with $l>1$. By \cite{I94} for any $k\ge2^{57}$ the free Burnside
group $B$ of exponent $k$ on $2$ generators is infinite (the formulation
in \cite{I94} is ``$k$ larger than $2^{48}$ and either odd or divisible
by $2^{9}$'' but this is the same. Indeed, if the Burnside group of
a given exponent $k$ is infinite then so is the Burnside group of
every product of $k$). We take $q_{0}$ corresponding to $k_{0}=2^{57}$
in the above. Consider the group $G=\mathbb{Z}/l\mathbb{Z}\wr B$.
Then it is easy to see that $G$ is of exponent $q$. Indeed, for
any $g=(\omega,b)\in G$, $g^{k}=(\xi,b^{k})=(\xi,1)$ for some $\xi:B\to\mathbb{Z}/l\mathbb{Z}$
and then
\[
g^{q}=\left(g^{k}\right)^{l}=(\xi,1)^{l}=(l\xi,1)=(\vec{0},1)
\]
which is the identity in $G$. We take this $G$ with the standard
switch-move-switch generators with respect to the lazy random walk
on $\mathbb{Z}/l\mathbb{Z}$ and the standard walk on $B$.

Thus we need only show that $x^{q}$ is non-Peano, i.e.\ that $\Pr(g\in\gamma)\le C|g|^{-4}$
for all $n$, where $\gamma$ is the path of $R^{q}$ and $R$ is
a random walk of length $n$. To see this condition on the path of
$R$ in $B$ and call this path $\beta$ (we apologise for the confusing
use of the word ``path'' for both the path in $B$ and in $G$ ---
hopefully, if the reader keeps in mind that the path in $B$ is denoted
by $\beta$ and the path in $G$ is denoted by~$\gamma$ not too much
confusion will arise). Denote $g\eqqcolon(\omega,b)$ and consider
one $i$ such that $\beta_{i}=b$. By \Lref{lem:spanning} we
know that the set $M\coloneqq\{\beta_{0},\dotsc,\beta_{i}\}$ must
have size at least $c|g|$ (here and below constants may depend on
$q$, $k$ and $l$), otherwise it is impossible for $\gamma_{i}$
to be $g$ and we may discard such $i$.

Write $i=an+r$ for $0\leq r<n$ (and $a\leq q$), and write $R_n=R_rR'_{n-r}$, where $R_r=(\xi,g_0)$ and $R'_{n-r}=(\xi',g_0')$ are independent random walks with $r$ and $n-r$ steps respectively. As $R_r$ and $R'_{n-r}$ are independent, and after conditioning on the path $\beta$, the random variables $\set{\xi(g)}_{g\in G}$ and $\set{\xi'(g)}_{g\in G}$ are independent and non-degenerate.

As $\gamma_i=(R_rR'_{n-r})^aR_r$, for any $x\in B$ which is visited by the random walk $R_n$ by time $n$, $\gamma_i(x)$ is a linear combination of the variables $\set{\xi(g)}_{g\in G}$ and $\set{\xi'(g)}_{g\in G}$. Each $\gamma_i(x)$ is a linear combination of at most $2q$ such variables, and each such variable appears in at most $2q$ combinations. Thus
we are exactly in the situation of \Lref{lem:kugelager} and
we may conclude that
\[
\Pr(\gamma_{i}=g\,|\,\beta)\le(1-c)^{\lceil|M|/4q^{2}\rceil}.
\]
Together with the previous observation we have, still for $i$ such
that $\beta_{i}=b$, that $\Pr(\gamma_{i}=g\,|\,\beta)\le\exp(-c|g|)$.

Finally, to estimate the number of relevant $i$ we note that $B$
is transient and hence using \Lref{lem:not just superpoly} with
$r=1$ shows that, conditioning only on $\beta_{n}$ and not on the
whole of $\beta$, each piece of $\beta$ visits $b$ in expectation at most a constant number
of times. Integrating this conditioning shows that
\[
\E(|\{i:\beta_{i}=b\}|)\le Cq.
\]
The lemma is then finished by the following calculation
\[
\Pr(g\in\gamma)\le\sum_{i=0}^{qn}\E(\Pr(\gamma_{i}=g\,|\,\beta))\le\sum_{i=0}^{qn}\Pr(\beta_{i}=b)\exp(-c|g|)\le Cq\exp(-c|g|),
\]
which is far stronger than the $8$-dimensional estimate we needed
to prove.
\end{proof}
\begin{rem*}
The idea that a lamplighter with a Burnside base group is also a Burnside
group has been used before, see the appendix of \cite{O18} and a
reference within.
\end{rem*}
\begin{thm}
For $q$ sufficiently large and composite, the law $[x^{q},y^{q}]$
has no gap or positivity for infinite groups.
\end{thm}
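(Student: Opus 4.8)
The plan is to build a single infinite group $\Gamma$, containing a non-abelian free subgroup, on which the law $[x^q,y^q]$ holds with probability arbitrarily close to $1$ under suitable random walks; this rules out both a gap and a positivity result in one stroke. The two ingredients are already in hand: \Lref{lem:burnside-laplighter-non-peano}, which for sufficiently large composite $q$ produces a group of exponent $q$ on which $x^q$ is non-Peano, and \Lref{lem:non-peano is enough}, which turns this into a lamplighter satisfying the commutator law with high probability.

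Concretely, first I would fix $q$ sufficiently large and composite and invoke \Lref{lem:burnside-laplighter-non-peano} to obtain an infinite group $G$ of exponent $q$ (so $x^q=1$ holds identically on $G$) and a finite generating set $S$ such that $x^q$ is non-Peano on $G$, $S$. Then I would set $H=\F_2$ with free generators $a,b$ and take $\Gamma=H\wr G$, and apply \Lref{lem:non-peano is enough} to $w_1=x_1^q$ and $w_2=x_2^q$, two power laws on disjoint letters with $d_1=d_2=1$. Since $G$ has exponent $q$ we get $\Pr(R_n^q=1)=1$ for all $n$, hence $p_1=p_2=1$, and $x^q$ is non-Peano on $G$, $S$ by construction; moreover $\langle a\rangle$ and $\langle b\rangle$ are abelian subgroups of the non-abelian group $H$, as the lemma requires. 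The lemma then yields, for every $\eps>0$, a generating set $S'$ of $\Gamma$ (of switch-move-switch form, and in particular of size at most $16|S|$, independently of $\eps$) with
$$\liminf_{n\to\infty}\Pr\left([X_n^q,Y_n^q]=1\right)\ge p_1p_2-\eps=1-\eps,$$
so $\Pr_{S'}\left([x^q,y^q]=1\on\Gamma\right)\ge 1-\eps$.

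It then remains to read off both conclusions. Since $\Gamma=\F_2\wr G$ contains a copy of $\F_2$ (the lamps sitting over a single point of $G$), the remark following \Dref{def:pos} shows that $\Gamma$ satisfies no law virtually; in particular it has no finite-index subgroup satisfying $[x^q,y^q]$, while taking $\eps=\tfrac12$ gives $\Pr_{S'}\left([x^q,y^q]=1\on\Gamma\right)\ge\tfrac12>0$, so $[x^q,y^q]$ fails positivity. For the gap, write $r_0$ for the ($\eps$-independent) bound on $|S'|$; if $[x^q,y^q]$ satisfied a $\phi$-gap, then choosing $\eps<\phi(r_0)$ and the corresponding $S'$ would make $\Gamma$ an $r_0$-generated group with $\Pr_{S'}\left([x^q,y^q]=1\on\Gamma\right)>1-\phi(r_0)$, forcing $\Gamma$ to satisfy $[x^q,y^q]$; but $[a^q,b^q]\ne 1$ in $\F_2\le\Gamma$, a contradiction. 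The argument rests entirely on the two cited lemmas, so no step is a genuine obstacle here; the only place to be careful is that the size of $S'$ produced by \Lref{lem:non-peano is enough} is bounded in terms of $|S|$ alone — the offset $u$ in the switch-move-switch generators grows with $1/\eps$, but the number of generators does not — which is exactly what kills even a weak, $r$-dependent gap function.
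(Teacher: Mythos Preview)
Your proof is correct and follows exactly the route sketched in the paper: take $G$ from \Lref{lem:burnside-laplighter-non-peano}, set $\Gamma=\F_2\wr G$, apply \Lref{lem:non-peano is enough} with $p_1=p_2=1$, and use the free subgroup of $\Gamma$ to kill both gap and positivity. Your explicit observation that $|S'|$ is bounded independently of $\eps$ (only the offset $u$ grows) is a point the paper leaves implicit but is indeed what rules out even a weak gap.
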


This follows immediately from \Lref{lem:non-peano is enough}, by considering the wreath product $\Gamma=\F_2\wr G$ for the group $G$ from \Lref{lem:burnside-laplighter-non-peano}, using the fact that $\F_{2}\wr G$
does not satisfy any law (even virtually), due to its free subgroups.

\section{Uniform measures on balls}\label{sec:uniform-measures-balls}

In this section we deal with the measures induced by taking uniform measures on balls. I.e., we choose a finite symmetric generating set $S$ containing $1$, and let~$\mu_n$ denote the uniform measure on~$S^n$. We show that in this case that many laws do not have a strong gap.

\begin{thm}\label{thm:uniform-balls}
Let $w$ be a non-trivial law such that there is some group of exponential growth satisfying $w$. Then $w$ has no strong gap with respect to measures induced by uniform measures on balls.
\end{thm}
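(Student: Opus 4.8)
The plan is to refute a strong gap by producing, for every $\eps>0$, a finitely generated group $G$ with a finite symmetric generating set $S\ni 1$ such that $\Pr_M(w=1\on G)>1-\eps$ for the associated sequence $M=\set{\mu_n}$ of uniform ball measures, while $G$ does not satisfy $w$. As suggested in the introduction, take $G=H\times F$, where $H$ is a finitely generated group of exponential growth satisfying $w$ (supplied by the hypothesis) and $F$ is a finite group \emph{not} satisfying $w$. Such an $F$ exists because $w\neq 1$ in $\F_d$ and $\F_d$ is residually finite, so some finite quotient of $\F_d$ does not kill $w$. Since $F$ is a retract of $G$, the group $G$ does not satisfy $w$; and since $H$ satisfies $w$, for any $(g_1,\dots,g_d)\in G^d$ we have $w(g_1,\dots,g_d)=(1,w(\overline{g_1},\dots,\overline{g_d}))$, where $\overline{g_i}$ denotes the projection of $g_i$ to $F$. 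In particular $w(g_1,\dots,g_d)=1$ whenever every $\overline{g_i}=1$, so it suffices to make a $\mu_n$-random element of $G$ have trivial $F$-component with probability close to $1$, along some subsequence of $n$.

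To this end, fix a finite symmetric generating set $S_H$ of $H$, write $\omega=\lim_m |B_{S_H}(m)|^{1/m}>1$ for the growth rate, and for an integer parameter $j$ set $S_H^{(j)}=B_{S_H}(j)$, a symmetric generating set of $H$ with $|B_{S_H^{(j)}}(n)|=|B_{S_H}(jn)|$. Fix also a finite symmetric generating set $S_F\ni 1$ of $F$, and take $S=\big(S_H^{(j)}\times\set{1}\big)\cup\big(\set{1}\times S_F\big)$, a symmetric generating set of $G$ containing $1$. Because the two types of generators lie in commuting subgroups, $S^n=\set{(h,f)\suchthat |h|_{S_H^{(j)}}+|f|_{S_F}\le n}$, and hence for $n$ large (writing $s_F(k)$ for the number of $f\in F$ with $|f|_{S_F}=k$)
\[
\Pr_{\mu_n}(\overline{g}=1)=\frac{|B_{S_H}(jn)|}{\sum_k s_F(k)\,|B_{S_H}(jn-jk)|}\ \ge\ \frac{1}{1+(|F|-1)\,|B_{S_H}(jn-j)|/|B_{S_H}(jn)|},
\]
using monotonicity of balls and $\sum_{k\ge 1}s_F(k)=|F|-1$.

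Next I claim $\limsup_n |B_{S_H}(jn)|/|B_{S_H}(jn-j)|\ge\omega^j$. Indeed, the telescoping product of these ratios over $n=1,\dots,N$ equals $|B_{S_H}(jN)|$, which is at least $\omega^{jN}$ by submultiplicativity of balls together with Fekete's lemma; thus the geometric mean of the first $N$ ratios is at least $\omega^j$ for every $N$, forcing the $\limsup$ to be at least $\omega^j$. Plugging this into the displayed bound gives $\limsup_n \Pr_{\mu_n}(\overline{g}=1)\ge(1+(|F|-1)\omega^{-j})^{-1}$. Since the $d$ coordinates are i.i.d.\ under $\mu_n$ and $w$ vanishes whenever all $F$-components are trivial,
\[
\Pr_M(w=1\on G)=\limsup_n \mu_n(\set{w=1})\ \ge\ \big(\limsup_n \Pr_{\mu_n}(\overline{g}=1)\big)^d\ \ge\ \big(1+(|F|-1)\omega^{-j}\big)^{-d},
\]
which tends to $1$ as $j\to\infty$ because $\omega>1$. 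Choosing $j=j(\eps)$ large enough, the group $G=H\times F$ with this generating set has $\Pr_M(w=1\on G)>1-\eps$ but does not satisfy $w$, so $w$ has no strong gap with respect to uniform ball measures.

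The main obstacle is the amplification to probability near $1$: for a \emph{fixed} generating set, the $F$-component of a uniform ball element stays bounded away from the identity in probability (the ratio $|B(n)|/|B(n-1)|$ is bounded), so one is forced to let the ``speed'' of $H$ grow by coarsening its generating set to $B_{S_H}(j)$, which boosts the growth rate to $\omega^j$ but inflates the number of generators — this is exactly why the argument only rules out a strong gap, not a gap with a generator-dependent function $\phi(r)$. A secondary, purely technical point is that $H$ need not have uniformly exponential growth, so $|B(n)|/|B(n-1)|$ need not be bounded below; this costs nothing because $\Pr_M$ is defined via a $\limsup$, and the Fekete/telescoping argument already delivers largeness of the relevant ratio along a subsequence.
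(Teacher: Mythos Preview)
Your proof is correct and follows essentially the same approach as the paper's: take the direct product of an exponential-growth group satisfying $w$ with a finite group not satisfying $w$, and amplify the growth rate of the first factor by coarsening its generating set to a large ball. The only cosmetic differences are that the paper takes the entire finite group as generators (so every nontrivial element has length $1$, slightly simplifying the ball count), and that you make the Fekete/telescoping step explicit where the paper leaves it implicit.
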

\begin{proof}
Write $w=w(x_1,\dots,x_d)$, and let $G$ be a group of exponential growth satisfying $w$. Take a finite symmetric generating set $S$ of $G$ containing $1$, and let $\rho=\lim_{n\to\infty}\sqrt[n]{|S^n|}>1$ be the growth exponent of $G$ with respect to $S$. Also, let $H$ be a finite group which does not satisfy $w$ (it is easy to see that any non-trivial law is not satisfied by a sufficiently large symmetric group).

Consider the group $\Gamma=G\times H$, with generating set $T=(S\times \set{1})\cup(\set{1}\times H)$, and let $\mu_n$ denote the uniform measure on $T^n$. Then for any $n\ge 1$ and $h\in H$ we have
$$\left|\set{(g,h)\in T^n\mid g\in G}\right|=\begin{cases}|S^{n-1}|,&h\neq 1\\|S^n|,&h=1.\end{cases}$$
Note that $w((g_1,h_1),\dots,(g_d,h_d))=1$ if and only if $w(h_1,\dots,h_d)=1$ (because~$G$ satisfies $w$), and thus $w$ holds if $h_1=\cdots=h_d=1$. Therefore, if $(g_i,h_i)$ are distributed uniformly in a ball of radius $n$ then 
\begin{align*}
    \Pr
    (w((g_1,h_1),\dots,(g_d,h_d))=1)&\ge \Pr
    (h_1=\cdots=h_d=1)=\\
    &=\frac{|S^n|^d}{\left((|H|-1)|S^{n-1}|+|S^n|\right)^d}=\\
    &=\frac{1}{\left((|H|-1)\frac{|S^{n-1}|}{|S^n|}+1\right)^d},
\end{align*}
which shows that
$$\limsup_{n\to\infty}\Pr
(w((g_1,h_1),\dots,(g_d,h_d))=1)\ge\frac{1}{\left((|H|-1)\rho^{-1}+1\right)^d}.$$
By replacing $S$ with $S^m$ for large enough $m$ we may take $\rho\to\infty$, and the above probability converges to $1$. However, it is clear that $\Gamma$ does not satisfy $w$, as required.
\end{proof}

\begin{rem*}
    Any law satisfying the assumption of \Tref{thm:uniform-balls} does not satisfy a strong positivity result as well (with respect to uniform measures on balls).
    
    Indeed, let $w$ be such a law, and let $G$ be a group of exponential growth satisfying~$w$. Fix $\eps>0$. Following the proof of the theorem, for any $k$ we may find a generating set of $G\times\Sym_k$ such that $\Pr\left(w=1\on G\times\Sym_k\right)\ge\eps$ (with respect to balls in the appropriate Cayley graph of $G\times\Sym_k$). However, $G\times\Sym_k$ cannot have a subgroup satisfying $w$ for which the index is bounded by a function of $\eps$, since this is not true for $\Sym_k$ as $k$ tends to $\infty$.
\end{rem*}

\begin{exmpl}
We give two examples of law satisfying the assumption of \Tref{thm:uniform-balls}:
\begin{enumerate}
    \item The law $x^n=1$ for large enough $n$ such that the Burnside group $B(d,n)$ is infinite (for some~$d$).
    \item The metabelian law $[[x,y],[z,w]]=1$ (for which we can take $G=\Z/2\Z\wr\Z$).
\end{enumerate}
\end{exmpl}

\section{Open questions}

We conclude this paper with several open questions. For finite groups (or for residually finite groups), the existence of a gap (\Qref{ques:gap-fin}) and the positivity (\Qref{ques:pos-fin}) are still open. 

A particular case of interest is for power laws $x^m=1$. When $G$ is finite, it is known that there exists a gap for $\Pr\left(x^m=1\on G\right)$ depending on $m$ and on the number of generators of $G$. However, the dependency on the number of generators is still, as far as we know, unknown:

\begin{ques}
    Does the law $x^m=1$ satisfy a gap result for finite groups, with no dependency on the number of generators?
\end{ques}

This question is open for both the finite case and the infinite case with respect to random walks (though we believe that the two cases might be very different). Note that for $m=3$ and $4$ Laffey gave a positive answer \cite{laf,laf3}. Further, for $m=p$ a prime number, Laffey shows that any non-$p$-groups must have $\Pr(x^p=1)\le p/(p+1)$ \cite{laf2}. For $p$-groups the situation is more complicated, and Khukhro gave an example of a 5-group where $1>\Pr(x^5=1)\ge 24/25$ \cite{kh81}. See \cite{hvl09} for a survey of this direction.

As a generalization of several results provided in this paper, we raise the following question:
\begin{ques}
    Let $w_1=w_1(x_1,\dots,x_d)\in\F_d$ and $w_2=w_2(y_1,\dots,y_{d'})\in\F_d$ be words. Does the law $[w_1,w_2]$ satisfy a gap or positivity result?
\end{ques}

Recall that our results show that when $w_1,w_2$ are balanced, or when $w_1,w_2$ are the same word with disjoint variables, then $[w_1,w_2]$ does not satisfy a gap or a positivity result; also, if $w_1$ is a power law, then $[w_1,w_2]$ does not satisfy a positivity result.\medskip

Our next questions are somewhat ``extreme'' cases for the gap and positivity of laws. We showed that a law can be satisfied with probability arbitrarily close to $1$ on a group, while the group does not satisfy the law (or any law) virtually. One may ask for the following:

\begin{ques}
    Let $w\in\F_d$ be a word, let $G$ be a finitely generated group generated by $S$, and let $R_n^{(1)},\dots,R_n^{(d)}$ be independent random walks on $G$ with respect to $S$. If $\lim_{n\to\infty}\Pr(w(R_n^{(1)},\dots,R_n^{(d)})=1)=1$, is it true that $w=1$ holds on $G$?
\end{ques}

Regarding positivity,
\begin{ques}
    Can the probability of satisfying a law be $0$ for one generating set but positive for another?
\end{ques}

Finally, we raise another question, a bit more technical in nature:
\begin{ques}
    Can the $\limsup\neq\liminf$ of the probability to satisfy $w$?
\end{ques}
This can be arranged, for example, if the random walk is not lazy (consider for example the infinite Dihedral group $D_{\infty}=\sg{a,b\suchthat a^2=b^2=1}$, with respect to the generating set $S=\set{a,b}$, and the law $x^2=1$). However, when the random walk is lazy, it is unclear whether the probabilities converge or not. We conjecture that, in fact, an example where the probabilities do not converge can be constructed.

\bibliographystyle{plain}
\bibliography{refs}

\begin{thebibliography}{10}

\bibitem{AMV}
Yago Antol{\'\i}n, Armando Martino, and Enric Ventura.
\newblock Degree of commutativity of infinite groups.
\newblock {\em Proceedings of the American Mathematical Society},
  145(2):479--485, 2017.

\bibitem{deli}
Costantino Delizia, Urban Jezernik, Primo{\v{z}} Moravec, and Chiara Nicotera.
\newblock Gaps in probabilities of satisfying some commutator-like identities.
\newblock {\em Israel Journal of Mathematics}, 237(1):115--140, 2020.

\bibitem{ES}
Sean Eberhard and Pavel Shumyatsky.
\newblock Probabilistically nilpotent groups of class two.
\newblock {\em arXiv preprint arXiv:2108.02021}, 2021.

\bibitem{Gro81}
Mikhael Gromov.
\newblock Groups of polynomial growth and expanding maps.
\newblock {\em Inst. Hautes \'{E}tudes Sci. Publ. Math.}, 53:53--73, 1981.

\bibitem{lampresid}
Karl~W. Gruenberg.
\newblock Residual properties of infinite soluble groups.
\newblock {\em Proc. London Math. Soc. (3)}, 7:29--62, 1957.

\bibitem{gui73}
Yves Guivarc'h.
\newblock Croissance polynomiale et p\'{e}riodes des fonctions harmoniques.
\newblock {\em Bull. Soc. Math. France}, 101:333--379, 1973.

\bibitem{Gu}
William~H. Gustafson.
\newblock What is the probability that two group elements commute?
\newblock {\em The American mathematical monthly}, 80(9):1031--1034, 1973.

\bibitem{hvl09}
George Havas and Michael Vaughan-Lee.
\newblock On counterexamples to the {H}ughes conjecture.
\newblock {\em J. Algebra}, 322(3):791--801, 2009.

\bibitem{hsc93}
W.~Hebisch and L.~Saloff-Coste.
\newblock Gaussian estimates for {M}arkov chains and random walks on groups.
\newblock {\em Ann. Probab.}, 21(2):673--709, 1993.

\bibitem{Hi}
Kurt~A. Hirsch.
\newblock On infinite soluble groups {(III)}.
\newblock {\em Proceedings of the London Mathematical Society}, 2(1):184--194,
  1946.

\bibitem{I94}
Sergei~V. Ivanov.
\newblock The free {B}urnside groups of sufficiently large exponents.
\newblock {\em Internat. J. Algebra Comput.}, 4(1-2):ii+308, 1994.

\bibitem{kh81}
Evgeni\u\i{}~I. Khukhro.
\newblock On the connection between the {H}ughes conjecture and the relations
  in finite groups of prime period.
\newblock {\em Mat. Sb. (N.S.)}, 116(158)(2):253--264, 1981.

\bibitem{Kle10}
Bruce Kleiner.
\newblock A new proof of {G}romov's theorem on groups of polynomial growth.
\newblock {\em J. Amer. Math. Soc.}, 23(3):815--829, 2010.

\bibitem{kocs}
Zoltan~A. Kocsis.
\newblock Degree of satisfiability of some special equations.
\newblock {\em arXiv preprint arXiv:2002.01773}, 2020.

\bibitem{laf}
Thomas~J. Laffey.
\newblock The number of solutions of $x^3=1$ in a $3$-group.
\newblock {\em Mathematische Zeitschrift}, 149(1):43--45, 1976.

\bibitem{laf2}
Thomas~J. Laffey.
\newblock The number of solutions of $x^p=1$ in a finite group.
\newblock In {\em Mathematical Proceedings of the Cambridge Philosophical
  Society}, volume~80, pages 229--231. Cambridge University Press, 1976.

\bibitem{laf3}
Thomas~J. Laffey.
\newblock The number of solutions of $x^4=1$ in a $3$-group.
\newblock {\em Proc. Roy. Irish Acad. Sect. A}, 79(4):29--36, 1979.

\bibitem{LdlS}
Paul-Henry Leemann and Mikael de~la Salle.
\newblock Cayley graphs with few automorphisms: the case of infinite groups.
\newblock {\em arXiv preprint arXiv:2010.06020}, 2020.

\bibitem{lubotzbook}
Alexander Lubotzky and Dan Segal.
\newblock {\em Subgroup growth}, volume 212 of {\em Progress in Mathematics}.
\newblock Birkh\"{a}user Verlag, Basel, 2003.

\bibitem{LPSZ20}
Russell Lyons, Yuval Peres, Xin Sun, and Tianyi Zheng.
\newblock Occupation measure of random walks and wired spanning forests in
  balls of {C}ayley graphs.
\newblock {\em Ann. Fac. Sci. Toulouse Math. (6)}, 29(1):97--109, 2020.

\bibitem{Ma}
Avinoam Mann.
\newblock Finite groups containing many involutions.
\newblock {\em Proceedings of the American Mathematical Society}, pages
  383--385, 1994.

\bibitem{MM}
Avinoam Mann and Consuelo Martinez.
\newblock The exponent of finite groups.
\newblock {\em Archiv der Mathematik}, 67(1):8--10, 1996.

\bibitem{MTVV}
Armando Martino, Matthew C.~H. Tointon, Motiejus Valiunas, and Enric Ventura.
\newblock Probabilistic nilpotence in infinite groups.
\newblock {\em Israel Journal of Mathematics}, 244(2):539--588, 2021.

\bibitem{Ne}
Peter~M. Neumann.
\newblock Two combinatorial problems in group theory.
\newblock {\em Bulletin of the London Mathematical Society}, 21(5):456--458,
  1989.

\bibitem{O18}
Damian Osajda.
\newblock Group cubization.
\newblock {\em Duke Math. J.}, 167(6):1049--1055, 2018.
\newblock With an appendix by Mika\"{e}l Pichot.

\bibitem{ragbook}
M.~S. Raghunathan.
\newblock {\em Discrete subgroups of {L}ie groups}.
\newblock Ergebnisse der Mathematik und ihrer Grenzgebiete, Band 68.
  Springer-Verlag, New York-Heidelberg, 1972.

\bibitem{Sh}
Aner Shalev.
\newblock Probabilistically nilpotent groups.
\newblock {\em Proceedings of the American Mathematical Society},
  146(4):1529--1536, 2018.

\bibitem{toin}
Matthew C.~H. Tointon.
\newblock Commuting probabilities of infinite groups.
\newblock {\em Journal of the London Mathematical Society}, 101(3):1280--1297,
  2020.

\end{thebibliography}

\end{document}